    \renewcommand\thmcontinues[1]{Continued}
    \def\cleardoublepage{
        \clearpage
        \ifodd\c@page
        \else
        \hbox{}
        \thispagestyle{empty}
        \newpage
        \if@twocolumn
            \hbox{}
            \newpage
        \fi
        \fi
    }
    \newcommand{\C}{\mathbb{C}}
    \newcommand{\R}{\mathbb{R}}
    \newcommand{\Z}{\mathbb{Z}}
    \newcommand{\N}{\mathbb{N}}
    \newcommand{\T}{\mathbb{T}}
    \renewcommand{\AA}{\mathfrak A}
    \newcommand{\PP}{\mathfrak P}
    \newcommand{\FF}{\mathfrak F}
    \newcommand{\GG}{\mathfrak G}
    \newcommand{\HH}{\mathfrak H}
    \newcommand{\HHb}{\bar{\mathfrak H}}
    \newcommand{\EXT}{\operatorname{Ext}}
    \newcommand{\Ho}{\operatorname{H}}
    \newcommand{\cohom}[3]{H^{{\raise1pt\hbox{$\scriptstyle#1$}}}(#2\>\!,#3)}
    \newcommand{\tatecohom}[3]%
        {\widehat H^{{\raise1pt\hbox{$\scriptstyle#1$}}}(#2\>\!,#3)}
    \newcommand{\Cohom}[4]%
        {\Ho^{{\raise1pt\hbox{$\scriptstyle#1$}}}_{#2}\big(#3 ;#4\big)}
    \newcommand{\GenHom}[2]%
            {\mathcal{H}_{#1}\left(#2\right)}
    \newcommand{\Tatecohom}[3]%
        {\widehat H^{{\raise1pt\hbox{$\scriptstyle#1$}}}\big(#2\>\!,#3\big)}
    \newcommand{\Ext}[4]{{\EXT}^{{\raise1pt\hbox{$\scriptstyle#1$}}}_{#2}\big(#3 ,#4\big)}
    \newcommand{\homol}[3]{\Ho_{{\lower1pt\hbox{$\scriptstyle#1$}}}(#2\>\!,#3)}
    \newcommand{\homolog}[2]{H_{{\lower1pt\hbox{$\scriptstyle#1$}}}(#2)}
    \newcommand{\Ab}{\mathfrak{Ab}}
    \newcommand{\Ker}[1]{\operatorname{Ker} \left( #1 \right)}
    \newcommand{\epim}{\twoheadrightarrow}
    \newcommand{\UEG}{{\underline EG}}
    \newcommand{\UEGG}[1]{{\underline E}\left(#1\right)}
    \newcommand{\EFG}[2]{\operatorname{E}_{#1} \left( #2 \right)}
    \newcommand{\BFG}[2]{\operatorname{B}_{#1} \left( #2 \right)}
    \newcommand{\All}{\mathfrak{All}}
    \newcommand{\Vcy}{\mathfrak{VC}}
    \newcommand{\Fin}{\mathfrak{Fin}}
    \newcommand{\RS}[3]{\mathfrak{R}_{#1}^{#2} \left(#3\right)}
    \newcommand{\dF}[2]{\mathfrak{#1}_{#2} \setminus \mathfrak{#1}_{#2-1}}
    \newcommand{\dRS}[3]{\RS{#1}{#2}{#3} \setminus \RS{#1-1}{#2}{#3}}
    \newcommand{\Nzr}[2]{N_{#1} #2}
    \newcommand{\Nzer}[2]{N_{#1}\left[#2\right]}
    \newcommand{\Nzerr}[3]{N_{#2}\left[#3\right]_{#1}}
    \newcommand{\comm}{\operatorname{Comm}}
    \DeclareMathOperator{\pd}{pd}
    \DeclareMathOperator{\cd}{cd}
    \DeclareMathOperator{\gd}{gd}
    \DeclareMathOperator{\ucd}{\underline{cd}}
    \DeclareMathOperator{\ugd}{\underline{gd}}
    \newcommand{\OFG}[2]{\mathcal{O}_{#1} #2}
    \newcommand{\RMod}[2]{\operatorname{Mod-}\OFG{#1}{#2}} 
    \newcommand{\LMod}[2]{\OFG{#1}{#2}\operatorname{-Mod}}
    \newcommand{\mor}[3]{\operatorname{mor}_{#1}\left(#2\>\!,#3\right)}
    \newcommand{\Zfirst}[2]{\Z[?\>\!,#1/#2]_{#1}}
    \newcommand{\Zsecond}[2]{\Z[#1/#2\>\!, ?]_{#1}}
    \newcommand{\Znone}[1]{\Z[?\>\!,??]_{#1}}
    \newcommand{\Zfull}[3]{\Z[#1/#2\>\!, #1/#3]_{#1}}
    \newcommand{\Ztriv}[1]{\underline{\Z}_{#1}}
    \newcommand{\modres}[1]{\operatorname{res}_{#1}}
    \newcommand{\modind}[1]{\operatorname{ind}_{#1}}
    \newcommand{\modcoind}[1]{\operatorname{coind}_{#1}}
    \newcommand{\restr}[2]{#1 \raisebox{-.5ex}{\big|}_{#2}}
    \newcommand{\mcyl}[1]{\operatorname{Cyl}\left(#1\right)}
    \newcommand{\mcone}[1]{\operatorname{C}\left(#1\right)}
    \newcommand{\dmcyl}[2]{\operatorname{Cyl}\left(#1, #2\right)}
    \newcommand{\bs}{\backslash}
    \newcommand{\st}{\,|\,}
    \numberwithin{equation}{chapter}
    \newtheorem{Theorem}{Theorem}[chapter]
    \newtheorem*{Theorem*}{Theorem}
    \newtheorem{Prop}[Theorem]{Proposition}
    \newtheorem{Proposition}[Theorem]{Proposition}
    \newtheorem{Lemma}[Theorem]{Lemma}
    \newtheorem{Corollary}[Theorem]{Corollary}
    \newtheorem{Remark}[Theorem]{Remark}
    \theoremstyle{definition}
        \newtheorem{Example}[Theorem]{Example}
        \newtheorem{Defn}[Theorem]{Definition}
        \newtheorem{Observation}[Theorem]{Observation}
    \theoremstyle{remark}
\begin{document}
\thispagestyle{empty}

\begin{center}

    \vspace*{0.8in}

    {\LARGE\bf Classifying spaces for chains of families of subgroups\\}

    \vspace{0.3in}
    {\large V\'ictor Moreno}

    \vspace{2in}
    Thesis submitted to the University of London\\
    for the degree of Doctor of Philosophy

    \vspace{2in}
    School of Mathematics and Information Security\\
    Royal Holloway, University of London

    \vspace{0.2in}
    \monthyeardate{November 2018}

\end{center}

\newpage
\thispagestyle{empty}
\newpage
\setcounter{secnumdepth}{3}
\newpage
\chapter*{Declaration}
\parskip=11pt

These doctoral studies were conducted under the supervision of
Professor Brita Nucinkis.

The work presented in this thesis is the result of original research I conducted, in
collaboration with others, whilst enrolled in the School of Mathematics and Information Security as a candidate for the
degree of Doctor of Philosophy. This work has not been submitted for any other degree or award in
any other university or educational establishment.

\bigskip\bigskip\bigskip

\begin{figure}[h!]
\hspace{\fill} \includegraphics[width=4cm]{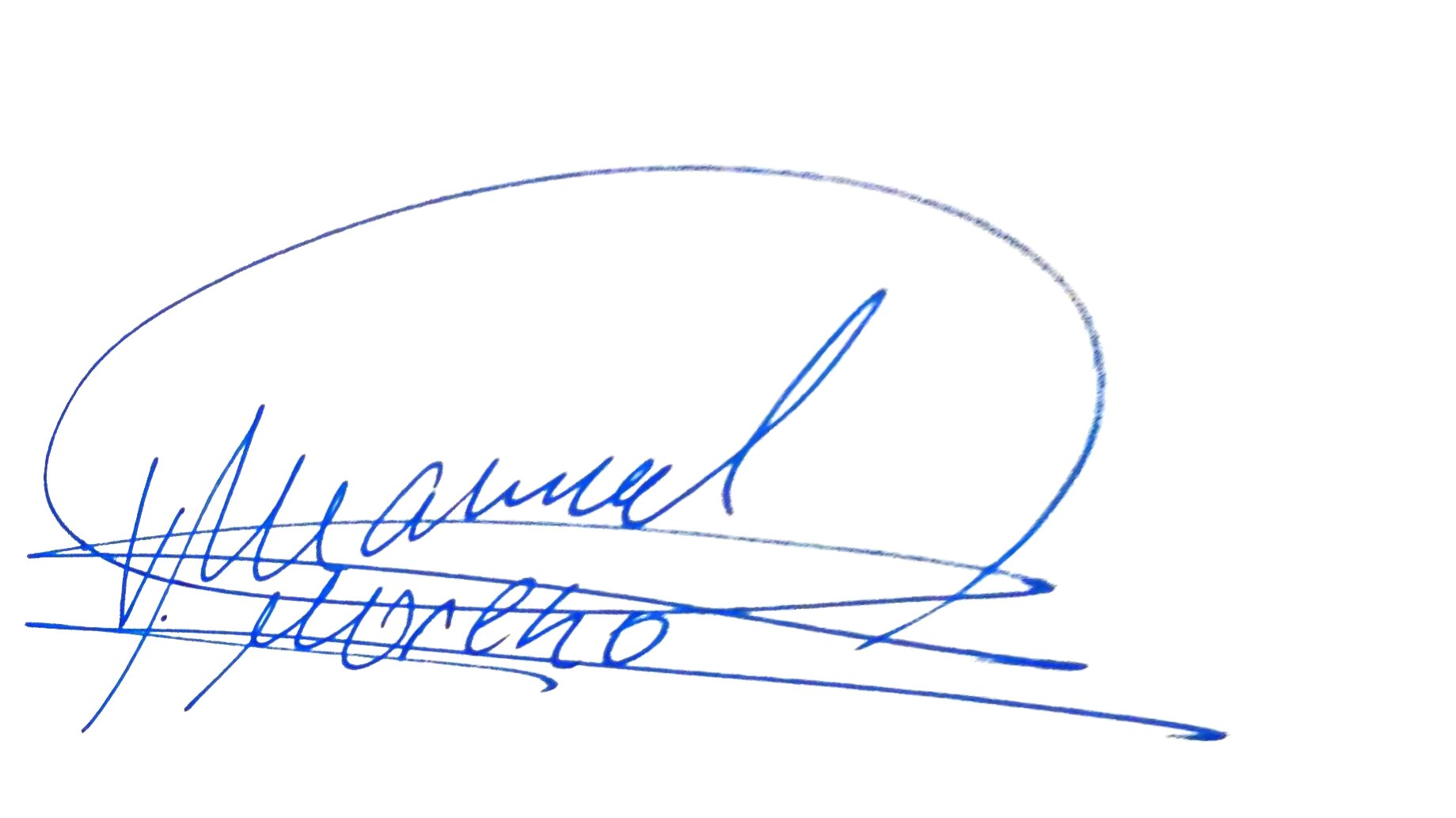}
\end{figure}

\hspace{\fill} V\'ictor Moreno\\
\smallskip
\hspace{\fill} November 25, 2018
\parskip=0pt
\newpage
\chapter*{Acknowledgements}

This work would not have been possible without the aid and support
of the people that surrounded me over these years.

I am profoundly grateful to my supervisor Professor Brita Nucinkis for showing
me the ways of research and always finding what I needed the most, be it
well-timed encouraging words, mathematical insights, showing passion for
what I was learning, the recognition of progress in my work or a reminder that the
wheel was already invented. For all this and much more I learned in our
time working together, thank you.

For persuading me to pursue this opportunity and for being always a source of
kindness and energy for many different topics, I want to thank Pep Burillo.

As I shared my first steps in the field with them, I would like to thank Federico Pasini and Ged Corob-Cook.
Learning and working side by side was a great welcome for me, and the fruits of our collaboration
gave shape to and inspired the present work.

Many were those who made my stay at Royal Holloway great. Thank you Christian, Matteo,
Eugenio, Pips, Sam, George, Rachel, Naomi, Wanpeng, Amit and Alex.
Heartfelt thanks go to Thalia, Thyla and Pavlo for the friendship
we built over these years.

I would also like to thank Claudia, Camila, Maria P., Hector, Maria F., Anna, Su,
Alfonso, Stefan and Pablo for keeping me sane and becoming a second family for me
in each of the locations I lived during these years.

A mi familia, especialmente a mi madre, gracias de todo coraz\'on por
el apoyo y cari\~no que siempre me hac\'eis llegar, sin importar lo lejos
que estemos.

And finally, my dearest thank you to Marina, for being the most wonderful partner
I could have hoped for. \textgreek{Γεμίζεις τις καλές μου στιγμές με χαρά, με φως
και σταθερότητα τις σκοτεινές μου στιγμές, κάνοντας με πάντοτε καλύτερο.
Αγαπώ σε, καρδιά μου.}

\chapter*{Abstract}
{

This thesis concerns the study of the Bredon cohomological and geometric
dimensions of a discrete group $G$ with respect to a family $\FF$ of subgroups of $G$.
With that purpose, we focus on building finite-dimensional models for $\EFG{\FF}{G}$.
The cases of the family $\Fin$ of finite subgroups of a group and the family $\Vcy$ of virtually
cyclic subgroups of a group have been widely studied and many tools have been developed
to relate the classifying spaces for $\Vcy$ with those for $\Fin$.

Given a discrete group $G$ and an ascending chain
$\FF_0 \subseteq \FF_1 \subseteq \ldots \subseteq \FF_n \subseteq \ldots$ of families of subgroups
of $G$, we provide a recursive methodology to build models for $\EFG{\FF_r}{G}$ and give
certain conditions under which the models obtained are finite-dimensional. We provide
upper bounds for both the Bredon cohomological and geometric dimensions of $G$ with
respect to the families $\left(\FF_r\right)_{r\in\N}$ utilising the classifying spaces
obtained.

We consider then the families $\HH_r$ of virtually polycyclic subgroups of Hirsch length
less than or equal to $r$, for $r\in\N$. We apply the results obtained for chains of families
of subgroups to the chain $\HH_0 \subseteq \HH_1 \subseteq \ldots$ for an arbitrary virtually
polycyclic group $G$, proving that the corresponding Bredon dimensions are both bounded above
by $h(G) + r$, where $h(G)$ is the Hirsch length of $G$.

Finally, we give similar results for the same chain of families of subgroups and an arbitrary locally
virtually polycyclic group as the ambient group, obtaining in this case the upper bound $h(G) + r  + 1$.

}

\dominitoc
\tableofcontents
\justify
\setcounter{chapter}{-1}

\chapter{Introduction}\label{ch:intro}

Given a group $G$, we say that a non-empty collection $\FF$ of subgroups of $G$ is a \emph{family} if it
is closed under conjugation and taking subgroups. In this configuration, a $G$-CW-complex $X$ is a model
for $E_\FF G$ or a classifying space for the family $\FF$ if for each subgroup $H \leq G$, the set of
points of $X$ that $H$ fixes is contractible if $H\in\FF$ and empty otherwise.

A universal property holds for such spaces, meaning that if $X$ is a model for $E_\FF G$ and $Y$
is any $G$-CW-complex with stabilizers in $\FF$, there is a $G$-map $f: Y \to X$ unique up to
homotopy. In other words, a model for $E_\FF G$ is a terminal object in the homotopy category of
$G$-CW-complexes with stabilizers in $\FF$. As such, their existence is guaranteed for any
group and family of subgroups \cite{luecksurvey}. Given their universal existence, the interest
lies in determining the least possible dimension of a model for $E_\FF G$, i.e.
the \emph{Bredon geometric dimension} of $G$ with respect to the family $\FF$, denoted by $\gd_\FF G$.

Homological methods facilitate the study of such dimensions. In the case of $G$-CW-complexes
with stabilisers in a family $\FF$, the Bredon cohomology of groups is the most suitable tool. Glen Bredon
introduced this homology theory in \cite{bredon_equiv} for finite groups and
Wolfgang L{\"u}ck extended it for arbitrary groups and families of subgroups in \cite{luecktransformation}.

The spaces $\underline{E}G = E_{\mathfrak{F}}G$, known as universal space for proper actions, for
the family $\mathfrak{F} = \Fin$ of finite subgroups and
$\underline{\underline{E}} G = E_{\mathfrak{F}} G$ for the family $\mathfrak{F} = \Vcy$ of virtually
cyclic subgroups have been widely studied given their appearance as the geometric objects in the Baum-Connes
and Farrell-Jones conjectures, respectively. For a first introduction into the subject see, for example,
the survey \cite{luecksurvey}.

In the case of $\Fin$, explicit constructions of the respective models arise in a natural way
from the geometrical origin, interpretation or properties of many classes of groups. For $\Vcy$,
however, building such spaces has proven more challenging. As such, methodologies
that could help obtain the desired models using known classifying spaces for the family of finite
subgroups and other related families have been developed.

Such methodologies made the construction of classifying spaces for families of subgroups
other than $\Fin$ and $\Vcy$ reachable. For example, in \cite{nucinkispetrosyan_hier}, the authors build
$3$-dimensional classifying spaces for the family of virtually nilpotent subgroups of any
abelian-by-infinite cyclic groups. Another example more aligned with $\Fin$ and $\Vcy$ can be
found in \cite{nucinkisetal}, where $(n+r)$-dimensional models for $E_{\FF_r} G$, where $G$
is finitely generated abelian and $\FF_r$ is the family of subgroups of torsion-free rank less
than or equal to $r$, are constructed in a recursive manner.

Let $\FF_0 \subseteq \FF_1 \subseteq \ldots \subseteq \FF_n \subseteq \ldots$ be an ascending
chain of families of subgroups of a discrete group $G$. Under certain conditions, it is possible
to build classifying spaces for all the families in the chain recursively, utilising those
for $\FF_0$ and other families that will be introduced throughout the process. The aim of this
thesis is to provide such methodology and use it to give upper bounds for the respective Bredon dimensions.
We then apply this construction process to families of virtually polycyclic subgroups.

\section*{Structure of the thesis}
In Chapter~\ref{ch:class_spaces}, we present CW-complexes and their equivariant counterparts:
$G$-CW-complexes. We also introduce some operations with such spaces that will be used throughout
the thesis and the conditions under which the resulting spaces are also ($G$-)CW-complexes. These
operations are quotients, products, joins, push-outs and, in particular, mapping cones and
mapping and double mapping cylinders. Finally, we give the definition and some basic properties
and examples of families of subgroups and classifying spaces.

In Chapter~\ref{ch:bredon_cohom}, we introduce the orbit category $\OFG{\FF}{G}$ and Bredon modules
as functors from said category to the category of abelian groups. We also give a basic overview
of free and projective objects of the category of Bredon modules, as they are key for defining
Bredon Cohomology (Chapter~\ref{ch:bredon_cohom}) and describing its relation to
classifying spaces (Chapter~\ref{ch:bredon_dim}).

In Chapter~\ref{ch:bredon_dim}, apart from specifying some results about the aforementioned
relation, we define the Bredon cohomological and geometric dimensions of a group $G$ with
respect to a family of subgroups $\FF$, $\cd_{\FF} G$ and $\gd_{\FF} G$, respectively.

In Chapter~\ref{ch:related-families}, we compile and extend a list of results that, given related
families and groups, connect their respective classifying spaces, Bredon cohomology groups and
Bredon dimensions. The particular cases we look into are: a family and its restriction to a
subgroup of the ambient group, families that are related by a functor, unions of families,
pairs of families $\FF \subseteq \GG$ such that the set $\GG\bs\FF$ admits certain structure and
families of subgroups of a direct union of groups and their restrictions to the groups appearing
in the direct union.

The main contribution of this thesis is Chapter~\ref{ch:chains}. Given an ascending chain
$\left(\FF_r\right)_{r\in\N}$ of families of subgroups of a discrete group and provided that
the chain has certain properties, we develop a methodology
based on the results in Chapter~\ref{ch:related-families} to build models for
$\EFG{\FF_r}{G}$ recursively. We find upper bounds for the Bredon dimensions with respect to
$\FF_r$ depending on those with respect to $\FF_{r-1}$ and other related families.
In the case that there are finite-dimensional classifying spaces
for the family $\FF_0$ for the ambient group and some of its subgroups, we list some further
conditions that will ensure that the models we build
have also finite dimension.

Finally, in Chapter~\ref{ch:class_poly}, we apply the results in the previous chapter to the
chain $\left(\FF_r\right)_{r\in\N}$ of families of subgroups, where $H\in\FF_r$ if and only if
$H$ is virtually polycyclic and its Hirsch length is smaller than or equal to $r$. We consider
two different classes of groups for the ambient group: virtually polycyclic groups and
locally virtually polycyclic groups.

\chapter{Classifying Spaces for families of subgroups}\label{ch:class_spaces}
\minitoc

\section{CW-complexes}\label{sect_cwcomp}

In \cite{whitehead_combhomotopy1}, J.H.C. Whitehead introduced CW-complexes as a class of
topological spaces that could play the role of simplicial complexes in Homotopy
Theory and would allow the field to be studied from a different perspective.
The results we will present in this section can be found, in most of the cases,
in the aformentioned publication. Given the nature of their first appearance,
their description and study in \cite{whitehead_combhomotopy1} is based on their desired properties.
For a more constructive view of CW-complexes,
we introduce them as in modern publications such as \cite{Hatcher_algebraictop} and relate those results and definitions to the ones found in the original source.

\begin{Defn}\label{attaching_space}
    Let $X$ and $Y$ be topological spaces. Let $f : A \to Y$ be a continuous map,
    where $A \subset X$ is a subspace. Then, the \emph{attaching space} or \emph{adjunction space}
    for $f$ is
    $$ X \cup_f Y = (X \sqcup Y)/\sim,$$
    where $\sim$ is the equivalence relation generated by $f(a)\sim a$ for all $a \in A$.
\end{Defn}

\begin{Defn}\label{modernCWc} A non-empty topological space $X$ is a \emph{CW-complex} if it admits a filtration $X^0 \subseteq X^1 \subseteq \ldots \subseteq X^n \subseteq \ldots \subseteq X$ such that:
\begin{enumerate}[label= {(\roman*)}]
\item $X^0$ is a non-empty discrete set of points ($0$-cells).
\item\label{attach} For each $n \geq 1$, $X^n$ can be obtained by attaching $n$-cells $e^n_\alpha$ to $X^{n-1}$ via maps $\phi_\alpha : S^{n-1} \to X^{n-1}$.
\item $X = \bigcup_n X^n$ and the topology in $X$ coincides with the weak topology associated to such filtration.

Under these conditions, $X^n$ will be called $n$-\emph{skeleton} of $X$.
\end{enumerate}

\end{Defn}

According to Definition~\ref{attaching_space}, the condition \ref{attach} in the definition of CW-complex
means that $X^n$ is the quotient space of the disjoint union of
$X^{n-1}$ with a collection of $n$-discs under the identification $x \sim \phi_{\alpha} (x)$ for $x \in \partial D^n_{\alpha}$, i.e.
$X^n = \left(X^{n-1} \bigsqcup_\alpha D^n_\alpha \right) / \sim$ where $\sim$ is the equivalence relation generated by $x \sim \phi_\alpha (x)$ for $x\in \partial D^n_\alpha$. The $n$-cell $e^n_{\alpha}$ is the homeomorphic image of $D^n_{\alpha} \setminus \partial D^n_{\alpha}$.

\begin{Defn} If $X = X^n$ for some $n$, then $X$ is \emph{finite-dimensional}, in which case we will say that its \emph{dimension} is
$\dim (X) = \min\{ n \in \N \st X = X^n\}$. If there is no such $n$, then $\dim (X) = \infty$.
\end{Defn}

\begin{Defn}
A CW-complex $X$ is called \emph{finite} if it has only finitely many cells.
\end{Defn}

\begin{Example} The $n$-sphere is a CW-complex. For example, if we take one $0$-cell $e^0$ and one $n$-cell $e^n$ with constant attaching map $S^{n-1} \to e^0$, then $S^n = X^n$.
\end{Example}

\begin{Example}\label{RasCWcompl} $\R$ is a CW-complex. Let $X^0 = \Z$ and for each $i\in \Z$ attach to $X^0$ a $1$-cell $e_i^1$ via the map $\phi_i: S^0 = \{0,1\} \to \Z$ defined by $\phi_i (0) = i$ and $\phi_i (1) = i+1$ to obtain $X^1$. Then, $X^1 = \R$.

More generally, $\R^n$ is a CW-complex.
\end{Example}

\begin{Example} Let $\Gamma = ( V, E)$ be a graph with vertices $V$ and edges $E$. Take as the $0$-cells the vertices, $X^0 = \bigcup_{v\in V} e_v^0$, and for each edge $\{u,v\}\in E$ attach to $X^0$ a $1$-cell  $e_{\{u,v\}}^1$ via the map $\phi_{\{u,v\}}: S^0 = \{0,1\} \to X^0$ defined by $\phi_{\{u,v\}}(0) = u$ and $\phi_{\{u,v\}}(1) = v$ to obtain $X^1$. Then, $\Gamma = X^1$ is a CW-complex.
\end{Example}

\begin{Example}\label{TasCWcompl} The torus $\T$ is a CW-complex. A filtration fitting the definition would be the following:

Take $X^0 = \{*\}$. Attach to it two $1$-cells $e_1^1, e_2^1$ via the constant maps $\phi_1 = \phi_2: S^0 \to \{*\}$ to obtain $X^1$. Take $e^2$ a single $2$-cell and consider the map $\phi : S^1 \to X^1$ such that it sends each pair of opposite quarters of the $S^1$ to a different $1$-cell. Then, $\T = X^2$.
\end{Example}

These examples help us to get an idea of how important to the CW-complex structure of a topological space $X$ is the way the discs $D^n_{\alpha}$ are incorporated into $X$. That leads to the following definition, that we will use further in this section to clarify what \emph{CW} in CW-complexes stand for:

\begin{Defn} For each $n$-cell $e^n_{\alpha}$ of a CW-complex $X$ we define its \emph{characteristic map} $\Phi^n_{\alpha} : D^n_{\alpha} \to X$ by the composition $$D^n_{\alpha} \hookrightarrow X^{n-1} \bigsqcup_\alpha D^n_\alpha \epim X^n \hookrightarrow X.$$
\end{Defn}

\begin{Remark} If $X$ is a CW-complex and $e^n_{\alpha}$ is any of its $n$-cells, then $\Phi^n_{\alpha}$ is continuous and its restriction to the interior of $D^n_{\alpha}$ is a homeomorphism onto $e^n_{\alpha}$.
\end{Remark}

\begin{Remark} If $A \subseteq X$ is an open (closed) subset of a CW-complex $X$, then a map $f : A \to Y$, where $Y$ is any topological space, is continuous if and only if its restriction $\restr{f}{A \cap \bar{e}}$ is continuous for all cells of $X$.
\end{Remark}

And it is always important when we define a mathematical object (or structure) to define and study its sub-objects (sub-structures):

\begin{Defn} Given a CW-complex $X$, a subspace $A \subset X$ is a \emph{subcomplex} of $X$ if for every cell $e$ of $X$, if there is $p\in e$ such that $p\in A$, then $\bar{e} \subset A$, where $\bar{e}$ is the closure of ${e}$.
\end{Defn}

Equivalently, a subspace $A$ of a CW-complex $X$ is a subcomplex if it is a closed subset that is the
union of a set of cells of $X$.

\begin{Defn} Given a CW-complex $X$ and a set of points $P\subseteq X$, the \emph{closure} of $P$ in $X$, denoted by $X(P)$, is the smallest subcomplex of $X$ containing $P$, i.e., the intersection of all subcomplexes of $X$ that contain $P$.
\end{Defn}

Definition~\ref{modernCWc} introduces CW-complexes in a constructive way, making it easier to work with such spaces. At this point we are able to relate this definition with the original one that J. H. C. Whitehead formulated in 1949 in \cite{whitehead_combhomotopy1}. Whitehead's definition, even if less practical than the one commonly used nowadays, gives a better insight in why such spaces are called CW-complexes and which topological necessities they were defined to cover in Homotopy Theory.

\begin{Defn}\cite[Section 4]{whitehead_combhomotopy1}
A Hausdorff space $X$ is called a \emph{cell complex} if it is the union of disjoint open cells $e^n_{\alpha}$ subject to the following condition: the closure $\bar{e}^n_{\alpha}$ of each $n$-cell $e^n_{\alpha} \in X$ shall be the image of a fixed $n$-simplex $\sigma^n_\alpha$ by a map $f : \sigma^n_\alpha \to \bar{e}^n_\alpha$ such that
\begin{enumerate}[label={(\roman*)}]
\item $\restr{f}{(\sigma^n_\alpha \bs \partial\sigma^n_\alpha)}$ is a homeomorphism onto $e^n_\alpha$
\item $\partial e^n_\alpha \subset X^{n-1}$, where $\partial e^n_\alpha = f \partial \sigma^b_\alpha = \bar{e}^n_\alpha \bs e^n_\alpha$ and $X^{n-1}$ is the $(n-1)$-skeleton of $X$, consisting of all the cells whose dimensionalities do not exceed $n-1$.
\end{enumerate}
\end{Defn}

\begin{Defn}\cite[Section 5]{whitehead_combhomotopy1}
A cell complex $X$ is said to be \emph{closure finite} if $X(e)$ is a finite subcomplex of $X$ for every cell.
\end{Defn}

Note that the notion of \emph{subcomplex} refers to cell complexes and not to CW-complexes. A subspace $A$ of a cell complex $X$ is a subcomplex if it is the union of a subset of $X$'s cells such that $e \subseteq L$ implies $\bar{e} \subseteq L$ for all cells of $X$.

\begin{Defn}\cite[Section 5]{whitehead_combhomotopy1}
A cell complex $X$ has the \emph{weak topology} if a subset $U \subseteq X$ is closed provided $U \cap \bar{e}$ is closed for each cell $e\in X$.
\end{Defn}

And finally, Whitehead's definition of CW-complex:

\begin{Defn}\cite[Section 5]{whitehead_combhomotopy1}\label{originalCWc}
A cell complex $X$ is a \emph{CW-complex} if it is closure finite and has the weak topology.
\end{Defn}

Now we can see clearly that \emph{C} stands for \emph{closure finite} and \emph{W} for \emph{weak topology}.

A proof of the equivalence between Definition~\ref{modernCWc} and Definition~\ref{originalCWc} can be found, for example, in the Appendix of~\cite{Hatcher_algebraictop}:

\begin{Prop}\cite[Proposition A.2.]{Hatcher_algebraictop}
    Given a Hausdorff space $X$ and a family of maps $\Phi^n_\alpha : D^n_\alpha \to X$,
    then these maps are the characteristic maps of a CW-complex (as in Definition~\ref{modernCWc})
    structure on $X$ if and only if:
    \begin{enumerate}[label={\emph{(\roman*)}},noitemsep]
        \item each $\Phi^n_\alpha$ restricts to a homeomorphism from $\mathring{D}^n_\alpha = D^n_\alpha \bs \partial D^n_\alpha$ onto its image, a cell $e^n_\alpha \subseteq X$;
        \item for each cell $e^n_\alpha$, $\Phi^n_\alpha ( \partial D^n_\alpha )$ is contained in a finite subcomplex whose cells have dimension strictly less than $n$; and
        \item a subset of $X$ is closed if and only if it meets  the topological closure of each cell of $X$ in a closed set.
    \end{enumerate}
\end{Prop}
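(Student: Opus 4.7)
I would prove the two implications separately. For the forward direction, suppose $X$ is a CW-complex in the sense of Definition~\ref{modernCWc} with characteristic maps $\Phi^n_\alpha$. Item (i) is immediate from the remark following the definition of characteristic map. Item (iii) follows because $X^n$ is by construction a quotient of $X^{n-1} \sqcup \bigsqcup_\alpha D^n_\alpha$: a subset of $X$ meeting each $\bar{e}^n_\alpha$ in a closed set has closed intersection with every $X^n$, and the weak topology on $X$ then forces it to be closed. The real work is item (ii): the compact set $\Phi^n_\alpha(\partial D^n_\alpha)$ lies in $X^{n-1}$ by construction, so it suffices to establish the auxiliary lemma that any compact subset of a CW-complex meets only finitely many open cells. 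I would prove that lemma by selecting one point from each open cell touched and showing the resulting set is closed and discrete, using Hausdorffness together with induction on skeleton dimension, and then invoking compactness to force finiteness.

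For the reverse direction, given a family $\{\Phi^n_\alpha\}$ satisfying (i)--(iii), I would define $X^n$ to be the union of all images $e^k_\alpha$ with $k \leq n$; by (i) the cells are pairwise disjoint and cover $X$. Discreteness of $X^0$ then follows from (iii) together with (ii) and the Hausdorff hypothesis: each singleton meets every $\bar{e}^k_\alpha$ in a finite, hence closed, set. For each $n \geq 1$, condition (ii) guarantees $\Phi^n_\alpha(\partial D^n_\alpha) \subseteq X^{n-1}$, so the universal property of the quotient produces a continuous bijection from the adjunction space of $X^{n-1}$ and $\bigsqcup_\alpha D^n_\alpha$ along the maps $\phi_\alpha = \Phi^n_\alpha|_{\partial D^n_\alpha}$ (in the sense of Definition~\ref{attaching_space}) onto $X^n$. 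To upgrade this to a homeomorphism I would use (iii) restricted to $X^n$, matching closed sets in $X^n$ with closed sets in the adjunction. Finally, the weak topology on $X = \bigcup_n X^n$ agrees with the given topology: a set closed in $X$ is closed in each $X^n$ trivially, and conversely a set closed in every $X^n$ meets each $\bar{e}^k_\alpha \subseteq X^k$ in a closed subset, whence (iii) yields closedness in $X$.

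The main obstacle will be the compactness lemma underlying (ii) in the forward direction. The statement that every compact subset of a CW-complex is contained in a finite subcomplex is the one step requiring genuine topological care, since all remaining steps reduce to unwinding quotient topologies and reorganising the inductive structure of the skeletal filtration. A secondary delicate point is the topology matching in the reverse direction, where one must be careful that (iii) provides exactly the information needed to promote the canonical continuous bijection onto $X^n$ to a homeomorphism; otherwise the candidate filtration would not reproduce the attaching construction of Definition~\ref{modernCWc}.
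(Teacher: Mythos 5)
The paper itself does not prove this statement: it is imported verbatim from Hatcher's appendix, so the only benchmark is the standard argument given there, and your sketch follows exactly that route (the compactness lemma of Hatcher's Proposition A.1 for condition (ii) in the forward direction, quotient-topology bookkeeping for the converse). In that sense the outline is correct and is the ``same'' proof as the one the paper cites. One remark on the statement: your reading of (i) as including that the cells $e^n_\alpha$ are pairwise disjoint and cover $X$ is the intended one; the transcription in the thesis drops that clause from Hatcher's (i), and without it the converse is false, so you were right to build it in.

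Two steps of your sketch are thinner than they look and are where the real content of the converse lives. First, for discreteness of $X^0$ you check that each \emph{singleton} meets every $\bar{e}^k_\beta$ in a finite set; that only shows points are closed, which Hausdorffness already gives. You need the same finiteness argument for an \emph{arbitrary} subset $A \subseteq X^0$: by (ii) each $\bar{e}^k_\beta$ is contained in $e^k_\beta$ together with a finite union of lower-dimensional cells, so $A \cap \bar{e}^k_\beta$ is finite, hence closed, and then (iii) makes every such $A$ closed in $X$. Second, ``(iii) restricted to $X^n$'' cannot simply be invoked: (iii) quantifies over \emph{all} cells of $X$, so to conclude that a set $C \subseteq X^n$ whose preimages in $X^{n-1}$ and in the discs $D^n_\alpha$ are closed is actually closed in $X^n$, you must also control $C \cap \bar{e}^m_\gamma$ for cells of dimension $m > n$. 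Since $C \cap e^m_\gamma = \emptyset$, this intersection equals $C \cap \Phi^m_\gamma(\partial D^m_\gamma)$, which by (ii) lies in a finite subcomplex of dimension $< m$; a descending induction on dimension, together with compactness of $\Phi^m_\gamma(D^m_\gamma)$ and Hausdorffness (so that closed cells and compact sets are closed in $X$), reduces it to the dimensions $\leq n$ you have already handled. A parallel remark applies in the forward direction: your compactness lemma shows $\Phi^n_\alpha(\partial D^n_\alpha)$ meets only finitely many cells, but to obtain the \emph{finite subcomplex} asserted in (ii) you still need closure-finiteness, i.e.\ an induction on dimension applying the same lemma to the closures of those finitely many cells. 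None of this changes your approach; it just identifies the points at which the sketch must be expanded to become Hatcher's proof.
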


As one may note, previous proposition alone does not prove the equivalence of the two definitions. For its completion, CW-complexes defined as in Definition~\ref{modernCWc} have to be Hausdorff. A stronger property is true:

\begin{Prop}\cite[Proposition A.3.]{Hatcher_algebraictop}
CW-complexes (as in Definition~\ref{modernCWc}) are normal and, in particular, Hausdorff.
\end{Prop}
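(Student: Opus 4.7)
The plan is to prove normality directly; the Hausdorff property will then follow once we observe that $X$ is also $T_1$. First I would verify that singletons are closed. For any point $x \in X$ and any cell $e^n_\alpha$, the intersection $\{x\} \cap \bar e^n_\alpha$ is either empty or a single point. Since $\bar e^n_\alpha = \Phi^n_\alpha(D^n_\alpha)$ lies inside a finite subcomplex (by closure-finiteness) and is the continuous image of a compact Hausdorff disc, singletons are closed in $\bar e^n_\alpha$; by the weak topology, $\{x\}$ is then closed in $X$.

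For normality, given disjoint closed subsets $A, B \subseteq X$, I would construct by induction on $n$ open sets $U_n, V_n \subseteq X^n$ satisfying $A \cap X^n \subseteq U_n$, $B \cap X^n \subseteq V_n$, $U_n \cap V_n = \emptyset$, together with the compatibility conditions $U_{n+1} \cap X^n = U_n$ and $V_{n+1} \cap X^n = V_n$. The base case $n = 0$ is immediate because $X^0$ is discrete: take $U_0 = A \cap X^0$ and $V_0 = B \cap X^0$.

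The inductive step is carried out cell by cell. For each $(n+1)$-cell with characteristic map $\Phi_\alpha : D^{n+1}_\alpha \to X^{n+1}$, the preimages $\Phi_\alpha^{-1}(U_n)$ and $\Phi_\alpha^{-1}(V_n)$ are disjoint open subsets of the boundary sphere $S^n$. I would extend them into $D^{n+1}_\alpha$ through a collar $S^n \times (1-\epsilon, 1]$, and inside the open disc $\mathring D^{n+1}_\alpha$ invoke normality of the metric space $D^{n+1}_\alpha$ to separate the disjoint closed sets $\Phi_\alpha^{-1}(A) \cap \mathring D^{n+1}_\alpha$ and $\Phi_\alpha^{-1}(B) \cap \mathring D^{n+1}_\alpha$ by open sets that agree with the collar extension near the boundary. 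Projecting these open sets down via $\Phi_\alpha$ and taking the union over all $(n+1)$-cells defines $U_{n+1}$ and $V_{n+1}$. Setting $U = \bigcup_n U_n$ and $V = \bigcup_n V_n$ yields disjoint open neighborhoods of $A$ and $B$; openness in $X$ follows from the compatibility $U \cap \bar e^n_\alpha = U_n \cap \bar e^n_\alpha$ together with the weak topology characterisation of open sets.

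The main obstacle is the gluing in the inductive step: one has to simultaneously separate the boundary data $U_n, V_n$ already chosen, the interior pieces of $A$ and $B$, and maintain the exact compatibility $U_{n+1} \cap S^n = \Phi_\alpha^{-1}(U_n)$ on every attaching sphere. This is manageable because each closed disc $D^{n+1}_\alpha$ is metrizable and hence normal (so one can invoke the Tietze extension theorem to interpolate), and because the cells are handled independently and then reassembled; the weak-topology definition of $X^{n+1}$ as a quotient guarantees that the resulting sets descend to genuine open subsets of $X^{n+1}$. Once normality is established, Hausdorffness follows at once from $T_1$ by applying normality to disjoint singletons.
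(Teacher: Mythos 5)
The paper offers no proof of this statement---it is quoted directly from Hatcher's Appendix (Proposition A.3)---and your plan is essentially the argument given there: induction over skeleta, pulling the previously chosen separating sets back along the attaching maps, thickening by a collar, separating the preimages of $A$ and $B$ inside each disc by a metric/normality argument, and reassembling via the quotient description of $X^{n+1}$ and the weak topology. The disc-level lemma you defer (disjoint open subsets of $D^{n+1}_\alpha$ containing $\Phi_\alpha^{-1}(A)$ and $\Phi_\alpha^{-1}(B)$ whose traces on $S^n$ are exactly $\Phi_\alpha^{-1}(U_n)$ and $\Phi_\alpha^{-1}(V_n)$) is true and is proved with exactly the tools you name, provided the collar thickness is chosen cell by cell, e.g.\ smaller than the distances $d\bigl(\Phi_\alpha^{-1}(B),\Phi_\alpha^{-1}(U_n)\bigr)$ and $d\bigl(\Phi_\alpha^{-1}(A),\Phi_\alpha^{-1}(V_n)\bigr)$, which are positive by compactness; this is precisely Hatcher's ``$\epsilon_\alpha$'' device, so on this point you and the cited source coincide.

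The one step that is wrong as written is the $T_1$ argument. Being a continuous image of a compact Hausdorff disc does not make points of $\bar e^n_\alpha$ closed---quotients of $[0,1]$ can easily fail to be $T_1$---and closedness of $\{x\}\cap\bar e^n_\alpha$ in the subspace $\bar e^n_\alpha\subseteq X$ is part of what is being proved, so the inference is circular. The standard repair is another induction over skeleta: points of the discrete set $X^0$ are closed; if points of $X^{n-1}$ are closed, then for $x\in X^n$ each preimage $\Phi_\alpha^{-1}(x)$ is either a single interior point of $D^n_\alpha$ or $\phi_\alpha^{-1}(x)$ for the attaching map $\phi_\alpha\colon S^{n-1}\to X^{n-1}$, hence closed; therefore $\{x\}$ is closed in $X^n$ by the quotient description of the skeleton, and closed in $X$ by the weak topology. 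With that fix, applying normality to two disjoint singletons yields the Hausdorff property as you say.
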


We shall now introduce some constructions preserving the structure of CW-complexes that will be used directly or indirectly for the results on this thesis.

\subsection{Quotients}

Let $X$ be a CW-complex and $A\subseteq X$ a subcomplex of $X$. Then the space $X / A$ inherits a CW-structure from $X$ naturally, by keeping the cells from $X \smallsetminus A$ and identifying $A$ with an extra $0$-cell.

In a more general case that will be useful when considering mapping cones and joins,
for example, we have:

\begin{Prop}\cite[(F) in Section 5]{whitehead_combhomotopy1}\label{QovermapCW}
    If $X$ is a CW-complex, $L$ is a closure finite complex and $\pi : X \to L$
    is a surjective map such that:
    \begin{enumerate}[label= {\emph{\arabic*)}}, noitemsep]
        \item $L$ has the identification topology determined by $\pi$ and
        \item $L(f(\bar{e}))$ is finite for every cell $e\in X$,
    \end{enumerate}
    then $L$ is a CW-complex.
\end{Prop}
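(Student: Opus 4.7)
The plan is to exploit the weak topology of $X$ together with the identification topology on $L$ provided by $\pi$, with condition (2) serving as the bridge between them: it will let me verify closedness in $L$ by pulling back one cell of $X$ at a time. Since $L$ is already a closure finite cell complex — in particular Hausdorff by the definition of cell complex — the only axiom of Definition~\ref{originalCWc} left to establish is that $L$ carries the weak topology, i.e.\ that a subset $U \subseteq L$ is closed whenever $U \cap \overline{e'}$ is closed in $L$ for every cell $e'$ of $L$.

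So I would fix such a $U$ and aim to show $\pi^{-1}(U)$ is closed in $X$; by hypothesis (1) this is equivalent to $U$ being closed in $L$. Because $X$ is a CW-complex, this in turn reduces to verifying that $\pi^{-1}(U) \cap \bar{e}$ is closed in $\bar{e}$ for every cell $e$ of $X$, and I plan to establish this cell by cell using hypothesis (2). Given such an $e$, the subcomplex $L(\pi(\bar{e}))$ is finite, so write it as $\overline{e'_1} \cup \cdots \cup \overline{e'_k}$ for finitely many cells $e'_1, \ldots, e'_k$ of $L$. Since $\pi(\bar{e}) \subseteq L(\pi(\bar{e}))$, I can unravel
$$\pi^{-1}(U) \cap \bar{e} \;=\; \bar{e} \,\cap\, \pi^{-1}\!\Bigl(\bigcup_{i=1}^{k}\bigl(U \cap \overline{e'_i}\bigr)\Bigr).$$

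Now each $U \cap \overline{e'_i}$ is closed in $L$ by the assumption on $U$, and a finite union of closed sets is closed, so the set inside $\pi^{-1}(\cdot)$ above is closed in $L$. Continuity of $\pi$ (automatic from the identification topology (1)) makes its preimage closed in $X$, and intersecting with the closed set $\bar{e}$ yields a closed subset of $\bar{e}$, as required. Running this over all cells $e$ of $X$ forces $\pi^{-1}(U)$ to be closed in $X$, and then (1) gives that $U$ is closed in $L$.

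The anticipated difficulty is less a genuine obstacle than a point of care: the argument turns crucially on the finiteness in condition (2) so that $\bigcup_i (U \cap \overline{e'_i})$ is a \emph{finite} union of closed sets and hence closed in $L$ — an arbitrary union of closed sets need not be — and on the fact that each $\overline{e'_i}$ is closed in $L$, so that ``closed in $\overline{e'_i}$'' and ``closed in $L$'' match up when we intersect with $U$. Once these two points are secured, the result is a clean dovetailing of the identification topology on $L$ with the weak topology on $X$.
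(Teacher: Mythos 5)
Your argument is correct: reducing to the weak-topology axiom (since $L$ is already a Hausdorff, closure finite cell complex), pulling $U$ back through the identification map, and checking closedness cell by cell in $X$ via the finite subcomplex $L(\pi(\bar e))$ is exactly the right mechanism, and the displayed set identity together with the finiteness of the union is where condition (2) is genuinely used. The thesis itself gives no proof of this proposition --- it is quoted as statement (F) from Whitehead's paper --- and your argument is essentially Whitehead's original one, so there is nothing to reconcile beyond noting that the $f$ in the statement of condition (2) is a typographical slip for $\pi$, as you correctly assumed.
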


\subsection{Product}

\begin{Defn}
    Let $X$ and $Y$ be CW-complexes with cells $e^n_\alpha$ and $\tilde{e}^n_\beta$ and characteristic
    maps $\Phi^n_{\alpha} : D^n_{\alpha} \to X$ and $\tilde{\Phi}^n_{\beta} : D^n_{\beta} \to Y$, respectively.
    We say the \emph{product cellular structure} of $X\times Y$ is the one defined
    by ${(X\times Y)}^n = \{ e^k_\alpha \times \tilde{e}^l_\beta \st 0 \leq k + l \leq n \}$ and characteristic maps
    \begin{align*}
        \Psi^{k,l}_{\alpha,\beta} : D^{k+l}_{\alpha,\beta} & \longrightarrow  X\times Y \\
        p & \longmapsto \Psi^{k,l}_{\alpha,\beta} \left( p \right) = \left(\Phi^k_{\alpha}(p_X),\tilde{\Phi}^l_{\beta}(p_Y) \right),
    \end{align*}
    where $p_X = f_{k,l}(\pi^k_1(p))$ and $p_Y = f_{k,l}(\pi^l_2(p))$ for the
    natural homeomorphism $f_{k,l}: D^{k+l} \to D^k \times D^l$ and the projections
    $\pi^k_1, \pi^l_2$ from $D^k \times D^l$ to $D^k$ and $D^l$, respectively.
\end{Defn}

Note that, for any cells $e^k_\alpha \in X$ and $e^l_\beta\in Y$,
${e^k_\alpha \times \tilde{e}^l_\beta} \subset {X(e^k_\alpha) \times Y(\tilde{e}^l_\beta)}$ holds,
being the latter a subcomplex of $X\times Y$. Then, by definition of closure of a
subset in a CW-complex, ${(X\times Y)\left(e^k_\alpha \times \tilde{e}^l_\beta\right)} \subset {X(e^k_\alpha) \times Y(\tilde{e}^l_\beta)}$
and therefore since $X$ and $Y$ a closure finite, so is $X\times Y$.

However, $X$ and $Y$ having the weak topology with respect to their CW-complex structures
doesn't generally mean $X\times Y$ has the weak topology with respect to the product cellular
structure defined above.

According to Theorem A.6 (\cite[Appendix: Topology of Cell Complexes]{Hatcher_algebraictop}) and
to Propositions (D) and (H) (\cite[Section 5]{whitehead_combhomotopy1}) we have some
conditions on $X$ and $Y$ for their product (together with the product cellular structure) to be
a CW-complex:

\begin{Theorem}\label{productCW}
    Let $X$ and $Y$ be CW-complexes. Then $X\times Y$ with the product cellular
    structure defined above is a CW-complex if any of the following is true:
    \begin{enumerate}[label={\emph{(\roman*)}},noitemsep]
        \item either $X$ or $Y$ is locally compact;
        \item either $X$ of $Y$ is locally finite;
        \item both $X$ and $Y$ have finitely many cells.
    \end{enumerate}

\end{Theorem}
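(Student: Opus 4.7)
The plan is to exploit the observation made immediately before the statement: the product cellular structure on $X \times Y$ is automatically closure finite, since for any product cell $e^k_\alpha \times \tilde{e}^l_\beta$ one has $(X\times Y)\bigl(e^k_\alpha\times\tilde{e}^l_\beta\bigr) \subseteq X(e^k_\alpha)\times Y(\tilde{e}^l_\beta)$, and the right-hand side is a product of two finite subcomplexes, hence finite. So the entire content of the theorem is the verification that, under each of the three hypotheses, the product topology on $X\times Y$ agrees with the weak topology determined by the product cellular structure.

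I would first dispose of case (iii): if both $X$ and $Y$ have finitely many cells, they are compact (each being a finite union of continuous images of compact discs), so $X\times Y$ is compact Hausdorff and is covered by the finitely many compact closures $\overline{e^k_\alpha\times\tilde{e}^l_\beta}$. A standard argument then shows that a subset of $X\times Y$ is closed precisely when it meets each such closure in a closed set, which is exactly the weak-topology condition for the product cellular structure.

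For (i) and (ii) my strategy is to use that every CW-complex is the colimit, in the category of topological spaces, of its skeleta attached via the characteristic maps of its cells. Since local finiteness of a CW-complex implies local compactness of the underlying space (every point has a neighbourhood contained in a finite subcomplex, which is compact Hausdorff), (ii) reduces to (i). Assuming without loss of generality that $X$ is locally compact Hausdorff, the functor $X\times(-)$ preserves the quotient constructions by which $Y$ is assembled from its cells. Iterating this with the symmetric statement for $Y^m$ against the skeleta $X^n$, one shows inductively that each $X^n\times Y^m$ carries the product cellular structure as a CW-complex, and finally identifies $X\times Y$ as the colimit of the spaces $X^n\times Y^m$; the topology on this colimit is precisely the weak topology associated to the product cellular decomposition, so Proposition~\ref{QovermapCW} (applied step by step) delivers the CW-structure.

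The main obstacle will be cleanly justifying that $X\times(-)$ commutes with the relevant colimits. This is the classical subtlety behind the failure of arbitrary products of CW-complexes to be CW-complexes: without a local compactness assumption on one factor, the product topology can be strictly coarser than the weak topology associated to the product cellular structure, so the identity map from one to the other is continuous only in one direction. Local compactness of a factor is exactly the ingredient that rescues the argument via the exponential law for mapping spaces, ensuring that $X\times(-)$ preserves quotient maps, directed colimits along closed inclusions, and hence the inductive assembly by which CW-complexes are built from their skeleta.
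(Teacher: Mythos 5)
Your sketch is essentially correct, but be aware that the thesis itself gives no proof of Theorem~\ref{productCW}: it is quoted from \cite[Theorem A.6]{Hatcher_algebraictop} and from propositions (D) and (H) of \cite{whitehead_combhomotopy1}, so what you have written is a reconstruction of the standard argument behind those citations rather than an alternative to an internal proof. Within that reconstruction, two points deserve comment. First, your case analysis collapses: for CW-complexes local finiteness and local compactness are equivalent, and a complex with finitely many cells is compact, so (ii) and (iii) are both special cases of (i); your separate compactness argument for (iii) is correct but unnecessary. Second, the step ``iterating this with the symmetric statement for $Y^m$ against the skeleta $X^n$'' is the shakiest part: only one factor is assumed locally compact, so the literally symmetric statement is not available. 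It can be repaired, since skeleta and finite subcomplexes of the locally compact factor are again locally compact and closed cells are compact, but the cleaner route --- and the one in Hatcher --- avoids the double skeletal induction altogether. Writing $p\colon \bigsqcup_\alpha D^{k}_\alpha \to X$ and $q\colon \bigsqcup_\beta D^{l}_\beta \to Y$ for the maps assembled from the characteristic maps, the weak topology attached to the product cellular structure is exactly the quotient topology induced by $p\times q$; factor $p\times q=(p\times \operatorname{id}_Y)\circ(\operatorname{id}\times q)$, note that $\operatorname{id}\times q$ is a quotient map because a disjoint union of discs is locally compact, and that $p\times\operatorname{id}_Y$ is a quotient map because $Y$ (say) is locally compact --- this is precisely the exponential-law fact you invoke. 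Hence $p\times q$ is a quotient map, the product topology coincides with the weak topology, and together with the automatic closure finiteness observed before the theorem this finishes the proof. So your proposal identifies the right key ingredient (preservation of quotient maps by $-\times Z$ for $Z$ locally compact); the gain of the quotient-map factorization over your colimit induction is only economy and the elimination of the imprecise symmetric step.
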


\subsection{Join}

\begin{Defn} The \emph{join} of two non-empty topological spaces $X$ and $Y$, denoted by $X * Y$, is given by the quotient
$$ X * Y = X \times Y \times [0,1] / \sim , $$
where $\sim$ is the equivalence relation generated by $(x,y_1,0) \sim (x,y_2,0)$ for all $x\in X$ and $y_1,y_2 \in Y$ and $(x_1,y,1) \sim (x_2,y,1) $ for all $x_1,x_2 \in X$ and $y \in Y$.
\end{Defn}

\begin{Corollary}\label{join_is_CW}
    If $X$ and $Y$ are CW-complexes such that any of the conditions in Theorem~\ref{productCW}
    is true, then $X * Y$ admits a CW-structure inherited from
    the product cellular structure of $X \times Y \times [0,1]$ and the projections
    $\pi_X : X \times Y \times \{0\} \to X$ and $\pi_Y : X \times Y \times \{1\} \to Y$.
\end{Corollary}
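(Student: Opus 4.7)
The plan is to realise $X * Y$ as a quotient of the CW-complex $X \times Y \times [0,1]$, equip it with a natural candidate cell structure, and invoke Proposition~\ref{QovermapCW}. As a preliminary I would establish that $X \times Y \times [0,1]$ is a CW-complex: by hypothesis $X$ and $Y$ meet one of the three conditions of Theorem~\ref{productCW}, so $X \times Y$ is a CW-complex with the product cellular structure; giving $[0,1]$ the obvious finite CW-structure (two $0$-cells and one $1$-cell) and noting that $[0,1]$ is locally compact, a second application of Theorem~\ref{productCW} to the pair $(X \times Y, [0,1])$ gives a CW-structure on $X \times Y \times [0,1]$. The quotient map $\pi : X \times Y \times [0,1] \to L := X * Y$ is then surjective and $L$ carries the identification topology by definition.

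Next I would propose the following cells for $L$: for each cell $e^k_\alpha$ of $X$ the image $\pi(e^k_\alpha \times Y \times \{0\}) \subseteq L$, for each cell $\tilde{e}^l_\beta$ of $Y$ the analogous image in $L$, and for each pair $(e^k_\alpha, \tilde{e}^l_\beta)$ one open $(k+l+1)$-cell $\pi(e^k_\alpha \times \tilde{e}^l_\beta \times (0,1))$. After identifying $D^{k+l+1}$ with $D^k \times D^l \times [0,1]$ in the standard way, the characteristic map of a join-type cell is $\pi \circ (\Phi^k_\alpha \times \tilde{\Phi}^l_\beta \times \mathrm{id}_{[0,1]})$, and its restriction to the interior of $D^{k+l+1}$ is a homeomorphism onto its image because the equivalence relation $\sim$ acts non-trivially only at $t = 0$ and $t = 1$. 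The characteristic maps of the cells coming from $X$ and $Y$ are inherited from those of $X$ and $Y$. Closure finiteness of $L$ then follows from that of $X$, $Y$ and $X \times Y \times [0,1]$: the smallest subcomplex of $L$ containing a join cell is covered by the finitely many cells in $X(e^k_\alpha)$, $Y(\tilde{e}^l_\beta)$, together with the join cells indexed by these finite families.

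To conclude I would verify the remaining hypothesis of Proposition~\ref{QovermapCW}: for any cell $e = e^k_\alpha \times \tilde{e}^l_\beta \times e^1_{[0,1]}$ of $X \times Y \times [0,1]$, the image $\pi(\bar{e})$ sits inside the finite subcomplex of $L$ spanned by the cells associated to $X(e^k_\alpha)$, $Y(\tilde{e}^l_\beta)$ and their corresponding join cells, so $L(\pi(\bar{e}))$ is finite. The proposition then yields that $L = X * Y$ is a CW-complex with the structure described. The main obstacle will be the careful bookkeeping showing that the candidate open cells are pairwise disjoint and that the proposed characteristic maps send the boundary of $D^{k+l+1}$ into the lower skeleton; both points reduce to tracking the action of $\sim$ on the two caps $D^k \times D^l \times \{0,1\}$ of $D^{k+l+1}$ under the identification with $D^k \times D^l \times [0,1]$.
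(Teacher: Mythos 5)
Your proposal is correct and follows essentially the same route as the paper: both obtain a CW-structure on $X \times Y \times [0,1]$ from Theorem~\ref{productCW} (using that $[0,1]$ is locally compact) and then realise $X * Y$ as a quotient handled by Proposition~\ref{QovermapCW}. The only organisational difference is that the paper factors the identification into two successive applications of that proposition (first collapsing the $t=0$ end onto $X$, then the $t=1$ end onto $Y$), whereas you apply it once to the full quotient with an explicit list of cells and characteristic maps; both versions are sound, and yours is in fact more explicit about the resulting cell structure.
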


\begin{proof}
    By Theorem~\ref{productCW} and since $[0,1]$ locally compact, both $X \times Y$
    and $X \times Y \times [0,1]$ are CW-complexes with respect to the corresponding
    product cellular structure.

    Let $C_X = X \times Y \times [0,1] / \sim_0$, where $\sim_0$ is the equivalence
    relation generated by $(x,y_1,0) \sim_0 (x,y_2,0)$ for all $x\in X$ and $y_1,y_2 \in Y$.
    Let ${\tilde{\pi}_X : X \times Y \times [0,1] \to C_X}$ be the corresponding
    quotient map. Then, applying Proposition~\ref{QovermapCW} to $\tilde{\pi}_X$,
    we have that $C_X$ is a CW-complex.

    We can express $X * Y$ as the quotient $C_X / \sim_1$ where $\sim_1$ is the
    equivalence relation generated by $(x_1,y,1) \sim_1 (x_2,y,1) $ for all $x_1,x_2 \in X$
    and $y \in Y$. Let ${\tilde{\pi}_Y : C_X \to X * Y}$ be the corresponding quotient
    map. Then, applying Proposition~\ref{QovermapCW} to $\tilde{\pi}_Y$,
    we have that $X * Y$ is a CW-complex,, as we wanted to see.
\end{proof}

\subsection{Attaching spaces along maps}

Some important examples of attaching spaces that we will use throughout this thesis are the following:

\begin{Defn}\label{defconcyl}
    Given $f : X \to Y$ a continous map between topological spaces, the \emph{mapping cylinder of }$f$
    is $$\mcyl{f} = \left(X\times [0,1]\right)\cup_g Y,$$
    where $g : X\times \{1\} \subset X\times [0,1]  \to Y$ is defined by $g(x,1) = f(x)$.

    The \emph{mapping cone of } $f$ is
    $$\mcone{f} = \mcyl{f} / \sim,$$
    where $\sim$ is the equivalence relation generated by $(x,0) \sim (x^\prime, 0)$ for all
    $x, x^\prime \in X$.
\end{Defn}

\begin{Defn}\label{defdoublecyl}
    Given $f : X \to Y$ and $g : X \to Z$ continuous maps between topological spaces,
    the \emph{double mapping cylinder of } $Y \xleftarrow{f} X \xrightarrow{g} Z$ is
    $$\dmcyl{f}{g} = \left(X\times [0,1]\right)\cup_k \left(Y\sqcup Z\right),$$
    where $k : X\times \{0,1\} \subset X\times [0,1]  \to \left(Y\sqcup Z\right)$ is
    defined by $k(x,0) = f(x) \in Y$ and $k(x,1) = g(x) \in Z$.
\end{Defn}

\begin{Defn}
    Let $X$ and $Y$ be CW-complexes. Then a continuous map $f : X \to Y$
    is called \emph{cellular} if $f(X^n) \subseteq Y^n$ for all $n$.
\end{Defn}

A cellular map sends $0$-cells to $0$-cells but the same is not necessarily true
for $n$-cells with $n > 0$. For example, take $f : \R \to \R$ given by $f(x) = x^2$,
where $\R$ has the CW-complex structure given in Example~\ref{RasCWcompl}. $f$ is
a cellular map since $X^0 = \Z$, $f(\Z) \subset \Z$ and $f(\R) \subseteq    \R$.
However, if we take $e^1 = (1,2)$, $f(e^1) = (1,4)$, and $(1,4)$ is not a $1$-cell
but the union of two $1$-cells and one $0$-cell.

Note that the maps $g$ and $h$ in Definition~\ref{defconcyl} are cellular if $f$
is, given that $\{0\}$ and $\{1\}$ are the $0$-cells of $[0,1]$. Analogously,
the map $k$ in Definition~\ref{defdoublecyl} is cellular if $f$ and $g$ are.

The following theorem gives us some conditions under which an adjunction space
is a CW-complex:

\begin{Theorem}\cite[Theorem 2.3.1.]{fritsch_cellularstr}\cite[Lemma 3.10]{lueck_alg_top}\label{pushoutisCW}
    Let $X$ and $Y$ be CW-complexes and $A \subseteq X$ a subcomplex of $X$. Let
    $f : A \to Y$ be a cellular map. Then, if we take $Z$ to be the topological
    push-out of the diagram formed by $f$ and $\iota : A \hookrightarrow X$, $Z$
    is a CW-complex.

    Moreover, if $\bar{\iota}$ and $\bar{f}$ are the maps that complete the push-out diagram
    and $c(X), c(Y), c(A), c(Z)$ are the sets of open cells of $X, Y, A$ and $Z$ respectively,
    the $n$-skeleton $Z^n = \bar{f}(A^n)\cup \bar{\iota}(Y^n)$ and $c(Z) = c(Y) \sqcup (c(X) \bs c(A))$.
\end{Theorem}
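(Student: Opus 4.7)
The plan is to equip $Z$ with an explicit cell decomposition coming from those on $X$ and $Y$, and then verify either Definition~\ref{modernCWc} or Whitehead's Definition~\ref{originalCWc}. Write $\bar{f} : X \to Z$ and $\bar{\iota} : Y \to Z$ for the canonical maps completing the pushout; the pushout relation identifies each $a \in A$ with $f(a) \in Y$, so set-theoretically $Z = \bar{\iota}(Y) \sqcup \bar{f}(X \bs A)$. I would declare the cells of $Z$ to be the cells of $Y$ (transported by $\bar{\iota}$, with characteristic maps $\bar{\iota}\circ\tilde{\Phi}^n_\beta$) together with the cells of $X$ not contained in $A$ (transported by $\bar{f}$, with characteristic maps $\bar{f}\circ\Phi^n_\alpha$). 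This immediately yields $c(Z) = c(Y) \sqcup (c(X) \bs c(A))$.

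The skeletal formula $Z^n = \bar{f}(X^n) \cup \bar{\iota}(Y^n)$ then follows by induction on $n$. The base case is clear. For the inductive step, the attaching map $\phi_\alpha$ of an $n$-cell $e^n_\alpha \in c(X) \bs c(A)$ sends $\partial D^n_\alpha$ into $X^{n-1}$; its image decomposes into the part lying in $A$, which then lies in $A^{n-1}$ because $A$ is a subcomplex, and the rest, which lies in $X^{n-1} \bs A^{n-1}$. By the cellularity of $f$, the first piece is carried by $\bar{f}$ into $\bar{\iota}(f(A^{n-1})) \subseteq \bar{\iota}(Y^{n-1})$ and the second piece into $\bar{f}(X^{n-1} \bs A^{n-1})$, so the attaching map descends to one into $Z^{n-1}$. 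Closure-finiteness is then immediate: the closure of any cell of $Z$ is the continuous image of the closure of a cell in $X$ or $Y$, which meets only finitely many cells there and hence only finitely many cells of $Z$.

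The main obstacle is verifying that the pushout topology on $Z$ coincides with the weak topology of this cell decomposition. By the universal property of the pushout, $U \subseteq Z$ is closed if and only if both $\bar{\iota}^{-1}(U) \subseteq Y$ and $\bar{f}^{-1}(U) \subseteq X$ are closed; since $X$ and $Y$ have the weak topology, this is equivalent to $U$ meeting in a closed set the closure of every cell of $X$ and every cell of $Y$. Using that $A$ is closed in $X$ and that $f$ is cellular, one checks that the condition coming from cells contained in $A$ is already implied by the condition on cells of $Y$ via $f$, leaving exactly the weak-topology condition of Definition~\ref{originalCWc} for the cells of $Z$. Once Hausdorffness of $Z$ is observed (it is inherited since $A$ is closed in $X$ and both $X$, $Y$ are Hausdorff), Definition~\ref{originalCWc} yields that $Z$ is a CW-complex. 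An alternative route is to identify the closure-finite cell decomposition first and then apply Proposition~\ref{QovermapCW} to the quotient map from $X \sqcup Y$ onto $Z$.
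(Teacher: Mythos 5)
The thesis does not actually prove Theorem~\ref{pushoutisCW}: it is imported verbatim from Fritsch--Piccinini and L\"uck, so there is no internal argument to measure you against. What you wrote is essentially the standard proof from those sources: take $c(Z)=c(Y)\sqcup(c(X)\bs c(A))$ with characteristic maps $\bar\iota\circ\tilde\Phi^n_\beta$ and $\bar f\circ\Phi^n_\alpha$, use cellularity of $f$ together with the subcomplex property of $A$ to see that attaching maps of cells of $X\bs A$ descend to $Z^{n-1}$, and then check closure finiteness and the weak topology through the quotient description of the push-out. The outline is sound, and your skeleton formula $Z^n=\bar f(X^n)\cup\bar\iota(Y^n)$ is the correct one (the $\bar f(A^n)$ appearing in the statement is evidently a slip, since $\bar f(A^n)\subseteq\bar\iota(Y^n)$ by cellularity). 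Your treatment of the weak topology is the right mechanism: closedness in $Z$ is tested on $\bar\iota^{-1}(U)$ and $\bar f^{-1}(U)$, the $Y$-cells and the $X$-cells not in $A$ give exactly the cells of $Z$, and the condition on cells inside $A$ follows from the $Y$-condition because $\bar f|_A=\bar\iota\circ f$ and $A$ is closed in $X$.

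Two points need more care than you give them. First, Hausdorffness of $Z$ is not simply ``inherited'' from $X$ and $Y$ being Hausdorff with $A$ closed; adjunction spaces of Hausdorff spaces along closed subspaces need not be Hausdorff in general. What rescues the argument is that CW-complexes are normal (the thesis records this after Definition~\ref{originalCWc}), and an adjunction space of normal spaces along a closed subspace is again normal, hence Hausdorff; you need to invoke that, or verify separation directly as Fritsch--Piccinini do, before you may conclude that $\bar f(\bar e)$ is closed and that $\bar f|_{\bar e}$ is a closed (hence quotient) map, both of which your weak-topology step silently uses. Second, in the closure-finiteness step the finitely many cells of $X$ met by $\bar e$ for $e\in c(X)\bs c(A)$ may include cells of $A$, and these are not cells of $Z$; you still need the observation that the $f$-image of the closure of each such cell is compact and therefore contained in a finite subcomplex of $Y$, which then supplies the finitely many $Z$-cells. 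With those two repairs the proof is complete; your alternative closing suggestion, feeding the quotient map $X\sqcup Y\to Z$ into Proposition~\ref{QovermapCW}, is in fact closer in spirit to how the thesis itself argues elsewhere (compare Corollary~\ref{join_is_CW}).
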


\begin{Corollary}\label{cone_mcyl_dcyl_CW}
    Let $X$, $Y$ and $Z$ be CW-complexes and let $f : X \to Y$ and $g : X \to Z$ be
    cellular maps. Then the mapping cone $\mcone{f}$, the mapping cylinder $\mcyl{f}$ and the double
    mapping cylinder $\dmcyl{f}{g}$ are CW-complexes with $X$ and $Y$ (and $Z$ in the case of $\dmcyl{f}{g}$)
    as subcomplexes.
\end{Corollary}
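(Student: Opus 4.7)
The plan is to recognise each of the three constructions as a topological push-out of cellular maps between CW-complexes and then invoke Theorem~\ref{pushoutisCW}. The common preliminary step is that $X\times[0,1]$ is a CW-complex: since $[0,1]$ carries the finite CW-structure of Example~\ref{RasCWcompl} restricted to the unit interval (two $0$-cells and one $1$-cell) and is locally compact, Theorem~\ref{productCW} endows $X\times[0,1]$ with the product cellular structure. In this structure both $X\times\{0\}$ and $X\times\{1\}$, and hence their disjoint union $X\times\{0,1\}$, are subcomplexes canonically homeomorphic to $X$ (resp.\ to $X\sqcup X$).

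For the mapping cylinder, I would view $\mcyl{f}$ as the push-out of
$X\times[0,1]\hookleftarrow X\times\{1\}\xrightarrow{\,g\,}Y$,
where $g(x,1)=f(x)$. The inclusion $X\times\{1\}\hookrightarrow X\times[0,1]$ is the inclusion of a subcomplex and $g$ is cellular because $f$ is. Theorem~\ref{pushoutisCW} then yields that $\mcyl{f}$ is a CW-complex, and the description $c(Z)=c(Y)\sqcup(c(X\times[0,1])\setminus c(X\times\{1\}))$ given in that theorem shows that $Y$ sits as a subcomplex and that $X\cong X\times\{0\}$, being a subcomplex of $X\times[0,1]$ disjoint from $X\times\{1\}$, also survives as a subcomplex of $\mcyl{f}$.

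The double mapping cylinder is handled by an identical argument applied to the push-out
$X\times[0,1]\hookleftarrow X\times\{0,1\}\xrightarrow{\,k\,}(Y\sqcup Z)$,
with $k(x,0)=f(x)$ and $k(x,1)=g(x)$. The subcomplex $X\times\{0,1\}$ and the cellularity of $k$ (inherited from that of $f$ and $g$) let Theorem~\ref{pushoutisCW} apply, producing a CW-structure on $\dmcyl{f}{g}$ in which $Y$ and $Z$ are subcomplexes. For the mapping cone I would exploit the fact that $\mcone{f}$ is the further push-out of $\{\mathrm{pt}\}\leftarrow X\times\{0\}\hookrightarrow\mcyl{f}$: the map $X\times\{0\}\to\{\mathrm{pt}\}$ is trivially cellular, $X\times\{0\}$ is a subcomplex of the mapping cylinder already produced, and $\{\mathrm{pt}\}$ is a CW-complex, so Theorem~\ref{pushoutisCW} gives the conclusion, with $Y$ retained as a subcomplex.

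The only real checkpoint in this argument is verifying that the various attaching maps are genuinely cellular and that the relevant subspaces are genuinely subcomplexes; both reduce to the product cellular structure on $X\times[0,1]$ and to the hypothesis that $f$ and $g$ are cellular, so no essential obstacle arises beyond bookkeeping. The potentially delicate point — ensuring that $X\times[0,1]$ really is a CW-complex rather than merely a cell complex — is exactly what the local compactness clause of Theorem~\ref{productCW} is designed to handle.
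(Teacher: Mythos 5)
Your proposal is correct and takes essentially the same route as the paper, whose proof is just the citation of Proposition~\ref{QovermapCW} and Theorem~\ref{pushoutisCW}: you realise $\mcyl{f}$ and $\dmcyl{f}{g}$ as push-outs along cellular maps out of the subcomplexes $X\times\{1\}$ and $X\times\{0,1\}$ of the product CW-complex $X\times[0,1]$ and invoke Theorem~\ref{pushoutisCW}. The only cosmetic difference is that you obtain the mapping cone by a second application of Theorem~\ref{pushoutisCW} (push-out of the subcomplex $X\times\{0\}$ to a point) instead of quoting the quotient result Proposition~\ref{QovermapCW}, which amounts to the same thing.
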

\begin{proof}
    Consequence of Proposition~\ref{QovermapCW} and Theorem~\ref{pushoutisCW}.
\end{proof}

The condition of $f$ being a cellular map is not as big a restriction as it may seem, given
the following result, that can be found for example in \cite[Theorem 8.5.4.]{tDieck_algebraic_top},
\cite[Theorem 4.8]{Hatcher_algebraictop} and \cite[Theorem 2.4.11]{fritsch_cellularstr}:

\begin{Theorem}[Cellular Approximation Theorem]\label{CellApprox}
    Every continuous map $f : X \to Y$ between CW-complexes is homotopic to a cellular
    map ${g : X \to Y}$. If $f$ is already cellular on a subcomplex $A \subset X$, the
    homotopy may be taken to be stationary on $A$.
\end{Theorem}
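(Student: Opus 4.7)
The plan is to prove the theorem by induction on the skeleta of $X$, constructing a cellular map $g$ together with a homotopy $H : X \times [0,1] \to Y$ from $f$ to $g$ one skeleton at a time, and maintaining throughout the property that $H$ is stationary on $A$. The entire argument rests on a ``pushing lemma'' of the following form: if $\varphi : D^n \to Y$ is continuous and $\varphi(\partial D^n) \subseteq Y^{n-1}$, then $\varphi$ is homotopic, relative to $\partial D^n$, to a map whose image lies in $Y^n$.

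To prove this lemma, I would first use that $\varphi(D^n)$ is compact and hence meets only finitely many open cells of $Y$ (a consequence of closure finiteness together with the weak topology, already recorded earlier in the chapter). This reduces the problem to the case where $\varphi$ lands in a finite subcomplex, and the argument proceeds by downward induction on the maximal cell dimension $m$ hit by $\varphi$. The key geometric input is that when $m > n$, there exists a point $p$ lying in the interior of some open $m$-cell $e^m$ of $Y$ which is not in the image of $\varphi$. I would establish this via a small approximation of $\varphi|_{\varphi^{-1}(e^m)}$ by a piecewise linear map into $\mathring{D}^m \cong e^m$ (possible since this local piece is a map between open subsets of Euclidean spaces), whose image has measure zero when $m > n$. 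Once such a $p$ is found, radial projection from $p$ provides a deformation retraction of $e^m \setminus \{p\}$ onto $Y^{m-1} \cap \bar{e}^m$, which gives a homotopy of $\varphi$ into $Y^{(m)} \setminus e^m$. Iterating finitely many times brings $\varphi$ into $Y^n$.

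With the pushing lemma established, the inductive construction proceeds as follows. Suppose I have built a homotopy $H^{n-1}$ on $(X^{n-1} \cup A) \times [0,1]$ starting at $f$, stationary on $A$, and ending in a map whose restriction to $X^{n-1}$ is cellular. For each $n$-cell $e^n_\alpha$ not contained in $A$, apply the pushing lemma to the composite of $\Phi^n_\alpha$ with the current approximation; this produces a relative homotopy pushing the image of $e^n_\alpha$ into $Y^n$. To glue these local homotopies into a global homotopy on $(X^n \cup A) \times [0,1]$ that extends $H^{n-1}$, I would invoke the homotopy extension property of the CW-pair $(X^n \cup A,\, X^{n-1} \cup A)$, which is a standard property of CW-pairs, reducing in this situation to extending a partial map defined on ${(X^n \cup A) \times \{0\}} \cup {(X^{n-1} \cup A) \times [0,1]}$ over $(X^n \cup A) \times [0,1]$ using the attaching data of the $n$-cells.

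The main obstacle, where care is most needed, is the passage to possibly infinite-dimensional $X$. I would concatenate the successive homotopies $H^n$ so that $H^n$ occupies the time interval $[1 - 2^{-n},\, 1 - 2^{-n-1}]$ and then define the final homotopy $H$ by piecing them together with $H(x,1) = g(x)$. Continuity at interior times is immediate; continuity at $t=1$ follows from the fact that each point $x \in X$ lies in a finite subcomplex and hence in some $X^N$, so $H$ is eventually constant in $t$ on a neighbourhood of $x$, once $n > N$. Continuity as a map on $X \times [0,1]$ is then a consequence of the weak topology on $X$: it suffices to check continuity on each $\bar{e}^n_\alpha \times [0,1]$, where only finitely many of the constituent homotopies contribute nontrivially. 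The relative statement is preserved throughout since each $H^n$ is built to be stationary on $A$, and the homotopy extension property is applied relative to $A$ at every step.
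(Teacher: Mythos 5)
The paper does not prove Theorem~\ref{CellApprox} itself but quotes it from \cite{tDieck_algebraic_top}, \cite{Hatcher_algebraictop} and \cite{fritsch_cellularstr}, and your sketch is essentially the standard argument given there (e.g.\ Hatcher's Theorem 4.8): a compression lemma obtained by missing a point of each top-dimensional cell via a piecewise-linear/general-position approximation and radial projection, skeletal induction glued with the homotopy extension property rel $A$, and the infinite-dimensional case handled by concatenating the stagewise homotopies on shrinking time intervals and invoking the weak topology. So your proposal is correct and follows the same route as the cited proof; the only step deserving extra care in a full write-up is that the approximation must agree with the original map near the frontier of the preimage of the cell so that the straight-line homotopy stays within the cell.
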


In \cite[Theorem 2.1]{luecktransformation}, W. L\"uck provides a version of the theorem for $G$-CW-complexes (which we will talk about in the next section).

\section{$G$-CW-Complexes}

\begin{Defn}\label{definition_GCW} Let $G$ be a discrete group and $X$ a topological space such that $G$ acts continuously on $X$. A $G$-CW-complex structure on $X$ consists of
\begin{enumerate}[label={(\roman*)},noitemsep]
\item a filtration $\emptyset = X_{-1} \subseteq X^0 \subseteq X^1 \subseteq \ldots \subseteq X^n \subseteq \ldots \subseteq X$ such that $\bigcup_{n \geq 0} X^n = X$;
\item a collection $\{ e^n_\alpha \st \alpha \in \mathcal{A}^n\}$ of $G$-subspaces $e^n_\alpha \subseteq X^n$ for each $n\in\N$
\end{enumerate}
such that

\begin{enumerate}[label={(\alph*)},noitemsep]
    \item $X$ has the weak topology with respect to the filtration $\{X^n\}_{n\in\N}$
    \item for each $n\geq 0$ $X^n$ can be obtained by attaching the $G$-subspaces $e^n_\alpha$ to $X^{n-1}$ via continuous $G$-maps $q^n_\alpha : G/H_\alpha \times S^{n-1} \to X^{n-1}$, where $H_\alpha$ are subgroups of $G$. That is, $X^n$ is the push-out of the following diagram:

    $$\begin{tikzcd}
    \underset{\alpha\in \mathcal{A}^n}{\bigsqcup} G/H_\alpha \times S^{n-1}
            \arrow[rr, "\underset{\alpha\in \mathcal{A}^n}{\bigsqcup} q^n_\alpha"]
            \arrow[hookrightarrow, dd, "\iota"]
    & & X^{n-1}
            \arrow[dotted, dd] \\
            \\
    \underset{\alpha\in \mathcal{A}^n}{\bigsqcup} G/H_\alpha \times D^n
            \arrow[dotted, rr, "\underset{\alpha\in \mathcal{A}^n}{\bigsqcup} Q^n_\alpha"]
    & & X^n
    \end{tikzcd}
    $$

\end{enumerate}
\end{Defn}

In this case the $n$-skeleton is $X^n$ again and $e^n_\alpha$ are the (open) equivariant $n$-cells and $\bar{e}^n_\alpha$ are their topological closure (we may refer to them as closed equivariant $n$-cells).

We may assume that $X$ is a Hausdorff space. In fact, in some of their first appearances, $G$-CW-complexes were defined to be Hausdorff spaces (\cite{matumoto_GCW}, \cite{illman_GCW}), as the original CW-complexes were all normal (and hence Hausdorff). Also, in \cite{tDieck_transformation} it is shown that if $X^n$ is obtained from $X^{n-1}$ as in the push-out above and $X^{n-1}$ is Hausdorff, then $X^n$ is also Hausdorff.

$G$-CW-complexes can be defined more generally for topological groups (see \cite{luecksurvey}, for example).

As for any $G$-space, the \emph{isotropy groups} of a $G$-CW-complex $X$ play an essential role when studying the relation between $G$ and $X$.
If we take the isotropy group of $x\in X$, $G_x = \{g \in G \st gx = x\}$, we can see that it is nothing than the preimage of $\{x\}$ by the action of $G$ on $X$, which is a continuous map. In the case that we focus our interest, groups are discrete, and so equipped with the discrete topology. In that case, of course, isotropy groups are open and closed.

\begin{Prop}\cite[Remark 1.3]{luecksurvey}\label{GCW-open-isotropy}
Let $X$ be a $G$-space with $G$-invariant filtration
$$\emptyset = X_{-1} \subseteq X^0 \subseteq X^1 \subseteq \ldots \subseteq X^n \subseteq \ldots \subseteq X = \bigcup_{n\geq 0} X^n.$$
Then the following assertions are equivalent:
\begin{enumerate}[label= {\emph{\roman*)}}, noitemsep]
\item Every isotropy group of $X$ is open and the filtration above yields a $G$-CW-structure on $X$.
\item The filtration above yields a CW-structure on $X$ such that for each open cell $e\subseteq X$ and each $g\in G$, if $ge \cap e \neq \emptyset$ then $g$ fixes $e$ point-wise.
\end{enumerate}
\end{Prop}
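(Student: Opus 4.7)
The plan is to establish each direction separately, using the decomposition of each equivariant cell $G/H \times D^n$ into ordinary cells as the bridge between the two structures.

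For (i) $\Rightarrow$ (ii), I would unpack the equivariant pushouts of Definition~\ref{definition_GCW} into ordinary ones. Since each isotropy group $H_\alpha$ is open, the coset space $G/H_\alpha$ is discrete, so $G/H_\alpha \times D^n$ decomposes as $\bigsqcup_{gH_\alpha} \{gH_\alpha\} \times D^n$. Declaring the images of these discs to be the ordinary $n$-cells yields a CW-structure on $X$ with skeleta $\{X^n\}$. For the cell condition, since $G$ acts only on the first factor, any $g' \in G$ that sends some point of $\{gH_\alpha\} \times \mathring{D}^n$ back into this same cell must satisfy $g'gH_\alpha = gH_\alpha$, so $g' \in gH_\alpha g^{-1}$ and therefore fixes the entire cell point-wise.

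For (ii) $\Rightarrow$ (i), I would work skeleton by skeleton. The $G$-action preserves the filtration and acts by homeomorphisms, so it permutes the connected components of $X^n \setminus X^{n-1}$, which are precisely the open $n$-cells. Picking a representative $e_\alpha^n$ in each $G$-orbit of $n$-cells, define $H_\alpha$ to be its set-stabiliser; by the hypothesis of (ii), $H_\alpha$ coincides with the point-wise stabiliser of $e_\alpha^n$, and by continuity of the action together with the density of $e_\alpha^n$ in $\bar{e}_\alpha^n$, it also fixes $\bar{e}_\alpha^n$ point-wise. This lets me promote the characteristic map $\Phi_\alpha^n : D^n \to \bar{e}_\alpha^n$ to a well-defined $G$-equivariant map
\[
Q_\alpha^n : G/H_\alpha \times D^n \longrightarrow X^n, \qquad (gH_\alpha,\, p) \longmapsto g\,\Phi_\alpha^n(p),
\]
whose restriction to $G/H_\alpha \times S^{n-1}$ plays the role of the equivariant attaching map $q_\alpha^n$. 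Assembling the $Q_\alpha^n$ over orbit representatives produces the equivariant pushout realising $X^n$ from $X^{n-1}$, and the isotropy group of any interior point of $g\bar{e}_\alpha^n$ is the conjugate $gH_\alpha g^{-1}$, which is open since $G$ is discrete.

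The main technical point is verifying that the $Q_\alpha^n$ really do assemble into a pushout of the precise shape demanded by Definition~\ref{definition_GCW}. Concretely, every point of $X^n \setminus X^{n-1}$ must appear exactly once in $\bigsqcup_\alpha G/H_\alpha \times \mathring{D}^n$ under $\bigsqcup Q_\alpha^n$, which follows from the orbit-representative choice combined with the point-wise-fixing hypothesis ruling out collapses within a single orbit. The weak-topology condition transfers without extra work since both filtrations share the skeleta $\{X^n\}$. The most delicate step is upgrading the point-wise-fixing property from $e_\alpha^n$ to its closure $\bar{e}_\alpha^n$, for which continuity of the action and density of the open cell in its closure are essential.
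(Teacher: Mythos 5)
The paper offers no proof of this statement --- it is quoted directly from \cite[Remark 1.3]{luecksurvey} --- so your argument can only be compared with the standard one, and in outline it is exactly that: for (i)$\Rightarrow$(ii) unpack each equivariant cell of Definition~\ref{definition_GCW} using that $H_\alpha$ open makes $G/H_\alpha$ discrete, so $G/H_\alpha\times D^n$ is a disjoint union of ordinary discs; for (ii)$\Rightarrow$(i) reassemble the cells of one $G$-orbit into an equivariant cell, using the pointwise-fixing hypothesis to identify the setwise and pointwise stabilisers $H_\alpha$ and to make $Q^n_\alpha(gH_\alpha,p)=g\,\Phi^n_\alpha(p)$ well defined. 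The first direction is complete (the key point, that $g'e\cap e\neq\emptyset$ forces $g'$ into a conjugate of $H_\alpha$ because distinct cosets parametrise disjoint cells, is there), and your bookkeeping for the pushout in the second direction --- bijectivity on $\bigsqcup_\alpha G/H_\alpha\times \mathring{D}^n$ and the transfer of the weak topology --- is the right argument.

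The one genuine weak point is the last step of (ii)$\Rightarrow$(i), where you dispose of the openness of the isotropy groups with ``open since $G$ is discrete''. Discreteness is not a hypothesis of the proposition, and it is exactly the case in which the openness clause of (i) says nothing; the statement is taken from L{\"u}ck's survey, where $G$ is a topological group, which is why that clause is present and why the Illman example discussed just after the proposition is relevant. The openness must be deduced from (ii), and it can be, cheaply: if $x$ lies in the open $n$-cell $e$, the pointwise-fixing property gives $G_x=\{g\in G \st gx\in e\}$, and since the filtration is $G$-invariant the orbit map $g\mapsto gx$ takes values in $X^n$, where $e$ is open; hence $G_x$ is the preimage of an open set under a continuous map and is open. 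This repair is not cosmetic for the rest of your argument either: in the non-discrete case you need $H_\alpha$ open before you may say that $G/H_\alpha$ is discrete and that your assembled map $Q^n_\alpha$ is continuous, so openness should be established first, not invoked at the end. With that few-line change your proof works in the stated generality; as written it only proves the discrete case, i.e.\ essentially Corollary~\ref{G-CW-comp-discrete}.
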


The case of the proposition above holds for discrete groups, but it is not the general case if one takes in consideration any topological group. In Section 2 of \cite{illman_rest_transf} there is an example of non-discrete equivariant (for the circle group $S^1 = \{ z \in \C \st |z| = 1\}$) CW-complex $X$ such that $X$ does not admit a CW structure compatible with the equivariant CW structure given.

\begin{Defn}
Let $G$ be a topological group acting on a CW-complex $X$. We say that the $G$-action on $X$ is \emph{cellular} (or that $G$ acts \emph{cellularly} on $X$) iff
\begin{enumerate}[label={(\roman*)},noitemsep]
\item if $e$ is an $n$-cell of $X$ and $g\in G$, then $g e$ is also an $n$-cell of $X$ and
\item if $e$ is a cell of $X$ and $g\in G$ is such that $ge \cap e \neq \emptyset$ then $gp = p$ for all $p \in e$.
\end{enumerate}
\end{Defn}

As a particular case of Proposition \ref{GCW-open-isotropy}, we have the following characterization of equivariant CW-complexes for discrete groups:

\begin{Corollary}\label{G-CW-comp-discrete}
Let $G$ be a discrete group and $X$ a topological space. Then $X$ is a $G$-CW-complex if and only if $X$ admits a CW-structure and $G$ acts on $X$ cellularly.
\end{Corollary}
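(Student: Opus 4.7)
The plan is to derive the corollary from Proposition \ref{GCW-open-isotropy} by observing that, for a discrete group $G$, every subgroup is automatically open, so the only real content of condition (i) in that proposition beyond having a $G$-CW-structure is empty, while condition (ii) matches exactly the definition of a cellular $G$-action (once we ensure the filtration under discussion is the $G$-invariant skeletal filtration).

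First I would prove the forward direction. Suppose $X$ is a $G$-CW-complex. Its skeletal filtration is $G$-invariant by definition, and all isotropy groups are trivially open since $G$ carries the discrete topology. So Proposition \ref{GCW-open-isotropy}, applied in the direction (i)$\Rightarrow$(ii), yields that the filtration gives $X$ the structure of a CW-complex such that whenever $ge\cap e\neq\emptyset$ for an open cell $e$, the element $g$ fixes $e$ pointwise. This is precisely property (ii) in the definition of cellular action. Property (i) of cellular action, namely that $ge$ is again an $n$-cell of the same dimension, follows from the description of the equivariant attachment in Definition~\ref{definition_GCW}: an ordinary open $n$-cell of $X$ arises as a translate $gE$ of a cell $E$ coming from a factor $G/H_\alpha\times(D^n\setminus\partial D^n)$, so the $G$-action permutes the ordinary $n$-cells within each equivariant $n$-cell and hence sends $n$-cells to $n$-cells.

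For the converse, suppose $X$ admits a CW-structure and $G$ acts cellularly. Property (i) of cellular action implies that $G$ permutes cells of each fixed dimension, so each skeleton $X^n$ is $G$-invariant, giving a $G$-invariant filtration on $X$. Property (ii) of cellular action is exactly condition (ii) of Proposition \ref{GCW-open-isotropy}. Since $G$ is discrete, every isotropy subgroup of $X$ is open. Therefore the hypotheses needed to apply Proposition \ref{GCW-open-isotropy} in the direction (ii)$\Rightarrow$(i) are met, and we conclude that the skeletal filtration equips $X$ with a $G$-CW-structure.

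The main conceptual point, and essentially the only thing requiring thought, is the bookkeeping step of identifying the $G$-invariant filtration appearing in Proposition~\ref{GCW-open-isotropy} with the skeletal filtration of the CW-structure; once one notes that the two properties of a cellular action correspond respectively to the $G$-invariance of that filtration and to property (ii) of the proposition, the corollary is immediate. No obstruction arises from the openness of isotropy groups since discreteness of $G$ trivialises that hypothesis.
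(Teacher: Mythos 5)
Your argument is correct and is essentially the paper's own route: the corollary is stated there as an immediate specialisation of Proposition~\ref{GCW-open-isotropy}, using exactly your observation that discreteness of $G$ makes all isotropy groups open and that the two defining properties of a cellular action correspond to the $G$-invariance of the skeletal filtration and to condition (ii) of that proposition. Your extra bookkeeping about the $G$-action permuting open cells within each equivariant cell is a reasonable filling-in of what the paper leaves implicit.
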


\begin{Example}[continues = RasCWcompl] Let $G = \Z = \langle t \rangle$, then $\R$ is a $G$-CW-complex. The $G$-action would be defined by $t^n x = x + n$, where $n\in \Z$ and $x\in \R$, i.e. $G = \Z$ acts on $\R$ by translation of $1$ unit in the positive direction.
\
According to Corollary~\ref{G-CW-comp-discrete}, we only need to check that the
action is cellular. $0$-cells are points $m \in \Z$, and clearly $G$ sends points
in $\Z$ to $\Z$. $1$-cells are intervals $(m,m+1)$ where $m\in \Z$. Since all elements
of $g$ would act by addition of an integer, $1$-cells would also go to $1$-cells. It only
remains to check that if $g\in G$ and a $1$-cell $(m,m+1)$ are such that $g x \in (m,m+1)$
for some $x\in (m,m+1)$, then $gy = y$ for all $y \in (m,m+1)$. But that is clear given
the definition of the action.

More generally, $\R^n$ is a $\Z^k$-CW-complex for any $n,k \in \N$, where each generator of $\Z^k$ acts either trivially on $\R^n$ or by translation by any integer on one of the components of the points of $\R^n$.
\end{Example}

\begin{Example}\label{RasDinfCWcomplex} Similarly, $\R$ is also a $D_\infty$-CW-complex, where $D_\infty = \langle a , b \st ab = ba, b^2 = 1 \rangle$ is the infinite dihedral group.

In this case, $a$ acts by translation of $2$ units in the positive direction and $b$ by reflection with respect to $0$.

It is necessary for the translation induced by $a$ to be of $2$ units since the element $ab$ fixes the midpoint between $0$ and $a0$, which would need to be a $0$-cell itself and not belong to the interior of a $1$-cell. Alternatively, we could define the $0$-skeleton of $\R$ to also include the points of the form $\frac{m}{2}$ for $m\in\Z$ and the $1$-skeleton to be the segments between consecutive $0$-cells.
\end{Example}

\begin{Defn} Given a group $G$ and a $G$-space $X$ (with left-action), the \emph{quotient of $X$ by the $G$-action} is
$$ G \bs X = X / \sim$$
where $x \sim y$ if and only if there is $g\in G$ such that $x = gy$.
\end{Defn}

Note that $G\bs X$ is itself a $G$-space where $G$ acts trivially. Since $G$ and $X$ have a topology and the $G$-action on $X$ is continuous, $G \bs X$ is a topological space with the quotient topology.

\begin{Defn}
A $G$-space $X$ is \emph{cocompact} if $G\bs X$ is compact.
\end{Defn}

\begin{Defn}\label{def_finite_type}
    A $G$-CW-complex is said to be of \emph{finite type} if the indexing sets $\mathcal{A}^n$ in
    Definition~\ref{definition_GCW} are all finite, i.e., if there are finitely many $n$-cells
    for all $n\in\N$.
\end{Defn}

\begin{Remark}
    A $G$-CW-complex is of finite type if and only if it has only finitely many $G$-orbits in each dimension.
\end{Remark}

\begin{Remark}
A $G$-CW-complex $X$ is cocompact if and only if it is of finite type and finite-dimensional.
\end{Remark}

\begin{Example}[continues = TasCWcompl]
Consider $G = \Z^2 = \langle s,t \rangle$. As seen in the continuation of Example~\ref{RasCWcompl}, $\R^2$ is a $\Z^2$-CW-complex. We can take as action the one generated by $s (0,0) = (1,0)$ and $t(0,0) = (0,1)$.

Note that in this case $\Z^2 \bs \R^2 = \T$. And so, $\T$ is a $\Z^2$-CW-complex on which $\Z^2$ acts trivially.
\end{Example}

\subsection{Operations with $G$-CW-complexes}

In Section~\ref{sect_cwcomp} we showed constructions with CW-complexes that result
in CW-complexes that we will use throughout. Let us extend some of those results to
$G$-CW-complexes for discrete groups, using Corollary~\ref{G-CW-comp-discrete}. In the
case of the quotient and product of $G$-CW-complexes, the properties $(F)$ and $(H)$
in \cite{matumoto_GCW}, respectively, provide more general results than the ones in this
section, but we are only interested in the case where $G$ is a discrete group. In the case
of the join, mapping cylinder and double mapping cylinder, similar results can be found
in \cite{luecktransformation}. For this reasons, one should read the proofs we provide
as a way of obtaining useful information about the $G$-actions for the discrete case, since the results
were proved in previously cited sources.

\begin{Corollary}
    Given a $G$-CW-complex $X$ and $\pi : X \twoheadrightarrow L$ a surjective map
    as in Proposition~\ref{QovermapCW}. Assume moreover that $gx = gy$ for all $g\in G$
    and $x,y\in X$ such that $\pi(x) = \pi(y)$ and that $\pi$ is cellular. Then $L$ is a $G$-CW-complex.
\end{Corollary}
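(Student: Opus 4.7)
The plan is to invoke Proposition~\ref{QovermapCW} to equip $L$ with a CW-structure, then descend the $G$-action from $X$ to $L$, and finally conclude via Corollary~\ref{G-CW-comp-discrete} by checking that this action is cellular.

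First I would apply Proposition~\ref{QovermapCW} directly to $\pi : X \twoheadrightarrow L$, which yields that $L$ is a CW-complex. Next, I would define a set-theoretic $G$-action on $L$ by the rule $g \cdot \pi(x) = \pi(gx)$. The hypothesis that $gx = gy$ whenever $\pi(x) = \pi(y)$ is exactly what is needed for this definition to be independent of the chosen preimage in $X$. Associativity and the identity axiom are then inherited from the $G$-action on $X$, and the fact that each $g$ acts as a set-bijection (with inverse induced by $g^{-1}$) is immediate.

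To verify continuity of the action map $L \to L$ given by a fixed $g$, I would use that $L$ carries the identification topology determined by $\pi$. Under that topology, a map $\alpha_g : L \to L$ is continuous if and only if $\alpha_g \circ \pi : X \to L$ is continuous. But $\alpha_g \circ \pi$ is precisely the composition $x \mapsto \pi(gx)$, which is continuous since both the action of $g$ on $X$ and $\pi$ itself are continuous.

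Finally, to apply Corollary~\ref{G-CW-comp-discrete}, I must show that the induced $G$-action on $L$ is cellular. Since $\pi$ is cellular and surjective, $\pi(X^n) \subseteq L^n$ and the cells of $L$ arise as images of cells of $X$ under $\pi$; combined with the $G$-equivariance $\pi(gx) = g\pi(x)$ and the fact that $G$ acts cellularly on $X$ (by Corollary~\ref{G-CW-comp-discrete} applied to $X$), each $g \in G$ must permute the open cells of $L$ preserving dimension. For the stabilization condition, if $e_L$ is a cell of $L$ with $g \cdot e_L \cap e_L \neq \emptyset$, I would lift a witness point via $\pi$ back to a cell $e_X$ of $X$ mapping to (the closure of) $e_L$; the corresponding condition for $e_X$ in the $G$-CW-complex $X$ forces $g$ to fix $e_X$ pointwise, and pushing forward by $\pi$ shows $g$ fixes $e_L$ pointwise.

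The main obstacle will be step four, namely making precise the correspondence between the cells of $L$ (given abstractly as part of the closure-finite complex structure) and the cells of $X$ under $\pi$. The cellular hypothesis on $\pi$ together with surjectivity controls this correspondence dimension-wise, but one has to be careful that the $G$-equivariance of $\pi$ is strong enough (via the well-definedness hypothesis) to transfer both the permutation and fixed-point properties of the cellular action from $X$ down to $L$.
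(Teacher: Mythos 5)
Your overall strategy --- push the action down to $L$ by $g\cdot\pi(x)=\pi(gx)$, check well-definedness and continuity, and conclude through Corollary~\ref{G-CW-comp-discrete} by showing the induced action is cellular --- is exactly the paper's route: its proof consists of defining this action and asserting that it is cellular because $\pi$ and the $G$-action on $X$ are. The problem is your step four, and the obstacle you yourself flagged there is a genuine gap. Cellularity of $\pi$ only gives $\pi(X^n)\subseteq L^n$; it does not make the open cells of $L$ images of open cells of $X$. For example, take $X=[0,1]$ with $0$-cells $\{0\},\{1\}$, let $L$ be $[0,1]$ with $0$-cells $\{0\},\{1/2\},\{1\}$, and $\pi=\mathrm{id}$: all hypotheses of Proposition~\ref{QovermapCW} hold and $\pi$ is cellular, yet the $0$-cell $\{1/2\}$ is the image of no cell of $X$ and the image of the open $1$-cell of $X$ is not a cell of $L$. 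Worse, the cellularity you are trying to verify can actually fail for the CW-structure $L$ is given: let $X=\R$ with $0$-cells at $\Z$, let $G=\Z$ act by translations, $\pi=\mathrm{id}$, and let $L$ be $\R$ with $0$-cells at $\Z\cup\{1/2\}$. Every hypothesis of the corollary is satisfied, but the induced action sends the $0$-cell $1/2$ into the interior of the $1$-cell $(1,2)$, so it does not permute the cells of $L$. (The conclusion that $L$ is a $G$-CW-complex survives in this example, but only after replacing the given structure by a $G$-adapted one, e.g.\ with $0$-cells at $\Z$; it cannot be obtained by checking cellularity of the action for the structure $L$ came with.)

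Your fixed-point step suffers from the same defect: knowing that $p$ and $g\cdot p$ lie in one open cell of $L$ does not let you conclude $g e_X\cap e_X\neq\emptyset$ for the cell $e_X$ of $X$ containing a lift $x$ of $p$, because $\pi$ may merge many cells of $X$ into a single cell of $L$; so the pointwise-fixing condition in $X$ is never triggered and there is nothing to push forward. In short, the argument does not close as written: one needs either an additional hypothesis tying the cells of $L$ to the $\pi$-images of cells of $X$ (as is the case in the concrete situations where such quotients are used, e.g.\ collapsing a $G$-subcomplex), or one must construct a new CW-structure on $L$ compatible with the $G$-action. To be fair, the paper's own proof is no more than the assertion that the induced action ``is cellular as both $\pi$ and the action of $G$ on $X$ are,'' so your proposal reproduces the paper's approach while making visible the point it glosses over. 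A small additional remark: the hypothesis as printed, $gx=gy$ whenever $\pi(x)=\pi(y)$, should surely read $\pi(gx)=\pi(gy)$, which is how you (correctly) used it for well-definedness.
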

\begin{proof}
    Given $g\in G$, $g$ acts on $l\in L$ by $l \mapsto \pi(gx)$, where $\pi(x) = l$.
    This action is cellular as both $\pi$ and the action of $G$ on $X$ are cellular.
\end{proof}

\begin{Corollary}\label{productGCW}
    Let $X$ and $Y$ be $G$-CW-complexes. Then, under any of the conditions stated in
    Theorem~\ref{productCW}, $X \times Y$ is a $G$-CW-complex.
\end{Corollary}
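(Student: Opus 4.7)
The plan is to reduce the statement to the underlying CW-complex version (Theorem~\ref{productCW}) together with the characterisation of $G$-CW-complexes for discrete groups provided by Corollary~\ref{G-CW-comp-discrete}. Concretely, I would equip $X \times Y$ with the diagonal $G$-action $g \cdot (x,y) = (gx, gy)$, verify that this action is continuous (which is immediate from continuity of the individual actions and the universal property of the product topology), and then check cellularity with respect to the product cellular structure.

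First, I would invoke Theorem~\ref{productCW} to conclude that, under any of the three listed hypotheses, the product cellular structure does make $X \times Y$ into a genuine CW-complex. So the remaining work is purely about the $G$-action.

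Next, I would verify the two conditions in the definition of cellular action. For the first, given a product cell $e^{k}_{\alpha} \times \tilde{e}^{l}_{\beta}$ and $g \in G$, since $G$ acts cellularly on $X$ and $Y$ separately we have $g e^{k}_{\alpha} = e^{k}_{\alpha'}$ and $g \tilde{e}^{l}_{\beta} = \tilde{e}^{l}_{\beta'}$ for some indices $\alpha', \beta'$; therefore $g (e^{k}_{\alpha} \times \tilde{e}^{l}_{\beta}) = e^{k}_{\alpha'} \times \tilde{e}^{l}_{\beta'}$, which is again a product cell of the same dimension $k+l$. For the second condition, suppose $g(e^{k}_{\alpha} \times \tilde{e}^{l}_{\beta}) \cap (e^{k}_{\alpha} \times \tilde{e}^{l}_{\beta}) \neq \emptyset$. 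Projecting to the two factors shows that $g e^{k}_{\alpha} \cap e^{k}_{\alpha} \neq \emptyset$ and $g \tilde{e}^{l}_{\beta} \cap \tilde{e}^{l}_{\beta} \neq \emptyset$, so by the cellularity of the actions on $X$ and $Y$, $g$ fixes each of these factor cells pointwise; consequently $g$ fixes every point of $e^{k}_{\alpha} \times \tilde{e}^{l}_{\beta}$.

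Having shown that $X \times Y$ is a CW-complex on which $G$ acts cellularly, Corollary~\ref{G-CW-comp-discrete} immediately gives that $X \times Y$ is a $G$-CW-complex. There is no genuine obstacle here: the only subtle ingredient is the topological one (that the product of two CW-complexes need not be a CW-complex without some local finiteness/compactness hypothesis), and that is precisely what Theorem~\ref{productCW} handles for us. The verification of cellularity of the diagonal action reduces coordinate-wise to the cellularity of the given actions on the factors.
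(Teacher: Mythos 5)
Your proof is correct and follows essentially the same route as the paper: invoke Theorem~\ref{productCW} for the CW-structure, check that the diagonal action is cellular by reducing both conditions coordinate-wise to the cellularity of the actions on $X$ and $Y$, and conclude with Corollary~\ref{G-CW-comp-discrete}. No meaningful differences to report.
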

\begin{proof}
    By Theorem~\ref{productCW}, $X \times Y$ with the product cellular structure
    is a $CW$-complex.

    Define the action of $g\in G$ on $X\times Y$ by $(x,y) \mapsto (gx,gy)$. We need to
    see that this action is cellular:
    \begin{enumerate}[label={(\roman*)},noitemsep]
        \item If $e$ is an $n$-cell of $X\times Y$ and $g\in G$, then $g e$ is also
        an $n$-cell of $X\times Y$:\\
        By definition of the product cellular structure on $X\times Y$,
        there is $e_1$ a $k$-cell of $X$ and there is $e_2$ a $(n-k)$-cell of $Y$ such
        that $e = e_1 \times e_2$. Also, $g e = (g e_1) \times (g e_2)$, and since
        the $G$-actions on $X$ and $Y$ are cellular, we have $g e_1$ is a $k$-cell
        of $X$ and $g e_2$ is an $(n-k)$-cell of $Y$. Hence, $g e$ is the $n$-cell
        of $X\times Y$.
        \item If $e$ is a cell of $X\times Y$ and $g\in G$ is such that $ge \cap e \neq \emptyset$
        then $gp = p$ for all $p \in e$:\\
        Let $e_1$ and $e_2$ as above. Then if $ge \cap e \neq \emptyset$ then we have
        $g e_1 \cap e_1 \neq \emptyset$ and $g e_2 \cap e_2 \neq \emptyset$. And as
        the $G$-actions on $X$ and $Y$ are both cellular, that means that $g p_1 = p_1$
        for all $p_1 \in e_1$ and $g p_2 = p_2$ for all $p_2 \in e_2$. Therefore,
        for every $p\in e$, $gp = p$.
    \end{enumerate}

    By Corollay~\ref{G-CW-comp-discrete}, we are done, as $G$ is discrete.
\end{proof}

\begin{Corollary}\label{joinGCW}
    Let $X$ and $Y$ be $G$-CW-complexes. Then, under any of the conditions stated in
    Theorem~\ref{productCW}, $X * Y$ is a $G$-CW-complex.
\end{Corollary}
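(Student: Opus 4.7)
The plan is to mirror the proof of Corollary~\ref{join_is_CW} while carrying the $G$-action throughout. First I would equip $X \times Y \times [0,1]$ with the diagonal $G$-action on $X \times Y$ and the trivial action on $[0,1]$; since $[0,1]$ is locally compact, two applications of Corollary~\ref{productGCW} show that $X \times Y \times [0,1]$ is itself a $G$-CW-complex.

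Next, I would check that the two equivalence relations $\sim_0$ and $\sim_1$ appearing in the proof of Corollary~\ref{join_is_CW} are $G$-invariant. Because $G$ acts trivially on the $[0,1]$ coordinate, $g \cdot (x, y_1, 0) = (gx, gy_1, 0) \sim_0 (gx, gy_2, 0) = g \cdot (x, y_2, 0)$, and the analogous calculation handles $\sim_1$. Hence the diagonal action descends continuously first to $C_X = (X \times Y \times [0,1])/\sim_0$ and then to $X * Y = C_X/\sim_1$, while Corollary~\ref{join_is_CW} already supplies a CW-structure on $X * Y$.

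By Corollary~\ref{G-CW-comp-discrete} it only remains to verify that the induced $G$-action on $X * Y$ is cellular. The open cells of $X * Y$ in this CW-structure fall into three types inherited from the construction: cells of $X$ (from the collapsed $t = 0$ slice), cells of $Y$ (from the collapsed $t = 1$ slice), and ``middle'' cells of the form $e \times f \times (0,1)$ with $e$ a cell of $X$ and $f$ a cell of $Y$. Since the $G$-actions on $X$ and $Y$ are both cellular and $[0,1]$ carries the trivial action, each $g \in G$ sends every such cell to a cell of the same dimension. If moreover $g$ preserves a middle cell $e \times f \times (0,1)$ setwise, then $ge \cap e \neq \emptyset$ and $gf \cap f \neq \emptyset$, so cellularity on $X$ and $Y$ forces $g$ to fix $e$ and $f$ pointwise, and hence to fix the whole middle cell pointwise; the corresponding property for cells originating from $X$ or $Y$ is immediate.

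The main technical subtlety I anticipate is that the preceding quotient corollary cannot be applied verbatim to $\tilde\pi_X$ or $\tilde\pi_Y$, since those collapse maps only satisfy $\pi(gx) = \pi(gy)$ whenever $\pi(x) = \pi(y)$, rather than the stronger $gx = gy$ that its hypothesis demands. The extra work therefore consists in identifying the three types of cells of $X * Y$ explicitly and checking cellularity of the induced action on each of them by hand, as sketched above.
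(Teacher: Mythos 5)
Your proposal is correct and follows essentially the same route as the paper's proof: build the $G$-CW structure on $X \times Y \times [0,1]$ via Corollary~\ref{productGCW} with trivial action on $[0,1]$, let the diagonal action descend to the join, identify the three types of cells (cells of $X$, cells of $Y$, and cells $e_1 \times e_2 \times (0,1)$), verify cellularity of the induced action on each type, and conclude with Corollary~\ref{G-CW-comp-discrete}. The only cosmetic difference is that for the middle cells the paper fixes points by combining injectivity of the quotient map on $X \times Y \times (0,1)$ with cellularity of the action on the product, whereas you argue directly from cellularity of the actions on $X$ and $Y$; these are interchangeable.
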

\begin{proof}
    $X\times Y$ is a $G$-CW-complex by Corollary~\ref{productGCW}. Hence,
    $X \times Y \times [0,1]$ is also a $G$-CW-complex by the same result, taking
    the trivial $G$-action on $[0,1]$.
    Let $\pi : X \times Y \times [0,1] \to X * Y$ be the quotient map. Note that
    the restriction $\pi$ to $X \times Y \times (0,1)$ is injective. Note that
    the cells of $X * Y$ are of the form $\pi(e_1 \times e_2 \times (0,1))$,
    $\pi(e_1 \times e_2 \times {0}) = e_1$ or $\pi(e_1 \times e_2 \times {1}) = e_2$
    for $e_1$ and $e_2$ cell of $X$ and $Y$ respectively.

    Let $g\in G$, then we define the action of $g$ on $X * Y$ by $g\pi(x,y,t) = \pi(g(x,y,t)) = \pi(gx,gy,t)$.
    We need to see now that this $G$-action is cellular:
    \begin{enumerate}[label={(\roman*)},noitemsep]
        \item If $e$ is an $n$-cell of $X * Y$ and $g\in G$, then $g e$ is also
        an $n$-cell of $X * Y$:\\
        If $e$ is of the form $e_i$ for $i \in \{1 , 2\}$, then since $X$ and $Y$
        are $G$-CW-complexes, $g e$ is a cell of $X * Y$ of the same form. If $e$
        is of the form $\pi(e_1 \times e_2 \times (0,1))$, then
        $g e = \pi(g e_1 \times g e_2 \times (0,1))$ is also a cell of $X * Y$, as
        $g e_1$, $g e_2$ and $(0,1)$ are cells of $X$, $Y$ and $[0,1]$ respectively.
        \item If $e$ is a cell of $X * Y$ and $g\in G$ is such that $ge \cap e \neq \emptyset$
        then $gp = p$ for all $p \in e$:\\
        If $e = e_i$ for $i \in \{1 , 2\}$, then we are done since
        $G$ acts cellularly on $X$ and $Y$. If $e$ is of the form
        $\pi(e_1 \times e_2 \times (0,1))$ and $g e\cap e \neq \emptyset$, then we
        have $g (e_1 \times e_2 \times (0,1)) \cap e_1 \times e_2 \times (0,1) \neq \emptyset$.
        Let $p \in \cap e$, since $\pi$ is injective in $X \times Y \times (0,1)$,
        $\pi^{-1}(p) \in e_1 \times e_2 \times (0,1)$
        is a single point. Since $G$ acts cellularly in $X\times Y\times [0,1]$,
        then $g\pi^{-1}(p) = \pi^{-1}(p)$. Hence, $g p = p$, as we wanted to see.
    \end{enumerate}

    By Corollay~\ref{G-CW-comp-discrete}, we are done, as $G$ is discrete.
\end{proof}

\begin{Corollary}\label{mcylGCW}
    Let $X$ and $Y$ be $G$-CW-complexes and let $f : X \to Y$ be a $G$-map.
    Then, $\mcyl{f}$ is a $G$-CW-complex.
\end{Corollary}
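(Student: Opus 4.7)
The plan is to realize $\mcyl{f}$ as a push-out of CW-complexes along a cellular map, invoke Theorem~\ref{pushoutisCW} for the underlying CW-structure, then transfer the $G$-actions on the constituent spaces to a cellular $G$-action on the push-out, and finally conclude via Corollary~\ref{G-CW-comp-discrete}. First, by the equivariant version of cellular approximation recalled after Theorem~\ref{CellApprox} (namely \cite[Theorem 2.1]{luecktransformation}) I may without loss of generality replace $f$ by a $G$-homotopic cellular $G$-map, so I shall assume $f$ itself is cellular. Equip $[0,1]$ with the standard CW-structure (two $0$-cells and one $1$-cell) and the trivial $G$-action; by Corollary~\ref{productGCW}, $X \times [0,1]$ is then a $G$-CW-complex with the diagonal action, and $X \times \{1\}$ is a $G$-subcomplex. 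The composite
\[
g \colon X \times \{1\} \hookrightarrow X \times [0,1] \longrightarrow Y, \qquad (x,1) \longmapsto f(x),
\]
is a cellular $G$-map because $f$ is.

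Now apply Theorem~\ref{pushoutisCW} to the inclusion $\iota \colon X \times \{1\} \hookrightarrow X \times [0,1]$ and the cellular map $g$: the resulting topological push-out, which by Definition~\ref{defconcyl} coincides with $\mcyl{f}$, inherits a CW-structure in which $Y$ and $X \times [0,1]$ both appear as subcomplexes, and whose open cells are the cells of $Y$ together with those cells of $X \times [0,1]$ that are not contained in $X \times \{1\}$. Since $\iota$ is $G$-equivariant and $g$ is a $G$-map, the equivalence relation defining the push-out is $G$-invariant, so the two $G$-actions descend to a unique continuous $G$-action on $\mcyl{f}$ restricting correctly on each piece. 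To verify this action is cellular, note that for any cell $e$ of $\mcyl{f}$ and any $h \in G$, the image $h\cdot e$ lies in the subcomplex ($Y$ or $X \times [0,1]$) from which $e$ originates, and by the cellularity of the $G$-actions there it is again a cell of the same dimension; and if $h\cdot e \cap e \neq \emptyset$ then the same cellularity in the relevant piece forces $h$ to fix $e$ pointwise. Corollary~\ref{G-CW-comp-discrete} then yields that $\mcyl{f}$ is a $G$-CW-complex.

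The only point requiring care, rather than routine bookkeeping, is ensuring that the $G$-action really descends continuously to the push-out and that the cell decomposition produced by Theorem~\ref{pushoutisCW} is genuinely $G$-invariant; this is why I need $g$ (equivalently $f$) to be a cellular $G$-map, and hence why the preliminary appeal to equivariant cellular approximation is necessary. Once these compatibilities are in place, checking the two cellularity axioms reduces verbatim to the corresponding checks already carried out in Corollary~\ref{productGCW} for $X \times [0,1]$ and given by hypothesis for $Y$.
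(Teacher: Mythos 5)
Your argument is correct and is essentially the paper's own proof: equip $[0,1]$ with the trivial action, use Corollary~\ref{productGCW} to make $X\times[0,1]$ a $G$-CW-complex, obtain the CW-structure on the quotient/push-out, observe that the actions descend because $f$ is a $G$-map, verify the two cellularity conditions cell type by cell type, and conclude with Corollary~\ref{G-CW-comp-discrete}. Two small points of comparison. Your preliminary ``without loss of generality $f$ is cellular'' is not literally innocuous, since replacing $f$ by a $G$-homotopic cellular map changes $\mcyl{f}$ only up to $G$-homotopy equivalence, not homeomorphism; so strictly you prove the statement under the extra hypothesis that $f$ is cellular (as in Corollary~\ref{cone_mcyl_dcyl_CW}), or the original statement only up to $G$-homotopy --- but the paper's proof carries the same implicit assumption, since the cell decomposition it writes down (cells $\pi(e_1\times\{0\})$, $\pi(e_1\times(0,1))$, $\pi(e_2)$) presupposes it, so you are if anything more explicit. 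Finally, $X\times[0,1]$ is not a subcomplex of $\mcyl{f}$ (its image contains $f(X)$, which need not be a union of cells); what Theorem~\ref{pushoutisCW} actually provides, and what both you and the paper really use, is that $Y$ and $X\times\{0\}$ are subcomplexes and that the quotient map is injective on $X\times[0,1)$, which is how the paper phrases the cellularity checks.
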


\begin{proof}

    Let $G$ act trivially on $[0,1]$. Then, by Corollary~\ref{productGCW}, $X\times [0,1]$ is a $G$-CW-complex, and so
    is $\left(X\times [0,1]\right) \sqcup Y$. Let $\pi : \left(X \times [0,1]\right) \sqcup Y \to \mcyl{f}$ be the quotient map. Then, given $g\in G$
    we can define the action of $g$ on $\mcyl{f}$ as $g \pi(x,t) = (gx, t)$ for $x \in X$ and $t \in [0,1]$ and
    $g \pi(y) = gy$ for $y \in Y$. Since $f$ is a $G$-map, the action is well-defined, i.e., for all $x \in X$ $g \pi(x, 1)$ and $g \pi(f(x))$
    correspond to the same point in $\mcyl{f}$. By definition, $\pi$ is bijective when restricted to $X\times [0,1)$ and when restricted to $Y$.

    Hence, the cells of $\mcyl{f}$ are of the form $\pi(e_1 \times \{0\})$, $\pi(e_1 \times (0, 1))$
    or $\pi(e_2)$ for $e_1$ and $e_2$ cells of $X$ and $Y$ respectively.

    \begin{enumerate}[label={(\roman*)},noitemsep]
        \item If $e$ is an $n$-cell of $\mcyl{f}$ and $g\in G$, then $g e$ is also
        an $n$-cell of $\mcyl{f}$:\\
        Let $e$ be an $n$-cell of $\mcyl{f}$ of the form $\pi(e_1 \times A)$ where $e_1$ is a $k$-cell of $X$ and $A$ is either $\{0\}$ or $(0,1)$.
        Since $\pi$ is bijective when restricted to $X\times [0,1)$, we have $k + dim(A) = n$. By definition of the $G$-action
        on $\mcyl{f}$, $g e = \pi((g e_1)\times A)$. Since $X$ is a $G$-CW-complex, if $e_1$ is a $k$-cell of $X$, so is $g e_1$.
        And again as $\pi$ is bijective when restricted to $X\times [0, 1)$, that means $g e$ is a $k + dim(A)$-cell of $\mcyl{f}$.
        In the case $e$ is an $n$-cell of $\mcyl{f}$ of the form $\pi(e_2)$ for $e_2$ and $n$-cell of $Y$, as $g \pi(e_2) = \pi(e_2)$
        and $Y$ is a $G$-CW-complex, $g \pi(e_2)$ is an $n$-cell of $\mcyl{f}$.
        \item If $e$ is a cell of $\mcyl{f}$ and $g\in G$ is such that $ge \cap e \neq \emptyset$
        then $gp = p$ for all $p \in e$:\\
        Let again $e$ be of the form $\pi(e_1 \times A)$ where $e_1$ is a cell of $X$ and $A$
        is either $\{0\}$ or $(0,1)$. Then, since $\pi$ is bijective when restricted to $X\times [0,1)$
        and by definition of the action of $G$ on $\mcyl{f}$,
        $ge \cap e \neq \emptyset$ if and only if $g (e_1 \times A) \cap e_1 \times A \neq \emptyset$.
        But $G$ is acting trivially on $[0,1]$, so the second condition is equivalent to
        $g e_1 \cap e_1 \neq \emptyset$. And since $X$ is a $G$-CW-complex, in that case
        for all $p \in e_1$, $gp = p$, which by analogous reasoning is equivalent to $gq = q$ for
        all $q \in e$.
        In the case $e$ is of the form $\pi(e_2)$, the proof is analogous, taking into consideration
        that $f$ is a $G$-map (and hence $f(gx) = gf(x)$) for the points in $\mcyl{f}$ of the form $\pi(f(x))$ for $x\in X$.
    \end{enumerate}

    By Corollay~\ref{G-CW-comp-discrete}, we are done, as $G$ is discrete.
\end{proof}

\begin{Corollary}\label{dmcylGCW}
    Let $X$, $Y$ and $Z$ be $G$-CW-complexes and let $f : X \to Y$ and $g : X \to Z$
    be $G$-maps.
    Then, $\dmcyl{f}{g}$ is a $G$-CW-complex.
\end{Corollary}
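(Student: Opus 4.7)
The plan is to mimic the argument used for the mapping cylinder in Corollary~\ref{mcylGCW}, adapting it to accommodate the two attaching maps $f$ and $g$. First I would equip $[0,1]$ with the trivial $G$-action, so that by Corollary~\ref{productGCW} the space $X \times [0,1]$ is a $G$-CW-complex, and hence so is the disjoint union $(X \times [0,1]) \sqcup (Y \sqcup Z)$ (the disjoint union of $G$-CW-complexes is trivially a $G$-CW-complex). Let $\pi : (X \times [0,1]) \sqcup (Y \sqcup Z) \to \dmcyl{f}{g}$ denote the quotient map; by Corollary~\ref{cone_mcyl_dcyl_CW} we already know that $\dmcyl{f}{g}$ is a CW-complex, so by Corollary~\ref{G-CW-comp-discrete} it suffices to define a $G$-action on $\dmcyl{f}{g}$ and show it is cellular.

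Next I would define the action on $\dmcyl{f}{g}$ by $g\cdot\pi(x,t) = \pi(gx,t)$ for $(x,t)\in X\times[0,1]$, $g\cdot\pi(y) = \pi(gy)$ for $y\in Y$, and $g\cdot\pi(z) = \pi(gz)$ for $z\in Z$. The fact that $f$ and $g$ are $G$-maps is precisely what ensures the action respects the two identifications $\pi(x,0) = \pi(f(x))$ and $\pi(x,1) = \pi(g(x))$, so the action is well-defined. Note that $\pi$ is injective on $X\times(0,1)$, on $Y$, and on $Z$, so the cells of $\dmcyl{f}{g}$ are of four possible types: $\pi(e_1\times(0,1))$ for $e_1$ a cell of $X$, $\pi(e_Y)$ for $e_Y$ a cell of $Y$, $\pi(e_Z)$ for $e_Z$ a cell of $Z$, where the images of $X\times\{0\}$ and $X\times\{1\}$ are absorbed into $Y$ and $Z$ respectively through $f$ and $g$.

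To verify cellularity, I would check the two conditions from Corollary~\ref{G-CW-comp-discrete} separately on each type of cell. For cells of the form $\pi(e_1\times(0,1))$, the arguments are identical to the corresponding step of Corollary~\ref{mcylGCW}: $\pi$ is a bijection on $X\times(0,1)$, $G$ acts trivially on $[0,1]$, and $X$ is a $G$-CW-complex. For cells of the form $\pi(e_Y)$ or $\pi(e_Z)$, cellularity is inherited directly from the $G$-CW-structures on $Y$ and $Z$, since $\pi$ is injective on each of these subspaces and the action restricts to the original actions on $Y$ and $Z$.

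The only subtlety, and what I would expect to be the main thing to be careful about, is the well-definedness of the action at the boundary points $X\times\{0\}$ and $X\times\{1\}$, where a point can be represented either as $\pi(x,0)$ or as $\pi(f(x))$ (and similarly at $t=1$ with $g$). This is exactly where the hypothesis that $f$ and $g$ are $G$-maps is used: $h\cdot\pi(f(x)) = \pi(hf(x)) = \pi(f(hx)) = \pi(hx,0) = h\cdot\pi(x,0)$ for any $h\in G$. Once this is checked, the remaining verifications are routine applications of the cellularity of the actions on $X$, $Y$, $Z$, and Corollary~\ref{G-CW-comp-discrete} then completes the proof.
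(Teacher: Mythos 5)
Your proposal is correct, but it takes a more self-contained route than the paper. The paper's proof is a two-line reduction: apply Corollary~\ref{mcylGCW} to each of the maps $f$ and $g$ separately and then identify the two copies of $X\times\{0\}$ in the resulting mapping cylinders, i.e.\ view $\dmcyl{f}{g}$ as two $G$-CW mapping cylinders glued along the common $G$-subcomplex $X$. You instead rerun the direct argument of Corollary~\ref{mcylGCW} on the quotient $\bigl(X\times[0,1]\bigr)\sqcup Y\sqcup Z \to \dmcyl{f}{g}$: define the action, use the $G$-equivariance of $f$ and $g$ for well-definedness at $t=0$ and $t=1$, describe the cells, and check cellularity via Corollary~\ref{G-CW-comp-discrete}. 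Your version is longer but makes the $G$-action and the cell structure explicit, and it avoids the gluing step that the paper leaves implicit (gluing two $G$-CW-complexes along a common $G$-subcomplex). Two cosmetic points: you reuse the letter $g$ both for the map $g:X\to Z$ and for group elements (you switch to $h$ only at the end), and you announce ``four possible types'' of cells but correctly list only three, since both ends $X\times\{0\}$ and $X\times\{1\}$ are absorbed into $Y$ and $Z$. Finally, your appeal to Corollary~\ref{cone_mcyl_dcyl_CW} strictly requires $f$ and $g$ to be cellular, which is not among the hypotheses; this is the same gloss the paper itself makes in Corollary~\ref{mcylGCW} (justified by cellular approximation), so it is not a gap relative to the paper's own standard, but it is worth acknowledging with a sentence.
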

\begin{proof}
    It is only necessary to apply Corollary~\ref{mcylGCW} to each of the maps and
    identify the two copies of $X\times \{0\}$.
\end{proof}

\section{Families of subgroups}

Given two $G$-spaces $X$ and $Y$, and a $G$-map $g : X \to Y$, we denote by $[g]$ the equivalence class of all
$G$-maps from $X$ to $Y$ that are homotopic to $g$ and we denote by $[X,Y]^G$ the set of $G$-homotopy classes
of $G$-maps from $X$ to $Y$.

The following theorem is stated and proved for topological
groups in \cite{luecksurvey}, but we include a reduced version restricted to discrete groups.
With it, families of subgroups are introduced from their relation to Homotopy Theory.

\begin{Theorem}[Whitehead Theorem for Families]\cite[Theorem 1.6]{luecksurvey}\label{WTFF}
Let $f : Y \to Z$ be a $G$-map of $G$-spaces for $G$ a discrete group. Let $\FF$ be a set of subgroups of $G$ which is closed under conjugation. Then the following assertions are equivalent:
\begin{enumerate}[label= {\emph{\roman*)}}, noitemsep]
\item for any $G$-CW-complex $X$, whose isotropy groups belong to $\FF$, the map induced by $f$ $$f_* : [X , Y]^G \to [X , Z]^G , \hspace{0.5cm} [g] \mapsto [f \circ g]$$
between the set of $G$-homotopy classes of $G$-maps is bijective;
\item for any $H\in\FF$ the map $f^H : Y^H \to Z^H$ is a weak homotopy equivalence, where $A^H$ represents the subset of fixed points by $H$ of a $G$-space $A$.
\end{enumerate}
\end{Theorem}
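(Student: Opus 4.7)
The plan is to prove the two implications separately, with the harder direction being (ii) $\Rightarrow$ (i). The key technical tool throughout is the adjunction
$$[G/H \times K, W]^G \cong [K, W^H]$$
valid for any space $K$ and any $G$-space $W$, together with the observation that $G/H \times D^n$ and $G/H \times S^n$ are $G$-CW-complexes all of whose isotropy groups are conjugate to $H$, and hence lie in $\FF$ whenever $H \in \FF$, since $\FF$ is closed under conjugation.

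For (i) $\Rightarrow$ (ii), I would fix $H \in \FF$ and test the hypothesis against $X = G/H \times S^n$ for each $n \geq 0$. Under the adjunction above, the bijection $f_* : [X, Y]^G \to [X, Z]^G$ translates into bijectivity of $(f^H)_* : \pi_n(Y^H) \to \pi_n(Z^H)$ as sets (for $n = 0$) or groups (for $n \geq 1$). Basepoint issues are dispatched by first using $X = G/H$ to settle $\pi_0$; once $(f^H)_*$ is a bijection on components, the usual change-of-basepoint argument promotes the bijection on higher $\pi_n$ to every component of $Y^H$. Hence $f^H$ is a weak homotopy equivalence.

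For (ii) $\Rightarrow$ (i), I would build the required lifts and homotopies by induction on the skeleta of $X$. For surjectivity of $f_*$, given a $G$-map $h : X \to Z$, I construct a $G$-map $g : X \to Y$ and a $G$-homotopy $\Theta : f \circ g \simeq h$ stage by stage. On $X^0$, each orbit is $G/H_\alpha$ with $H_\alpha \in \FF$, and the adjunction reduces the problem to surjectivity of $\pi_0(Y^{H_\alpha}) \to \pi_0(Z^{H_\alpha})$, which is granted by (ii). Assuming the data has been built on $X^{n-1}$, I extend across each equivariant $n$-cell $G/H_\alpha \times D^n$ attached via $q^n_\alpha$: under the adjunction this becomes the question of extending a map $S^{n-1} \to Y^{H_\alpha}$ over $D^n$ compatibly with a given map $D^n \to Z^{H_\alpha}$ lifting $f^{H_\alpha}$. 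The obstruction lives in the relative homotopy set $\pi_n(Z^{H_\alpha}, Y^{H_\alpha})$, which vanishes by the long exact sequence of the pair since $f^{H_\alpha}$ is a weak equivalence. Injectivity of $f_*$ is handled by the same argument applied to the $G$-CW-pair $(X \times [0,1], X \times \{0,1\})$, with the $G$-homotopy extension property supplying the extensions at each stage and the cellular approximation theorem (Theorem~\ref{CellApprox}, together with its equivariant refinement cited at the end of Section~\ref{sect_cwcomp}) arranging that all comparison maps can be assumed cellular.

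The main obstacle I expect is the inductive extension step in (ii) $\Rightarrow$ (i), where one must transport both a partial lift and the homotopy witnessing it through each skeletal stage. Doing so cleanly requires pairing the adjunction-based reformulation of the lifting problem with the $G$-homotopy extension property for $(X^n, X^{n-1})$ afforded by the pushout description in Definition~\ref{definition_GCW}, and then invoking the vanishing of the relative homotopy groups uniformly across all orbit types appearing in $X$.
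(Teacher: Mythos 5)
First, a point of comparison: the paper does not prove this theorem at all --- it is quoted from L\"uck's survey \cite{luecksurvey} (Theorem 1.6 there), so there is no in-paper argument to measure yours against; the argument you sketch is the standard one from the literature (see also \cite{tDieck_transformation}). Your direction (ii)$\Rightarrow$(i) is essentially right: the adjunction $[G/H\times K,W]^G\cong[K,W^H]$, induction over the skeleta of $X$ via the pushout in Definition~\ref{definition_GCW}, and vanishing of the relevant relative homotopy groups. Two small corrections there: the obstruction should be located in the relative homotopy groups of the pair $\left(\mcyl{f^{H_\alpha}},Y^{H_\alpha}\right)$, since $Y^{H_\alpha}$ is not a subspace of $Z^{H_\alpha}$ (your ``$\pi_n(Z^{H_\alpha},Y^{H_\alpha})$'' only makes literal sense after replacing $f$ by a cofibration); and cellular approximation plays no role, because $Y$ and $Z$ are arbitrary $G$-spaces --- what you actually need at each stage is the $G$-homotopy extension property of $(X^n,X^{n-1})$, which the pushout description provides.

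The one step that does not work as written is in (i)$\Rightarrow$(ii). Testing against $X=G/H\times S^n$ gives bijectivity of the \emph{free} homotopy sets $[S^n,Y^H]\to[S^n,Z^H]$, i.e.\ of the quotients of $\pi_n$ by the $\pi_1$-action on each component (conjugacy classes when $n=1$), and a bijection of these orbit sets combined with a change-of-basepoint argument does not yield a bijection of the groups $\pi_n$ themselves. The standard repair is to verify the compression criterion instead: given a map of pairs $(D^n,S^{n-1})\to\left(\mcyl{f^H},Y^H\right)$, use injectivity of $f_*$ for the test complex $G/H\times S^{n-1}$ and surjectivity for $G/H\times D^n$ (plus the homotopy extension property) to compress it into $Y^H$ rel $S^{n-1}$; this shows the relative homotopy groups of $\left(\mcyl{f^H},Y^H\right)$ vanish at every basepoint, hence $f^H$ is a weak equivalence. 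Alternatively, since (i) is assumed for \emph{every} $G$-CW complex with isotropy in $\FF$, you may test against $G/H\times K$ for arbitrary CW complexes $K$ and invoke the classical fact that bijectivity of $[K,Y^H]\to[K,Z^H]$ for all CW $K$ characterises weak homotopy equivalences. With that adjustment your proof coincides with the argument in the cited sources.
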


\begin{Defn}
    Let $G$ be a group. A non-empty collection $\FF$ of subgroups of $G$ is a \emph{family} if it is closed under conjugation.
    If it is also closed under finite intersections, we say $\FF$ is a \emph{semi-full} family. In the case it is closed under
    taking subgroups, we call $\FF$ a \emph{full} family.
\end{Defn}

All results produced in this thesis will refer to full families. Therefore, if not indicated otherwise, we will assume that families are full families.

\begin{Example} The following are full families of subgroups:
\begin{enumerate}[label= {\arabic*)}, noitemsep]
\item $\FF = \{ \{ 1\} \}$, the \emph{trivial} family;
\item $\All$, the family of all subgroups of $G$;
\item $\Fin$, the family of all finite subgroups of $G$;
\item $\Vcy$, the family of all virtually cyclic subgroups of $G$;
\item $\PP$, the family of all $p$-subgroups of $G$;
\end{enumerate}
\end{Example}

\begin{Example}\label{bounded_rank_subgroups_of_a_class_is_family}
    Let $\mathfrak{X}$ be a non-empty class of groups closed under taking subgroups and let
    $r: \mathfrak{X} \to \N \cup \{\infty\}$ be a rank such that:
    \begin{enumerate}[label={(\roman*)}, noitemsep]
        \item if $H,K \in \mathfrak{X}$ are such that there is an injective homomorphism
        $f : H \to K$, then $r(H) \leq r(K)$ and
        \item if $H,K \in \mathfrak{X}$ are such that $H \cong K$, then $r(H) = r(K)$.
    \end{enumerate}
    Then, given a group $G$ and $n \in \N \cup \{\infty\}$, $$\mathfrak{X}_n (G) =
    \{ H \leq G \st H \in \mathfrak{X} \text{ and } r(H) \leq n \}$$ is a full
    family of subgroups of $G$.
\end{Example}

Here we expose some ways to obtain new families from given families and subgroups:

\begin{Remark}\label{thingswithfam} Let $\FF$ and $\GG$ be two families of subgroups of a group $G$ and let $K\leq G$ and $N \lhd G$. Then, the following holds:
\begin{enumerate}[label={\emph{\arabic*)}},noitemsep]
\item $\FF \cap \GG$ is a family of subgroups of $G$;
\item $\FF \cup \GG$ is a family of subgroups of $G$;
\item the restriction of $\FF$ to $K$, $\FF \cap K = \{ H \cap K \st H \in \FF \}$, is a family of subgroups of $K$;
\item $\FF/N = \{ HN/N \leq G / N \st H \in \FF \}$ is a family of subgroups of $G/N$.
\end{enumerate}
\end{Remark}

\begin{Remark}
    If the families $\FF$ and $\GG$ in Remark~\ref{thingswithfam} are
    full, so are $\FF \cap \GG$, $\FF \cup \GG$, $\FF \cap K$ and $\FF/N$.

    If the families $\FF$ and $\GG$ are semi-full, so are $\FF \cap \GG$, $\FF \cap K$ and $\FF/N$.
    However, $\FF \cup \GG$ is not semi-full in general, given that the subgroups of the form $H\cap K$,
    where $H\in\FF$ and $K\in\GG$, do not necessarily belong to any of the two original families.
\end{Remark}

\section{Classifying spaces}

In this section, we present some basic results about classifying spaces for families of subgroups. More information can be
found in \cite{luecksurvey}, for example.

\begin{Defn} Let $G$ be a topological group and let $\FF$ be a semi-full family of subgroups of $G$. A $G$-CW-complex $X$ is a \emph{classifying space of $G$ for the family $\FF$} if it satisfies the following conditions:
\begin{enumerate}[label={(\roman*)},noitemsep]
\item All isotropy groups of $X$ belong to $\FF$.
\item\label{universal_prop} If $Y$ is a $G$-CW-complex with isotropy groups in $\FF$, then there exists $G$-map $f : Y \to X$, unique up to $G$-homotopy.
\end{enumerate}
\end{Defn}

Equivalently, we may say $X$ is a \emph{model for} $\EFG{\FF}{G}$ when $X$ is a classifying space for the family
$\FF$ of subgroups of $G$.

We will refer to condition \ref{universal_prop} as \emph{universal property of classifying spaces}, and it is equivalent to
$X$ being a terminal object in the $G$-homotopy category of $G$-CW-complexes with isotropy groups in the family $\FF$.

One of the first questions that arises is whether the existence of such spaces is conditional or universal. The following theorem shows that their existence is universal:

\begin{Theorem}[Existence of models for $\EFG{\FF}{G}$]\cite[Proposition 2.3]{luecktransformation}\label{EFG_existence}
Let $G$ be a topological group and $\FF$ a semi-full family of (closed) subgroups of $G$. Then, there is a model for $\EFG{\FF}{G}$.
\end{Theorem}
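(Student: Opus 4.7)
The plan is to apply the Whitehead Theorem for Families (Theorem~\ref{WTFF}) to reduce the universal property of $\EFG{\FF}{G}$ to a condition on fixed-point subspaces, and then build a $G$-CW-complex meeting that condition by iteratively attaching equivariant cells that kill higher homotopy on each relevant fixed-point set. Concretely, a $G$-CW-complex $X$ with isotropy in $\FF$ is a model for $\EFG{\FF}{G}$ precisely when $X^H$ is weakly contractible for every $H \in \FF$: given a $G$-CW-complex $Y$ with isotropy in $\FF$, Theorem~\ref{WTFF} combined with a standard equivariant obstruction argument yields both existence of a $G$-map $Y \to X$ and its uniqueness up to $G$-homotopy once all those fixed-point sets are weakly contractible. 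Thus it is enough to construct such an $X$.

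First I would set $X_0 = \bigsqcup_{[H]} G/H$, where $[H]$ ranges over a set of conjugacy class representatives of subgroups in $\FF$; each $G/H$ is a single equivariant $0$-cell, so $X_0$ is a $G$-CW-complex with isotropy in $\FF$ and with $X_0^H$ non-empty for every $H \in \FF$. Then I would inductively enlarge $X_n$ to $X_{n+1}$ by killing $\pi_n$ of each fixed-point set $X_n^H$ with $H \in \FF$: for every such $H$ and every class $[\alpha] \in \pi_n(X_n^H)$, fix a representative $\alpha : S^n \to X_n^H \subseteq X_n$ and extend it $G$-equivariantly via $(gH, s) \mapsto g \cdot \alpha(s)$ to a $G$-map $G/H \times S^n \to X_n$ (well-defined because $H$ fixes $\alpha(s)$). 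Attaching the equivariant $(n+1)$-cell $G/H \times D^{n+1}$ along this map produces $X_{n+1}$; since the stabiliser of any point of $G/H \times D^{n+1}$ is a conjugate of $H$, the isotropy of $X_{n+1}$ remains in $\FF$.

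Set $X = \bigcup_n X_n$ with the weak topology. For each $H \in \FF$ the fixed-point functor $(-)^H$ preserves the equivariant pushouts used in the construction---the key observation being that, for any subgroup $H' \leq G$, $(G/H')^H$ is a discrete set, so the $H$-fixed points of each attached equivariant cell $G/H' \times D^n$ decompose as a disjoint union of ordinary $n$-cells in $X^H$---hence $X^H$ is an ordinary CW-complex given as the colimit of the $X_n^H$. Any map $S^n \to X^H$ lands in some $X_m^H$ by compactness of $S^n$, and the class it represents is annihilated at a later stage by construction; together with $X_0^H \neq \emptyset$, this shows $X^H$ is weakly contractible for every $H \in \FF$, and the universal property then follows from Theorem~\ref{WTFF}.

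The main obstacle will be the bookkeeping inherent in this transfinite induction: attaching an equivariant $(n+1)$-cell $G/H \times D^{n+1}$ can introduce new $n$-dimensional classes in the fixed-point sets $X^K$ for proper subgroups $K$ of $H$, because $(G/H)^K$ need not consist of a single point, so one must carefully interleave the killing procedure across all pairs $(H, [\alpha])$ with $H \in \FF$ so that every class appearing at any stage is itself eventually killed. Particular care is also needed in verifying that the resulting colimit, equipped with the weak topology, is indeed a $G$-CW-complex with the desired fixed-point behaviour, which is where the assumption that $\FF$ is a semi-full family enters to guarantee that the fixed-point analysis stays coherently inside $\FF$.
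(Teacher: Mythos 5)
The paper does not actually prove this statement---it is quoted from L\"uck---and your construction is essentially the standard argument from that source: start with $\bigsqcup_{[H]} G/H$ and inductively attach equivariant cells $G/H\times D^{n+1}$ to kill $\pi_n$ of every fixed-point set, then deduce the universal property from the Whitehead theorem for families, most cleanly by applying Theorem~\ref{WTFF} to the map $X\to\{*\}$, whose bijectivity statement for $[Y,X]^G$ is exactly existence and uniqueness up to $G$-homotopy. Two remarks. First, your justification that the fixed-point functor behaves well rests on $(G/H')^H$ being discrete; this holds for discrete $G$ but fails for general topological groups (e.g.\ $G=S^1$, $H'=H=\{1\}$), which is the generality in which the theorem is stated. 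The repair is to argue directly that $(-)^H$ commutes with these pushouts (the attaching maps are closed $G$-cofibrations) and with the colimit, and to note that only weak contractibility of $X^H$ is needed---$X^H$ need not carry a CW-structure. Second, the ``main obstacle'' you flag at the end is not really one, and no transfinite interleaving is required: at stage $n$ you attach only $(n+1)$-cells, which leave $\pi_k$ for $k<n$ untouched; since you kill all of $\pi_n(X_n^K)$ for every $K\in\FF$ simultaneously, and the new cells meet each $X^K$ only in $(n+1)$-cells attached along $K$-fixed spheres, the map $\pi_n(X_n^K)\to\pi_n(X_{n+1}^K)$ is onto and annihilates every class, so $\pi_n(X_m^K)=0$ for all $m\geq n+1$; any new classes created live in degrees $\geq n+1$ and are killed at the next stage, and by compactness every sphere in $X^H$ factors through some finite stage. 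With these points made explicit your argument is complete and agrees with the cited proof.
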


And as a consequence of Whitehead Theorem for Families (\ref{WTFF}), we can characterize classifying spaces homotopically as follows:

\begin{Theorem}[Homotopy characterization of $\EFG{\FF}{G}$]\cite[Theorem 1.9]{luecksurvey}
Let $\FF$ be a semi-full family of subgroups of a topological group $G$. Then, a $G$-CW-complex $X$ is a model for $\EFG{\FF}{G}$ if and only if all its isotropy groups belong to $\FF$ and for each $H\in\FF$ the set $X^H$ of points fixed by $H$ is weakly contractible.
\end{Theorem}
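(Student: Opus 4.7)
The plan is to deduce the characterization from the Whitehead Theorem for Families (Theorem~\ref{WTFF}) applied to the constant $G$-map from $X$ to a point, together with the standard adjunction $[G/H \times Z, X]^G \cong [Z, X^H]$ valid for any space $Z$ regarded as a $G$-space with trivial action. Under this adjunction a $G$-map $G/H \times Z \to X$ is determined by where it sends $(H, z)$, and equivariance forces that value to lie in $X^H$, so the correspondence is a natural bijection of homotopy classes.

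For the forward implication, I would suppose $X$ is a model for $\EFG{\FF}{G}$. The isotropy condition on $X$ is built into the definition, so only weak contractibility of each $X^H$ with $H \in \FF$ remains. Fix such an $H$ and any $n \geq 0$. Because $\FF$ is closed under conjugation, $G/H \times S^n$ (with trivial $G$-action on the sphere factor) is a $G$-CW-complex whose isotropy groups, all conjugates of $H$, lie in $\FF$. The universal property then forces $[G/H \times S^n, X]^G$ to be a singleton; via the adjunction this says $[S^n, X^H]$ is a singleton, so $X^H$ is nonempty, path-connected, and has vanishing homotopy groups in every positive degree, i.e.\ weakly contractible.

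For the reverse implication, I would assume every isotropy group of $X$ lies in $\FF$ and every $X^H$ with $H \in \FF$ is weakly contractible, and then apply Theorem~\ref{WTFF} to the constant $G$-map $c : X \to \{*\}$. For each $H \in \FF$, the induced map $c^H : X^H \to \{*\}$ is a weak homotopy equivalence by hypothesis, so the theorem yields that, for any $G$-CW-complex $Y$ with isotropy in $\FF$, the induced map $c_* : [Y, X]^G \to [Y, \{*\}]^G$ is a bijection. Since the target is a singleton, $[Y, X]^G$ is a singleton as well, which is exactly the universal property defining a model for $\EFG{\FF}{G}$.

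The main subtlety, rather than any single hard obstacle, is the careful handling of the fixed-point adjunction and ensuring that $G/H \times S^n$ legitimately serves as a test object; this relies on the isotropy of $G/H$ — the conjugates of $H$ — still lying in $\FF$, which holds automatically from $\FF$ being a family. Note that the semi-full hypothesis plays no explicit role in the argument itself; it enters only upstream via Theorem~\ref{EFG_existence}, guaranteeing that one is not trying to characterize a non-existent object.
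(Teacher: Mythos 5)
Your proof is correct and follows exactly the route the paper indicates: the theorem is presented there as a consequence of the Whitehead Theorem for Families (the thesis only cites L\"uck and gives no further details), and your argument---testing against $G/H \times S^n$ via the fixed-point adjunction for one direction, and applying Theorem~\ref{WTFF} to the constant map $X \to \{*\}$ for the other---is the standard derivation. Two minor points to tighten: $[S^n, X^H]$ consists of \emph{free} homotopy classes, so for $n=1$ you should note that having a single conjugacy class forces $\pi_1(X^H)$ to be trivial before identifying free with based classes in higher degrees; and the version of Theorem~\ref{WTFF} included in the paper is stated only for discrete $G$, so the topological-group case of the statement strictly requires L\"uck's general form of that theorem.
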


In the case of a discrete group $G$, the condition of $X^H$ being weakly contractible can be substituted by $X^H$ being contractible, as weak homotopy equivalences between CW-complexes are homotopy equivalences (\cite[Theorem 3.5]{whitehead_elements_hpy}).

\begin{Prop}
Let $\FF$ be a semi-full family of subgroups of a discrete group $G$. Then, a $G$-CW-complex $X$ is a model for $\EFG{\FF}{G}$ if and only if all its isotropy groups belong to $\FF$ and for each $H\in\FF$ the set $X^H$ is contractible.
\end{Prop}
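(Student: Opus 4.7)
The plan is to deduce this proposition directly from the preceding homotopy characterization theorem by upgrading ``weakly contractible'' to ``contractible'' in the discrete setting. The only thing that stands between the two statements is the classical fact that a weakly contractible CW-complex is contractible (Whitehead's theorem, as cited in \cite{whitehead_elements_hpy}), so the entire task reduces to verifying that for a discrete group $G$ and each subgroup $H \leq G$, the fixed-point set $X^H$ of a $G$-CW-complex $X$ is itself a CW-complex.

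To do this, I would invoke Corollary~\ref{G-CW-comp-discrete}: since $G$ is discrete, the $G$-action on $X$ is cellular, meaning that for every cell $e$ and every $g \in G$, if $ge \cap e \neq \emptyset$ then $g$ fixes $e$ pointwise. In particular, for each subgroup $H \leq G$, each open cell of $X$ is either entirely fixed by $H$ or moved off itself by some element of $H$. Hence $X^H$ is the union of those closed cells $\bar{e}$ such that $e \subseteq X^H$, and one checks that this union is closed (a point in the closure is fixed by $H$ since the $H$-action is continuous) and inherits the weak topology from the filtration $X^H \cap X^n$. Thus $X^H$ is a subcomplex of $X$ and, in particular, a CW-complex in its own right.

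With this in hand, the proof is immediate. For the forward direction, if $X$ is a model for $\EFG{\FF}{G}$, then by the preceding theorem all isotropy groups lie in $\FF$ and $X^H$ is weakly contractible for every $H \in \FF$; since $X^H$ is a CW-complex, Whitehead's theorem promotes this to genuine contractibility. Conversely, if all isotropy groups of $X$ lie in $\FF$ and each $X^H$ is contractible for $H \in \FF$, then in particular each $X^H$ is weakly contractible, so the previous theorem identifies $X$ as a model for $\EFG{\FF}{G}$.

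The only mildly delicate point is the verification that $X^H$ is a CW-subcomplex, and specifically that it is closed; once this is established the result is a one-line consequence of the homotopy characterization together with Whitehead's theorem, and I do not anticipate any further obstacle.
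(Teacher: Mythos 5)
Your argument is essentially the paper's own: the paper deduces this proposition from the homotopy characterization of $\EFG{\FF}{G}$ together with the fact that weak homotopy equivalences between CW-complexes are homotopy equivalences (\cite[Theorem 3.5]{whitehead_elements_hpy}), exactly as you do. Your additional verification that $X^H$ is a subcomplex of $X$ (using the cellular action for discrete $G$ and the closedness of fixed-point sets in a Hausdorff space) correctly fills in the detail the paper leaves implicit, namely that Whitehead's theorem applies to $X^H$.
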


Finally, when considering full families of subgroups of discrete groups, we can conclude the following characterization of classifying spaces:

\begin{Corollary}\cite[Corollary 2.5]{fluchthesis}\label{other_def_of_class_space}
Let $\FF$ be a full family of subgroups of a discrete group $G$. Then, a $G$-CW-complex $X$ is a model for $\EFG{\FF}{G}$ if and only if for every $H\leq G$ we have
\begin{enumerate}[label={\emph{(\roman*)}},noitemsep]
\item $X^H = \emptyset$ if $H \notin \FF$;
\item $X^H$ is contractible if $H\in\FF$.
\end{enumerate}
\end{Corollary}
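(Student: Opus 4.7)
The plan is to reduce this characterisation to the previous proposition (the one stated for semi-full families), so the only real content is showing that, when $\FF$ is full, the condition ``all isotropy groups of $X$ belong to $\FF$'' is equivalent to the condition ``$X^H = \emptyset$ for every $H \leq G$ with $H \notin \FF$''. Once this equivalence is established, both directions of the corollary follow immediately from the preceding proposition, since the contractibility clause is identical.

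For the forward implication, I would assume $X$ is a model for $\EFG{\FF}{G}$ and pick an arbitrary $H \leq G$ with $H \notin \FF$. If $X^H$ were non-empty, there would be a point $x \in X$ with $H x = x$, meaning $H \leq G_x$. By the previous proposition, $G_x \in \FF$, and since $\FF$ is full (closed under taking subgroups) this forces $H \in \FF$, a contradiction; so $X^H = \emptyset$. For $H \in \FF$, contractibility of $X^H$ is exactly what the previous proposition gives.

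For the converse, suppose $X$ is a $G$-CW-complex satisfying (i) and (ii). I need to verify the hypotheses of the previous proposition, namely that every isotropy group lies in $\FF$ and that $X^H$ is contractible for every $H \in \FF$. The second is (ii). For the first, given any $x \in X$, the point $x$ lies in $X^{G_x}$, so $X^{G_x} \neq \emptyset$; by the contrapositive of (i) this forces $G_x \in \FF$. Applying the previous proposition then yields that $X$ is a model for $\EFG{\FF}{G}$.

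The only subtle point is the use of fullness in the forward direction: without closure under subgroups, an isotropy group lying in $\FF$ would not prevent a smaller $H \leq G_x$ from failing to lie in $\FF$ while still fixing $x$. This is precisely why the cleaner statement (i)--(ii) characterises classifying spaces only in the full-family setting; no further obstacle arises, and the argument is a direct bookkeeping reduction to the preceding proposition.
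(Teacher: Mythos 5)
Your argument is correct, and it is essentially the standard reduction (the paper itself only cites Fluch's thesis, where the proof is exactly this bookkeeping between ``isotropy groups lie in $\FF$'' and ``$X^H=\emptyset$ for $H\notin\FF$'', using fullness in the forward direction). Nothing further is needed.
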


\begin{Example}
Let $G$ be any discrete group and $\All$ the family of all subgroups of $G$, then $G/G = \{ * \}$ is a model for $\EFG{\All}{G}$. Moreover, $\EFG{\FF}{G}$ admits a $0$-dimensional model if and only if $G \in \FF$ (\cite[Proposition 3.19]{fluchthesis}).
\end{Example}

\begin{Example}[continues = RasCWcompl]\label{RasEGforZ}
$\R^n$ is a classifying space for the family of finite subgroups $\Fin$ of $\Z^n$ (which in this case coincides with the trivial family).
Given a set $\{g_1, \ldots, g_n\}$ of generators of $\Z^n$ and a base $\{e_1, \ldots, e_n\}$ of $\R^n$ as a $\R$-vector space,
define the action of $\Z^n$ on $\R^n$ as follows:
For $i\in \{1, \ldots, n\}$, $g_i$ acts on $\R^n$ by translation by the vector $e_i$.

Then, it is clear that only the subgroup $\{1\}$ fixes points (all $\R^n$, which is contractible).
\end{Example}

\begin{Example}[continues = RasDinfCWcomplex]
$\R$ is a model for $\EFG{\Fin}{D_\infty}$.

First, note that a subgroup $H$ of $D_\infty$ is finite if and only if there is $i \in \Z$ such that $H = H_i = \langle ba^i \rangle$. It is easy to see that $\R^{H_i} = \{-i\}$. Moreover, if $K \leq D_\infty$ is not finite, then it contains an element of the form $a^j$, which doesn't fix any element in $\R$. Hence, if $K \notin \Fin$, $\R^K = \emptyset$, as we needed to see.
\end{Example}

We will visit examples of classifying spaces for other families than $\Fin$ and the trivial family in Section~\ref{sec:LW} of Chapter 3.

\chapter{Bredon Cohomology}\label{ch:bredon_cohom}
\minitoc

\section{Bredon Modules}
Let $G$ be a group. If $H$ is a subgroup of $G$, then $G/H$ is a $G$-space. Moreover, the action of $G$
on $G/H$ is transitive, so $G/H$ is a homogeneous $G$-space.

Let $H,K \leq G$ and consider $G/H$ and $G/K$ as $G$-spaces. Then, we denote the set of all $G$-maps from $G/H$ to $G/K$ as $[G/H, G/K]_G$.

Given $f \in [G/H, G/K]_G$, since $f(gH)=gf(H)$, $f$ is fully characterised by $f(H)$. Assume $g\in G$ is such that $f(H) = gK$. Then, given $h\in H$, since $hH = H$, $hgK = gK$. That means $g^{-1}Hg \leq K$, so $gK \in (G/K)^H$.
In addition, given $gK \in (G/K)^H$, we can define a $G$-map $f_g : G/H \to G/K$ by $f_g(xH) = xgK$ and $f_g$ is the unique $G$-map such that the image of the coset $H$ is $gK$.

Then, $gK\mapsto f_g$ is a bijection between $(G/K)^H$ and $[G/H, G/K]_G$.

\begin{Defn}
Let $G$ be a group and $\FF$ a family of subgroups of $G$. The \emph{orbit category} $\OFG{\FF}{G}$ is the small category whose objects are homogeneous $G$-spaces $G/H$ for $H\in\FF$ and whose morphisms are $G$-maps between such $G$-spaces.
\end{Defn}

\begin{Defn}\label{bredonmodule} A \emph{Bredon module over the orbit category} $\OFG{\FF}{G}$ is a functor $M : \OFG{\FF}{G} \to \Ab$ where $\Ab$ is the category of abelian groups.

In the case that $M$ is a contravariant functor, we will say that $M$ is a \emph{right $\OFG{\FF}{G}$-module} and if it is a covariant functor, we will say that $M$ is a \emph{left $\OFG{\FF}{G}$-module}.

If $M$ and $N$ are contravariant $\OFG{\FF}{G}$-modules, a \emph{morphism} $\Phi : M \to N$ is a natural transformation from the functor $M$ to the functor $N$. That is, $\Phi$ is given by a family of homomorphisms of abelian groups $\Phi(G/H) : M(G/H) \to N(G/H)$ such that for every $f \in [G/H,G/K]_G$ the following diagram commutes:

$$\begin{tikzcd}
M(G/H)
        \arrow[rr, "\Phi(G/H)"]
& & N(G/H)
        \\
        \\
M(G/K)
        \arrow[rr, "\Phi(G/K)"]
        \arrow[uu, "M(f)"]
& & N(G/K).
        \arrow[uu, "N(f)"]
\end{tikzcd}
$$
\\

In the case $M$ and $N$ are covariant $\OFG{\FF}{G}$-modules, morphisms are defined in the analogous way, taking into account that the vertical arrows in the diagram have to be reversed.
\end{Defn}

\begin{Example}
The trivial $\OFG{\FF}{G}$-module $\Ztriv{\FF}$ is defined by the functor that associates any element of the orbit category with $\Z$
and any morphism between elements of the orbit category with the identity homomorphism in $\Z$. That is, if $G/H, G/K \in \OFG{\FF}{G}$
and $f : G/H \to G/K$ is a $G$-map, then $\Ztriv{\FF}(G/H) = \Z$ and $\Ztriv{\FF}(f): \Z \to \Z$ is defined by $\Ztriv{\FF}(f)(n) = n$.

More generally, given an abelian group $A$, we define a constant $\OFG{\FF}{G}$ module $\underline{A}$ as $\underline{A}(G/H) = A$ for each object $G/H$ of $\OFG{\FF}{G}$ and $\underline{A}(f)$ is the identity homomorphism in $A$ for every morphism $f$ of $\OFG{\FF}{G}$.
\end{Example}

\begin{Example}\label{ZfirstEx} Given $K\in\FF$, we define the contravariant (right) Bredon module $\Zfirst{G}{K}$ as follows:
\begin{enumerate}[label={(\roman*)},noitemsep]
\item for $G/H \in \OFG{\FF}{G}$, take $\Zfull{G}{H}{K}$ as the free abelian group with basis $[G/H, G/K]_G$;
\item for $G/H, G/L \in \OFG{\FF}{G}$ and $f\in [G/H, G/L]_G$, $\Z[f, G/K]_G$ sends $g \in [G/L, G/K]_G$ to  $g \circ f \in [G/H, G/K]_G$, and then extend linearly to a homomorphism $\Zfull{G}{L}{K} \to \Zfull{G}{H}{K}$.
\end{enumerate}

Analogously, we can define  the covariant (left) Bredon module $\Zsecond{G}{H}$ for a given $G/H \in \OFG{\FF}{G}$.
\end{Example}

\begin{Defn}
$\RMod{\FF}{G}$ is the category of contravariant Bredon modules over $\OFG{\FF}{G}$ with morphisms as defined in \ref{bredonmodule}.

$\LMod{\FF}{G}$ is the category of covariant Bredon modules over $\OFG{\FF}{G}$ with morphisms as defined in \ref{bredonmodule}.
\end{Defn}

\section{Free and projective Bredon modules}

We will briefly construct the free objects of $\RMod{\FF}{G}$. For a more detailed and rigorous view on this matter, see \cite[Chapter 1. Section 5]{fluchthesis}.

\begin{Defn}
An $\FF$-\emph{set} $\Delta$ is a pair $\Delta = (\Delta, \phi)$ consisting of a set $\Delta$ and a function $\phi : \Delta \to \FF$. We denote $\Delta_H = \phi^{-1}(\{H\})$ the $H$-\emph{component} of $\Delta$ for each $H \in \FF$. A map $f : (\Delta, \phi) \to (\Delta^\prime, \phi^\prime)$ of $\FF$-sets is a map $f$ between the sets $\Delta$ and $\Delta^\prime$ such that the diagram formed by $f$, $\phi$ and $\phi^\prime$ commutes.

The category described by the objects and maps defined above is denoted by $\FF\text{-}\mathfrak{Set}$.
\end{Defn}

A Bredon module $M$ can also be seen as an $\FF$-set, taking $M_H = M(G/H)$. We can consider the forgetful functor
$$U : \RMod{\FF}{G} \to \FF\text{-}\mathfrak{Set}$$
that sends the Bredon module $M$ to its underlying $\FF$-set $UM$, also denoted by $M$.

\begin{Defn}
We say an $\FF$-set $X$ is a subset of a Bredon module $M$ if $X_H \subseteq M_H$ for all $H\in\FF$. The submodule of $M$ \emph{generated by} $X$ is the smallest submodule of $M$ containing the $\FF$-set $X$ and denoted by $\langle X \rangle$.
\end{Defn}

The singleton $\FF$-sets are those with $\Delta_K = \{\delta\}$ for a particular $K\in\FF$ and $\Delta_H = \emptyset$ for all $H\in\FF$ different from $K$. We will denote this singleton by $\Delta_\delta$, and $K = \phi(\delta)$. These $\FF$-sets give rise to the Bredon modules $\Zfirst{G}{K}$:

\begin{Lemma}\cite[Lemma 1.12]{fluchthesis}\label{ZfirstFromSingleton}
Let $K\in\FF$. Then $\Zfirst{G}{K} = \langle \Delta_\delta \rangle$, where $K = \phi(\delta)$.
\end{Lemma}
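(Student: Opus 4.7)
The plan is to identify $\delta$ with the identity morphism $\operatorname{id}_{G/K} \in [G/K, G/K]_G$, which is a distinguished basis element of the free abelian group $\Zfull{G}{K}{K} = \Zfirst{G}{K}(G/K)$. Under this identification, the singleton $\FF$-set $\Delta_\delta$ (whose $K$-component is $\{\operatorname{id}_{G/K}\}$ and whose other components are empty) sits as a sub-$\FF$-set of the underlying $\FF$-set of $\Zfirst{G}{K}$, so that $\langle \Delta_\delta \rangle$ makes sense as a submodule of $\Zfirst{G}{K}$.

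The inclusion $\langle \Delta_\delta \rangle \subseteq \Zfirst{G}{K}$ is automatic. For the reverse inclusion, I would argue component-wise. Fix any $H \in \FF$; since $\Zfull{G}{H}{K}$ is the free abelian group on the set $[G/H, G/K]_G$, it suffices to show that every basis element $g \in [G/H, G/K]_G$ belongs to $\langle \Delta_\delta \rangle (G/H)$. By the definition recalled in Example~\ref{ZfirstEx}, the morphism $\Zfirst{G}{K}(g) = \Z[g, G/K]_G : \Zfull{G}{K}{K} \to \Zfull{G}{H}{K}$ sends $\operatorname{id}_{G/K}$ to $\operatorname{id}_{G/K} \circ g = g$. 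Since any submodule containing $\Delta_\delta$ must be closed under the structure morphisms of the functor $\Zfirst{G}{K}$, in particular under $\Zfirst{G}{K}(g)$, we conclude that $g \in \langle \Delta_\delta \rangle (G/H)$.

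Since $H \in \FF$ was arbitrary and the generators $g$ span $\Zfull{G}{H}{K}$ as an abelian group, this gives $\langle \Delta_\delta \rangle (G/H) = \Zfull{G}{H}{K}$ for all $H \in \FF$, hence $\langle \Delta_\delta \rangle = \Zfirst{G}{K}$. There is no real obstacle beyond setting up the identification $\delta \leftrightarrow \operatorname{id}_{G/K}$ carefully; the rest is a direct unwinding of the definition of the structure maps of $\Zfirst{G}{K}$, together with the observation that a submodule, being closed under all contravariant functoriality maps, is automatically saturated under composition with morphisms in the orbit category.
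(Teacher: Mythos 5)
Your proposal is correct and takes essentially the standard route: the paper itself states this lemma with a citation to Fluch's thesis rather than proving it, and the argument there is exactly yours, namely identifying $\delta$ with $\operatorname{id}_{G/K}\in\Zfull{G}{K}{K}$ and noting that every basis element $g\in[G/H,G/K]_G$ is the image $\Z[g,G/K]_G(\operatorname{id}_{G/K})=\operatorname{id}_{G/K}\circ g$, so any submodule containing $\Delta_\delta$ is all of $\Zfirst{G}{K}$. No gaps to report.
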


We can write any $\FF$-set $\Delta$ as the coproduct of the singleton $\FF$-sets of its elements, i.e., $\Delta = \coprod_{\delta\in\Delta} \Delta_\delta$. For this reason, and given Lemma \ref{ZfirstFromSingleton}, we can now define a left adjoint for $U$:

\begin{Prop}\label{forgetful_free_functs}\cite[Proposition 1.13]{fluchthesis}
    The forgetful functor $U$ has
    a left adjoint $F : \FF\text{-}\mathfrak{Set} \to \RMod{\FF}{G}$.
\end{Prop}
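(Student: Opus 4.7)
The plan is to construct the left adjoint $F$ explicitly by extending the assignment $\Delta_\delta \mapsto \Zfirst{G}{\phi(\delta)}$ supplied by Lemma~\ref{ZfirstFromSingleton} from singleton $\FF$-sets to arbitrary $\FF$-sets via direct sums, and then to verify the adjunction through a Yoneda-style identification.

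First I would define $F$ on objects. Given $\Delta = (\Delta, \phi)$, since $\Delta = \coprod_{\delta \in \Delta} \Delta_\delta$ in $\FF\text{-}\mathfrak{Set}$, set $F(\Delta) = \bigoplus_{\delta \in \Delta} \Zfirst{G}{\phi(\delta)}$, where the direct sum is taken in $\RMod{\FF}{G}$ and is computed pointwise on each $G/H$ as a direct sum of abelian groups. On morphisms, an $\FF$-set map $f : (\Delta,\phi) \to (\Delta',\phi')$ satisfies $\phi'(f(\delta)) = \phi(\delta)$ for every $\delta \in \Delta$, so on the $\delta$-summand I define $F(f)$ as the identity $\Zfirst{G}{\phi(\delta)} = \Zfirst{G}{\phi'(f(\delta))}$ followed by the canonical inclusion into the $f(\delta)$-summand of $F(\Delta')$. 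Functoriality of $F$ is then immediate.

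Next I would establish the required natural bijection $\mathrm{Hom}_{\RMod{\FF}{G}}(F(\Delta), M) \cong \mathrm{Hom}_{\FF\text{-}\mathfrak{Set}}(\Delta, UM)$ for every $\Delta$ and every right Bredon module $M$. Using the universal property of the direct sum, this reduces to the representability statement $\mathrm{Hom}_{\RMod{\FF}{G}}(\Zfirst{G}{K}, M) \cong M(G/K)$ for each $K \in \FF$. This is the Yoneda-type step and is where the main technical content lies: given a morphism $\psi : \Zfirst{G}{K} \to M$, set $m = \psi(G/K)(\mathrm{id}_{G/K}) \in M(G/K)$; naturality of $\psi$ applied to any $f \in [G/H, G/K]_G$ (viewed as a morphism $G/H \to G/K$ in $\OFG{\FF}{G}$) yields $\psi(G/H)(f) = M(f)(m)$, so $\psi$ is completely determined by $m$. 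Conversely, for any $m \in M(G/K)$, the formula $\psi_m(G/H)(f) = M(f)(m)$, extended linearly to $\Zfull{G}{H}{K}$, defines a natural transformation by the contravariant functoriality of $M$ on composites in $\OFG{\FF}{G}$.

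Unwinding these identifications, a morphism $F(\Delta) \to M$ corresponds to a family $\{m_\delta\}_{\delta \in \Delta}$ with $m_\delta \in M(G/\phi(\delta)) = (UM)_{\phi(\delta)}$, which is precisely the datum of an $\FF$-set map $\Delta \to UM$. Naturality of the correspondence in both $\Delta$ and $M$ is routine to read off from the formula $\psi_m(G/H)(f) = M(f)(m)$ together with the functoriality of $F$ defined above. The main obstacle is the Yoneda identification, but given the explicit free description of $\Zfirst{G}{K}$ from Example~\ref{ZfirstEx}, this is essentially the classical Yoneda lemma applied to the representable functor $[-, G/K]_G$ composed with the free abelian group functor, and so presents no real difficulty beyond bookkeeping.
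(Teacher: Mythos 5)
Your construction is correct and is essentially the same argument as the one the paper relies on (via \cite[Proposition 1.13]{fluchthesis}): decompose an $\FF$-set into its singletons, set $F\Delta = \coprod_{\delta\in\Delta} \Zfirst{G}{\phi(\delta)}$, and verify the adjunction through the Yoneda-type identification $\mor{\FF}{\Zfirst{G}{K}}{M} \cong M(G/K)$. Your handling of the contravariant functoriality in the naturality check is exactly what makes that identification work, so nothing is missing.
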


It follows that given an $\FF$-set $\Delta$ its image by $F$ is $$F\Delta = \coprod_{\delta\in\Delta} \Zfirst{G}{\phi(\delta)}.$$

\begin{Defn} Given $M\in\RMod{\FF}{G}$, we say that $M$ is \emph{free} if there is an $\FF$-set $\Delta$ such that $M = F\Delta$.
\end{Defn}

\begin{Defn} Given $P\in \RMod{F}{G}$, we say $P$ is a \emph{projective Bredon module} if for every $M,N\in \RMod{F}{G}$ and morphisms $\phi: P \to M$ and $\pi: N \to M$ such that $N \xrightarrow{\pi} M \to 0$ is exact there is a morphism $\psi: P \to N$ such that the following diagram commutes
$$\begin{tikzcd}
& P \arrow[dl, "\psi"', dashrightarrow] \arrow[d, "\phi"] \\
N
\arrow[r, "\pi"]
& M \arrow[r]
& 0
\end{tikzcd}
$$

\end{Defn}

$\RMod{\FF}{G}$ is an abelian category in which kernels and images are calculated component wise. In particular,
a sequence of right $\OFG{\FF}{G}$-modules $L \to M \to N$ is exact at $M$ if and only if the corresponding
sequences of abelian groups $N(G/H) \to M(G/H) \to L(G/H)$ are exact at $M(G/H)$ for all $G/H\in\OFG{\FF}{G}$.
For those reasons, the following characterization of projective Bredon modules, analogous to that for modules
over a ring that can be found for example in \cite[section 2.2]{weibel_homalg} or \cite[section 3.1]{rotman_homalg}, holds true.

\begin{Prop}\label{equiv_cond_proj_mod} Let $P$ be a Bredon module over the orbit category $\OFG{\FF}{G}$. Then the following statements for $P$ are equivalent:
\begin{enumerate}[label={\emph{(\arabic*)}},noitemsep]
\item $P$ is projective;
\item every exact sequence $0 \to M \to N \to P \to 0$ splits;
\item $\mor{\FF}{P}{?}$ is an exact functor;
\item $P$ is a direct summand of a free $\OFG{\FF}{G}$-module.
\end{enumerate}
\end{Prop}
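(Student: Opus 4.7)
The plan is to prove the equivalences by the cycle $(1) \Rightarrow (2) \Rightarrow (4) \Rightarrow (1)$ and then handle $(1) \Leftrightarrow (3)$ separately, following the standard template from module theory but adapted to the abelian category $\RMod{\FF}{G}$. The crucial structural fact I will lean on throughout is that kernels, images, and exactness in $\RMod{\FF}{G}$ are all computed componentwise on $\Ab$, together with the free/forgetful adjunction $F \dashv U$ established in Proposition~\ref{forgetful_free_functs}.

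First I would do $(1) \Rightarrow (2)$: given an exact sequence $0 \to M \to N \xrightarrow{\pi} P \to 0$, apply the defining lifting property of projectivity to $\mathrm{id}_P : P \to P$ and $\pi$ to obtain a splitting $\psi : P \to N$; then the exact sequence splits as usual. For $(2) \Rightarrow (4)$, pick any surjection onto $P$ from a free module: apply Proposition~\ref{forgetful_free_functs} to the underlying $\FF$-set $UP$ to obtain $F(UP)$ together with the counit $\varepsilon : F(UP) \twoheadrightarrow P$, which is an epimorphism since its image contains every element of each component $P(G/H)$; by (2) the resulting short exact sequence splits, exhibiting $P$ as a direct summand of the free module $F(UP)$. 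For $(4) \Rightarrow (1)$, it suffices to verify that every free module $F\Delta$ is projective, and that a direct summand of a projective module is projective (the latter is a formal diagram chase valid in any abelian category). Freeness of $F\Delta = \coprod_{\delta \in \Delta} \Zfirst{G}{\phi(\delta)}$ reduces via the universal property of the coproduct to projectivity of each $\Zfirst{G}{K}$: given $\phi : \Zfirst{G}{K} \to M$ and a surjection $\pi : N \to M$, the adjunction $F \dashv U$ identifies $\phi$ with a single element of $M(G/K)$, componentwise surjectivity of $\pi$ supplies a preimage in $N(G/K)$, and adjointness converts that choice back into the desired lift $\psi : \Zfirst{G}{K} \to N$.

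Finally, for $(1) \Leftrightarrow (3)$, the functor $\mor{\FF}{P}{?}$ is left exact in any abelian category, so the only nontrivial content is that it preserves epimorphisms exactly when $P$ admits lifts against them, which is precisely condition (1).

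The main obstacle I anticipate is making the $(4) \Rightarrow (1)$ step watertight, specifically the claim that $\Zfirst{G}{K}$ represents the ``evaluate at $G/K$'' functor in a way that turns lifting problems in $\RMod{\FF}{G}$ into componentwise lifting problems in $\Ab$. This rests on a Yoneda-type identification $\mor{\FF}{\Zfirst{G}{K}}{M} \cong M(G/K)$, which must be extracted from Lemma~\ref{ZfirstFromSingleton} and the description of $F$ as left adjoint to $U$; once this natural bijection is in hand, projectivity of $\Zfirst{G}{K}$ follows from the surjectivity of $\pi(G/K) : N(G/K) \twoheadrightarrow M(G/K)$ (a consequence of componentwise exactness), and the remaining implications are routine.
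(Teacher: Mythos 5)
Your proof is correct, and it is essentially the argument the paper intends: the paper does not write out a proof but simply invokes the classical characterization from \cite[Section 2.2]{weibel_homalg} and \cite[Section 3.1]{rotman_homalg}, noting that it transfers because $\RMod{\FF}{G}$ is abelian with componentwise exactness and has the free/forgetful adjunction of Proposition~\ref{forgetful_free_functs}. Your write-up, including the identification $\mor{\FF}{\Zfirst{G}{K}}{M}\cong M(G/K)$ used to prove projectivity of the frees, is exactly the standard argument being cited, so there is no substantive difference in approach.
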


Note that the right Bredon modules $\Zfirst{G}{K}$ are projective Bredon modules,
as they are free $\OFG{\FF}{G}$-modules. This, together with Proposition~\ref{forgetful_free_functs},
leads us to the following crucial result:

\begin{Theorem}\label{RModHasEnoughProj}
$\RMod{\FF}{G}$ has enough projectives, i.e., for every $M\in\RMod{\FF}{G}$ there is a projective $P\in\RMod{\FF}{G}$ and an epimorphism $\Phi : P \to M$.
\end{Theorem}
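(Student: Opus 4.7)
The plan is to construct, for any $M\in\RMod{\FF}{G}$, a free Bredon module surjecting onto $M$; freeness yields projectivity via clause~(4) of Proposition~\ref{equiv_cond_proj_mod}, so this suffices. The natural candidate comes from the adjunction $F\dashv U$ of Proposition~\ref{forgetful_free_functs}: take $P := FUM$, i.e. the free module on the underlying $\FF$-set of $M$. Explicitly,
\[
P \;=\; \coprod_{H\in\FF}\;\coprod_{m\in M(G/H)} \Zfirst{G}{H},
\]
with one copy of $\Zfirst{G}{H}$ for each element $m$ in the $H$-component of $UM$. This is free by definition.

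Next I would produce the desired epimorphism $\Phi:P\to M$ as the counit of the adjunction $F\dashv U$ evaluated at $M$, which under the adjunction corresponds to the identity $\FF$-set map $UM\to UM$. To make this concrete, one uses the Yoneda-type identification implicit in Example~\ref{ZfirstEx}: a morphism $\Zfirst{G}{H}\to M$ is determined by the image of the distinguished basis element $\mathrm{id}_{G/H}\in\Zfull{G}{H}{H}$, and every element of $M(G/H)$ arises in this way. Thus for each $m\in M(G/H)$ the summand $\Zfirst{G}{H}$ indexed by $m$ carries a canonical morphism to $M$ sending the generator $\mathrm{id}_{G/H}$ to $m$, and assembling these by the universal property of the coproduct yields $\Phi$.

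Finally, surjectivity of $\Phi$ is checked component-wise, which is legitimate because, as noted just before Proposition~\ref{equiv_cond_proj_mod}, kernels and images in $\RMod{\FF}{G}$ are computed at each $G/H$ separately. For every $H\in\FF$ and every $m\in M(G/H)$, the element $m$ lies in the image of $\Phi(G/H)$ by construction, so $\Phi(G/H)$ is surjective and hence $\Phi$ is an epimorphism.

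There is essentially no obstacle: the only mildly subtle point is unpacking the adjunction $F\dashv U$ into the concrete statement that $\mor{\FF}{\Zfirst{G}{H}}{M}\cong M(G/H)$ naturally in $M$ (a Yoneda lemma for the orbit category). Once this identification is in hand, the construction and verification of $\Phi$ are immediate, and the theorem follows.
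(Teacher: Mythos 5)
Your proposal is correct and follows essentially the same route the thesis indicates: the theorem is deduced from the freeness (hence projectivity) of the modules $\Zfirst{G}{K}$ together with the adjunction $F\dashv U$ of Proposition~\ref{forgetful_free_functs}, taking the free module on the underlying $\FF$-set of $M$ and the counit as the epimorphism, with surjectivity checked componentwise. The Yoneda-type identification $\mor{\FF}{\Zfirst{G}{H}}{M}\cong M(G/H)$ you invoke is exactly the ingredient underlying the constructive proof the thesis cites from Mislin, so nothing is missing.
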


In \cite[Section 3]{mislin_equivariant}, the author gives a constructive approach to projective Bredon modules that may offer more practical insight on these modules. In particular, reading the proof of Theorem~\ref{RModHasEnoughProj} (\cite[Pg. 9-10]{mislin_equivariant}) may be a good exercise to get familiar with working with Bredon modules.

\begin{Defn}
    Given a right $\OFG{\FF}{G}$-module $M$, a \emph{resolution of} $M$ \emph{in} $\RMod{\FF}{G}$ is
    a long exact sequence $$ \ldots \to N_n \to N_{n-1} \to \ldots \to N_0 \to M \to 0$$
    such that $N_k\in\RMod{\FF}{G}$ for all $k$.

    In the case that $N_k$ are projective $\OFG{\FF}{G}$-modules, we say that the sequence is a \emph{projective resolution}.
\end{Defn}

Note that Theorem~\ref{RModHasEnoughProj} implies that there is a projective resolution of every contravariant $\OFG{\FF}{G}$-module in $\RMod{\FF}{G}$.

\section{Bredon Cohomology}

By Theorem~\ref{RModHasEnoughProj}, every $M\in\RMod{\FF}{G}$ admits a projective resolution $P_*(M) \epim M$. Therefore, for every $M,N\in\RMod{\FF}{G}$ we can define a cochain complex $\mor{\FF}{P_*(M)}{N}$, which allows us to define the derived functors of the morphism functor $\mor{\FF}{?}{??}$.

\begin{Defn} Given $N\in\RMod{\FF}{G}$ and $n\in\N$, we define $\Ext{n}{\FF}{?}{N}$ to be the $n$-th right derived functor of $\mor{\FF}{?}{N}$. That is, for every $M\in\RMod{\FF}{G}$
$$\Ext{n}{\FF}{M}{N} = \Ho_{n}(\mor{\FF}{P_*(M)}{N}).$$
\end{Defn}

Analogously to the case of Proposition~\ref{equiv_cond_proj_mod}, the results \cite[2.2.3]{weibel_homalg} and
\cite[2.5.2]{weibel_homalg} remain true for $\RMod{\FF}{G}$:

\begin{Prop}\label{projective_equivalence}
Let $M\in\RMod{\FF}{G}$. Then the following statements for $M$ are equivalent:
\begin{enumerate}[label={\emph{(\arabic*)}},noitemsep]
\item $M$ is projective;
\item $\mor{\FF}{M}{?}$ is an exact functor;
\item $\Ext{n}{\FF}{M}{N} = 0$ for every $n \geq 1$ and every $N\in\RMod{\FF}{G}$;
\item $\Ext{1}{\FF}{M}{N} = 0$ for every $N\in\RMod{\FF}{G}$.
\end{enumerate}
\end{Prop}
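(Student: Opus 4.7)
The plan is to exploit the derived-functor machinery now available in $\RMod{\FF}{G}$ (it is an abelian category with enough projectives by Theorem~\ref{RModHasEnoughProj}) and combine it with the characterisations of projectives in Proposition~\ref{equiv_cond_proj_mod}. The equivalence (1) $\Leftrightarrow$ (2) is essentially already recorded in Proposition~\ref{equiv_cond_proj_mod}, so the genuine content is the cycle (1) $\Rightarrow$ (3) $\Rightarrow$ (4) $\Rightarrow$ (1).

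For (1) $\Rightarrow$ (3), I would observe that if $M$ is projective then the complex $\dots \to 0 \to M \xrightarrow{\mathrm{id}} M \to 0$ is itself a projective resolution of $M$; applying $\mor{\FF}{?}{N}$ leaves a complex concentrated in degree $0$, so every $\Ext{n}{\FF}{M}{N}$ with $n \geq 1$ vanishes. The step (3) $\Rightarrow$ (4) is immediate by specialising $n = 1$.

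The core of the argument is (4) $\Rightarrow$ (1). Using Theorem~\ref{RModHasEnoughProj}, choose an epimorphism $\pi : P \twoheadrightarrow M$ with $P$ projective and set $K = \ker \pi$, giving a short exact sequence $0 \to K \to P \xrightarrow{\pi} M \to 0$ in $\RMod{\FF}{G}$. Applying $\mor{\FF}{?}{K}$ and using the standard long exact sequence of derived functors, one obtains
$$\mor{\FF}{P}{K} \to \mor{\FF}{K}{K} \to \Ext{1}{\FF}{M}{K} = 0,$$
where the last equality uses hypothesis (4). Hence $\mathrm{id}_K$ lifts to a retraction $\sigma : P \to K$, so the sequence splits and $M$ becomes a direct summand of the projective module $P$. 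By Proposition~\ref{equiv_cond_proj_mod}(4), $M$ is projective, closing the cycle.

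The main obstacle is ensuring that the long exact sequence of $\Ext$-groups really is at our disposal in $\RMod{\FF}{G}$, which in turn requires the horseshoe lemma and the standard comparison theorem for projective resolutions in this category. Fortunately, as noted in the paragraph preceding Proposition~\ref{equiv_cond_proj_mod}, kernels, images, and exactness in $\RMod{\FF}{G}$ are computed component-wise in $\Ab$; consequently, all the usual homological-algebra constructions transport verbatim from the case of modules over a ring, and the argument sketched above goes through without further technical input.
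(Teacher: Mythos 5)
Your proposal is correct and follows essentially the same route as the paper, which gives no independent argument but simply notes that the standard characterisations from Weibel carry over to $\RMod{\FF}{G}$ because it is an abelian category with enough projectives in which exactness is checked component-wise; your write-up is exactly that standard argument (identity resolution for (1)$\Rightarrow$(3), and the long exact sequence applied to $0 \to K \to P \to M \to 0$ to split it for (4)$\Rightarrow$(1)). The only cosmetic point is that in the last step $M$ is a direct summand of the projective $P$, which is in turn a summand of a free module, so Proposition~\ref{equiv_cond_proj_mod}(4) applies after this one extra observation.
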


\begin{Defn} Let $M\in\RMod{\FF}{G}$. Then the \emph{Bredon cohomology groups} $\Cohom{n}{\FF}{G}{M}$ \emph{of} $G$ \emph{with coefficients in} $M$ are
$$\Cohom{n}{\FF}{G}{M} = \Ext{n}{\FF}{\Ztriv{\FF}}{M}. $$
\end{Defn}

\chapter{Bredon Dimensions}\label{ch:bredon_dim}
\minitoc

\section{Bredon cohomological dimension}

\begin{Defn}\label{proj_resolution} Let $G$ be a discrete group, $\FF$ a full family of subgroups of $G$ and
    $M \in \RMod{\FF}{G}$. Let $n$ be the smallest natural number such that there
    is a projective resolution of $M$
    $$ 0 \to P_n \to P_{n-1} \to \ldots \to P_0 \to M \to 0$$
    of length $n$. Then, we say that $n$ is the \emph{projective dimension of} $M$ and denote
    it by $\pd_{\FF} M = n$. In the case there is no such $n$, we say $\pd_{\FF} M = \infty$.
\end{Defn}

As in the case of Proposition~\ref{projective_equivalence}, the results \cite[Lemma 4.1.6]{weibel_homalg} and \cite[Proposition 8.6]{rotman_homalg}
hold for $\RMod{\FF}{G}$:

\begin{Prop}\label{pd_zeros}
    Let $M$ be a right $\OFG{\FF}{G}$-module. Then the following statements are equivalent:
    \begin{enumerate}[label={\emph{(\arabic*)}},noitemsep]
        \item $\pd_{\FF} M \leq d$;
        \item $\Ext{n}{\FF}{M}{N} = 0$ for every $N\in\RMod{\FF}{G}$ and every $n > d$ ;
        \item $\Ext{d+1}{\FF}{M}{N} = 0$ for every $N\in\RMod{\FF}{G}$;
        \item given any projective resolution of $M$ $$\ldots \to P_2 \to P_1 \to P_0 \to 0,$$
        the kernel $\Ker{P_d \to P_{d-1}}$ is projective.
    \end{enumerate}
\end{Prop}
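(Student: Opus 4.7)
The plan is to prove the cyclic chain $(1) \Rightarrow (2) \Rightarrow (3) \Rightarrow (4) \Rightarrow (1)$. The implication $(1) \Rightarrow (2)$ is immediate from the definition of Ext: a projective resolution of length $\leq d$, when plugged into $\mor{\FF}{-}{N}$, yields a cochain complex that is zero in degrees greater than $d$, so its cohomology groups $\Ext{n}{\FF}{M}{N}$ vanish for $n > d$. The implication $(2) \Rightarrow (3)$ is a tautology obtained by specialising $n = d+1$.

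The substantive step is $(3) \Rightarrow (4)$, which relies on dimension shifting. Fix any projective resolution $\ldots \to P_1 \to P_0 \to M \to 0$ and let $\Omega^k M = \Ker{P_{k-1} \to P_{k-2}}$ denote its $k$-th syzygy (with the convention $P_{-1} = M$), so that the short exact sequences $0 \to \Omega^{k} M \to P_{k-1} \to \Omega^{k-1} M \to 0$ splice the resolution together. Each of these yields a long exact sequence in $\Ext{\ast}{\FF}{-}{N}$, and since every $P_{k-1}$ is projective (so $\Ext{n}{\FF}{P_{k-1}}{N} = 0$ for $n \geq 1$ by Proposition~\ref{projective_equivalence}), iterating gives the dimension-shift isomorphism
\begin{equation*}
\Ext{d+1}{\FF}{M}{N} \;\cong\; \Ext{1}{\FF}{\Omega^d M}{N}.
\end{equation*}
Assumption (3) makes the left-hand side vanish for every $N$, so Proposition~\ref{projective_equivalence} forces $\Omega^d M$ to be projective. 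The short exact sequence $0 \to \Ker{P_d \to P_{d-1}} \to P_d \to \Omega^d M \to 0$ then splits, exhibiting $\Ker{P_d \to P_{d-1}}$ as a direct summand of the projective module $P_d$, which yields (4).

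The closing implication $(4) \Rightarrow (1)$ runs the splitting argument in reverse: for any projective resolution, projectivity of $\Ker{P_d \to P_{d-1}}$ splits the same short exact sequence and presents $\Omega^d M$ as a direct summand of $P_d$, hence as a projective Bredon module. Truncating produces the length-$d$ projective resolution
\begin{equation*}
0 \to \Omega^d M \to P_{d-1} \to \ldots \to P_0 \to M \to 0,
\end{equation*}
so $\pd_{\FF} M \leq d$. The main technical ingredient throughout is the availability of long exact sequences in Ext associated with short exact sequences of Bredon modules, which is a formal consequence of $\RMod{\FF}{G}$ being an abelian category with enough projectives (Theorem~\ref{RModHasEnoughProj}); once this is in hand, everything reduces to the standard syzygy arguments familiar from ordinary module theory, transplanted verbatim to the Bredon setting.
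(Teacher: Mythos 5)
Your overall strategy---the cyclic chain $(1)\Rightarrow(2)\Rightarrow(3)\Rightarrow(4)\Rightarrow(1)$ via dimension shifting, resting on Proposition~\ref{projective_equivalence} and the long exact sequences of $\Ext{*}{\FF}{-}{N}$ that exist because $\RMod{\FF}{G}$ is abelian with enough projectives---is exactly the standard argument behind the sources the paper invokes for this proposition (Weibel, Lemma~4.1.6; Rotman, Proposition~8.6); the paper gives no independent proof. Your steps $(1)\Rightarrow(2)\Rightarrow(3)$ are fine, and in $(3)\Rightarrow(4)$ the shift $\Ext{d+1}{\FF}{M}{N}\cong\Ext{1}{\FF}{\Omega^{d}M}{N}$ correctly yields that $\Omega^{d}M=\Ker{P_{d-1}\to P_{d-2}}$ is projective; passing from there to projectivity of $\Ker{P_d\to P_{d-1}}$ is also legitimate, because the sequence $0\to\Ker{P_d\to P_{d-1}}\to P_d\to\Omega^{d}M\to 0$ splits on account of its \emph{quotient} being projective.

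The genuine gap is in $(4)\Rightarrow(1)$: you assert that projectivity of the kernel $\Ker{P_d\to P_{d-1}}$ splits that same sequence. A short exact sequence splits when its quotient is projective (or its sub-object injective), not when its sub-object is projective; $0\to\Z\xrightarrow{\,2\,}\Z\to\Z/2\Z\to 0$ is the standard failure, and it already lives in a Bredon category (trivial group and family). Moreover, no repair can close the cycle with item (4) read literally: $\Ker{P_d\to P_{d-1}}$ is the $(d+1)$-st syzygy $\Omega^{d+1}M$, and by your own truncation argument its projectivity in every resolution is equivalent to $\pd_{\FF}M\le d+1$, which is weaker than $(1)$. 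What the cited statements actually assert in item (4) is projectivity of the $d$-th kernel $\Omega^{d}M=\Ker{P_{d-1}\to P_{d-2}}$, equivalently of the image of $P_d\to P_{d-1}$; with that reading your dimension shift in $(3)\Rightarrow(4)$ already delivers (4) directly, and $(4)\Rightarrow(1)$ is immediate by truncating to $0\to\Omega^{d}M\to P_{d-1}\to\cdots\to P_0\to M\to 0$, a projective resolution of length $d$, with no splitting needed. So the fix is to drop the invalid splitting claim and run the syzygy argument (and statement (4)) at the level of $\Omega^{d}M$ rather than $\Omega^{d+1}M$.
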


\begin{Defn} Let $G$ be a discrete group and $\FF$ a full family of subgroups of $G$. The \emph{Bredon cohomological dimension of} $G$ \emph{with respect to} $\FF$ is the projective dimension of the trivial $\OFG{\FF}{G}$-module $\Ztriv{\FF}$ and we denote it $\cd_{\FF} G$.
\end{Defn}

Note that if the group $G$ belongs to the family $\FF$, then the trivial $\OFG{\FF}{G}$-module $\Ztriv{\FF}$ is free (and hence
projective). That means $\cd_{\FF} G = 0$. The reciprocal is true in the case of $\FF$ being semi-full:

\begin{Proposition}\label{G_in_FF_cd_0}\cite[Proposition 3.20]{fluchthesis}
    Let $G$ be a group and $\FF$ a semi-full family of subgroups of $G$. Then, $\cd_{\FF} G = 0$ if and only if $G\in\FF$.
\end{Proposition}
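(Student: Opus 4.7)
The plan is to prove both implications; the reverse is immediate from the hypothesis, while the forward direction exploits the semi-full assumption in an essential way.

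For the reverse implication, suppose $G \in \FF$. Then $G/G$ is an object of $\OFG{\FF}{G}$, and for every $H \in \FF$ the set $[G/H, G/G]_G$ consists of the unique collapsing $G$-map. Consequently the representable module $\Zfirst{G}{G}$ of Example~\ref{ZfirstEx} takes the value $\Z$ at every object and assigns the identity on $\Z$ to every morphism of the orbit category, so it coincides with $\Ztriv{\FF}$. Being free by Lemma~\ref{ZfirstFromSingleton}, $\Ztriv{\FF}$ is projective and thus $\cd_{\FF} G = 0$.

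For the forward implication, assume $\cd_{\FF} G = 0$. By Proposition~\ref{equiv_cond_proj_mod} the trivial module $\Ztriv{\FF}$ is a direct summand of a free module $F\Delta = \bigoplus_{\delta \in \Delta} \Zfirst{G}{\phi(\delta)}$; fix a section $s\colon \Ztriv{\FF} \to F\Delta$ and a retraction $p$ with $p \circ s = \mathrm{id}$. For each $H \in \FF$, $x_H := s_H(1)$ is a finite formal sum $\sum_j n_j [f_j]$ where $f_j\colon G/H \to G/\phi(\delta_j)$ sends $H$ to some coset $g_j\,\phi(\delta_j)$, forcing $H \leq g_j\,\phi(\delta_j)\,g_j^{-1}$. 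Now the semi-full hypothesis enters: the intersection $H^\star := \bigcap_j g_j\,\phi(\delta_j)\,g_j^{-1}$ lies in $\FF$ and contains $H$, and naturality of $s$ against the canonical $G$-map $G/H \to G/H^\star$, together with the injectivity of the induced pullback on the natural bases, forces $x_H$ to be the image of $x_{H^\star}$.

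The principal obstacle is to bootstrap this coherence into the conclusion $G \in \FF$. The idea is that the family $(x_H)_{H \in \FF}$ is a compatible system in the inverse limit $\varprojlim_{\OFG{\FF}{G}} F\Delta$, and repeated application of the step above produces an ascending chain $H \leq H^\star \leq H^{\star\star} \leq \cdots$ in $\FF$ which, by finiteness of supports, stabilises. Combined with the identity $p \circ s = \mathrm{id}$ and the semi-full closure under intersections and conjugation, this stabilisation should ultimately force the existence of some $\delta_0 \in \Delta$ with $\phi(\delta_0) = G$, whence $G \in \FF$. The delicate bookkeeping for this last step, which is the main difficulty of the argument, is carried out in detail in \cite[Proposition~3.20]{fluchthesis}, and I would invoke that computation to close the proof.
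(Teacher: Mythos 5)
Your reverse implication is fine, and it is exactly the observation the paper itself makes just before the statement: when $G\in\FF$ the module $\Zfirst{G}{G}$ is constantly $\Z$ with identity structure maps, so $\Ztriv{\FF}$ is free, hence projective. The problem is the forward implication, which is the only non-trivial content of the proposition (and the only place semi-fullness is needed). The paper does not prove it either — it simply cites \cite[Proposition 3.20]{fluchthesis} — and your write-up, after a sketch, falls back on that same citation, so no proof has actually been supplied. Moreover, the mechanism you sketch would not close the argument: the chain $H\leq H^\star\leq H^{\star\star}\leq\cdots$ has no reason to stabilise (ascending chains of subgroups of an arbitrary group need not terminate, and the finiteness of the support of each $x_H$ bounds the length of each formal sum, not the length of the chain), and even if it did stabilise at some $L=L^\star$, that only says $L$ is a finite intersection of conjugates $g\,\phi(\delta)\,g^{-1}$ containing it; it does not produce $\delta_0$ with $\phi(\delta_0)=G$. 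What is missing is the counting argument that uses $p\circ s=\mathrm{id}$, the analogue of the norm/augmentation argument in the classical proof that $\Z$ is projective over $\Z[Q]$ only when $Q$ is trivial; without it, nothing rules out the splitting being supported on proper subgroups.

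For comparison, here is how the converse actually closes, and where semi-fullness enters twice. Closure under finite intersections makes $\FF$ downward directed: for $H,H_0\in\FF$ naturality of $s$ along the projections $G/(H\cap H_0)\to G/H$ and $G/(H\cap H_0)\to G/H_0$, together with the fact that these maps send basis elements injectively to basis elements, shows that all the elements $x_H$ are one and the same finite formal sum $x=\sum_j n_j\,g_jK_{\delta_j}$ (nonzero, since $p(x)=1$). Hence \emph{every} $H\in\FF$ fixes each coset $g_jK_{\delta_j}$, so $L:=\bigcap_j g_jK_{\delta_j}g_j^{-1}$ lies in $\FF$ (conjugation plus finite intersections) and contains every member of $\FF$; being a maximum of a conjugation-closed family, $L$ is normal in $G$. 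Now evaluate at $G/L$: since $L$ is maximal in $\FF$, the only representables with nonzero value there are those with $K_\delta$ conjugate to $L$, so $F\Delta(G/L)$ is a free module over $\Z[W]$ with $W=[G/L,G/L]_G\cong G/L$, while $\Ztriv{\FF}(G/L)=\Z$ is the trivial $W$-module. The identity $p\circ s=\mathrm{id}$ then exhibits trivial $\Z$ as a direct summand of a free $\Z[W]$-module, and the classical fixed-point/norm argument forces $W=G/L$ to be trivial, i.e.\ $G=L\in\FF$. This last reduction is precisely the "delicate bookkeeping" your proposal leaves to \cite{fluchthesis}; without it (or something equivalent), the forward direction remains unproven.
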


\begin{Corollary}\label{cd_max_min}
    Given a full family $\FF$ of subgroups of a discrete group $G$, we have
    \begin{enumerate}[label={\emph{(\roman*)}}, noitemsep]
        \item $\cd_{\FF} G = \max\{ d \st \text{ there is } M\in\RMod{\FF}{G} \text{ with }\Cohom{d}{\FF}{G}{M} \neq 0\}$ and
        \item $\cd_{\FF} G = \min\{ d \st \Cohom{d+1}{\FF}{G}{M} = 0 \text{ for all } M\in\RMod{\FF}{G}\}$.
    \end{enumerate}
    Also, $\cd_{\FF} G = \infty$ if and only if the maximum in $(i)$ doesn't exist, which is
    equivalent to the set over which we take the minimum in $(ii)$ being empty.
\end{Corollary}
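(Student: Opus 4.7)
The plan is to deduce this corollary directly from Proposition~\ref{pd_zeros} applied to the trivial module $M = \Ztriv{\FF}$, using the defining identity
$\Cohom{n}{\FF}{G}{N} = \Ext{n}{\FF}{\Ztriv{\FF}}{N}$
and the fact that $\cd_{\FF} G = \pd_{\FF} \Ztriv{\FF}$ by definition.

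First I would prove (ii), since it is the more direct translation. By Proposition~\ref{pd_zeros}, the inequality $\pd_{\FF} \Ztriv{\FF} \leq d$ is equivalent to $\Ext{d+1}{\FF}{\Ztriv{\FF}}{N} = 0$ for every $N\in\RMod{\FF}{G}$, which in turn reads $\Cohom{d+1}{\FF}{G}{N} = 0$ for every such $N$. Hence the set
$S_{\min} = \{ d \st \Cohom{d+1}{\FF}{G}{N} = 0 \text{ for all } N\in\RMod{\FF}{G}\}$
coincides with $\{d \st \pd_{\FF} \Ztriv{\FF} \leq d\}$, and its minimum (when it exists) is exactly $\pd_{\FF} \Ztriv{\FF} = \cd_{\FF} G$. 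If $S_{\min}$ is empty, then $\pd_{\FF} \Ztriv{\FF} > d$ for every $d$, so $\cd_{\FF} G = \infty$.

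For (i), write $S_{\max} = \{ d \st \text{there is } M\in\RMod{\FF}{G} \text{ with } \Cohom{d}{\FF}{G}{M} \neq 0\}$. Equivalently, $d\notin S_{\max}$ means $\Cohom{d}{\FF}{G}{M} = 0$ for all $M$, i.e.\ (by Proposition~\ref{pd_zeros}) $\pd_{\FF} \Ztriv{\FF} \leq d-1$. So if $\cd_{\FF} G = n < \infty$, then $d\in S_{\max}$ forces $n \geq d$, giving $\max S_{\max} \leq n$; on the other hand the equivalence $(1)\Leftrightarrow(3)$ in Proposition~\ref{pd_zeros} tells us that $\pd_{\FF} \Ztriv{\FF} \not\leq n-1$, so there exists some $N$ with $\Ext{n}{\FF}{\Ztriv{\FF}}{N} = \Cohom{n}{\FF}{G}{N} \neq 0$, proving $n\in S_{\max}$ and hence $\max S_{\max} = n$. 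If $\cd_{\FF} G = \infty$, then for each $d$ we have $\pd_{\FF} \Ztriv{\FF} \not\leq d-1$, so there is $N$ with $\Cohom{d}{\FF}{G}{N}\neq 0$; this shows $S_{\max}$ is unbounded, and in particular has no maximum, confirming the last sentence of the corollary.

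There is no real obstacle here: the entire argument is a bookkeeping translation between the language of derived functors and the specific notation for Bredon cohomology, together with the standard equivalences of Proposition~\ref{pd_zeros}. The only care needed is to handle the two edge cases ($\cd_{\FF} G = \infty$ simultaneously with $S_{\max}$ having no maximum and $S_{\min}$ being empty) so that the final clause of the statement follows cleanly.
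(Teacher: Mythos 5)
Your argument is correct and is exactly the route the paper intends: the paper states Corollary~\ref{cd_max_min} without proof, treating it as an immediate consequence of Proposition~\ref{pd_zeros} applied to $\Ztriv{\FF}$ together with the definitions $\cd_{\FF} G = \pd_{\FF}\Ztriv{\FF}$ and $\Cohom{n}{\FF}{G}{M} = \Ext{n}{\FF}{\Ztriv{\FF}}{M}$, which is precisely the translation you spell out (including the infinite-dimensional case).
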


\begin{Defn}
    Let $G$ be a discrete group and $\FF$ a full family of subgroups of $G$.
    Then, the \emph{Bredon geometric dimension of} $G$ \emph{for the family} $\FF$
    is the smallest possible dimension of a model for $\EFG{\FF}{G}$.
\end{Defn}

Since $\{*\}$ is a model for $\EFG{\FF}{G}$ if $G\in\FF$ and since $\{*\}$ is the only $0$-dimensional
contractible $G$-CW-complex, we have the following result analogous to Proposition~\ref{G_in_FF_cd_0}:

\begin{Proposition}\label{G_in_FF_gd_0}\cite[Proposition 3.19]{fluchthesis}
    Let $G$ be a group and $\FF$ a semi-full family of subgroups of $G$. Then, $\gd_{\FF} G = 0$ if and only if $G\in\FF$.
\end{Proposition}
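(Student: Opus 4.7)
The plan is to use the homotopy characterization of classifying spaces stated just above for semi-full families: a $G$-CW-complex $X$ is a model for $\EFG{\FF}{G}$ iff every isotropy group of $X$ lies in $\FF$ and $X^H$ is contractible for each $H \in \FF$. I will prove the two implications separately, with the harder direction ($\gd_\FF G = 0 \Rightarrow G \in \FF$) hinging on the semi-full hypothesis.

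For the easy direction, assume $G \in \FF$. The one-point space $G/G = \{*\}$ with trivial $G$-action is a zero-dimensional $G$-CW-complex whose only isotropy group is $G \in \FF$, and for every $H \in \FF$ we have $\{*\}^H = \{*\}$, which is contractible. Hence this is a model for $\EFG{\FF}{G}$ of dimension zero, giving $\gd_\FF G = 0$.

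For the converse, suppose $\gd_\FF G = 0$ and let $X$ be a zero-dimensional model. As a zero-dimensional $G$-CW-complex, $X$ is a discrete $G$-set, so $X = \bigsqcup_\alpha G/H_\alpha$ with each $H_\alpha \in \FF$. The crucial step is to show $X$ consists of a single point. For any two points $x, y \in X$, their stabilizers $G_x$ and $G_y$ are conjugates of some $H_\alpha$'s and hence both lie in $\FF$; by semi-fullness $G_x \cap G_y \in \FF$, so $X^{G_x \cap G_y}$ is contractible. Since a contractible subspace of a discrete space must be a single point, and since $x \in X^{G_x} \subseteq X^{G_x \cap G_y}$ and $y \in X^{G_y} \subseteq X^{G_x \cap G_y}$, we conclude $x = y$. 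Thus $X$ is a single point, necessarily of the form $G/K$ with $K = G$, and since $K$ is an isotropy group of $X$ it lies in $\FF$, yielding $G \in \FF$.

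The main obstacle is precisely where the semi-full hypothesis enters: without closure under finite intersections, a zero-dimensional model could a priori have several orbits $G/H_\alpha$ whose fixed-point sets need not coalesce, and one would have to rule out such configurations by a more delicate argument (or by invoking the inequality $\cd_\FF G \le \gd_\FF G$ together with Proposition~\ref{G_in_FF_cd_0}). The intersection trick bypasses both of these complications cleanly.
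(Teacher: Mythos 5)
Your proof is correct. Note that the paper does not actually prove this statement: it cites Fluch and justifies the converse only by the remark that $\{*\}$ is the only $0$-dimensional contractible $G$-CW-complex, which tacitly assumes a $0$-dimensional model is contractible; for a merely semi-full family this is not immediate, since the trivial subgroup need not belong to $\FF$, so a priori only the fixed-point sets $X^H$ with $H\in\FF$ are known to be contractible. Your intersection-of-stabilizers argument supplies exactly this missing step and makes explicit where semi-fullness is used, so it is a completed version of the intended argument rather than a genuinely different one. Two small remarks: non-emptiness of $X$ (needed to pass from ``any two points coincide'' to ``$X$ is a single point'') follows since $\FF\neq\emptyset$ and $X^H$ is contractible, hence non-empty, for any $H\in\FF$; and the alternative route you mention, via $\cd_{\FF}G\le\gd_{\FF}G$ together with Proposition~\ref{G_in_FF_cd_0}, also works, although that inequality is only stated later in the chapter.
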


\section{Bredon cohomology and Classifying spaces}

Given a $G$-CW-complex $X$, we will construct a chain of projective contravariant $\OFG{\FF}{G}$-modules for any family
$\FF$ containing the family of isotropy groups of $X$ as done in \cite{mislin_equivariant}.

\begin{Defn}
    Given $X$ a $G$-CW-complex, we denote by $\FF(X)$ the family of isotropy subgroups of $X$.
\end{Defn}

Let $\Delta_n$ be the $G$-set formed by taking all cosets involved in the cell attachment to construct the $n$-skeleton of $X$
from its $(n-1)$-skeleton. That is, $\Delta_n = \{ G/H_\alpha \st \alpha \in \mathcal{A}^n\}$.

Then we have the cellular chain complex $C_* (X)$ given by $$C_n (X) = \Z [\Delta_n].$$
Given $K \leq G$, if we consider the cellular chain complex defined from the $G$-CW-complex $X^K$, we have
$C_n(X^K) = \Z[\Delta_n^K]$. By definition of $\Delta_n$, and since $(G/H)^K \cong [G/K, G/H]_G$ for any $H \leq G$, we have
$C_n(X^K) \cong \bigoplus_{\alpha \in \mathcal{A}^n} \Zfull{G}{K}{H_\alpha}$.
Given a family $\FF$ such that $\FF(X) \subseteq \FF$, we can define the contravariant $\OFG{\FF}{G}$-module
\begin{align*}
    \underline{C_n (X)} : \OFG{\FF}{G} & \longrightarrow  \Ab \\
    G/K & \longmapsto C_n(X^K),
\end{align*}

We can summarize some of the properties of $\underline{C_* (X)}$ found for example
in \cite{mislin_equivariant}:

\begin{Remark}\label{properties_underline_Cstar}
\begin{enumerate}[label={\emph{(\roman*)}}, noitemsep]
    \item since $\underline{C_n (X)} = \bigoplus_{\alpha\in\mathcal{A}^n} \Zfirst{G}{H_\alpha}$,
    then $\underline{C_n (X)}$ is projective for every $n\geq 0$ and
    \item given $M\in\RMod{\FF}{G}$, we have $$\operatorname{mor}_{\FF} \left( \underline{C_*(X)}, M\right) \cong
    \operatorname{mor}_{\FF(X)} \left( \underline{C_*(X)}, \modres{F} M\right),$$ where
    $F : \FF(X) \to \FF$ is the inclusion functor.\label{Cstar_restricted}
\end{enumerate}
\end{Remark}

\begin{Defn}\label{bredcohom_on_class_spaces}
    Let $X$ be a $G$-CW-complex, $\FF$ a family of subgroups of $G$ such that $\FF(X) \subseteq \FF$ and $M \in \RMod{\FF}{G}$.
    We define the \emph{Bredon cohomology groups of }$X$ \emph{with coefficients in} $M$ as the groups
    $$\Cohom{n}{\FF}{X}{M} = \Ho^n\left(\operatorname{mor}_{\FF} \left( \underline{C_*(X)}, M\right)\right)$$
    for every $n \geq 0$.
\end{Defn}

\begin{Corollary}\cite[Corollary 3.5]{mislin_equivariant}\label{cohom_EFG_Bred_cohom}
    Let $\FF$ be a full family of subgroups of $G$. Let $X$ be a model for $\EFG{\FF}{G}$ and $M \in \RMod{\FF}{G}$. Then,
    $\underline{C_*(X)}$ is a projective resolution of $\Ztriv{\FF}$. In particular, by the definition of
    $\Cohom{n}{\FF}{G}{M}$, we have
    $$\Cohom{n}{\FF}{X}{M} \cong \Cohom{n}{\FF}{G}{M}$$
    for all $n \geq 0$.
\end{Corollary}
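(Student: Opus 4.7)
The plan is to show that $\underline{C_*(X)} \to \Ztriv{\FF} \to 0$ is a projective resolution of the trivial module, after which the isomorphism of cohomology groups is immediate from the definitions.

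First, projectivity is already in hand: by Remark~\ref{properties_underline_Cstar}\,(i), $\underline{C_n(X)} = \bigoplus_{\alpha \in \mathcal{A}^n} \Zfirst{G}{H_\alpha}$ is a direct sum of free $\OFG{\FF}{G}$-modules (note that $H_\alpha \in \FF(X) \subseteq \FF$ because $X$ is a model for $\EFG{\FF}{G}$), hence projective. So what remains is to exhibit an augmentation $\underline{C_0(X)} \to \Ztriv{\FF}$ and verify exactness of the augmented complex.

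Next, I would define the augmentation component-wise: for each $G/K \in \OFG{\FF}{G}$, the ordinary cellular augmentation $\varepsilon_K : C_0(X^K) \to \Z$ sending each $0$-cell to $1$. The naturality of $\varepsilon_K$ with respect to the $G$-maps $G/K \to G/L$ is straightforward (a $G$-map $G/K \to G/L$ induces a continuous map $X^L \to X^K$ which sends $0$-cells to $0$-cells, and $\varepsilon$ is counting $0$-cells with sign). This assembles into a morphism $\underline{C_0(X)} \to \Ztriv{\FF}$ of right $\OFG{\FF}{G}$-modules. Exactness is checked componentwise, since $\RMod{\FF}{G}$ is abelian with kernels and images computed pointwise. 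Thus it suffices to show that for every $K \in \FF$, the augmented cellular chain complex
\[
\cdots \to C_1(X^K) \to C_0(X^K) \xrightarrow{\varepsilon_K} \Z \to 0
\]
is exact. Since $\FF$ is full and $X$ is a model for $\EFG{\FF}{G}$, Corollary~\ref{other_def_of_class_space} gives that $X^K$ is a contractible CW-complex (the fixed-point set inherits a CW-structure from the $G$-CW-structure of $X$, cf.\ Proposition~\ref{GCW-open-isotropy}), whose reduced cellular homology therefore vanishes, giving the required exactness.

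Finally, with $\underline{C_*(X)} \to \Ztriv{\FF} \to 0$ established as a projective resolution, the isomorphism is immediate:
\[
\Cohom{n}{\FF}{G}{M} = \Ext{n}{\FF}{\Ztriv{\FF}}{M} = \Ho^n\!\left(\mor{\FF}{\underline{C_*(X)}}{M}\right) = \Cohom{n}{\FF}{X}{M},
\]
where the first equality is the definition of Bredon cohomology, the second uses that $\Ext^n$ can be computed from any projective resolution of $\Ztriv{\FF}$, and the third is Definition~\ref{bredcohom_on_class_spaces}. The main (and really only) substantive point is the contractibility of $X^K$ feeding into exactness of the augmented complex; all the remaining work consists of translating between the componentwise and functorial viewpoints on $\RMod{\FF}{G}$.
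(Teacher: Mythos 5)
Your proof is correct and follows essentially the same route as the source the paper relies on: the thesis gives no proof of this corollary, citing \cite[Corollary 3.5]{mislin_equivariant}, and the argument there is exactly yours — projectivity of each $\underline{C_n(X)}$ as a direct sum of modules $\Zfirst{G}{H_\alpha}$ with $H_\alpha\in\FF(X)\subseteq\FF$, plus exactness of the augmented complex checked componentwise via contractibility of $X^K$ for every $K\in\FF$, followed by independence of $\EXT$ from the chosen projective resolution. No gaps; nothing further is needed.
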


The following results relating Bredon cohomology and classifying spaces can give a good
overview of the basic relation between cohomological and geometric Bredon dimensions:

\begin{Proposition}\cite[Theorem 0.1 (a)]{lueckmeintrup}
    Let $G$ be a discrete group, let $\FF$ be a semi-full family of subgroups of
    $G$ and let $n \geq 3$. Then, there is an $n$-dimensional model for $\EFG{\FF}{G}$
    if and only if there exists a projective resolution of the trivial $\OFG{\FF}{G}$-module
    $\Ztriv{\FF}$ of length $n$ in $\RMod{\FF}{G}$.
\end{Proposition}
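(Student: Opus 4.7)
The plan is to treat the two directions separately. The forward direction is immediate from Corollary~\ref{cohom_EFG_Bred_cohom}: if $X$ is an $n$-dimensional model for $\EFG{\FF}{G}$, then $\underline{C_*(X)}$ is a projective resolution of $\Ztriv{\FF}$ in $\RMod{\FF}{G}$ with $\underline{C_k(X)} = 0$ for $k > n$. If this resolution has length strictly less than $n$, I would lengthen it by inserting a two-term contractible complex $Q \to Q$ (identity) on some projective $Q$, yielding a projective resolution of length exactly $n$.

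For the backward direction, given a projective resolution $0 \to P_n \to P_{n-1} \to \cdots \to P_0 \to \Ztriv{\FF} \to 0$, the plan is to build geometrically an $n$-dimensional $G$-CW-complex $X$ whose cellular chain complex $\underline{C_*(X)}$ is isomorphic to $P_*$. Corollary~\ref{other_def_of_class_space} then identifies $X$ as a model for $\EFG{\FF}{G}$ once each $X^H$ with $H\in\FF$ is shown contractible. First, by adding finitely many contractible complexes of projectives to $P_*$ if needed, I would reduce to the case where each $P_k$ is free on an $\FF$-set $\Delta_k$, so $P_k = \bigoplus_{\delta\in\Delta_k} \Zfirst{G}{\phi(\delta)}$. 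Then I would construct $X$ inductively by skeleta: take $X^{(0)} = \bigsqcup_{\delta\in\Delta_0} G/\phi(\delta)$ and, given $X^{(k-1)}$, attach equivariant $k$-cells $G/\phi(\delta)\times D^k$ for $\delta\in\Delta_k$ whose attaching $G$-maps realise the prescribed algebraic boundary $\partial_k : P_k \to P_{k-1}$ on the cellular chain level.

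The hard part is this inductive attachment. The equivariant Hurewicz theorem turns an algebraic boundary into a geometric one only when each fixed-point set $(X^{(k-1)})^H$, for $H\in\FF$, is $(k-2)$-connected; without this, the $G$-homotopy class of the attaching map is not determined by the algebraic datum. The hypothesis $n\geq 3$ is exactly what makes the required inductive connectivity achievable: the base case forces us to control the $2$-skeleton so that every $(X^{(2)})^H$ is simply connected, which one arranges by attaching auxiliary $2$- and $3$-cells and recording these in the initial replacement of $P_*$. From $k=3$ onwards the equivariant Hurewicz theorem then drives the induction up to $k=n$. This is the familiar Eilenberg--Ganea-type gap between $\cd_{\FF} G$ and $\gd_{\FF} G$, and is precisely why the proposition excludes $n=2$.

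Once the induction terminates at $X = X^{(n)}$, for each $H\in\FF$ the chain complex $\underline{C_*(X)}(G/H)$ equals $P_*(G/H)$, which is exact, so $X^H$ is acyclic; combined with the simple connectedness built into the construction and the Whitehead theorem, $X^H$ is contractible. For $H\notin\FF$, no orbit type $G/\phi(\delta)$ contains $H$ in its isotropy, so $X^H = \emptyset$. Corollary~\ref{other_def_of_class_space} then concludes that $X$ is an $n$-dimensional model for $\EFG{\FF}{G}$, as required.
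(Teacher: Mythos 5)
The forward direction is fine (and the padding trick to reach length exactly $n$ is harmless). The genuine gap is in the backward direction, at the step where you attach equivariant $k$-cells ``whose attaching $G$-maps realise the prescribed algebraic boundary $\partial_k$ on the cellular chain level'' and conclude that $\underline{C_*(X)}$ is isomorphic to (a modification of) $P_*$. For $k=1$ and $k=2$ this realisation is impossible in general: an equivariant $1$-cell $G/K\times D^1$ attached by a $G$-map $G/K\times S^0\to X^{(0)}$ contributes a boundary that is forced to be a \emph{difference of two single basis elements} of $\underline{C_0}(G/K)$ (the two $K$-fixed $0$-cells hit by the attaching map), whereas an arbitrary morphism $\Zfirst{G}{K}\to \underline{C_0}$ is an arbitrary integer combination; similarly a prescribed $\partial_2$ is constrained by $\pi_1$-data of the fixed-point sets that the chain level does not record. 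The equivariant Hurewicz argument you invoke only becomes available from dimension $3$ onwards, and the remark about ``attaching auxiliary $2$- and $3$-cells and recording these in the initial replacement of $P_*$'' does not explain how an arbitrarily prescribed resolution could be forced to appear as a cellular chain complex; in fact it cannot, so the concluding claim that $\underline{C_*(X)}(G/H)=P_*(G/H)$ is too strong.

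The standard repair (this is how the cited proof of L\"uck--Meintrup runs; the thesis itself only quotes the statement) goes in the opposite order: do not try to realise the given $P_*$, but start geometrically. Take any model $Z$ for $\EFG{\FF}{G}$, which exists by Theorem~\ref{EFG_existence}, and let $X^{(2)}=Z^{(2)}$; then $(X^{(2)})^H=(Z^H)^{(2)}$ is non-empty and simply connected for every $H\in\FF$, and $\underline{C_2}\to\underline{C_1}\to\underline{C_0}\to\Ztriv{\FF}\to 0$ is exact at $\underline{C_1}$, $\underline{C_0}$ and $\Ztriv{\FF}$. Extend this by free Bredon modules to a free resolution, and use the hypothesis $\pd_{\FF}\Ztriv{\FF}\leq n$ together with Proposition~\ref{pd_zeros} to truncate: the $n$-th kernel is projective, and an Eilenberg swindle makes it free without changing the length. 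Only now does your induction work: since for $k\geq 3$ every $(X^{(k-1)})^H$ with $H\in\FF$ is simply connected with homology concentrated above degree $k-2$, the equivariant Hurewicz theorem realises any prescribed cycle in degree $k-1$ by an attaching map, so cells in dimensions $3,\dots,n$ can be attached following the algebraic data, and the resulting $n$-dimensional complex has contractible $H$-fixed points for all $H\in\FF$. A final minor point: since $\FF$ is only assumed semi-full, you should conclude via the homotopy characterisation (isotropy in $\FF$ and $X^H$ contractible for $H\in\FF$) rather than Corollary~\ref{other_def_of_class_space}; the assertion $X^H=\emptyset$ for $H\notin\FF$ is neither available nor needed in the semi-full setting.
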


In \cite[pp. 151ff]{luecktransformation}, the author constructs a projective resolution of $\Ztriv{\FF}$
in $\RMod{\FF}{G}$ of length $n$, given an $n$-dimensional model for $\EFG{\FF}{G}$, which proves the
following:

\begin{Theorem}\label{cd_leq_gd}
    For any semi-full family $\FF$ of subgroups of a discrete group $G$ we have
    $$\cd_{\FF}(G) \leq \gd_{\FF}(G).$$
\end{Theorem}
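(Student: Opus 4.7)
The plan is to exploit the constructive bridge between classifying spaces and projective resolutions provided by Corollary~\ref{cohom_EFG_Bred_cohom}. If $\gd_{\FF}(G) = \infty$ there is nothing to prove, so we may assume $\gd_{\FF}(G) = n < \infty$. By definition of Bredon geometric dimension, there is then an $n$-dimensional model $X$ for $\EFG{\FF}{G}$.

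First, I would associate to $X$ the cellular chain complex of contravariant $\OFG{\FF}{G}$-modules $\underline{C_*(X)}$ introduced just before Definition~\ref{bredcohom_on_class_spaces}. By Remark~\ref{properties_underline_Cstar}~(i), each $\underline{C_k(X)}$ is a projective right $\OFG{\FF}{G}$-module, being a direct sum of modules of the form $\Zfirst{G}{H_\alpha}$ indexed by the equivariant $k$-cells of $X$. Since $X$ is $n$-dimensional, there are no $k$-cells for $k > n$, whence $\underline{C_k(X)} = 0$ for all $k > n$.

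Next, I would invoke Corollary~\ref{cohom_EFG_Bred_cohom}, which asserts that for a model $X$ of $\EFG{\FF}{G}$ the augmented complex $\underline{C_*(X)} \to \Ztriv{\FF} \to 0$ is a projective resolution of $\Ztriv{\FF}$ in $\RMod{\FF}{G}$. Combined with the vanishing $\underline{C_k(X)} = 0$ for $k > n$, this yields a projective resolution
\begin{equation*}
    0 \to \underline{C_n(X)} \to \underline{C_{n-1}(X)} \to \cdots \to \underline{C_0(X)} \to \Ztriv{\FF} \to 0
\end{equation*}
of length at most $n$. By Definition~\ref{proj_resolution} and the definition of $\cd_{\FF} G$ as the projective dimension of $\Ztriv{\FF}$, this forces $\cd_{\FF}(G) \leq n = \gd_{\FF}(G)$.

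The main obstacle is really not in this proof at all, but rather packaged inside Corollary~\ref{cohom_EFG_Bred_cohom}: that the cellular chain complex of a model of $\EFG{\FF}{G}$ is exact with augmentation to $\Ztriv{\FF}$. That input relies on the contractibility of the fixed point sets $X^H$ for $H \in \FF$ (so that each $\underline{C_*(X)}(G/K) = C_*(X^K)$ is acyclic with augmentation $\Z$), together with the projectivity observation above; once these ingredients are in place, the argument reduces to the dimension bookkeeping carried out above.
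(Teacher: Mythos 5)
Your argument is correct and is essentially the paper's own proof: the thesis likewise deduces the inequality from the fact that an $n$-dimensional model for $\EFG{\FF}{G}$ gives rise to a length-$n$ projective resolution of $\Ztriv{\FF}$ in $\RMod{\FF}{G}$, there cited to L\"uck's construction rather than assembled from Remark~\ref{properties_underline_Cstar} and Corollary~\ref{cohom_EFG_Bred_cohom}. The only cosmetic mismatch is that Corollary~\ref{cohom_EFG_Bred_cohom} is stated for full families while the theorem allows semi-full ones, but, as you observe, the exactness of the augmented cellular complex only uses contractibility of the fixed-point sets $X^{K}$ for $K\in\FF$ together with projectivity of the $\underline{C_k(X)}$, so the argument goes through unchanged.
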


And as a consequence of the two previous results we have:

\begin{Proposition}
    Let $\FF$ be a semi-full family of subgroups of $G$ such that $\cd_{\FF}(G) \geq 3$
    or $\gd_{\FF}(G)\geq 4$. Then, $\cd_{\FF}(G) = \gd_{\FF}(G)$.
\end{Proposition}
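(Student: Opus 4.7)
The plan is to combine the two cited results: Theorem \ref{cd_leq_gd} (which already gives $\cd_{\FF}(G) \le \gd_{\FF}(G)$) with the Lück--Meintrup style proposition (the $n \ge 3$ equivalence between $n$-dimensional models and length-$n$ projective resolutions). The goal is therefore to establish the reverse inequality $\gd_{\FF}(G) \le \cd_{\FF}(G)$ under either hypothesis, and the two hypotheses will be handled by a single uniform argument that then gets split according to the resulting bound.

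First I would dispose of the trivial case $\cd_{\FF}(G) = \infty$, where the inequality $\gd_{\FF}(G) \le \cd_{\FF}(G)$ holds vacuously. Now assume $\cd_{\FF}(G) = c < \infty$. By Proposition \ref{pd_zeros}, since $\pd_{\FF}\Ztriv{\FF} = c$, there is a projective resolution of $\Ztriv{\FF}$ of length $c$; by padding with zero modules if needed, one obtains a projective resolution of length $n := \max(c, 3)$. Since $n \ge 3$, the cited Lück--Meintrup proposition produces an $n$-dimensional model for $\EFG{\FF}{G}$, so
\[
    \gd_{\FF}(G) \;\le\; \max\bigl(\cd_{\FF}(G),\, 3\bigr).
\]

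Now I would split on the hypothesis. If $\cd_{\FF}(G) \ge 3$, the maximum above equals $\cd_{\FF}(G)$ directly, giving $\gd_{\FF}(G) \le \cd_{\FF}(G)$, and combined with Theorem \ref{cd_leq_gd} this yields the desired equality. If instead $\gd_{\FF}(G) \ge 4$, then the inequality $\gd_{\FF}(G) \le \max(\cd_{\FF}(G), 3)$ forces $\max(\cd_{\FF}(G), 3) \ge 4$, hence $\cd_{\FF}(G) \ge 4 \ge 3$, and we are reduced to the previous case.

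The only delicate point — and the step I expect to watch most carefully — is the padding argument when $c \le 2$ under the first hypothesis, or more precisely the observation that the hypothesis $\gd_{\FF}(G) \ge 4$ rules out $\cd_{\FF}(G) \le 2$ via the cited proposition. There is no serious obstacle beyond this, as the key machinery is already in place: Theorem \ref{cd_leq_gd} supplies one inequality, and the Lück--Meintrup correspondence between length-$n$ resolutions and $n$-dimensional models (valid precisely for $n \ge 3$) supplies the other, with the numerical hypotheses of the proposition tailored exactly to keep us outside the exceptional Eilenberg--Ganea range $\cd_{\FF}(G)=2,\ \gd_{\FF}(G)=3$.
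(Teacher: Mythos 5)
Your proof is correct and follows exactly the route the paper intends: the paper states this proposition as an immediate consequence of the L\"uck--Meintrup correspondence (valid for $n \geq 3$) and Theorem~\ref{cd_leq_gd}, and your padding argument giving $\gd_{\FF}(G) \leq \max\{\cd_{\FF}(G),3\}$, together with the case split on the two hypotheses, is precisely the standard way to fill in those details.
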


\begin{Defn}\label{BFG}
    Let $\FF$ be a semi-full family of subgroups of $G$. We say that $Y$ is a model
    for $\BFG{\FF}{G}$ if there is a model $X$ for $\EFG{\FF}{G}$ such that $Y = X/G$, that is,
    $Y$ is the orbit space of some classifying space for $\FF$.
\end{Defn}

\begin{Theorem}\cite[Theorem 4.2]{fluchthesis}\label{HomBFG}
    Let $\FF$ be a semi-full family of subgroups of $G$. Then, for every $n\in \N$ we have
    $$\Cohom{n}{\FF}{G}{\Ztriv{\FF}} \cong \Ho^{n}\left(\BFG{\FF}{G}\right),$$
    where $\Ho^*$ denotes the singular cohomology functor.
\end{Theorem}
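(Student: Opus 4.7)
The plan is to identify the Bredon cochain complex computing $\Cohom{n}{\FF}{G}{\Ztriv{\FF}}$ with the cellular cochain complex of the orbit space $\BFG{\FF}{G}$, and then invoke the standard agreement between cellular and singular cohomology for CW-complexes.

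First, pick a model $X$ for $\EFG{\FF}{G}$ and set $Y = X/G$, a model for $\BFG{\FF}{G}$. By Corollary~\ref{cohom_EFG_Bred_cohom}, the complex $\underline{C_*(X)}$ is a projective resolution of $\Ztriv{\FF}$, so
\[
\Cohom{n}{\FF}{G}{\Ztriv{\FF}} \;\cong\; \Ho^n\bigl(\mor{\FF}{\underline{C_*(X)}}{\Ztriv{\FF}}\bigr).
\]
The key computation is a Yoneda-style identification: for any $K \in \FF$, a morphism $\Phi : \Zfirst{G}{K} \to \Ztriv{\FF}$ is determined by $\Phi(G/K)$ applied to the basis element $\mathrm{id}_{G/K} \in [G/K,G/K]_G$, and any choice of integer extends uniquely to such a morphism because $\Ztriv{\FF}$ takes every orbit morphism to the identity of $\Z$. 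Hence $\mor{\FF}{\Zfirst{G}{K}}{\Ztriv{\FF}} \cong \Z$ naturally in $K$. Combined with $\underline{C_n(X)} = \bigoplus_{\alpha \in \mathcal{A}^n} \Zfirst{G}{H_\alpha}$ from Remark~\ref{properties_underline_Cstar}, this gives
\[
\mor{\FF}{\underline{C_n(X)}}{\Ztriv{\FF}} \;\cong\; \prod_{\alpha \in \mathcal{A}^n} \Z.
\]

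Next I would match this with the cellular cochain complex of $Y$. Each equivariant $n$-cell $G/H_\alpha \times D^n$ of $X$ descends to exactly one ordinary $n$-cell of $Y$, so $\mathcal{A}^n$ indexes the $n$-cells of $Y$ and the above product is exactly $C^n_{\mathrm{cell}}(Y;\Z)$. The main technical step will be to check that the coboundary maps agree. On the Bredon side, the differential is induced by the attaching $G$-maps $q^n_\alpha$ from Definition~\ref{definition_GCW} via the functor $\mor{\FF}{-}{\Ztriv{\FF}}$; because $\Ztriv{\FF}$ collapses every orbit morphism to the identity of $\Z$, the induced map on $\prod_\alpha \Z$ records only incidence multiplicities of cells modulo the $G$-action, which is precisely the cellular incidence in the quotient $Y = X/G$. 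Thus the two cochain complexes are isomorphic, and their $n$-th cohomology groups agree, giving $\Cohom{n}{\FF}{G}{\Ztriv{\FF}} \cong \Ho^n_{\mathrm{cell}}(Y;\Z)$.

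Finally, since $Y$ is a CW-complex, cellular cohomology with $\Z$ coefficients coincides with singular cohomology, yielding $\Ho^n_{\mathrm{cell}}(Y;\Z) \cong \Ho^n(Y)$, which is $\Ho^n(\BFG{\FF}{G})$ by construction. Combining the three isomorphisms gives the theorem. The main obstacle is the bookkeeping in the middle step: one must verify carefully that the naturality of the Yoneda identification intertwines the Bredon coboundary with the cellular coboundary in the quotient, which requires unpacking how the attaching $G$-maps of $X$ project to the attaching maps of $Y$.
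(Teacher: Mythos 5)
Your proposal is correct and is essentially the argument of the cited source \cite{fluchthesis}: one identifies the cochain complex $\mor{\FF}{\underline{C_*(X)}}{\Ztriv{\FF}}$ with the cellular cochain complex of the orbit space $X/G$, using the Yoneda isomorphism $\mor{\FF}{\Zfirst{G}{H_\alpha}}{\Ztriv{\FF}}\cong\Z$ together with the fact that the equivariant $n$-cells of $X$ correspond bijectively to the $n$-cells of $X/G$, and then invokes the agreement of cellular and singular cohomology for CW-complexes. The coboundary comparison you flag as the delicate step is exactly the bookkeeping carried out there (the Bredon differential applied to $\Ztriv{\FF}$ computes incidence numbers summed over $G$-orbits, which are the incidence numbers in the quotient), so no essential idea is missing.
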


\subsection{Mayer-Vietoris sequence for push-outs}
Push-outs of CW-complexes play a central role in the construction
of classifying spaces. In this section we give a Mayer-Vietoris type sequence for push-outs
relating the cohomology groups of the spaces involved.

\begin{Theorem}[Mayer-Vietoris Sequence for cellular Push-outs]\cite[Satz 3.12]{lueck_alg_top}\label{MVpushout}
    Consider the following push-out
    $$\begin{tikzcd}
    X
            \arrow[r, "\iota"]
            \arrow[d, "f"]
    & Y
            \arrow[d, "\bar{f}"] \\
    Z
            \arrow[r, "\bar{\iota}"]
    & P
    \end{tikzcd}
    $$
    in which $Y$ and $Z$ are CW-complexes, $X \subseteq Y$ is a subcomplex, $\iota : X \to Y$
    the inclusion and $f : X \to Z$ is a cellular map. Then, for every Homology theory $\mathcal{H}_*$
    we get the long exact Mayer-Vietoris sequence
    \begin{multline*}
        \cdots \xrightarrow{\partial_{n+1}} \GenHom{n}{X} \xrightarrow{\GenHom{n}{f}\oplus\GenHom{n}{\iota}}
        \GenHom{n}{Z} \oplus  \GenHom{n}{Y} \\
        \xrightarrow{\GenHom{n}{\bar{\iota}} - \GenHom{n}{\bar{f}}} \GenHom{n}{P}
        \xrightarrow{\partial_{n}} \GenHom{n-1}{X}\xrightarrow{\GenHom{n-1}{f}\oplus\GenHom{n-1}{\iota}}\\
        \GenHom{n-1}{Z} \oplus  \GenHom{n-1}{Y} \xrightarrow{\GenHom{n-1}{\bar{\iota}} - \GenHom{n-1}{\bar{f}}}\GenHom{n-1}{P}
        \xrightarrow{\partial_{n-1}} \cdots
    \end{multline*}
\end{Theorem}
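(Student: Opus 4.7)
The plan is to derive the Mayer--Vietoris sequence from the Eilenberg--Steenrod style axioms (long exact sequence of a pair together with excision) applied to the CW structure that Theorem~\ref{pushoutisCW} places on $P$. By that theorem, $P$ is a CW-complex whose cells are the cells of $Y$ together with the cells of $Z \setminus f(X)$; in particular $\bar{f}(Y)$ and $\bar{\iota}(Z)$ are closed subcomplexes of $P$ with $\bar{f}(Y) \cup \bar{\iota}(Z) = P$. The first step I would carry out is to verify that $\bar{f}$ and $\bar{\iota}$ are injective: no pair of distinct points in $Y$ or $Z$ is identified by the push-out relation, so $\bar{f}(Y) \cap \bar{\iota}(Z) = \bar{f}\circ\iota(X) = \bar{\iota}\circ f(X)$ and this intersection may be identified with $X$ itself.

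Next, I would produce the excision isomorphism. Since $X$ is a subcomplex of $Y$, the inclusion $X \hookrightarrow Y$ is a cofibration, and push-outs of cofibrations along arbitrary maps are cofibrations, so $\bar{\iota}: Z \hookrightarrow P$ is a cofibration too, and $(P, \bar{f}(Y))$ is a good pair. A neighbourhood of $\bar{f}(Y)$ in $P$ deformation retracts onto $\bar{f}(Y)$ along the open mapping cylinder of cells in $Z \setminus f(X)$, so the inclusion of pairs $(\bar{\iota}(Z), X) \hookrightarrow (P, \bar{f}(Y))$ is an excision, yielding a natural isomorphism
\[
    \mathcal{H}_n\bigl(\bar{\iota}(Z), X\bigr) \xrightarrow{\cong} \mathcal{H}_n\bigl(P, \bar{f}(Y)\bigr).
\]

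With this in hand, the third step is the standard splicing. Writing the long exact sequences of the pairs $(\bar{\iota}(Z), X)$ and $(P, \bar{f}(Y))$ and linking them via the excision isomorphism produces a commutative braid from which the Mayer--Vietoris sequence falls out: the map $\mathcal{H}_n(X) \to \mathcal{H}_n(Z) \oplus \mathcal{H}_n(Y)$ is $\mathcal{H}_n(f) \oplus \mathcal{H}_n(\iota)$, the map to $\mathcal{H}_n(P)$ is $\mathcal{H}_n(\bar{\iota}) - \mathcal{H}_n(\bar{f})$, and the connecting map $\partial_n$ is the composition of the boundary of $(P, \bar{f}(Y))$ with the inverse of the excision isomorphism followed by the boundary of $(\bar{\iota}(Z), X)$. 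Exactness at each spot is then a diagram chase of kernels and images across the braid.

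The main obstacle is the excision step, because for a general (not necessarily ordinary) homology theory one has only the Eilenberg--Steenrod axioms to invoke and must argue purely from the CW-pair structure. This is precisely where the hypotheses that $X$ is a subcomplex of $Y$ and that $f$ is cellular are doing their work: they guarantee via Theorem~\ref{pushoutisCW} that $\bar{f}(Y)$ and $\bar{\iota}(Z)$ are genuine subcomplexes of $P$, and that the inclusion $\bar{\iota}(Z) \hookrightarrow P$ is a cofibration, so excision is available for $(P; \bar{f}(Y), \bar{\iota}(Z))$. Once excision is secured, the rest is a formal consequence of the axioms and could in principle be quoted from any treatment of Mayer--Vietoris for CW-triads.
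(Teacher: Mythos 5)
First, note that the paper does not prove Theorem~\ref{MVpushout} at all: it is quoted verbatim from L\"uck's textbook (\cite[Satz 3.12]{lueck_alg_top}), so the comparison is with the standard textbook argument rather than with an in-paper proof. Measured against that, your proposal has the right general shape (excision plus a Barratt--Whitehead/braid splice of two long exact sequences of pairs), but the concrete decomposition you set up is wrong, and the error occurs at the very first step. It is false in general that $\bar{f}$ is injective: in the push-out $(Y\sqcup Z)/\sim$ two distinct points $x_1,x_2\in X$ are identified whenever $f(x_1)=f(x_2)$, and $f$ is an arbitrary cellular map, not an embedding (in the applications in this thesis, e.g.\ the L\"uck--Weiermann push-out, $f$ is very far from injective). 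Consequently $\bar{f}(Y)$ is not a copy of $Y$, the intersection $\bar{f}(Y)\cap\bar{\iota}(Z)$ is a copy of $f(X)$ and not of $X$, and $\bar{f}(Y)$ need not even be a subcomplex of $P$: by Theorem~\ref{pushoutisCW} the cells of $P$ are $c(Z)\sqcup\bigl(c(Y)\setminus c(X)\bigr)$ (you state this the wrong way round), only $\bar{\iota}(Z)$ is a subcomplex, and the image $f(X)$ of a cellular map need not be a union of open cells of $Z$. So the braid you propose, built from the pairs $(\bar{\iota}(Z),X)$ and $(P,\bar{f}(Y))$, does not typecheck ($X$ is not a subspace of $\bar{\iota}(Z)$), and even after replacing $X$ by $f(X)$ it would output a sequence in the groups $\GenHom{*}{f(X)}$ and $\GenHom{*}{\bar{f}(Y)}$ rather than $\GenHom{*}{X}$ and $\GenHom{*}{Y}$. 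Your cofibration remark also slips: cobase change gives that $\bar{\iota}:Z\to P$ is a cofibration, which does not make $(P,\bar{f}(Y))$ a good pair.

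The repair is to run the comparison in the other direction, which is the standard proof of the cited Satz. Use that $X\subseteq Y$ is a CW subcomplex (hence $X\hookrightarrow Y$ is a cofibration) and that $Z\cong\bar{\iota}(Z)$ is a subcomplex of $P$; the map of pairs $\bar{f}:(Y,X)\to(P,\bar{\iota}(Z))$ induces a homeomorphism $Y/X\to P/\bar{\iota}(Z)$, and since both pairs are cofibred this gives, for any homology theory $\mathcal{H}_*$, a natural isomorphism $\GenHom{n}{Y,X}\cong\GenHom{n}{P,\bar{\iota}(Z)}$ (this is exactly where the hypotheses ``$X$ a subcomplex'' and ``$f$ cellular'' are used, via Theorem~\ref{pushoutisCW}). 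Splicing the long exact sequences of the pairs $(Y,X)$ and $(P,\bar{\iota}(Z))$ along this isomorphism by the Barratt--Whitehead lemma then yields precisely the stated Mayer--Vietoris sequence, with $\partial_n$ the composite of the boundary of $(P,\bar{\iota}(Z))$, the inverse excision isomorphism, and $\GenHom{n-1}{X}\to\GenHom{n-1}{X}$ coming from the pair $(Y,X)$. Your argument as written only goes through under the additional hypothesis that $f$ itself is an inclusion of a subcomplex, which is not assumed and would defeat the purpose of the theorem (Corollary~\ref{MVCWpushout} exists precisely to remove such an assumption on one leg by a mapping-cylinder trick).
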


\begin{Corollary}\label{MVCWpushout}
    Let $X$, $Y$ and $Z$ be CW-complexes, $f : X \to Z$ and $g : X \to Y$ cellular maps
    and $\mathcal{H}_*$ a homology theory. Then, if $P$ is the push-out of the diagram
    $Z \xleftarrow{f} X \xrightarrow{g} Y$, we get the long exact Mayer-Vietoris sequence
    \begin{multline*}
        \cdots \xrightarrow{\partial_{n+1}} \GenHom{n}{X} \xrightarrow{\GenHom{n}{f}\oplus\GenHom{n}{g}}
        \GenHom{n}{Z} \oplus  \GenHom{n}{Y} \\
        \xrightarrow{\GenHom{n}{\bar{g}} - \GenHom{n}{\bar{f}}} \GenHom{n}{P}
        \xrightarrow{\partial_{n}} \GenHom{n-1}{X}\xrightarrow{\GenHom{n-1}{f}\oplus\GenHom{n-1}{g}}\\
        \GenHom{n-1}{Z} \oplus  \GenHom{n-1}{Y} \xrightarrow{\GenHom{n-1}{\bar{g}} - \GenHom{n-1}{\bar{f}}}\GenHom{n-1}{P}
        \xrightarrow{\partial_{n-1}} \cdots
    \end{multline*}
\end{Corollary}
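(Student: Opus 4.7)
The plan is to reduce Corollary~\ref{MVCWpushout} to Theorem~\ref{MVpushout} by replacing the cellular map $g : X \to Y$ with a subcomplex inclusion, while arranging the auxiliary push-out to differ from $P$ only up to homotopy equivalence. The natural tool for this reduction is the mapping cylinder $\mcyl{g}$, which by Corollary~\ref{cone_mcyl_dcyl_CW} carries a CW-structure containing $X$ (as $X \times \{0\}$) and $Y$ as subcomplexes, and for which the canonical inclusion $\iota' : X \hookrightarrow \mcyl{g}$ is cellular.

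I would then form the push-out $P'$ of the diagram $Z \xleftarrow{f} X \xrightarrow{\iota'} \mcyl{g}$. Since $\iota'$ is the inclusion of a subcomplex and $f$ is cellular, Theorem~\ref{MVpushout} (applied with $\iota'$ in the role of $\iota$ and $\mcyl{g}$ in the role of $Y$) yields a long exact Mayer--Vietoris sequence
\begin{equation*}
    \cdots \to \GenHom{n}{X} \to \GenHom{n}{Z}\oplus\GenHom{n}{\mcyl{g}} \to \GenHom{n}{P'} \to \GenHom{n-1}{X} \to \cdots .
\end{equation*}
To convert this into the desired sequence, I would invoke the standard strong deformation retraction $r : \mcyl{g} \to Y$ defined by $(x,t) \mapsto g(x)$ and $y \mapsto y$; this is a homotopy equivalence satisfying $r \circ \iota' = g$, so under the induced isomorphism $\GenHom{n}{\mcyl{g}} \cong \GenHom{n}{Y}$ the homomorphism $\GenHom{n}{\iota'}$ is identified with $\GenHom{n}{g}$. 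Moreover, since $Y \hookrightarrow \mcyl{g}$ is a subcomplex inclusion, and hence a cofibration, $r$ descends to a deformation retraction $\bar r : P' \to P$, so $\GenHom{n}{P'} \cong \GenHom{n}{P}$.

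The main obstacle is the naturality bookkeeping: one must check that the homotopy equivalences $\mcyl{g} \simeq Y$ and $P' \simeq P$ are compatible with the remaining structure maps $\bar\iota$ and $\bar f$ of the push-out, so that substituting them into the sequence above reproduces precisely the arrows $\GenHom{n}{\bar g} - \GenHom{n}{\bar f}$ and the connecting maps $\partial_n$ of the target sequence. This follows from the naturality of the Mayer--Vietoris sequence of Theorem~\ref{MVpushout} with respect to morphisms of push-out diagrams, applied to the obvious morphism from the diagram $Z \xleftarrow{f} X \xrightarrow{\iota'} \mcyl{g}$ to $Z \xleftarrow{f} X \xrightarrow{g} Y$ induced by $r$, together with the standard fact that factoring a cellular map through its mapping cylinder does not alter the homotopy type of the associated push-out.
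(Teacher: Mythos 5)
Your proposal is correct and follows essentially the same route as the paper: replace $g$ by the subcomplex inclusion $X \hookrightarrow \mcyl{g}$ (using Corollary~\ref{cone_mcyl_dcyl_CW}), apply Theorem~\ref{MVpushout}, and identify the resulting sequence with the desired one via the homotopy equivalence $\mcyl{g} \simeq Y$. The only difference is that you spell out the naturality and the induced equivalence of push-outs, which the paper leaves implicit.
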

\begin{proof}
    Given that $g : X \to Y$ is a cellular map, by Corollary~\ref{cone_mcyl_dcyl_CW},
    the inclusion $\iota : X \to \mcyl{g}$ and the projection $\pi : \mcyl{g} \to Y$
    are cellular maps (with $X$ and $Y$ being subcomplexes of $\mcyl{g}$). By definition
    of $\mcyl{g}$, $\pi$ is a homotopy equivalence and we have $\pi \circ \iota = g$.
    Then, we can substitute $g : X \to Y$ by $\iota : X \to \mcyl{g}$ in the push-out
    and apply Theorem~\ref{MVpushout}.
\end{proof}

\chapter{Bredon dimensions for related families}\label{ch:related-families}
\minitoc

Given a group $G$ and a family of its subgroups $\FF$, building a classifying space
for the family $\FF$ can be accomplished by using known (or easier to build) classifying spaces
for other families of subgroups that are related to $\FF$ as raw materials. We can observe this
in many of the constructions for the family $\Vcy$ of virtually cyclic subgroups where
the known models for $\EFG{\Fin}{G}$ are heavily used. Similarly, there are results that provide
bounds to the Bredon cohomological dimension of a group $G$ over a family in terms of that over
a related family.

The purpose of this chapter is to present the results that provide bounds on Bredon dimensions
with respect to a family given the Bredon dimensions with respect to related families. The appropriate
topological constructs (for example, quotients, joins and mapping cones of CW-complexes)
have been already presented in previous chapters, so we proceed now to introduce
the cohomological ones.

\section{Restriction, induction and coinduction of Bredon modules}

\begin{Defn}\cite[9.12]{luecktransformation}
    Given a group $G$ and a family $\FF$ of subgroups of $G$, the \emph{tensor
    product over} $\FF$ is the bifunctor
    $$?\otimes_\FF ?? : \RMod{\FF}{G}\times\LMod{\FF}{G} \to \Ab$$
    defined by $$M \otimes_\FF N = \left(\coprod_{H\in\FF} M(G/H) \otimes N(G/H)\right)/\sim,$$
    where $\sim$ is the equivalence relation generated by $m\otimes N(f) \sim M(f)\otimes n$
    with $m\in M(G/H)$, $n\in N(G/K)$, $f \in [G/H, G/K]_\FF$ and $H,K\in\FF$.
\end{Defn}

Here, $\otimes$ denotes the tensor product of abelian groups over $\Z$.

The tensor product $\otimes_\FF$ can be made into a Bredon module evaluating it
in Bredon bimodules:

\begin{Defn}\cite[9.14]{luecktransformation}\label{defbimodule}
    Let $G_1$ and $G_2$ be two groups and $\FF_1$ and $\FF_2$ families of subgroups
    of $G_1$ and $G_2$ respectively. An $\OFG{\FF_1}{G_1}\text{-}\OFG{\FF_2}{G_2}$\emph{-bimodule}
    $M$ is a bifunctor $$M : \OFG{\FF_1}{G_1}\times\OFG{\FF_2}{G_2} \to \Ab$$
    that is covariant in the first variable and contravariant in the second.
\end{Defn}

\begin{Example}\cite[Example 1.8]{fluchthesis}
    As we saw in Example~\ref{ZfirstEx}, $\Zsecond{G}{H} \in \LMod{\FF}{G}$ and
    $\Zfirst{G}{K} \in \RMod{\FF}{G}$ for all $H,K \in \FF$. Then,
    $$\Znone{G} : \OFG{\FF}{G}\times\OFG{\FF}{G} \to \Ab$$
    is a $\OFG{\FF}{G}\text{-}\OFG{\FF}{G}$-bimodule (see \cite{fluchthesis} for
    a detailed definition).
\end{Example}

\begin{Defn}
    Let $G_1$, $G_2$, $\FF_1$ and $\FF_2$ be as in Definition~\ref{defbimodule}.
    Let $M$ be an $\OFG{\FF_2}{G_2}\text{-}\OFG{\FF_1}{G_1}$-bimodule and $N\in\LMod{\FF_1}{G_1}$.
    Then $$M(?,??)\otimes_{\FF_1} N(??)$$ is a left (covariant) $\OFG{\FF_2}{G_2}$-module.

    Symmetrically, let $M\in\RMod{\FF_1}{G_1}$ and $N$ be an $\OFG{\FF_1}{G_1}\text{-}\OFG{\FF_2}{G_2}$-bimodule.
    Then $$M(?)\otimes_{\FF_1} N(?,??)$$ is a right (contravariant) $\OFG{\FF_2}{G_2}$-module.
\end{Defn}

Note that in the tensor products in the above definition, the coproduct would
involve the abelian groups resulting from evaluating the Bredon bimodule and module
in the component marked by $??$ in the first case and by $?$ in the second, that in both
cases refer to elements of $\OFG{\FF_1}{G_1}$.

\begin{Defn}\cite[9.15 and pg. 350]{luecktransformation}
    Let $G_1$, $G_2$, $\FF_1$ and $\FF_2$ be as in Definition~\ref{defbimodule}.
    Let $F : \OFG{\FF_1}{G_1} \to \OFG{\FF_2}{G_2}$ be a functor between orbit categories.

    The \emph{restriction with} $F$ is the functor $\modres{F}: \RMod{\FF_2}{G_2} \to \RMod{\FF_1}{G_1}$
    defined by $\modres{F}M = M(??)\otimes_{\FF_2}\Z[F(?)\>\!,??]_{G_2}$.

    The \emph{induction with} $F$ is the functor $\modind{F}: \RMod{\FF_1}{G_1} \to \RMod{\FF_2}{G_2}$
    defined by $\modind{F}M = M(?)\otimes_{\FF_1}\Z[??\>\!,F(?)]_{G_2}$.

    The \emph{coinduction with} $F$ is the functor $\modcoind{F}: \RMod{\FF_1}{G_1} \to \RMod{\FF_2}{G_2}$
    defined by $\modcoind{F}M = \mor{\FF_1}{\Z[F(?)\>\!,??]_{G_2}}{M(?)}$.
\end{Defn}

The following properties of the functors described above are summarized in \cite[Proposition 1.31-1.35]{fluchthesis}:

\begin{Prop}\label{resindcoind_prop}
    Let $G_1$ and $G_2$ be groups, $\FF_1$ and $\FF_2$ families of subgroups of $G_1$ and $G_2$ respectively
    and $F : \OFG{\FF_1}{G_1} \to \OFG{\FF_2}{G_2}$ be a functor between orbit categories. Then, the following
    statements are true:
    \begin{enumerate}[label={\emph{(\arabic*)}},noitemsep]
        \item $\modind{F}$ is a left adjoint to $\modres{F}$;
        \item $\modcoind{F}$ is a right adjoint to $\modres{F}$;
        \item $\modres{F}$ is exact;
        \item $\modind{F}$ is right exact;
        \item $\modcoind{F}$ is left exact;
        \item $\modres{F}$ and $\modind{F}$ preserve arbitrary colimits;
        \item $\modres{F}$ and $\modcoind{F}$ preserve arbitrary limits;
        \item $\modind{F}$ preserves free and projective Bredon modules.
    \end{enumerate}
\end{Prop}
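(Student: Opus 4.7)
The plan is to establish parts (1) and (2) first by direct calculation with tensor products, and then to derive parts (3)--(8) essentially formally from the resulting adjoint chain $\modind{F}\dashv \modres{F}\dashv \modcoind{F}$ together with a Yoneda-type identification of $\modres{F}M$ with precomposition along $F$.

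For (1), I would construct the natural isomorphism
\[
\mor{\FF_2}{\modind{F}M}{N} \cong \mor{\FF_1}{M}{\modres{F}N}
\]
directly from the universal property of $\otimes_{\FF_1}$: a morphism out of the coend $M(?)\otimes_{\FF_1}\Z[??,F(?)]_{G_2}$ is a dinatural family, and that data is precisely a natural transformation $M \Rightarrow \modres{F}N$. The proof of (2) is dual, using the definition of $\modcoind{F}$ as a hom functor together with the hom-tensor adjunction in $\Ab$. For (3), I would verify the co-Yoneda identity
\[
M\otimes_{\FF_2}\Z[F(?),??]_{G_2} \cong M\circ F,
\]
which says that $\modres{F}$ is literally precomposition with $F$; since exactness in $\RMod{\FF_2}{G_2}$ is checked pointwise, exactness of $\modres{F}$ is then immediate.

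With (1), (2) and (3) in hand, items (4)--(7) are purely formal. A left adjoint is always right exact and preserves all colimits, and a right adjoint is always left exact and preserves all limits; applied to our adjoint chain this gives (4), (5) and one half each of (6) and (7). Since $\modres{F}$ is simultaneously a left and a right adjoint, it preserves both limits and colimits, completing (6) and (7). For the projectivity half of (8), I would use the standard transposition argument: given $P$ projective and an epimorphism $\pi\colon N \twoheadrightarrow M$ in $\RMod{\FF_2}{G_2}$, exactness of $\modres{F}$ keeps $\modres{F}\pi$ epic, projectivity of $P$ lifts a given map through $\modres{F}\pi$, and (1) transposes the lift back to one against $\pi$. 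For the free half of (8), I would compute $\modind{F}(\Zfirst{G_1}{H}) \cong \Z[??,F(G_1/H)]_{G_2}$ by Yoneda on the tensor factor, so each basic free generator maps to a representable (hence free) module; since $\modind{F}$ preserves coproducts by (6), this propagates from singleton $\FF_1$-sets to arbitrary ones.

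The main obstacle will be the careful bookkeeping around the Yoneda and co-Yoneda identities: the three functors are packaged in terms of ends and coends over the bimodule $\Z[?,??]_{G_2}$ pulled back along $F$, and the two placeholder variables $?$ and $??$ play asymmetric roles that are easy to confuse. Once the Yoneda reduction is stated once for the relevant bimodules, each of (1)--(8) reduces to a short formal manipulation that does not interact with the specifics of Bredon modules beyond the general machinery of module categories over a small category.
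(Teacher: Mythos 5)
Your proposal is correct, and it is essentially the argument behind the result as it appears in the literature: the paper itself gives no proof, quoting the statement from Fluch's thesis (Propositions 1.31--1.35, going back to L\"uck), where $\modind{F}$ and $\modcoind{F}$ are treated exactly as you do, namely as the left and right Kan extensions along $F$, with the co-Yoneda identification $\modres{F}M \cong M\circ F$ giving pointwise exactness and the adjunctions $\modind{F}\dashv\modres{F}\dashv\modcoind{F}$ yielding (4)--(7) formally. Your treatment of (8) -- transposing a lifting problem across the adjunction using exactness of $\modres{F}$ for projectives, and Yoneda plus preservation of coproducts for frees -- likewise matches the standard proof, so there is nothing to add.
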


\section{Restriction to subgroups}

Let $\FF$ be a family of subgroups of $G$. Given a subgroup $K\leq G$ such that
$\FF\cap K \subset \FF$, let $I_K : \OFG{\FF \cap K}{K} \to \OFG{\FF}{G}$ be the
inclusion functor (defined by $I_K(H) = H$ for $H\in\FF \cap K$). Then, we have the
following results regarding restriction and induction with $I_K$:

\begin{Prop}\label{Bredon:ind_IK_exact}\cite[Proposition 3.26]{fluchthesis}
    Induction with $I_K$ is an exact functor.
\end{Prop}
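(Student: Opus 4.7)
The plan is to leverage Proposition~\ref{resindcoind_prop}, by which $\modind{I_K}$ is already right exact, and to reduce the missing left exactness to a flatness statement. Since exactness in $\RMod{\FF}{G}$ is checked pointwise at each $G/H \in \OFG{\FF}{G}$, it suffices to show that the functor
\begin{equation*}
M \;\longmapsto\; (\modind{I_K} M)(G/H) \;=\; M \otimes_{\OFG{\FF \cap K}{K}} \Z[G/H, I_K(?)]_G
\end{equation*}
is exact in $M \in \RMod{\FF \cap K}{K}$. Equivalently, the left $\OFG{\FF \cap K}{K}$-module $L \mapsto \Z[G/H, G/L]_G$ must be flat.

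The heart of the argument will be a direct sum decomposition of this left module into representables. I plan to define $\mathcal{D} = \{g \in G : g^{-1}Hg \leq K\}$; since right multiplication by $K$ preserves this condition, the set of right $K$-cosets $\mathcal{D}/K$ is well-defined. For each coset, I would choose a representative $g$ and set $H_g = g^{-1}Hg$, which belongs to $\FF$ (by closure under conjugation) and to $K$ (by the defining condition of $\mathcal{D}$), hence $H_g \in \FF \cap K$. The goal is then to establish a natural isomorphism of left $\OFG{\FF \cap K}{K}$-modules
\begin{equation*}
    \Z[G/H, G/L]_G \;\cong\; \bigoplus_{gK \in \mathcal{D}/K} \Z[K/H_g, K/L]_K.
\end{equation*}
Using the identification $[G/H, G/L]_G \cong (G/L)^H$, every $H$-fixed coset $g'L$ must satisfy $g'^{-1}Hg' \leq L \leq K$, so $g' \in \mathcal{D}$ and determines a unique $gK \in \mathcal{D}/K$; writing $g' = gk$ with $k \in K$, the fixed-point condition rewrites as $k^{-1}H_g k \leq L$, giving a bijection between the $gK$-indexed part of $(G/L)^H$ and $(K/L)^{H_g}$.

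Each summand is the representable functor $\Zsecond{K}{H_g}$, which by the analogue of Lemma~\ref{ZfirstFromSingleton} for left modules is free, hence projective, hence flat. Since arbitrary direct sums of flat modules are flat, $\Z[G/H, I_K(?)]_G$ will be flat as a left $\OFG{\FF \cap K}{K}$-module, and tensoring with it will be exact, as required.

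The main obstacle I anticipate is verifying naturality of the decomposition in $L$: for a $K$-map $\alpha : K/L \to K/L'$, I must check that post-composition with the induced $G$-map $I_K(\alpha)$ on $\Z[G/H, G/L]_G$ respects the direct sum and restricts on each summand to the expected post-composition by $\alpha$ on $\Z[K/H_g, K/L]_K$. A secondary subtlety is that $H_g$ depends on the choice of representative $g$ of a coset $gK$ only up to $K$-conjugation, but this affects the decomposition only up to isomorphism, which is enough for the flatness conclusion.
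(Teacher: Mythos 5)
Your proposal is correct: reducing to pointwise exactness, identifying $(\modind{I_K}M)(G/H)$ with $M\otimes_{\FF\cap K}\Z[G/H,I_K(?)]_G$, and decomposing that covariant module over the cosets $gK$ with $g^{-1}Hg\leq K$ (i.e.\ over $(G/K)^H$) into representables $\Zsecond{K}{H_g}$, which are free and hence flat, is exactly the standard argument, and the naturality and choice-of-representative issues you flag do work out as you expect. The thesis itself gives no proof, simply citing \cite[Proposition 3.26]{fluchthesis}, and your argument is essentially the one found in that source, so there is nothing further to add.
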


\begin{Prop}\label{Bredon:res_IK_free}\cite[Proposition 3.28]{fluchthesis}
    Restriction with $I_K$ preserves free Bredon modules. In particular,
    it preserves projective Bredon modules.
\end{Prop}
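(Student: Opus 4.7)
The plan is to reduce the free case to a single generator $\Zfirst{G}{H}$, apply a Mackey-style decomposition of $G/H$ as a $K$-set, and then deduce the projective case by taking direct summands.

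By Proposition~\ref{resindcoind_prop}(6), $\modres{I_K}$ preserves arbitrary colimits and hence direct sums, so it suffices to prove that $\modres{I_K}\Zfirst{G}{H}$ is free in $\RMod{\FF\cap K}{K}$ for each $H\in\FF$. Unpacking the definition, restriction with $I_K$ amounts to precomposition along $I_K$: for $L\in\FF\cap K$,
$$\bigl(\modres{I_K}\Zfirst{G}{H}\bigr)(K/L) \;\cong\; \Z[G/L, G/H]_G \;\cong\; \Z\,(G/H)^L,$$
naturally in $L$, where the second isomorphism is the standard bijection $f\mapsto f(L)$ identifying a $G$-map $G/L\to G/H$ with its value on the coset $L$.

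The heart of the proof is the orbit decomposition of $G/H$ as a left $K$-set, indexed by the double cosets $K\backslash G/H$:
$$G/H \;\cong\; \bigsqcup_{KgH\,\in\, K\backslash G/H} K\big/(K\cap gHg^{-1}).$$
Taking $L$-fixed points on both sides and then passing to the free abelian group gives
$$\modres{I_K}\Zfirst{G}{H} \;\cong\; \bigoplus_{KgH\,\in\, K\backslash G/H} \Zfirst{K}{K\cap gHg^{-1}}$$
in $\RMod{\FF\cap K}{K}$. Since $\FF$ is closed under conjugation and (being full) under taking subgroups, each $K\cap gHg^{-1}$ lies in $\FF\cap K$, so every summand is a standard free generator and $\modres{I_K}\Zfirst{G}{H}$ is free. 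Combined with preservation of direct sums, this proves the first claim.

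For the projective case, if $P$ is projective in $\RMod{\FF}{G}$ then by Proposition~\ref{equiv_cond_proj_mod} there exist a free module $F$ and a module $Q$ with $F\cong P\oplus Q$; applying $\modres{I_K}$ and using once more that it commutes with direct sums, we see that $\modres{I_K}P$ is a direct summand of the free module $\modres{I_K}F$, so $\modres{I_K}P$ is projective. The main step I expect to require care is checking naturality in $L$ of the Mackey decomposition, that is, verifying that the bijection $(G/H)^L\leftrightarrow\bigsqcup_{KgH}(K/(K\cap gHg^{-1}))^L$ is compatible with the contravariant action of morphisms in $\OFG{\FF\cap K}{K}$; this is mechanical but essential in order to upgrade a pointwise isomorphism of abelian groups to an isomorphism of Bredon modules.
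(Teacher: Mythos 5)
Your argument is correct: the reduction to a single generator $\Zfirst{G}{H}$, the double-coset decomposition of $G/H$ as a $K$-set giving $\modres{I_K}\Zfirst{G}{H}\cong\bigoplus_{KgH\in K\backslash G/H}\Zfirst{K}{K\cap gHg^{-1}}$ (with each $K\cap gHg^{-1}\in\FF\cap K$ since $\FF$ is full), and the passage to projectives via direct summands are all sound, and the naturality you flag really is automatic because the decomposition is an isomorphism of $K$-sets. The thesis itself gives no proof but defers to the cited source, whose argument is essentially this same one, so there is nothing to add.
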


These propositions, together with properties $(2)$ and $(3)$ in \ref{resindcoind_prop}, give rise
to the following isomorphisms in $\Ho^{*}$ and $\EXT^{*}$:

\begin{Prop}\label{Bredon:ext_IK}\cite[Proposition 3.29]{fluchthesis}
    For any $M\in\RMod{\FF}{G}$ and $N\in\RMod{\FF\cap K}{K}$ there is an isomorphism
    $$\Ext{*}{\FF\cap K}{\modres{I_K}M}{N} \cong \Ext{*}{\FF}{M}{\modcoind{I_K}N}$$
    that is natural in both $M$ and $N$.
\end{Prop}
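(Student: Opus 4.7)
The plan is to deduce this as a standard Grothendieck-style derived-functor consequence of the adjunction $(\modres{I_K}, \modcoind{I_K})$ stated in Proposition~\ref{resindcoind_prop}(2). At degree zero, that adjunction gives a natural isomorphism
\[
\mor{\FF\cap K}{\modres{I_K}M}{N} \cong \mor{\FF}{M}{\modcoind{I_K}N},
\]
and the strategy is simply to derive this identity in $M$ via a projective resolution and check that each ingredient is well-behaved.

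First I would pick a projective resolution $P_* \twoheadrightarrow M$ in $\RMod{\FF}{G}$, which exists by Theorem~\ref{RModHasEnoughProj}. The next step is to check that $\modres{I_K} P_* \twoheadrightarrow \modres{I_K} M$ is again a projective resolution in $\RMod{\FF \cap K}{K}$: exactness is preserved by Proposition~\ref{resindcoind_prop}(3), and projectivity is preserved by Proposition~\ref{Bredon:res_IK_free}. Therefore
\[
\Ext{*}{\FF\cap K}{\modres{I_K}M}{N} \cong \Ho^*\bigl(\mor{\FF \cap K}{\modres{I_K} P_*}{N}\bigr).
\]
Applying the degree-zero adjunction term by term yields an isomorphism of cochain complexes
\[
\mor{\FF \cap K}{\modres{I_K} P_*}{N} \cong \mor{\FF}{P_*}{\modcoind{I_K} N},
\]
and taking cohomology of the right-hand side recovers $\Ext{*}{\FF}{M}{\modcoind{I_K} N}$, since $P_*$ is a projective resolution of $M$.

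The only genuine checks to carry out are (a) that the adjunction isomorphism is natural in both arguments, so that it assembles into a chain isomorphism compatible with the differentials of $P_*$, and (b) that the resulting isomorphism in cohomology is independent of the chosen resolution, which follows from the standard comparison theorem since any two projective resolutions of $M$ are chain-homotopy equivalent and $\modres{I_K}$ preserves chain homotopies (being additive and exact). Naturality in $M$ and $N$ then follows formally: naturality in $N$ is inherited directly from the adjunction, and naturality in $M$ is obtained by lifting a morphism $M \to M'$ to a chain map between chosen projective resolutions, which is unique up to chain homotopy. The main conceptual obstacle is really just bookkeeping: one must be confident that $\modres{I_K}$ sends a projective resolution of $M$ to a projective resolution of $\modres{I_K} M$, which is exactly where Propositions~\ref{Bredon:res_IK_free} and~\ref{resindcoind_prop}(3) do the work, making the argument essentially mechanical once these inputs are in hand.
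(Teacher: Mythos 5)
Your argument is correct and is essentially the paper's (and Fluch's) intended proof: the paper explicitly attributes the isomorphism to the adjunction in Proposition~\ref{resindcoind_prop}(2) together with exactness of $\modres{I_K}$ and the fact that it preserves projectives (Proposition~\ref{Bredon:res_IK_free}), which are exactly the inputs you use to derive the degree-zero adjunction along a projective resolution. No further comment is needed.
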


\begin{Prop}\label{Bredon:cohom_IK}\cite[Proposition 3.31]{fluchthesis}
    For any $M\in\RMod{\FF\cap K}{K}$ there is an isomorphism
    $$\Cohom{*}{\FF\cap K}{K}{M} \cong \Cohom{*}{\FF}{G}{\modcoind{I_K}M}$$
    that is natural in $M$.
\end{Prop}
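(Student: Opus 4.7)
The plan is to obtain the claim as an immediate specialisation of Proposition~\ref{Bredon:ext_IK}. Applying that proposition with its ``$M$'' taken to be $\Ztriv{\FF}\in\RMod{\FF}{G}$ and its ``$N$'' taken to be the $M$ of our current statement yields a natural isomorphism
$$\Ext{*}{\FF\cap K}{\modres{I_K}\Ztriv{\FF}}{M} \cong \Ext{*}{\FF}{\Ztriv{\FF}}{\modcoind{I_K}M}.$$
By the definition of Bredon cohomology, the right-hand side equals $\Cohom{*}{\FF}{G}{\modcoind{I_K}M}$.

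The remaining task is to identify the left-hand side with $\Cohom{*}{\FF\cap K}{K}{M}$, which amounts to verifying that the restriction functor sends the trivial module to the trivial module, i.e.\ that $\modres{I_K}\Ztriv{\FF} = \Ztriv{\FF\cap K}$. This is a short computation: the inclusion functor $I_K : \OFG{\FF\cap K}{K} \to \OFG{\FF}{G}$ sends $K/H$ to $G/H$, and the definition $\modres{I_K}N = N(??)\otimes_{\FF}\Z[I_K(?)\>\!,??]_G$ is a coend which, for the constant functor $\Ztriv{\FF}$, collapses by a Yoneda-type reduction to $\Ztriv{\FF}(I_K(K/H)) = \Z$ on objects and to the identity on morphisms of $\OFG{\FF\cap K}{K}$. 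Substituting this identification on the left-hand side and recognising both sides as the appropriate $\operatorname{Ext}$ groups of trivial modules yields the desired isomorphism.

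Naturality in $M$ of the final isomorphism is inherited from the naturality stated in Proposition~\ref{Bredon:ext_IK}, since the specialisation of the first argument to $\Ztriv{\FF}$ is a fixed choice independent of $M$. The only step requiring any care is the coend identification $\modres{I_K}\Ztriv{\FF} = \Ztriv{\FF\cap K}$; once this is in place, everything else is a direct invocation of the quoted results, so I do not anticipate a substantial obstacle.
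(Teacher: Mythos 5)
Your proposal is correct and follows essentially the route the paper (via the cited source) intends: specialise the natural isomorphism of Proposition~\ref{Bredon:ext_IK} to the trivial module, using the identification $\modres{I_K}\Ztriv{\FF} = \Ztriv{\FF\cap K}$ (a fact the paper itself invokes in the discussion following the statement), so that both sides become the stated Bredon cohomology groups. The naturality argument and the Yoneda-type computation of $\modres{I_K}$ on the constant module are exactly the points that need checking, and you handle them correctly.
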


The following result can be obtained by using either the isomorphisms in
Proposition~\ref{Bredon:cohom_IK} and Corollary~\ref{cd_max_min} or the facts
that $\modres{I_K}$ is exact and preserves projectives and $\modres{I_K}\Ztriv{\FF} = \Ztriv{\FF\cap K}$.

\begin{Theorem}\label{Bredon:cd_subgroup}\cite[Proposition 3.32]{fluchthesis}
    Let $G$ be a group and $\FF$ a family of subgroups of $G$. Then for any $K \leq G$
    we have $\cd_{\FF \cap K} K \leq \cd_\FF G$.
\end{Theorem}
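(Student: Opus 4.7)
The plan is to transport a projective resolution of $\Ztriv{\FF}$ in $\RMod{\FF}{G}$ across the restriction functor $\modres{I_K}$ and read the bound directly off the resulting complex. This is the more constructive of the two routes suggested in the statement's context, and it exercises exactly the three good properties of $\modres{I_K}$ that have just been collected.

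First, I would assume $\cd_{\FF}G = n < \infty$ (otherwise the inequality is vacuous) and fix, by Definition \ref{proj_resolution}, a projective resolution
\[
0 \to P_n \to P_{n-1} \to \cdots \to P_0 \to \Ztriv{\FF} \to 0
\]
in $\RMod{\FF}{G}$. I then apply $\modres{I_K}$ term by term. Exactness of the resulting complex is immediate from Proposition \ref{resindcoind_prop}(3), since $\modres{I_K}$ is exact. Each $\modres{I_K}P_i$ is projective in $\RMod{\FF \cap K}{K}$ by Proposition \ref{Bredon:res_IK_free}.

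The one point that needs a small verification is the natural identification $\modres{I_K}\Ztriv{\FF} \cong \Ztriv{\FF \cap K}$, which is what guarantees that the restricted resolution is a resolution of the \emph{trivial} $\OFG{\FF \cap K}{K}$-module rather than of some other coefficient system. Using the tensor product description of $\modres{I_K}$, for any $H \in \FF \cap K$ one obtains $(\modres{I_K}\Ztriv{\FF})(K/H) \cong \Z$, and the morphisms in $\OFG{\FF \cap K}{K}$ act as the identity because their images under $I_K$ do so in $\OFG{\FF}{G}$. Once this is in place, the complex
\[
0 \to \modres{I_K}P_n \to \cdots \to \modres{I_K}P_0 \to \Ztriv{\FF \cap K} \to 0
\]
is a projective resolution of $\Ztriv{\FF \cap K}$ of length at most $n$, so $\cd_{\FF \cap K}K \leq n = \cd_{\FF}G$.

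The main (mild) obstacle is the identification $\modres{I_K}\Ztriv{\FF} \cong \Ztriv{\FF \cap K}$ unpacked from the tensor-product formula; it is essentially a Yoneda-type check but deserves an explicit line. If one prefers to avoid even this, an entirely equivalent route is to invoke Proposition \ref{Bredon:cohom_IK}: for every $N \in \RMod{\FF \cap K}{K}$ and every $d > n$,
\[
\Cohom{d}{\FF \cap K}{K}{N} \cong \Cohom{d}{\FF}{G}{\modcoind{I_K}N} = 0,
\]
and then Corollary \ref{cd_max_min} yields the same bound.
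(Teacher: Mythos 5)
Your proposal is correct and follows exactly the route the paper indicates: it restricts a length-$n$ projective resolution of $\Ztriv{\FF}$ along $\modres{I_K}$, using that $\modres{I_K}$ is exact, preserves projectives, and satisfies $\modres{I_K}\Ztriv{\FF} = \Ztriv{\FF\cap K}$, which is precisely the second argument sketched before Theorem~\ref{Bredon:cd_subgroup} (and your alternative via Proposition~\ref{Bredon:cohom_IK} and Corollary~\ref{cd_max_min} is the first). No gaps; the identification $\modres{I_K}\Ztriv{\FF} \cong \Ztriv{\FF\cap K}$ is checked adequately.
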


It is important to realise that, in general, not all right $\OFG{\FF\cap K}{K}$-modules
are of the form $\modres{I_K}M^\prime$ for $M^\prime\in\RMod{\FF}{G}$ and not all right
$\OFG{\FF}{G}$-modules
are of the form $\modcoind{I_K}N^\prime$ for $N^\prime\in\RMod{\FF\cap K}{K}$.
That is why we can not ensure the equality between the corresponding Bredon cohomological dimensions
by applying any of the aforementioned arguments that prove Theorem~\ref{Bredon:cd_subgroup}.

To relate the Bredon geometric dimensions, restricting ourselves to the case where $\FF$
is a full family (and hence for any subgroup $K\leq G$, $\FF\cap K \subset \FF$ holds),
note that a model for $\EFG{\FF}{G}$ is also a model for $\EFG{\FF\cap K}{K}$, and therefore:

\begin{Prop}\label{Bredon:gd_subgroup}\cite[Proposition 3.33]{fluchthesis}
    Let $G$ be a group and $\FF$ a full family of subgroups of $G$. Then for any $K \leq G$
    we have $\gd_{\FF \cap K} K \leq \gd_\FF G$.
\end{Prop}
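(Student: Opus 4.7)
The plan is to show that any model for $\EFG{\FF}{G}$ can be reinterpreted as a model for $\EFG{\FF \cap K}{K}$ simply by restricting the group action from $G$ to $K$, with the dimension unchanged. The bound on the geometric dimensions then follows immediately by taking a minimal-dimensional model.

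First, I would take a model $X$ for $\EFG{\FF}{G}$ with $\dim X = \gd_\FF G$ (assuming this is finite; otherwise the inequality is trivial). Viewing $X$ with the action of $K$ obtained by restricting the $G$-action, I claim $X$ becomes a $K$-CW-complex. Since $G$ is discrete, so is $K$, and by Corollary~\ref{G-CW-comp-discrete} it suffices to check that $K$ acts cellularly on the underlying CW-complex of $X$. This is immediate: if every element of $G$ permutes the cells of $X$ preserving dimension and fixes pointwise any cell it stabilises setwise, then the same holds for elements of $K \leq G$.

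Next I would verify the fixed-point conditions of Corollary~\ref{other_def_of_class_space} for $X$ as a $K$-space. For any subgroup $H \leq K$, the set of $H$-fixed points is the same whether we regard $X$ as a $G$-space or as a $K$-space. Since $\FF$ is full and $H \leq K \leq G$, we have $H \cap K = H$, so $H \in \FF \cap K$ if and only if $H \in \FF$. Hence $X^H$ is contractible when $H \in \FF \cap K$ and empty when $H \notin \FF \cap K$, using that $X$ is a model for $\EFG{\FF}{G}$. By Corollary~\ref{other_def_of_class_space}, $X$ is a model for $\EFG{\FF \cap K}{K}$.

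Finally, since the CW-structure on $X$ is unchanged when we restrict the action, the dimension of $X$ as a $K$-CW-complex equals its dimension as a $G$-CW-complex, namely $\gd_\FF G$. Therefore $\gd_{\FF \cap K} K \leq \dim X = \gd_\FF G$. There is no real obstacle here; the only point that requires a brief argument is the reduction of the cellularity of the $K$-action to that of the $G$-action via Corollary~\ref{G-CW-comp-discrete}, together with the observation that fullness of $\FF$ makes the membership conditions for $\FF$ and $\FF \cap K$ agree on subgroups of $K$.
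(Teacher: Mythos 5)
Your proposal is correct and is essentially the paper's own argument: the text preceding the proposition simply notes that, for a full family $\FF$, a model for $\EFG{\FF}{G}$ restricted to the $K$-action is already a model for $\EFG{\FF\cap K}{K}$, which is exactly what you verify (cellularity of the restricted action plus the fixed-point criterion of Corollary~\ref{other_def_of_class_space}). Your write-up just fills in the routine details, so there is nothing to correct.
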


\section{Families related by a functor}

In this section we will describe the results in the paper \cite{mpconchita} that provide upper bounds
for the Bredon cohomological dimension and similar results included (or derived from those) in \cite{lueckweiermann} for the Bredon geometric dimension of two families related by a functor.

Let $G$ be a discrete group and $\FF$ and $\HH$ two families of subgroups of $G$. Let $\pi : \FF \to \HH$ be such that
$\bar{\pi} : \OFG{\FF}{G} \to \OFG{\HH}{G}$ defined by $\bar{\pi}(G/H) = G/\pi(H)$ is a covariant functor.

In \cite[Theorem 3.9]{mpconchita}, the author proves the existence and convergence of a spectral sequence relating the Bredon cohomology
groups for both families, given the following conditions for every $S\in\HH$:

\begin{enumerate}[label={(\text{MP}\arabic*)},noitemsep]
    \item For $g \in G$ and $L \in \FF$, $L^g \leq S$ if and only if $\pi(L)^g \leq S$, and
    \item $\FF \cap S \subseteq \FF$.
\end{enumerate}

As a consequence of this result, we have:

\begin{Corollary}\cite[Corollary 4.1]{mpconchita}\label{inclusion_cd_goes_up_boundedly}
    Let $\FF, \HH, \pi : \FF \to \HH$ as above satisfying conditions (1) and (2) and assume that we have an integer n such
    that for any $S\in\HH$, $\cd_{\FF \cap S} S \leq n$. Then,
    $$ \cd_\FF G \leq n + \cd_\HH G.$$
\end{Corollary}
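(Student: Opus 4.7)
The plan is to apply directly the spectral sequence produced in \cite[Theorem 3.9]{mpconchita}, whose existence is the whole point of imposing conditions (MP1) and (MP2). Fix an arbitrary coefficient module $M\in\RMod{\FF}{G}$. That spectral sequence converges to $\Cohom{p+q}{\FF}{G}{M}$ and has an $E_2$ page in which the $p$-direction records $\HH$-cohomology of $G$ while the $q$-direction records, over the various orbits of subgroups $S\in\HH$, cohomology groups of the form $\Cohom{q}{\FF\cap S}{S}{-}$ with coefficients restricted from $M$.

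Once the spectral sequence is in hand, the argument is a routine rectangular-vanishing estimate. By the hypothesis $\cd_{\FF\cap S} S \leq n$ for every $S\in\HH$ and by Corollary~\ref{cd_max_min}, each of the ``fibre'' groups feeding into $E_2^{p,q}$ vanishes for $q > n$, so the whole strip $\{q > n\}$ of $E_2$ is zero. Symmetrically, the definition of $\cd_\HH G$ together with Corollary~\ref{cd_max_min} annihilates the strip $\{p > \cd_\HH G\}$. Hence $E_2^{p,q}=0$ outside the rectangle $\{0\leq p\leq \cd_\HH G,\; 0\leq q\leq n\}$, and the same remains true on every subsequent page and in the limit.

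By convergence, $\Cohom{k}{\FF}{G}{M}=0$ whenever $k > n+\cd_\HH G$. Since $M\in\RMod{\FF}{G}$ was arbitrary, another appeal to Corollary~\ref{cd_max_min} yields $\cd_\FF G \leq n + \cd_\HH G$, which is the required inequality. A minor sanity check is needed in the degenerate cases $\cd_\HH G = \infty$ or $n=\infty$, where the inequality holds trivially, and in the edge case $\cd_\HH G = 0$ (that is, $G\in\HH$ by Proposition~\ref{G_in_FF_cd_0}), where the spectral sequence collapses onto a single column and the estimate reduces to $\cd_\FF G \leq n$, consistent with the hypothesis applied to $S=G$.

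The only genuinely non-trivial step is the first one, namely being precise about the $E_2$ page of \cite[Theorem 3.9]{mpconchita} so that the claimed vanishing really holds on the nose. In particular, one has to verify that the coefficients appearing in the ``fibre'' direction are honestly the Bredon cohomology groups $\Cohom{q}{\FF\cap S}{S}{\modres{I_S} M}$ (or a direct sum/product of such over $G$-conjugacy classes of $S\in\HH$), and not merely some functor thereof for which the vanishing hypothesis might fail to transfer. Condition (MP2), $\FF\cap S\subseteq\FF$, is what makes the restriction functors of the previous section available in the first place, and condition (MP1) is what identifies the relevant stabiliser data on the nose; once those identifications are recorded the rest of the proof is, as above, a two-line dimension count.
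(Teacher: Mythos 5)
Your argument is the standard one and coincides with how the paper (following \cite[Corollary 4.1]{mpconchita}) obtains this bound: the corollary is deduced from the spectral sequence of \cite[Theorem 3.9]{mpconchita}, with conditions (MP1) and (MP2) guaranteeing that the fibre terms are the groups $\Cohom{q}{\FF\cap S}{S}{-}$, followed by exactly the rectangular vanishing count you describe together with Corollary~\ref{cd_max_min}. So the proposal is correct and takes essentially the same route as the source the paper cites.
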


In the case of the Bredon geometric dimension, we restrict ourselves to $\FF \subseteq \GG$
being full families of subgroups of $G$. In that setting, we have the following result:

\begin{Proposition}\cite[Proposition 5.1 (i)]{lueckweiermann}\label{inclusion_gd_goes_up_boundedly}
    Let $\FF\subseteq\GG$ be full families of subgroups of a group $G$. Then, if there is $n\in\N$
    such that $\gd_{\FF\cap H} H \leq n$ for every $H\in\GG$,
    $$\gd_{\FF} G \leq \gd_{\GG} G + n.$$
\end{Proposition}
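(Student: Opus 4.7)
The plan is to construct a model for $\EFG{\FF}{G}$ of dimension at most $\gd_{\GG} G + n$ by ``thickening'' a minimal-dimensional model $X$ for $\EFG{\GG}{G}$: each equivariant cell of $X$ of the form $G/H\times D^k$ with $H\in\GG$ is to be replaced by a block of the form $G\times_H Z_H\times D^k$, where $Z_H$ is a chosen model for $\EFG{\FF\cap H}{H}$ of dimension at most $n$. Concretely, I would first fix a model $X$ for $\EFG{\GG}{G}$ with $\dim(X)=d:=\gd_{\GG} G$; let $\{H_\alpha\}_{\alpha\in I}$ represent the $G$-conjugacy classes of isotropy groups occurring in $X$, and for each $\alpha$ fix a model $Z_\alpha$ for $\EFG{\FF\cap H_\alpha}{H_\alpha}$ of dimension at most $n$, which exists because $H_\alpha\in\GG$.

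I would then build $Y$ together with a $G$-equivariant ``collapse'' map $f: Y\to X$ by induction on the skeleta of $X$. Having constructed $Y_{k-1}$ with $f_{k-1}: Y_{k-1}\to X^{k-1}$ satisfying that $f_{k-1}^L$ is a weak equivalence for every $L\in\FF$, for each equivariant attaching map $\phi: G/H_\alpha\times S^{k-1}\to X^{k-1}$ producing a new $k$-cell of $X$, I would use the projection $\pi: G\times_{H_\alpha} Z_\alpha\to G/H_\alpha$ together with Theorem~\ref{WTFF} applied to $f_{k-1}$ to lift $\phi\circ(\pi\times\operatorname{id})$ to a $G$-map $\psi: G\times_{H_\alpha} Z_\alpha\times S^{k-1}\to Y_{k-1}$ (this is legitimate because the source has isotropy in $\FF\cap H_\alpha\subseteq\FF$). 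Attaching $G\times_{H_\alpha} Z_\alpha\times D^k$ along $\psi$ via a $G$-equivariant push-out yields $Y_k$ together with an extension $f_k: Y_k\to X^k$; by Corollary~\ref{productGCW} and Theorem~\ref{pushoutisCW}, $Y_k$ is a $G$-CW-complex whose new cells have dimension at most $n+k$. Setting $Y=\bigcup_k Y_k$ and $f=\bigcup_k f_k$ gives a $G$-CW-complex of dimension at most $n+d$.

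To conclude, I would verify that $Y$ is a model for $\EFG{\FF}{G}$ via Corollary~\ref{other_def_of_class_space}. The isotropy groups of $Y$ lie in $\FF$ because each inserted block $G\times_H Z_H\times D^k$ has cell stabilizers conjugate to isotropy groups of $Z_H$, all of which belong to $\FF\cap H\subseteq\FF$. For $L\in\FF$, the set $(G/H)^L$ is discrete, and the $L$-fixed points of each block decompose as $(G\times_H Z_H\times D^k)^L=\bigsqcup_{gH\in(G/H)^L} Z_H^{g^{-1}Lg}\times D^k$; each factor $Z_H^{g^{-1}Lg}$ is contractible because fullness of $\FF$ ensures $g^{-1}Lg\in\FF\cap H$. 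Applying the Mayer--Vietoris sequence of Corollary~\ref{MVCWpushout} inductively to the push-outs of the construction and comparing $Y_k^L\to X^{k,L}$ via the gluing lemma shows $f^L: Y^L\to X^L$ is a weak equivalence. Since $X^L$ is contractible for $L\in\FF\subseteq\GG$, so is $Y^L$, giving $\gd_\FF G\leq d+n$.

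The main obstacle will be organising the inductive lift of the attaching maps coherently so that, at every stage, the comparison map $f_k^L$ retains its weak-equivalence property on all fixed-point sets indexed by $\FF$. This is precisely where the fullness of $\FF$ (to identify each relevant conjugate $g^{-1}Lg$ as an element of $\FF\cap H$) and the equivariant Whitehead Theorem~\ref{WTFF} are both indispensable; without fullness the fibres of $f^L$ cease to be classifying-space fibres and the contractibility argument breaks down.
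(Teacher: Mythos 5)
Your construction is correct and is essentially the standard argument behind this statement: the paper itself gives no proof but cites L\"uck--Weiermann, whose Proposition 5.1(i) is established by exactly this skeleton-by-skeleton replacement of each equivariant cell $G/H\times D^k$ of a minimal-dimensional model for $\EFG{\GG}{G}$ by a block $G\times_{H}\EFG{\FF\cap H}{H}\times D^k$, with the equivariant Whitehead theorem used to lift the attaching maps. The routine technical points you flag (the comparison squares only commuting up to $G$-homotopy, fixed up via collars or mapping cylinders, and the gluing lemma on $L$-fixed-point sets, where fullness of $\FF$ guarantees the fibres $Z_H^{g^{-1}Lg}$ are contractible) are precisely the ones the published proof also treats as standard.
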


\subsection{Passing to quotients}

Let now $G$ be a group, $N \lhd G$ and $\FF$ a family of subgroups of $G$. Let $\HHb$ be a family of subgroups of $\bar{G} = G/N$
satisfying the following conditions:
\begin{enumerate}[label={(\roman*)},noitemsep]
    \item For any $L\in\FF$, $LN/N \in \HHb$ and
    \item for any $S/N \in \HHb$, $\FF \cap S \subseteq \FF$.
\end{enumerate}

Let $\HH = \{ S\leq G \st N \leq S \text{ and } S/N \in \HHb\}$ and $\pi : \FF \to \HH$ defined by $\pi(L) = LN$,
$\FF$,$\HH$ and $\pi$ satisfy conditions (MP1) and (MP2).

Moreover, if we take $\Phi : \RMod{\HH}{G} \to \RMod{\HHb}{\bar{G}}$ defined by
$\Phi(M)\left(\bar{G}/\bar{H}\right) = M(G/H)$, since $\{ S \mapsto S/N \st S\in\HH\}$
is a bijection between $\HH$ and $\HHb$, we have that for all $M\in\RMod{\HH}{G}$
and all $n\geq 0$
$$\Cohom{n}{\HH}{G}{M} \cong \Cohom{n}{\HHb}{\bar{G}}{\Phi(M)},$$
which means $\cd_\HH G \leq \cd_{\HHb} \bar{G}$.

Now, if we apply Corollary~\ref{inclusion_cd_goes_up_boundedly} to $\FF$, $\HH$ and $\pi : \FF \to \HH$,
we can conclude:

\begin{Corollary}\cite[Corollary 5.2.]{mpconchita}\label{cd_for_quotients}
    Under the previous assumptions over $\FF$, $\HH$, $\pi$ and $\HHb$, if there
    is $n\geq 0$ such that for any $S\in\HH$, $\cd_{\FF\cap S} S \leq n$, then
    $$\cd_\FF G \leq n + \cd_{\HHb} \bar{G}.$$
\end{Corollary}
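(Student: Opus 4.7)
The plan is to combine Corollary~\ref{inclusion_cd_goes_up_boundedly} applied to the map $\pi : \FF \to \HH$, $L \mapsto LN$, with the comparison $\cd_\HH G \leq \cd_{\HHb}\bar{G}$, both of which are essentially prepared in the paragraph preceding the statement.

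First I would verify that $\pi$ satisfies hypotheses (MP1) and (MP2). Note that $\pi$ does land in $\HH$: for $L\in\FF$, the subgroup $LN$ contains $N$ and $LN/N\in\HHb$ by hypothesis (i) on $\HHb$. Condition (MP2) is then the second assumption on $\HHb$: every $S\in\HH$ satisfies $S/N\in\HHb$, so $\FF\cap S \subseteq \FF$. For (MP1), fix $g\in G$, $L\in\FF$ and $S\in\HH$. Since $N\trianglelefteq G$ and $N \leq S$, we have $\pi(L)^g = (LN)^g = L^g N$, and $L^g N \leq S$ holds if and only if $L^g \leq S$ (the forward direction is clear, the converse because $L^g N \leq SN = S$). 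This settles (MP1).

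Next I would argue that $\cd_\HH G \leq \cd_{\HHb}\bar{G}$. The correspondence $S \leftrightarrow S/N$ is a bijection between $\HH$ and $\HHb$, and for $H,K\in\HH$ the natural projection $G\twoheadrightarrow\bar G$ induces a bijection $[G/H,G/K]_G \to [\bar G/\bar H,\bar G/\bar K]_{\bar G}$ because $N\leq H$ forces every $G$-map to factor through $\bar G$. Thus $\bar\pi$ yields an equivalence of orbit categories $\OFG{\HH}{G}\simeq\OFG{\HHb}{\bar G}$, and the induced functor $\Phi$ on Bredon modules is an equivalence of abelian categories sending $\Ztriv{\HH}$ to $\Ztriv{\HHb}$ and projective resolutions to projective resolutions. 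Evaluating the displayed isomorphism $\Cohom{n}{\HH}{G}{M} \cong \Cohom{n}{\HHb}{\bar{G}}{\Phi(M)}$ at $M=\Ztriv{\HH}$ and using Corollary~\ref{cd_max_min} gives the desired inequality (in fact an equality).

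Finally, applying Corollary~\ref{inclusion_cd_goes_up_boundedly} to the triple $(\FF,\HH,\pi)$ using the hypothesis $\cd_{\FF\cap S} S \leq n$ for every $S\in\HH$ yields
\[
\cd_\FF G \;\leq\; n + \cd_\HH G \;\leq\; n + \cd_{\HHb}\bar{G},
\]
which is the claim. The only non-bookkeeping step is the identification of the two orbit categories $\OFG{\HH}{G}$ and $\OFG{\HHb}{\bar G}$; once the morphism sets are matched via $G\to\bar G$, everything else is formal from Corollary~\ref{inclusion_cd_goes_up_boundedly} and the exactness/projective-preserving properties collected in Proposition~\ref{resindcoind_prop}, so I do not expect a serious obstacle.
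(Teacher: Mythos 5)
Your proposal is correct and follows essentially the same route as the paper: the text preceding the statement sets up exactly this argument, namely that $\pi(L)=LN$ satisfies (MP1) and (MP2), that the bijection $S\mapsto S/N$ between $\HH$ and $\HHb$ identifies the Bredon cohomology of $G$ for $\HH$ with that of $\bar{G}$ for $\HHb$ (so $\cd_\HH G \leq \cd_{\HHb}\bar{G}$), and then applies Corollary~\ref{inclusion_cd_goes_up_boundedly}. Your write-up merely makes explicit the verifications (e.g.\ that $(LN)^g=L^gN$ and that morphism sets in the two orbit categories match) which the paper leaves implicit.
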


As previously, in the case of the Bredon geometric dimension, we restrict outselves to
$\FF$ and $\HHb$ being full families of subgroups.

\begin{Lemma}\label{class_space_for_quotients}
    Let $G$ be a discrete group, $N\lhd G$ and $\HHb$ a full family of subgroups of $\bar{G} = G/N$.
    Let $\HH = \{L \leq G\st LN/N \in\HHb\}$ and $X$ be a model for $\EFG{\HHb}{\bar{G}}$. Then,
    $\HH$ is a full family of subgroups of $G$ and $X$ is a model for $\EFG{\HH}{G}$.
    In particular,
    $$\gd_{\HH} G \leq \gd_{\HHb} \bar{G}.$$
\end{Lemma}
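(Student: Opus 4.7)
The plan is to verify the two structural claims in the statement separately (that $\HH$ is a full family and that $X$ becomes a model for $\EFG{\HH}{G}$), and then deduce the dimension bound as an immediate consequence via Corollary~\ref{other_def_of_class_space}.

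First, I would check that $\HH = \{L \leq G \st LN/N \in \HHb\}$ is a full family. Closure under conjugation follows because $N \lhd G$ gives $(L^g) N = (LN)^g$, so $L^g N/N$ is the $\bar{g}$-conjugate of $LN/N$ in $\bar{G}$, which lies in $\HHb$ by its closure under conjugation. Closure under subgroups is immediate: if $L' \leq L \in \HH$, then $L' N / N \leq LN/N \in \HHb$, and fullness of $\HHb$ puts $L'N/N$ in $\HHb$, whence $L' \in \HH$.

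Next I would endow $X$ with a $G$-action via the projection $\pi : G \twoheadrightarrow \bar{G}$, so that $N$ acts trivially. Since $\bar{G}$ acts cellularly on $X$ (as it is a $\bar{G}$-CW-complex), $G$ acts cellularly as well, and by Corollary~\ref{G-CW-comp-discrete} this turns $X$ into a $G$-CW-complex of the same (covering) dimension. To apply Corollary~\ref{other_def_of_class_space}, the central observation is that for any $H \leq G$, triviality of the $N$-action gives the chain of equalities
\[
X^H \;=\; X^{HN} \;=\; X^{HN/N},
\]
where the last fixed-point set is taken with respect to the $\bar{G}$-action. Since by definition $H \in \HH$ iff $HN/N \in \HHb$, and $X$ is a model for $\EFG{\HHb}{\bar{G}}$, the set $X^H$ is contractible when $H \in \HH$ and empty otherwise, which is exactly what Corollary~\ref{other_def_of_class_space} demands.

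The inequality $\gd_{\HH} G \leq \gd_{\HHb} \bar{G}$ then follows at once: a minimal-dimensional model $X$ for $\EFG{\HHb}{\bar{G}}$ yields, by the above, a model for $\EFG{\HH}{G}$ of the same dimension. The only real point requiring care is the fixed-point identification $X^H = X^{HN/N}$, which hinges on $N$ acting trivially after inflating the action; once this is in place, every other verification is formal.
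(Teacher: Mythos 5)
Your proposal is correct and follows essentially the same route as the paper's proof: verify closure of $\HH$ under conjugation (using $N\lhd G$) and under subgroups (using fullness of $\HHb$), inflate the $\bar{G}$-action on $X$ to a $G$-action through the projection, identify $X^H = X^{HN/N}$, and conclude via Corollary~\ref{other_def_of_class_space}. Your extra remark that the inflated action is cellular (so $X$ is genuinely a $G$-CW-complex by Corollary~\ref{G-CW-comp-discrete}) is a small point the paper leaves implicit, but otherwise the two arguments coincide.
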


\begin{proof}
    To see that $\HH$ is a full family of subgroups of $G$, we need to see that it is
    closed under conjugation and under taking subgroups. Let $S\in\HH$ and $g\in G$. Since $N \lhd G$,
    we have $S^gN/N = \left(SN/N\right)^{gN}$. Moreover, $\HHb$ is closed under conjugation, so
    $\left(SN/N\right)^{gN}\in\HHb$. Hence, $S^g\in\HH$, as we needed to see. Now let $S\in\HH$ and
    $K \leq S$. Since $KN/N \leq SN/N$ and $\HHb$ is closed under taking subgroups, $KN/N\in\HHb$, which
    completes the proof for $\HH$ being a full family of subgroups of $G$.

    We define the $G$-action on $X$ as $gx = (gN)x$, where $g\in G$, $x\in X$ and $(gN)x$ denotes the
    $G/N$-action on $X$. Then, if $K\leq G$, the set $X^K$ of fixed points of the $G$-action on $X$
    by $K$ is equal to the set $X^{KN/N}$ of fixed points of the $G/N$-action on $X$ by $KN/N$.

    Let $L\in \HH$. Since $LN/N\in\HHb$, $X^{LN/N}$ is contractible and so is $X^L$. Let now
    $L\notin \HH$. By definition of $\HH$, that means $LN/N\notin \HHb$, and for that reason
    $X^L = X^{LN/N} = \emptyset$. Hence, by Corollary~\ref{other_def_of_class_space}, $X$ is a
    model for $\EFG{\HH}{G}$.
\end{proof}

\begin{Theorem}\label{gd_for_quotients}
    Let $G$ be a discrete group, $N \lhd G$ and $\FF$ a full family of subgroups of $G$.
    Let $\HHb$ be a full family of subgroups of $\bar{G} = G/N$ such that for every $L\in\FF$
    the subgroup $LN/N$ of $\bar{G}$ belongs to $\HHb$. Let $\HH = \{L \leq G\st LN/N \in\HHb\}$.
    Then, if there is $n\in\N$ such that $\gd_{\FF\cap S} S \leq n$ for any $S\in\HH$,
    $$\gd_\FF G \leq n + \gd_{\HHb} \bar{G}.$$
\end{Theorem}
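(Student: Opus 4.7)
The plan is to realise this statement as a direct consequence of the two results immediately preceding it, namely Proposition~\ref{inclusion_gd_goes_up_boundedly} (which bounds $\gd_\FF G$ in terms of $\gd_\GG G$ for a pair $\FF\subseteq\GG$ of full families of $G$) and Lemma~\ref{class_space_for_quotients} (which lifts a model of $\EFG{\HHb}{\bar G}$ to a model of $\EFG{\HH}{G}$). In other words, the geometric version follows from the cohomological blueprint of Corollary~\ref{cd_for_quotients} with the cohomological input replaced by the geometric inputs just listed.

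First I would verify that $\FF\subseteq\HH$ as families of subgroups of $G$. If $L\in\FF$, then by the standing hypothesis $LN/N\in\HHb$, so $L\in\HH$ by the very definition of $\HH$. The family $\FF$ is full by assumption, and Lemma~\ref{class_space_for_quotients} guarantees that $\HH$ is also a full family of subgroups of $G$. Thus the pair $(\FF,\HH)$ is exactly in the configuration required by Proposition~\ref{inclusion_gd_goes_up_boundedly}.

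Next, I would check the numerical hypothesis of Proposition~\ref{inclusion_gd_goes_up_boundedly}: that $\gd_{\FF\cap S}S\leq n$ for every $S\in\HH$. This is precisely what we assume in the statement of the theorem, so Proposition~\ref{inclusion_gd_goes_up_boundedly} applies and yields
\[
\gd_\FF G \;\leq\; \gd_\HH G + n.
\]
Lemma~\ref{class_space_for_quotients} then gives $\gd_\HH G \leq \gd_{\HHb}\bar G$, and substituting into the previous inequality produces the desired bound $\gd_\FF G \leq n + \gd_{\HHb}\bar G$.

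There is no real obstacle here, since all of the substantive work has been done upstream: Lemma~\ref{class_space_for_quotients} handles the transfer of a classifying space along the projection $G\twoheadrightarrow\bar G$, and Proposition~\ref{inclusion_gd_goes_up_boundedly} handles the passage between nested families. The only point that needs a moment's care is confirming that the hypotheses of those two results match those of this theorem exactly, which amounts to the inclusion $\FF\subseteq\HH$ together with the fullness of both families.
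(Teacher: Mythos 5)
Your proposal is correct and follows essentially the same route as the paper's own proof: establish $\FF\subseteq\HH$, then combine Lemma~\ref{class_space_for_quotients} (giving that $\HH$ is full and $\gd_{\HH} G \leq \gd_{\HHb}\bar{G}$) with Proposition~\ref{inclusion_gd_goes_up_boundedly} applied to the pair $\FF\subseteq\HH$. No gaps.
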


\begin{proof}
    By Lemma~\ref{class_space_for_quotients} and Proposition~\ref{inclusion_gd_goes_up_boundedly},
    showing that $\FF \subseteq \HH$ will conclude the proof. Let $K\in\FF$. Then, by hypothesis,
    $LN/N\in\HHb$. That means, by definition of $\HH$, that $L\in\HH$, as we wanted to see.
\end{proof}

\section{Union of families}
Given two families of subgroups of a group $G$, we can build a classifying space for the union of those families using
the classifying spaces for each of the families. The first direct approuch gives us the following result:

\begin{Lemma}\label{E_FUGjoin}
Let $\FF$ and $\GG$ be two families of subgroups of $G$. Then
$$\gd_{\FF\cup\GG}G \leq \gd_\FF G + \gd_\GG G + 1.$$
\end{Lemma}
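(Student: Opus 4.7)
The plan is to build a model for $\EFG{\FF \cup \GG}{G}$ as the join of minimal-dimensional $G$-CW-complex models for $\EFG{\FF}{G}$ and $\EFG{\GG}{G}$, and then read off the dimension.

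First I would fix $G$-CW-complex models $X$ for $\EFG{\FF}{G}$ and $Y$ for $\EFG{\GG}{G}$ realising the respective geometric dimensions (if either dimension is infinite, the claim is vacuous). Form the join $Z = X * Y$. Assuming the local finiteness hypothesis of Theorem~\ref{productCW} holds for the pair $X,Y$, Corollary~\ref{joinGCW} supplies a $G$-CW structure on $Z$ inherited from the product cell structure on $X \times Y \times [0,1]$; the top-dimensional cells have the form $e_\alpha^X \times e_\beta^Y \times (0,1)$, so $\dim Z = \dim X + \dim Y + 1 = \gd_\FF G + \gd_\GG G + 1$.

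The second step is to check, via Corollary~\ref{other_def_of_class_space}, that $Z$ is a model for $\EFG{\FF\cup\GG}{G}$. For the isotropy condition, a point $[x,y,t] \in Z$ with $t \in (0,1)$ has stabiliser $G_x \cap G_y$, a subgroup of $G_x \in \FF$, hence in $\FF \subseteq \FF \cup \GG$ since $\FF$ is full; points at the levels $t = 0$ and $t = 1$ have stabilisers in $\FF$ and $\GG$ respectively.

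The key computation is the fixed-point analysis. For $H \leq G$, tracking the equivalence relation of Definition of the join shows that $Z^H$ decomposes as $X^H \times Y^H \times (0,1)$ in the interior, a copy of $X^H$ at $t = 0$ (the $Y$ factor is collapsed), and a copy of $Y^H$ at $t = 1$; this identifies $Z^H$ with the join $X^H * Y^H$, under the convention $A * \emptyset = A$ and $\emptyset * \emptyset = \emptyset$. If $H \in \FF$, contractibility of $X^H$ makes $Z^H$ contractible: either $Y^H = \emptyset$ and $Z^H = X^H$, or $Y^H \neq \emptyset$ and the join with a contractible space is contractible (standard cone deformation). The case $H \in \GG$ is symmetric, and $H \notin \FF \cup \GG$ forces $X^H = Y^H = \emptyset$, so $Z^H = \emptyset$.

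The main obstacle is primarily technical rather than conceptual: ensuring that the join inherits a bona fide $G$-CW structure, since Theorem~\ref{productCW} requires local finiteness hypotheses on the factors, and verifying that the topology on the fixed-point set $Z^H$ matches the join $X^H * Y^H$ in the edge cases where one factor is empty. Once these identifications are made, the contractibility reduction is routine and the dimension bound $\dim Z = \gd_\FF G + \gd_\GG G + 1$ immediately yields the claimed inequality.
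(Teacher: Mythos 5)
Your proposal is correct and takes essentially the same route as the paper: form the join $Z = X * Y$ of models for $\EFG{\FF}{G}$ and $\EFG{\GG}{G}$ with the diagonal $G$-action, obtain the $G$-CW structure from Corollary~\ref{joinGCW}, verify the fixed-point condition, and read off $\dim Z = \gd_\FF G + \gd_\GG G + 1$. The paper merely asserts that $Z$ is a model for $\EFG{\FF\cup\GG}{G}$ while you spell out the identification $Z^H \cong X^H * Y^H$ and the contractibility argument; the local-finiteness caveat you flag is equally present (and equally unaddressed) in the paper's own appeal to Corollary~\ref{joinGCW}.
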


\begin{proof}
Let $X,Y$ be models for $\EFG{\FF}{G}$ and $\EFG{\GG}{G}$ respectively. Let $Z = X * Y$ be the join of these two spaces, which
is a $G$-CW-complex by Corollary~\ref{joinGCW}.
Then $G$ acts on $Z$ as it acts on $X$ and $Y$ on each extreme of the interval and diagonally in the rest of $Z$. With this
action defined, $Z$ is a model for $\EFG{\FF \cup \GG}{G}$. And we finish the proof by noting $\operatorname{dim}(Z) = \operatorname{dim}(X) + \operatorname{dim}(Y) + 1$.
\end{proof}

If additionally we also take into account the classifying space over the intersection of those families, we have:

\begin{Lemma}\label{E_FUG}\cite[Lemma 2.4]{nucinkisetal}
Let $\FF$ and $\GG$ be two full families of subgroups of $G$. Then
$$\gd_{\FF\cup\GG}G \leq \max\{\gd_\FF G, \gd_\GG G, \gd_{\FF\cap\GG}G +1\}.$$
\end{Lemma}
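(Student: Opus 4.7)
The plan is to realize a model for $\EFG{\FF\cup\GG}{G}$ as a double mapping cylinder built from models for the three families $\FF$, $\GG$ and $\FF\cap\GG$. Let $X$ be a minimal-dimensional model for $\EFG{\FF\cap\GG}{G}$, let $Y$ be a minimal-dimensional model for $\EFG{\FF}{G}$, and let $Z$ be a minimal-dimensional model for $\EFG{\GG}{G}$. Since $\FF\cap\GG\subseteq\FF$ and $\FF\cap\GG\subseteq\GG$, the isotropy groups of $X$ lie in both $\FF$ and $\GG$, so the universal property of classifying spaces yields $G$-maps $f:X\to Y$ and $g:X\to Z$, unique up to $G$-homotopy. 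Using the equivariant cellular approximation theorem (the $G$-CW version of Theorem~\ref{CellApprox}), I may assume $f$ and $g$ are cellular $G$-maps without changing the $G$-homotopy type of the resulting construction.

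Form the double mapping cylinder $W=\dmcyl{f}{g}$. By Corollary~\ref{dmcylGCW} this is a $G$-CW-complex. The dimension bound is immediate: $W$ is built from $X\times[0,1]$, $Y$ and $Z$, so
\[
\dim(W)\;\leq\;\max\{\dim(Y),\,\dim(Z),\,\dim(X)+1\}\;=\;\max\{\gd_\FF G,\,\gd_\GG G,\,\gd_{\FF\cap\GG}G+1\}.
\]
It remains to verify that $W$ is a model for $\EFG{\FF\cup\GG}{G}$, which by Corollary~\ref{other_def_of_class_space} amounts to computing $W^H$ for each $H\leq G$.

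The key observation is that for a discrete group $G$, taking $H$-fixed points commutes with the pushout constructions defining mapping cylinders, because $[0,1]$ carries the trivial action and the quotient identifications are $G$-equivariant. Hence $W^H=\dmcyl{f^H}{g^H}$, built from $X^H$, $Y^H$ and $Z^H$. I then split into cases. If $H\in\FF\cap\GG$, all three fixed point sets are contractible and $W^H$ is the double mapping cylinder of contractible spaces along cellular maps, hence contractible. If $H\in\FF\setminus\GG$, then $Z^H=\emptyset$ and $X^H=\emptyset$ (since $\FF\cap\GG$ is closed under the subgroup passage from $\GG$), so $W^H=Y^H$, which is contractible; symmetrically if $H\in\GG\setminus\FF$. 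If $H\notin\FF\cup\GG$, then all three fixed point sets are empty and $W^H=\emptyset$. In particular, all isotropy groups of $W$ lie in $\FF\cup\GG$.

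The main subtlety is the commutation of fixed points with the double mapping cylinder, which relies on $f$ and $g$ being genuine $G$-maps (so their equivariance descends to fixed point subspaces) and on $G$ acting trivially on $[0,1]$; after that the case analysis above is essentially bookkeeping. The only other place one must be careful is the cellular approximation step, which must be done $G$-equivariantly so that Corollary~\ref{dmcylGCW} applies; this is guaranteed by the equivariant version of the cellular approximation theorem referenced after Theorem~\ref{CellApprox}.
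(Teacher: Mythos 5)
Your proposal is correct and takes essentially the same route as the paper: both form the double mapping cylinder of cellular $G$-maps (obtained from the universal property plus equivariant cellular approximation) from a model for $\EFG{\FF\cap\GG}{G}$ into models for $\EFG{\FF}{G}$ and $\EFG{\GG}{G}$, compute the $H$-fixed points case by case, and read off the dimension bound $\max\{\gd_\FF G,\gd_\GG G,\gd_{\FF\cap\GG}G+1\}$. The differences are only notational, and your fixed-point analysis matches the paper's computation.
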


\begin{proof}
    Let $X$, $Y$ and $Z$ be models for $\EFG{\FF}{F}$, $\EFG{\GG}{G}$ and
    $\EFG{\FF\cap\GG}{G}$ respectively. By the universal property of classifying
    spaces for families, there are $G$-maps, unique up to $G$-homotopy, $h :Z \to Y$
    and $f : Z \to X$. By the Cellular Approximation Theorem (\ref{CellApprox}),
    $f$ and $h$ can be assumed to be cellular. By Corollary~\ref{dmcylGCW}, $\dmcyl{f}{h}$
    is a $G$-CW-complex.

    Let $B = \left(\left(Z\times I_X\right) \sqcup X\right) \sqcup \left(\left(Z\times I_Y\right) \sqcup Y\right)$,
    where $I_X$ and $I_Y$ are two copies of $[0,1]$.
    Let $\pi: B \to \dmcyl{f}{h}$ be the quotient map. Then, if $g\in G$, $g$ acts
    on $\dmcyl{f}{h}$ by $g \pi(p,t) = \pi(gp,t)$ for $p \in Z$ and $t\in[0,1]$ and $g \pi(q) = \pi(g q)$ for $q\in X$
    or $q \in Y$. Then, given $H\leq G$,
    $\dmcyl{f}{h}^H = \pi(\left(\left(Z^H\times I_X\right) \sqcup X^H\right) \sqcup \left(\left(Z^H\times I_Y\right) \sqcup Y^H\right))$.
    Since
    And since $f$, $h$ and $\pi$ are $G$-maps, we can conclude
    $$\dmcyl{f}{h}^H = \pi(\left(\left(Z^H\times I_X\right) \sqcup X^H\right) \sqcup \left(\left(Z^H\times I_Y\right) \sqcup Y^H\right))$$

    Let $H \in \FF \cup \GG$. If $H \in \FF\bs\GG$, $Y^H$ and $Z^H$ are both
    empty, since $H\notin \GG$ and $H\notin \FF\cap\GG$. In that case, $X^H$ is
    non-empty and contractible, so $\dmcyl{f}{h}^H = \pi(X^H)$. The
    restriction of $\pi$ to $X\subseteq \dmcyl{f}{h}$ is the identity, so $\pi(X^H)$ is
    also contractible. Analogously in the case $H\in\GG\bs\FF$. In the case $H\in\FF\cap\GG$,
    $X^H$, $Y^H$, $Z^H$ and hence $Z^H\times I_X$ and $Z^H\times I_Y$ are all non-empty and contractible.
    Note that $f(Z^H) \subseteq {(f(Z))}^H \subseteq X^H$ and $h(Z^H)\subseteq{(h(Z))}^H \subseteq Y^H$.
    Therefore, $\dmcyl{f}{h}^H$ is non-empty and contractible. Finally, if $H \notin \FF\cup\GG$,
    $\dmcyl{f}{h}^H = \emptyset$ as $X^H = Z^H = Y^H = \emptyset$.

    Hence, $\dmcyl{f}{h}$ yields a model for $\EFG{\FF\cup \GG}{G}$ of the desired dimension.
\end{proof}

And for this last case, we can apply Corollary~\ref{MVCWpushout} and use the following corollary to
obtain a Bredon cohomological equivalent of Lemma~\ref{E_FUG}.

\begin{Lemma}\label{cdE_FUG}
    Let $\FF$ and $\GG$ be two full families of subgroups of $G$,
    $M$ be a right $\OFG{\FF\cup\GG}{G}$-module and $F_\FF : \FF \to \FF\cup\GG$,
    $F_\GG : \GG \to \FF\cup\GG$
    and $I : \FF\cap\GG \to \FF\cup\GG$ the inclusion functors. Then, the following
    sequence in Bredon Cohomology is exact
    \begin{multline*}
        \cdots \longrightarrow \Cohom{n}{\FF\cup\GG}{G}{M} \longrightarrow \\
        \Cohom{n}{\FF}{G}{\modres{F_\FF} M} \oplus  \Cohom{n}{\GG}{G}{\modres{F_\GG}M}
        \longrightarrow \\ \Cohom{n}{\FF\cap\GG}{G}{\modres{I} M}
        \longrightarrow \Cohom{n+1}{\FF\cup\GG}{G}{M}\longrightarrow \cdots
    \end{multline*}

    and hence
    $$\cd_{\FF\cup\GG}G \leq \max\{\cd_\FF G, \cd_\GG G, \cd_{\FF\cap\GG}G +1\}.$$
\end{Lemma}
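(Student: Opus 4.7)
The plan is to lift the double mapping cylinder construction of Lemma~\ref{E_FUG} to a short exact sequence of cellular chain complexes of Bredon modules, and then read off the Mayer--Vietoris sequence for Bredon cohomology.

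First, I fix models $X$, $Y$, $Z$ for $\EFG{\FF}{G}$, $\EFG{\GG}{G}$ and $\EFG{\FF\cap\GG}{G}$, together with $G$-maps $f : Z \to X$ and $h : Z \to Y$ supplied by the universal property, which may be taken cellular by Theorem~\ref{CellApprox}. Following the proof of Lemma~\ref{E_FUG}, the double mapping cylinder $D := \dmcyl{f}{h}$ is a model for $\EFG{\FF\cup\GG}{G}$. I realise $D$ as the $G$-pushout of the cellular inclusion $Z \hookrightarrow \mcyl{h}$ against $f : Z \to X$: by Corollary~\ref{mcylGCW}, $\mcyl{h}$ is a $G$-CW-complex $G$-homotopy equivalent to $Y$, and $Z$ sits in it as a subcomplex by Corollary~\ref{cone_mcyl_dcyl_CW}. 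Theorem~\ref{pushoutisCW} then identifies the cells of $D$ as those of $X$ together with the cells of $\mcyl{h}$ not in $Z$.

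Next, I assemble in each degree $n$ a short exact sequence of free (hence projective, by Remark~\ref{properties_underline_Cstar}) right $\OFG{\FF\cup\GG}{G}$-modules
$$0 \longrightarrow \underline{C_n(Z)} \longrightarrow \underline{C_n(X)} \oplus \underline{C_n(\mcyl{h})} \longrightarrow \underline{C_n(D)} \longrightarrow 0,$$
which splits degreewise because $\underline{C_n(D)}$ is projective. Applying $\mor{\FF\cup\GG}{-}{M}$ therefore preserves exactness, and passing to cohomology produces a long exact sequence. I then identify each term with a Bredon group cohomology via Remark~\ref{properties_underline_Cstar} and Corollary~\ref{cohom_EFG_Bred_cohom}: since $X$ has isotropy in $\FF$ and is a model for $\EFG{\FF}{G}$, its Bredon cohomology with coefficients in $M$ equals $\Cohom{n}{\FF}{G}{\modres{F_\FF}M}$, and analogously for $\mcyl{h} \simeq_G Y$ and for $Z$. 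This produces the Mayer--Vietoris sequence in the statement.

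Finally, set $d = \max\{\cd_\FF G, \cd_\GG G, \cd_{\FF\cap\GG}G + 1\}$. For any $n > d$ the three neighbouring terms $\Cohom{n}{\FF}{G}{\modres{F_\FF}M}$, $\Cohom{n}{\GG}{G}{\modres{F_\GG}M}$ and $\Cohom{n-1}{\FF\cap\GG}{G}{\modres{I}M}$ vanish for every $M$ by Corollary~\ref{cd_max_min}, so exactness forces $\Cohom{n}{\FF\cup\GG}{G}{M}=0$ and the desired inequality follows from the same corollary. The main subtlety I anticipate is that Corollary~\ref{MVCWpushout} is stated for homology theories, so working directly with the cellular chain complex of the pushout---rather than invoking the corollary as a black box---is what keeps the coefficient restrictions and the identification of the connecting map transparent.
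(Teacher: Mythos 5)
Your proof is correct, and its overall strategy coincides with the paper's: both take the double mapping cylinder model $\dmcyl{f}{h}$ for $\EFG{\FF\cup\GG}{G}$ from Lemma~\ref{E_FUG}, both translate the resulting geometric decomposition into a long exact Mayer--Vietoris sequence, both identify the terms as group Bredon cohomology via Corollary~\ref{cohom_EFG_Bred_cohom} (together with the restriction isomorphism in Remark~\ref{properties_underline_Cstar}), and both close with the same vanishing argument in degrees above $d=\max\{\cd_\FF G,\cd_\GG G,\cd_{\FF\cap\GG}G+1\}$, which you justify through Corollary~\ref{cd_max_min} and the paper through Proposition~\ref{pd_zeros} --- these are interchangeable. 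Where you genuinely diverge is the middle step: the paper obtains the long exact sequence by applying Corollary~\ref{MVCWpushout} with ``the (co)homology theory'' of Definition~\ref{bredcohom_on_class_spaces}, whereas you replace $Y$ by $\mcyl{h}$ so that the pushout is along a subcomplex inclusion and then produce, degreewise, the short exact sequence $0\to\underline{C_n(Z)}\to\underline{C_n(X)}\oplus\underline{C_n(\mcyl{h})}\to\underline{C_n(D)}\to 0$ of free Bredon modules over $\OFG{\FF\cup\GG}{G}$, which splits because $\underline{C_n(D)}$ is projective, so that $\mor{\FF\cup\GG}{-}{M}$ stays exact and the long exact sequence falls out of the snake lemma. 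Your route is more hands-on but has the advantage you point out: Corollary~\ref{MVCWpushout} is phrased for homology theories with a single coefficient system, so using it for contravariant Bredon cohomology with coefficients that must be restricted differently over $X$, $Y$ and $Z$ requires exactly the bookkeeping (Theorem~\ref{pushoutisCW} for the cell decomposition of $D$, Remark~\ref{properties_underline_Cstar} for the coefficient restrictions) that your chain-level argument makes explicit; the paper's version is shorter but leans on that corollary as a black box. Both derivations are sound.
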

\begin{proof}
    Let $X$, $Y$ and $Z$ be models for $\EFG{\FF}{G}$, $\EFG{\GG}{G}$ and
    $\EFG{\FF\cap\GG}{G}$ respectively. Consider $P = \dmcyl{f}{h}$ the model for $\EFG{\FF\cup\GG}{G}$
    described in the proof of Lemma~\ref{E_FUG}.

    Given $M\in\RMod{\FF\cup\GG}{G}$ and taking as (co)homology theory the one defined in \ref{bredcohom_on_class_spaces}, by
    Corollary~\ref{MVCWpushout} we have the following long exact sequence:
    \begin{multline*}
        \cdots \longrightarrow \Cohom{n-1}{\FF\cup\GG}{P}{M} \longrightarrow \\
        \Cohom{n-1}{\FF}{X}{\modres{F_\FF} M} \oplus  \Cohom{n-1}{\GG}{Y}{\modres{F_\GG}M}
        \longrightarrow \\ \Cohom{n-1}{\FF\cap\GG}{Z}{\modres{I} M}
        \longrightarrow \Cohom{n}{\FF\cup\GG}{P}{M} \longrightarrow \cdots
    \end{multline*}

    And using Corollary~\ref{cohom_EFG_Bred_cohom}, the previous Mayer-Vietoris
    long exact sequence is equivalent to the following long exact sequence:
    \begin{multline*}
        \cdots \longrightarrow \Cohom{n}{\FF\cup\GG}{G}{M} \longrightarrow \\
        \Cohom{n}{\FF}{G}{\modres{F_\FF} M} \oplus  \Cohom{n}{\GG}{G}{\modres{F_\GG}M}
        \longrightarrow \\ \Cohom{n}{\FF\cap\GG}{G}{\modres{I} M}
        \longrightarrow \Cohom{n+1}{\FF\cup\GG}{G}{M} \longrightarrow \cdots
    \end{multline*}

    Now, if we take $d = \max\{\cd_\FF G, \cd_\GG G, \cd_{\FF\cap\GG}G +1\}$, we know
    $$\Cohom{d+1}{\FF}{G}{\modres{F_\FF} M} = \Cohom{d+1}{\GG}{G}{\modres{F_\GG}M} = \Cohom{d}{\FF\cap\GG}{G}{\modres{I} M} = 0$$
    by Proposition~\ref{pd_zeros}. Hence, the long exact sequence above is
    $$\cdots \to 0 \to \Cohom{d+1}{\FF\cup\GG}{G}{M} \to 0 \to \cdots$$
    As the sequence is exact, in particular it is exact at $\Cohom{d+1}{\FF\cup\GG}{G}{M}$,
    which means $\Cohom{d+1}{\FF\cup\GG}{G}{M} = 0$. Since this is true for all
    $M\in\RMod{\FF\cup\GG}{G}$, by Proposition~\ref{pd_zeros},
    we can conclude ${\cd_{\FF\cup\GG} G \leq d}$, as we wanted to see.
\end{proof}

\section{Strongly structured inclusions}\label{sec:LW}

\subsection{A pre-example}

We will firstly describe Farrell's construction, which produces a classifying space
for the family of (virtually) cyclic subgroups of $\Z^2$. This construction can be also
found in \cite[p. 108]{juanpinedaleary} and in \cite[pp. 87-89]{fluchthesis}.

Studying these classical constructions of classifying spaces, we can perceive a hint of
what nowadays represents the most broadly used procedure to build such spaces for a family $\GG$
of subgroups of a group $G$ given known models for $\EFG{\FF}{G}$ and other classifying spaces for families
related with $\FF$, where $\FF \subseteq \GG$:
the L{\"u}ck-Weiermann method.

Let then $G = \Z^2$ and $\FF = \Vcy$ be the family of (virtually) cyclic subgroups of $G$, which
in this case coincides with the family of the subgroups of $\Z^2$ that are isomorphic to $\Z$.

Note that for every $H\in\FF$ there is a unique $\bar{H}\in\FF$ such that $H\leq\bar{H}$
and $\bar{H}$ is maximal among those subgroups with the same properties, i.e., if
$L\in\FF$ is such that $H\leq L$ then $L\leq\bar{H}$.

Let $\mathcal{H}$ denote the set of the maximals subgroups of $\FF$ we just described. If
$H_1, H_2\in\mathcal{H}$ are different, then their intersection is necessarily trivial, as
a proper non-trivial intersection between two maximals would contradict the fact that they are maximals.
Moreover, $\Z^2/H \cong \Z$ for any $H\in\mathcal{H}$.

The set $\mathcal{H}$ can be indexed by $\Z$ and we write $H_i$ to denote the $i$-th maximal subgroup.
As we saw in Example~\ref{RasEGforZ}, and since $\Z^2/H \cong \Z$, if we take $X_i$ to be a copy
of $\R$, $X_i$ is a model for $\EFG{\{\{1\}\}}{\Z^2/H_i}$, which in this case is equivalent
to being a model for $\EFG{\Fin}{\Z^2/H_i}$.

For any $i\in\Z$, we can define a $\Z^2$-action on $X_i$ as $gx = gH_i x$ for $g\in\Z^2$ and
for all $x\in X_i$, where $gH_i$ represents the class of $g$ in $\Z^2/H_i$. Note that
if $H\leq \Z^2$, the set of fixed points by $H$ with respect to the described $\Z^2$-action on $X_i$
is contractible if $H\leq H_i$ or empty otherwise, that is, $X_i$ is a model for $\EFG{\All(H_i)}{\Z^2}$,
where $\All(\Z^2)$ is the family of all subgroups of $H_i$.

Consider now the space $Y_i = X_i*X_{i+1}$ for $i\in\Z$, on which we have the diagonal $\Z^2$-action
defined from the $\Z^2$-actions in each of the spaces in the join in the proof of Corollary~\ref{joinGCW}.
Let $K\leq \Z^2$. If $K \leq H_i$ is a non-trivial subgroup, since $H_i \cap H_{i+1} = \emptyset$
and $K\in\All(H_i)$, we have $Y_i^K = X_i^K \simeq \{*\}$. The same reasoning holds
for any non-trivial $K\leq H_{i+1}$. In the case of $K = \{1\}$, $Y_i^K = Y_i \simeq \{*\}$. And finally,
in the case where $K\notin\All(H_i)\cup\All(H_{i+1})$, the set of fixed points by $K$ will be empty,
as $X_i^K = X_{i+1}^K$.

Hence, $Y_i$ is a model for $\EFG{\All(H_i)\cup\All(H_{i+1})}{\Z^2}$. This can now be proved using
Lemma~\ref{E_FUGjoin}. Moreover, taking $A_i = \{[x,y,t]\in Y_i \st t = 1/2\}$, $A_i$ is a classifying
space for the family $\All(H_i)\cap\All(H_{i+1}) = \{\{1\}\}$ of subgroups of $\Z^2$ so Lemma~\ref{E_FUG}
can also be used.

If we take
$$X = \left(\bigsqcup_{i\in\Z} X_i*X_{i+1}\right)/\sim,$$
where $\sim$ is the equivalence relation consisting on identifying, for each $i\in\Z$,
the pair of copies of $X_i$ in $X$, it follows that $X$ is a model for $\EFG{\Vcy}{\Z^2}$.

We can observe a similar construction in \cite{connollyetal}, and it is a very interesting exercise
to build the same spaces obtained in that publication using the method we will describe in this section
and find the resemblances of both approaches.

\subsection{Construction}

\begin{Defn}\label{def_strong_equiv_LW}\cite[(2.1)]{lueckweiermann}
    Let $\FF$ and $\GG$ families of subgroups of a given group $G$ such that $\FF \subseteq \GG$. Let $\sim$ be an equivalence relation on $\GG\bs\FF$ satisfying:
    \begin{enumerate}[label={(\roman*)}, noitemsep]
        \item For $H,K \in \GG\bs\FF$ with $H \leq K$ we have $H \sim K.$
        \item Let $H,K \in \GG\bs \FF$ and $g \in G$, then $H \sim K \iff gHg^{-1} \sim gKg^{-1}.$
    \end{enumerate}
    We call $\sim$ a \textit{strong equivalence relation}.
    Denote by $[\GG\bs\FF] $ the equivalence classes of $\sim$ and define for all $[H]\in [\GG\bs\FF]$ the following subgroup of $G$:
    $$\Nzer{G}{H} =\{g \in G\,|\, [gHg^{-1}]=[H]\}.$$
    Now define a family of subgroups of $\Nzer{G}{H}$ by
    $$\GG[H] =\{K \leq \Nzer{G}{H} \,|\, K \in \GG\bs\FF \, ,\, [K]=[H]\}\cup (\FF \cap \Nzer{G}{H}).$$
    Here $\FF \cap \Nzer{G}{H}$ is the family of subgroups of $\Nzer{G}{H}$ belonging to $\FF$.
\end{Defn}

\begin{Theorem}\label{lw-main}\cite[Theorem 2.3]{lueckweiermann} Let $\FF \subseteq \GG$ and $\sim$ be as in Definition \ref{def_strong_equiv_LW}. Denote by $\mathcal{H}$ a complete set of representatives of the conjugacy classes in $[\GG\bs\FF].$ Then the $G$-CW-complex given by the cellular $G$-pushout
$$\begin{tikzcd}
\underset{[H]\in \mathcal{H}}{\bigsqcup} G \times_{\Nzer{G}{H}}\EFG{\FF \cap \Nzer{G}{H}}{\Nzer{G}{H}}
        \arrow[r, "\iota"]
        \arrow[d, "\underset{[H]\in \mathcal{H}}{\sqcup} id_G \times_{\Nzer{G}{H}}f_{[H]}"]
& \EFG{\FF}{G}
        \arrow[d] \\
\underset{[H]\in \mathcal{H}}{\bigsqcup} G \times_{\Nzer{G}{H}}\EFG{\GG[H]}{\Nzer{G}{H}}
        \arrow[r]
& X
\end{tikzcd}
$$
where either $\iota$ or the $f_{[H]}$ are inclusions, is a model for $\EFG{\GG}{G}.$
\end{Theorem}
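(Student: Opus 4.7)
The plan is to apply Corollary~\ref{other_def_of_class_space}: it will suffice to verify that $X$ is a $G$-CW-complex and that $X^K$ is contractible for $K\in\GG$ and empty for $K\notin\GG$. The $G$-CW structure on $X$ will follow from the $G$-equivariant version of Theorem~\ref{pushoutisCW}, since the hypothesis that either $\iota$ or the $f_{[H]}$ are cellular inclusions makes the displayed diagram a cellular $G$-pushout. I will then use that taking $K$-fixed points commutes with such a pushout, together with the standard formula
\[
(G\times_N Y)^K = \bigsqcup_{gN\in(G/N)^K}\{gN\}\times Y^{g^{-1}Kg},\qquad (G/N)^K=\{gN:g^{-1}Kg\leq N\},
\]
to reduce each case of $X^K$ to a question about fixed-point sets of the classifying spaces appearing in the four corners.

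First I will dispose of the case $K\notin\GG$. Since $\FF\subseteq\GG$, the top-right corner $\EFG{\FF}{G}^K$ is empty. For each $[H]\in\mathcal{H}$ and each $g\in G$, the conjugate $g^{-1}Kg$ lies outside $\GG$, hence outside $\GG[H]\subseteq\GG$ and outside $\FF\cap\Nzer{G}{H}\subseteq\FF$, so all fixed-point sets in the top-left and bottom-left vanish. Thus $X^K=\emptyset$. Next I will handle the case $K\in\FF$: then $\EFG{\FF}{G}^K$ is contractible, and for each $[H]\in\mathcal{H}$ the sets $A_{[H]}^K$ and $C_{[H]}^K$ are indexed by the same cosets $gN_H$ with $g^{-1}Kg\leq\Nzer{G}{H}$ (using $\FF\cap\Nzer{G}{H}\subseteq\GG[H]$) and consist of disjoint unions of contractible pieces. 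The map $A_{[H]}^K\to C_{[H]}^K$ will therefore be a componentwise homotopy equivalence; because one leg of the pushout is a cofibration, standard pushout reasoning will give $X^K\simeq\EFG{\FF}{G}^K$, which is contractible.

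The crux of the argument is the case $K\in\GG\bs\FF$, which I will treat as follows. After replacing $K$ by a $G$-conjugate if necessary (this only translates $X^K$), I may assume $[K]=[H]$ for the unique $[H]\in\mathcal{H}$ in the $G$-conjugacy class of $[K]$. The top-right $\EFG{\FF}{G}^K$ is empty since $K\notin\FF$. I will then show that the $[H]$-summand of $C^K$ collapses to the single factor $\EFG{\GG[H]}{\Nzer{G}{H}}^K$. The reasoning: $g^{-1}Kg\in\GG[H]$ forces $g^{-1}Kg\notin\FF$ (as $K\notin\FF$), so $[g^{-1}Kg]=[H]$; by condition (ii) of a strong equivalence relation this is equivalent to $[K]=[gHg^{-1}]$, i.e.\ $g\in\Nzer{G}{H}$, and conversely any $g\in\Nzer{G}{H}$ yields $g^{-1}Kg\in\GG[H]$. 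Hence exactly the coset $e\Nzer{G}{H}$ contributes. For any other $[H']\in\mathcal{H}$, a contributing $g$ would give $[gH'g^{-1}]=[K]=[H]$, making $[H']$ and $[H]$ $G$-conjugate and contradicting the choice of $\mathcal{H}$ as a set of $G$-conjugacy class representatives. The analogous analysis of $A^K$ shows it is empty because $K\notin\FF$. Consequently $X^K\cong\EFG{\GG[H]}{\Nzer{G}{H}}^K$, which is contractible since $K\in\GG[H]$.

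The principal obstacle will be this last case: a priori many cosets and many representatives $[H]$ could index fixed points, but the strong equivalence axioms are tailored precisely to collapse all the bookkeeping to a single contractible piece. Once this is recognised, the remaining verifications (the $G$-CW structure, the commutation of fixed points with the pushout, and the easy cases above) are routine.
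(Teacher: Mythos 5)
The thesis gives no proof of this statement at all --- it is quoted verbatim from L\"uck--Weiermann --- so there is no internal proof to compare against; your argument is essentially a reconstruction of the original proof of \cite[Theorem 2.3]{lueckweiermann}, computing $K$-fixed points of the cellular $G$-pushout via the formula for fixed points of induced spaces and treating the cases $K\notin\GG$, $K\in\FF$, $K\in\GG\setminus\FF$, and it is correct. The only step you leave implicit is that $[K]=[H]$ already forces $K\leq\Nzer{G}{H}$ (for $k\in K$, axiom (ii) applied to $H\sim K$ gives $kHk^{-1}\sim kKk^{-1}=K\sim H$), which is exactly what makes $K\in\GG[H]$ and makes the trivial coset $K$-fixed in your crucial third case.
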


\subsection{Mayer-Vietoris sequence}

In \cite{lueckweiermann}, the authors use a Mayer-Vietoris type long exact sequence in Bredon
Cohomology that can be derived from Theorems \ref{lw-main} and \ref{MVpushout}. We include such
derivation for completeness and comprehension, given that it is not explicitly detailed
in the original source.

\begin{Proposition}\label{MV_LW}
    Let $\FF\subseteq\GG$ be two full families of subgroups of $G$ such that
    there is a strong equivalence relation $\sim$ in $\GG\bs\FF$, as in
    Definition~\ref{def_strong_equiv_LW}. Let $\mathcal{H}$ be a set of
    representatives of the classes in $[\GG\bs\FF]$. Let $M\in\RMod{\GG}{G}$. Let
    $F_\FF : \FF \to \GG$ and $F_{[H]} : \GG[H] \to \GG$
    and $I_{[H]} : \FF\cap\Nzer{G}{H} \to \GG$ the inclusion functors for each $H\in\mathcal{H}$ .
    Then, the following sequence in Bredon Cohomology is exact
    \begin{multline*}
        \cdots \longrightarrow \Cohom{n-1}{\GG}{G}{M} \longrightarrow \\
        \left( \underset{[H]\in \mathcal{H}}{\prod}
        \Cohom{n-1}{\GG[H]}{\Nzer{G}{H}}{\modres{F_{[H]}} M}\right)
        \oplus  \Cohom{n-1}{\FF}{G}{\modres{F_\FF} M}
        \longrightarrow \\
        \underset{[H]\in\mathcal{H}}{\prod} \Cohom{n-1}{\FF\cap\Nzer{G}{H}}{\Nzer{G}{H}}{\modres{I_{[H]}} M}
        \longrightarrow \Cohom{n}{\GG}{G}{M}\longrightarrow \cdots
    \end{multline*}
\end{Proposition}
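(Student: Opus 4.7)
The plan is to apply a Mayer--Vietoris sequence for cellular $G$-pushouts to the pushout that Theorem~\ref{lw-main} provides, taking Bredon cohomology of $G$-CW-complexes with coefficients in $M$ (as in Definition~\ref{bredcohom_on_class_spaces}) as the cohomology theory. The first step will be to establish the cohomological analogue of Corollary~\ref{MVCWpushout}: from a cellular $G$-pushout the usual decomposition of cells yields a short exact sequence of cellular chain complexes
\begin{equation*}
    0 \to \underline{C_*(A)} \to \underline{C_*(B)} \oplus \underline{C_*(C)} \to \underline{C_*(X)} \to 0
\end{equation*}
of right $\OFG{\GG}{G}$-modules that is split in each degree, hence remains exact after applying the contravariant functor $\operatorname{mor}_\GG(-,M)$; taking cohomology then produces the desired Mayer--Vietoris long exact sequence.

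With $A$, $B$, $C$ and $X$ denoting respectively the top-left coproduct, the term $\EFG{\FF}{G}$, the bottom-left coproduct, and the apex (a model for $\EFG{\GG}{G}$) of the pushout in Theorem~\ref{lw-main}, my next step is to identify every group appearing in the resulting sequence. Since $X$ is a model for $\EFG{\GG}{G}$, Corollary~\ref{cohom_EFG_Bred_cohom} gives $\Cohom{n}{\GG}{X}{M} \cong \Cohom{n}{\GG}{G}{M}$. For $B = \EFG{\FF}{G}$, whose isotropy groups all lie in $\FF\subseteq\GG$, I would invoke part (ii) of Remark~\ref{properties_underline_Cstar} applied to $F_\FF$ to rewrite $\operatorname{mor}_\GG(\underline{C_*(B)},M)$ as $\operatorname{mor}_\FF(\underline{C_*(B)}, \modres{F_\FF} M)$, and then apply Corollary~\ref{cohom_EFG_Bred_cohom} in $\FF$ to obtain $\Cohom{n}{\GG}{B}{M} \cong \Cohom{n}{\FF}{G}{\modres{F_\FF} M}$. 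The disjoint unions defining $A$ and $C$ produce direct sums of cellular chain complexes, which $\operatorname{mor}_\GG(-,M)$ turns into direct products, so each of those corners contributes a product indexed by $[H]\in\mathcal{H}$.

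The main obstacle will be identifying the Bredon cohomology of each summand $G\times_K \EFG{\mathcal{F}'}{K}$, where $K=\Nzer{G}{H}$ and $\mathcal{F}'$ is either $\FF\cap K$ or $\GG[H]$. The goal is a Shapiro-type isomorphism
\begin{equation*}
    \Cohom{n}{\GG}{G\times_K Y}{M} \cong \Cohom{n}{\mathcal{F}'}{Y}{\modres{I_K} M}
\end{equation*}
natural in $K$-CW-complexes $Y$ with isotropies in $\mathcal{F}'$, where $I_K : \OFG{\mathcal{F}'}{K} \to \OFG{\GG}{G}$ denotes the inclusion functor. I plan to prove this at the level of cellular chain complexes: a $K$-cell of $Y$ of type $K/L$ with $L\in \mathcal{F}'$ gives rise to a $G$-cell of $G\times_K Y$ of type $G/L$, and the $(\modind{I_K},\modres{I_K})$-adjunction provided by Proposition~\ref{resindcoind_prop} should identify $\underline{C_*(G\times_K Y)}$ with $\modind{I_K}\underline{C_*(Y)}$, so that
\begin{equation*}
    \operatorname{mor}_\GG(\underline{C_*(G\times_K Y)},M) \cong \operatorname{mor}_{\mathcal{F}'}(\underline{C_*(Y)}, \modres{I_K} M)
\end{equation*}
naturally in $Y$ and $M$. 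Taking $Y=\EFG{\mathcal{F}'}{K}$ and applying Corollary~\ref{cohom_EFG_Bred_cohom} inside $K$ then delivers the two kinds of factor $\Cohom{n}{\FF\cap\Nzer{G}{H}}{\Nzer{G}{H}}{\modres{I_{[H]}} M}$ and $\Cohom{n}{\GG[H]}{\Nzer{G}{H}}{\modres{F_{[H]}} M}$ that appear in the statement. Substituting all identifications into the Mayer--Vietoris sequence, with the index shift $n\mapsto n-1$ to match the stated grading, yields the claim; the remaining bookkeeping consists in verifying that each of these identifications is natural enough to commute with the Mayer--Vietoris connecting homomorphisms.
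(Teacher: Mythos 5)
Your proposal is correct and follows essentially the same route as the paper: apply the Mayer--Vietoris sequence for cellular ($G$-)pushouts to the L\"uck--Weiermann pushout of Theorem~\ref{lw-main}, with the Bredon cohomology of Definition~\ref{bredcohom_on_class_spaces} as the cohomology theory, and then identify each corner via Corollary~\ref{cohom_EFG_Bred_cohom}. The only difference is that you make explicit two steps the paper leaves implicit --- the degreewise-split short exact sequence of cellular Bredon chain complexes and the Shapiro-type identification $\underline{C_*(G\times_K Y)} \cong \modind{I_K}\underline{C_*(Y)}$ handling the induced corners $G\times_{\Nzer{G}{H}}\EFG{\mathcal{F}'}{\Nzer{G}{H}}$ --- both of which are correct and indeed needed for the identifications to go through.
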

\begin{proof}
    Let $X_{[H]}$, $Y$ and $Z_{[H]}$ be models for $\EFG{\GG[H]}{\Nzer{G}{H}}$, $\EFG{\FF}{G}$ and
    $\EFG{\FF\cap\Nzer{G}{H}}{\Nzer{G}{H}}$ for each $[H]\in\mathcal{H}$, respectively.
    Consider $P$ the model for $\EFG{\GG}{G}$ obtained as the
    $G$-pushout of the diagram in Theorem~\ref{lw-main}.

    Given $M\in\RMod{\GG}{G}$ and taking as (co)homology theory the one
    defined in \ref{bredcohom_on_class_spaces}, by
    Corollary~\ref{MVCWpushout} we have the following long exact sequence:
    \begin{multline*}
        \cdots \longrightarrow \Cohom{n-1}{\GG}{P}{M} \longrightarrow \\
        \left( \underset{[H]\in \mathcal{H}}{\prod}
        \Cohom{n-1}{\GG[H]}{X_{[H]}}{\modres{F_{[H]}} M}\right)
        \oplus  \Cohom{n-1}{\FF}{Y}{\modres{F_\FF} M}
        \longrightarrow \\
        \underset{[H]\in\mathcal{H}}{\prod} \Cohom{n-1}{\FF\cap\Nzer{G}{H}}{Z_{[H]}}{\modres{I_{[H]}} M}
        \longrightarrow \Cohom{n}{\GG}{P}{M}\longrightarrow \cdots
    \end{multline*}

    And using Corollary~\ref{cohom_EFG_Bred_cohom}, the previous Mayer-Vietoris
    long exact sequence is equivalent to the following long exact sequence:
    \begin{multline*}
        \cdots \longrightarrow \Cohom{n-1}{\GG}{G}{M} \longrightarrow \\
        \left( \underset{[H]\in \mathcal{H}}{\prod}
        \Cohom{n-1}{\GG[H]}{\Nzer{G}{H}}{\modres{F_{[H]}} M}\right)
        \oplus  \Cohom{n-1}{\FF}{G}{\modres{F_\FF} M}
        \longrightarrow \\
        \underset{[H]\in\mathcal{H}}{\prod} \Cohom{n-1}{\FF\cap\Nzer{G}{H}}{\Nzer{G}{H}}{\modres{I_{[H]}} M}
        \longrightarrow \Cohom{n}{\GG}{G}{M}\longrightarrow \cdots
    \end{multline*}
\end{proof}

This result was generalised for arbitrary $\operatorname{Ext}$ functors. The approach in that
case is strictly algebraic, meaning that it is independent from the geometric construction in
Theorem~\ref{lw-main}:

\begin{Theorem}\cite[Theorem 7.7]{degrijsepetrosyan}
    Let $\FF\subseteq\GG$ be two families of subgroups of a group $G$ such that the set
    $\GG\bs\FF$ is equipped with a strong equivalence relation. Let $\mathcal{H}$ be a set
    of representatives of the classes in $[\GG\bs\FF]$. Let $M\in\RMod{\GG}{G}$. Let
    $F_\FF : \FF \to \GG$ and $F_{[H]} : \GG[H] \to \GG$
    and $I_{[H]} : \FF\cap\Nzer{G}{H} \to \GG$ the inclusion functors for each $H\in\mathcal{H}$ .
    Then, the following sequence is exact:
    \begin{multline*}
        \cdots \longrightarrow \Ext{n-1}{\GG}{M}{N} \longrightarrow \\
        \left( \underset{[H]\in \mathcal{H}}{\prod}
        \Ext{n-1}{\GG[H]}{\modres{F_{[H]}} M}{\modres{F_{[H]}} N}\right)
        \oplus  \Ext{n-1}{\FF}{\modres{F_\FF} M}{\modres{F_\FF} N}
        \longrightarrow \\
        \underset{[H]\in\mathcal{H}}{\prod}
        \Ext{n-1}{\FF\cap\Nzer{G}{H}}{\modres{I_{[H]}} M}{\modres{I_{[H]}} N}
        \longrightarrow \Ext{n}{\GG}{M}{N}\longrightarrow \cdots
    \end{multline*}
\end{Theorem}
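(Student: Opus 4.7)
The plan is to produce the long exact sequence algebraically from a short exact sequence of right $\OFG{\GG}{G}$-modules derived from the LW push-out, and then to apply $\Ext{*}{\GG}{-}{N}$ combined with the induction-restriction adjunction. This will generalise Proposition~\ref{MV_LW}, which handles only the case $M=\Ztriv{\GG}$ via the geometric Mayer--Vietoris.

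First, I would apply $\underline{C_*(-)}$ to the cellular LW push-out of Theorem~\ref{lw-main} to obtain a short exact sequence of chain complexes of projective $\OFG{\GG}{G}$-modules, exactly as in Corollary~\ref{MVCWpushout}. Each chain complex resolves an induced-up trivial module by Corollary~\ref{cohom_EFG_Bred_cohom} and Proposition~\ref{resindcoind_prop}(8); passing to $H_0$ yields the short exact sequence
\begin{equation*}
0 \to \bigoplus_{[H]\in\mathcal{H}} \modind{I_{[H]}}\Ztriv{\FF\cap\Nzer{G}{H}} \to \modind{F_\FF}\Ztriv{\FF} \oplus \bigoplus_{[H]\in\mathcal{H}} \modind{F_{[H]}}\Ztriv{\GG[H]} \to \Ztriv{\GG} \to 0.
\end{equation*}
Promoting this to an arbitrary $M\in\RMod{\GG}{G}$ (i.e.\ replacing $\Ztriv{\cdot}$ by $\modres{\cdot}M$ throughout, using the counit $\modind{F}\modres{F}M\to M$ together with exactness of the relevant functors) produces the short exact sequence
\begin{equation*}
0 \to \bigoplus_{[H]\in\mathcal{H}} \modind{I_{[H]}}\modres{I_{[H]}}M \to \modind{F_\FF}\modres{F_\FF}M \oplus \bigoplus_{[H]\in\mathcal{H}} \modind{F_{[H]}}\modres{F_{[H]}}M \to M \to 0.
\end{equation*}

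Second, applying $\Ext{*}{\GG}{-}{N}$ to this short exact sequence produces a long exact sequence, contravariant in the first argument. By the adjunction $\modind{F}\dashv\modres{F}$ from Proposition~\ref{resindcoind_prop}(1), combined with $\modind{F}$ preserving projectives by Proposition~\ref{resindcoind_prop}(8), I obtain the natural isomorphism
\begin{equation*}
\Ext{*}{\GG}{\modind{F}A}{N} \cong \Ext{*}{\FF'}{A}{\modres{F}N}
\end{equation*}
for each relevant inclusion $F\in\{F_\FF,\, F_{[H]},\, I_{[H]}\}$. Specialising to $A=\modres{F}M$ identifies the two outer terms of the long exact sequence with $\Ext{*}{\FF}{\modres{F_\FF}M}{\modres{F_\FF}N}$, $\Ext{*}{\GG[H]}{\modres{F_{[H]}}M}{\modres{F_{[H]}}N}$ and $\Ext{*}{\FF\cap\Nzer{G}{H}}{\modres{I_{[H]}}M}{\modres{I_{[H]}}N}$, while the middle term is $\Ext{*}{\GG}{M}{N}$, exactly as stated.

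The main obstacle I anticipate is the upgrade from the $\Ztriv{\GG}$-sequence to the general $M$-sequence, since tensoring by $M$ over $\OFG{\GG}{G}$ is only right-exact in general. The cleanest route is probably to keep the argument at the chain-complex level: replace the geometric push-out by the short exact sequence of projective chain complexes, apply $\mor{\GG}{Q_*}{-}$ against a projective resolution $Q_*\to M$ (noting that $\modres{F}Q_*$ is still a projective resolution of $\modres{F}M$ by Proposition~\ref{Bredon:res_IK_free}), and only then take cohomology and invoke the adjunction to identify the resulting groups. A secondary subtlety is establishing the required exactness of $\modind{F}$ for the non-subgroup inclusions $F_\FF$ and $F_{[H]}$, which is needed to guarantee that $\modind{F}$ of a projective resolution remains a resolution; this is the natural analogue of Proposition~\ref{Bredon:ind_IK_exact} and should follow from a direct inspection of the tensor product defining induction.
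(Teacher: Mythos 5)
Your proposal does not reconstruct the paper's argument, because the paper gives none: the statement is quoted from \cite{degrijsepetrosyan}, and the text stresses that the proof there is strictly algebraic and independent of the push-out of Theorem~\ref{lw-main}, whereas you route everything through that geometric push-out. That is legitimate in principle, and several of your ingredients are sound: the cellular chain complexes of the corners are complexes of projective $\OFG{\GG}{G}$-modules; the inductions you need are indeed exact (induction along the inclusion of a full subfamily of the \emph{same} group is extension by zero, and the change-of-group part is Proposition~\ref{Bredon:ind_IK_exact}, so your ``secondary subtlety'' is fine); and the adjunction-plus-projectivity identification of $\Ext{*}{\GG}{\modind{F_\FF}\modres{F_\FF}M}{N}$ with $\Ext{*}{\FF}{\modres{F_\FF}M}{\modres{F_\FF}N}$, and its analogues for $F_{[H]}$ and $I_{[H]}$, is exactly right. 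Note, though, that ``passing to $H_0$'' of a short exact sequence of chain complexes does not automatically give a short exact sequence; here it does only because the fixed-point sets of the push-out are contractible or empty, so the relevant $H_1$ vanishes, and you should say so.

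The genuine gap is the promotion from trivial coefficients to arbitrary $M$, which is precisely where this theorem goes beyond Proposition~\ref{MV_LW}. Exactness of
\begin{multline*}
0 \longrightarrow \bigoplus_{[H]\in\mathcal{H}} \modind{I_{[H]}}\modres{I_{[H]}}M \longrightarrow \\
\modind{F_\FF}\modres{F_\FF}M \;\oplus\; \bigoplus_{[H]\in\mathcal{H}} \modind{F_{[H]}}\modres{F_{[H]}}M \longrightarrow M \longrightarrow 0
\end{multline*}
does not follow from ``replacing $\Ztriv{\GG}$ by $\modres{F}M$ and using the counit together with exactness of the relevant functors'': the assignment $M\mapsto \modind{F}\modres{F}M$ is not obtained from the trivial-coefficient sequence by applying any exact functor you have exhibited. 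What is actually needed is an untwisting (tensor-identity) isomorphism, natural in $M$, of the form $\modind{F}\modres{F}M\cong\bigl(\modind{F}\Ztriv{\FF'}\bigr)\otimes_{\Z}M$ (objectwise tensor product) for each of the three inclusions; granted this, exactness follows because the trivial-coefficient sequence is objectwise a sequence of free abelian groups. Alternatively one can verify exactness of the displayed sequence objectwise by a direct computation of the coends. Neither is done in your proposal. Your fallback plan does not repair this: pairing a projective resolution $Q_*\to M$ against the short exact sequence of chain complexes via $\mor{\GG}{Q_*}{-}$ never involves $N$ at all, and the groups it produces are not those in the statement, so as written it cannot yield the sequence. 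Until the displayed short exact sequence (equivalently, the untwisting isomorphism behind it) is proved, your argument only recovers the case $M=\Ztriv{\GG}$, i.e.\ Proposition~\ref{MV_LW}, not the cited Theorem 7.7 of \cite{degrijsepetrosyan}.
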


\subsection{Dimensions}

The condition in Theorem \ref{lw-main} on the two maps being inclusions is not
that strong a restriction as one can replace the spaces by the mapping cylinders, as we saw
in Corollary~\ref{MVCWpushout}.

\begin{Corollary}\label{lw-gdim}\cite[Remark 2.5]{lueckweiermann}
    Suppose there exists an $n$-dimensional model for $\EFG{\FF}{G}$ and for
    each $H \in \mathcal{H}$ there exist a $(n-1)$-dimensional model for
    $\EFG{\FF \cap \Nzer{G}{H}}{\Nzer{G}{H}}$ and a $n$-dimensional model for
    $\EFG{\GG[H]}{\Nzer{G}{H}}$. Then there is an $n$-dimensional model for $\EFG{\GG}{G}.$
\end{Corollary}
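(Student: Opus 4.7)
The plan is to apply Theorem~\ref{lw-main} after a careful substitution of models that converts the required map into a subcomplex inclusion. I would begin by fixing, for each $[H] \in \mathcal{H}$, an $(n-1)$-dimensional model $Z_{[H]}$ for $\EFG{\FF \cap \Nzer{G}{H}}{\Nzer{G}{H}}$ and an $n$-dimensional model $X_{[H]}$ for $\EFG{\GG[H]}{\Nzer{G}{H}}$, together with an $n$-dimensional model $Y$ for $\EFG{\FF}{G}$. Since inducing up from $\Nzer{G}{H}$ to $G$ does not change cell dimensions, $A := \bigsqcup_{[H]} G \times_{\Nzer{G}{H}} Z_{[H]}$ is $(n-1)$-dimensional and $B := \bigsqcup_{[H]} G \times_{\Nzer{G}{H}} X_{[H]}$ is $n$-dimensional.

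The $G$-map $\iota : A \to Y$ appearing in the pushout of Theorem~\ref{lw-main} exists by the universal property of classifying spaces; by the $G$-equivariant Cellular Approximation Theorem~\ref{CellApprox} I may assume it is cellular, and the same for the map $f : A \to B$. To arrange the inclusion condition in Theorem~\ref{lw-main}, I would replace $Y$ by the mapping cylinder $\mcyl{\iota}$, which by Corollary~\ref{mcylGCW} is a $G$-CW-complex containing $A$ as a subcomplex and is $G$-homotopy equivalent to $Y$; hence $\mcyl{\iota}$ is still a model for $\EFG{\FF}{G}$. Applying Theorem~\ref{lw-main} with this choice of models produces a $G$-CW-model $P$ for $\EFG{\GG}{G}$, assembled as a $G$-pushout whose top-horizontal map is the subcomplex inclusion $A \hookrightarrow \mcyl{\iota}$ and whose left-vertical map is the cellular $G$-map $f$.

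The dimension bound is then immediate from Theorem~\ref{pushoutisCW}: the mapping cylinder of a cellular map from an $(n-1)$-dimensional to an $n$-dimensional CW-complex has dimension $\max\{(n-1)+1,\, n\} = n$, and every cell of $P$ is either a cell of $B$ or a cell of $\mcyl{\iota}$ not lying in $A$, so
\[
\dim(P) \;\leq\; \max\{\dim(B),\, \dim(\mcyl{\iota})\} \;=\; n,
\]
which is the required bound.

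The principal subtlety I anticipate is equivariant book-keeping, namely verifying that the mapping-cylinder substitution remains compatible with the $G$-action and with the inducing-up construction, so that the modified square is a bona fide $G$-CW-pushout in the sense of Theorem~\ref{pushoutisCW}. Corollary~\ref{mcylGCW} was established precisely for this purpose in the discrete-group setting, so this verification is routine and the argument goes through.
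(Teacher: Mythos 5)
Your argument is correct and is essentially the intended one: the paper (following L\"uck--Weiermann, Remark 2.5, and the remark preceding the corollary) also handles the inclusion hypothesis of Theorem~\ref{lw-main} by a mapping-cylinder replacement after cellular approximation, and then reads off the dimension from the cell decomposition of the resulting $G$-pushout as in Theorem~\ref{pushoutisCW}. The only cosmetic point is that the cylinder trick and cellular approximation should be performed $\Nzer{G}{H}$-equivariantly on the pieces before inducing up (using the equivariant version of Theorem~\ref{CellApprox} and Corollary~\ref{mcylGCW}), which you already flag and which does not affect the bound $\dim(P)\leq\max\{n-1+1,\,n\}=n$.
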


Analogously, as a consequence of the long exact sequence in Proposition~\ref{MV_LW}:

\begin{Theorem}\label{lw-cdim}\cite[Theorem 7.2]{degrijsepetrosyan}
    Suppose there is a natural number $n$ such that $\cd_\FF G \leq n$
    and for each $[H] \in \mathcal{H}$ $\cd_{\FF \cap \Nzer{G}{H}} \Nzer{G}{H} \leq n -1$
    and $\cd_{\FF[H]} \Nzer{G}{H} \leq n$. Then $\cd_\GG G \leq n$.
\end{Theorem}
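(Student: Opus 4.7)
The plan is to deduce the bound directly from the long exact Mayer--Vietoris sequence of Proposition~\ref{MV_LW} together with the cohomological characterization of $\cd_{\GG} G$ given in Corollary~\ref{cd_max_min}. More precisely, I would fix an arbitrary $M \in \RMod{\GG}{G}$, locate the position in that long exact sequence where $\Cohom{n+1}{\GG}{G}{M}$ appears, and show that both its left and right neighbours vanish under the stated hypotheses; since $M$ is arbitrary, the characterization $\cd_{\GG} G = \min\{d \st \Cohom{d+1}{\GG}{G}{M} = 0 \text{ for all } M\}$ then yields $\cd_{\GG} G \leq n$.

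Concretely, the relevant segment of the sequence in Proposition~\ref{MV_LW} is
\begin{multline*}
\underset{[H]\in\mathcal{H}}{\prod}\Cohom{n}{\FF\cap\Nzer{G}{H}}{\Nzer{G}{H}}{\modres{I_{[H]}} M}
\longrightarrow \Cohom{n+1}{\GG}{G}{M} \\
\longrightarrow \left(\underset{[H]\in \mathcal{H}}{\prod}\Cohom{n+1}{\GG[H]}{\Nzer{G}{H}}{\modres{F_{[H]}} M}\right)\oplus \Cohom{n+1}{\FF}{G}{\modres{F_\FF} M}.
\end{multline*}
The hypothesis $\cd_{\FF} G \leq n$ immediately kills the summand $\Cohom{n+1}{\FF}{G}{\modres{F_\FF} M}$, and the hypothesis $\cd_{\GG[H]}\Nzer{G}{H} \leq n$ kills each factor $\Cohom{n+1}{\GG[H]}{\Nzer{G}{H}}{\modres{F_{[H]}} M}$ for $[H]\in\mathcal{H}$. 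Finally, the hypothesis $\cd_{\FF \cap \Nzer{G}{H}}\Nzer{G}{H} \leq n-1$ gives $\Cohom{n}{\FF\cap\Nzer{G}{H}}{\Nzer{G}{H}}{\modres{I_{[H]}} M}=0$ for every $[H]$, so the left-hand term of the displayed segment vanishes as well.

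Exactness of the sequence then squeezes $\Cohom{n+1}{\GG}{G}{M}$ between two zero groups, forcing $\Cohom{n+1}{\GG}{G}{M} = 0$. Since $M \in \RMod{\GG}{G}$ was arbitrary, Corollary~\ref{cd_max_min} (equivalently Proposition~\ref{pd_zeros} applied to $\Ztriv{\GG}$) delivers the desired inequality $\cd_{\GG} G \leq n$.

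Honestly there is no serious obstacle: everything reduces to keeping careful track of which coefficient module appears in which cohomology group, and noting that the restriction functors $\modres{F_\FF}$, $\modres{F_{[H]}}$ and $\modres{I_{[H]}}$ are only used as a means to rewrite the module after pull-back, while the vanishing bounds on cohomology come from the dimensions of the respective categories in which those modules are evaluated. The only point that merits a brief check is that the hypothesis on the intersection comes with a shift down by one: the connecting map in the Mayer--Vietoris sequence raises degree by one, so the intersection cohomology in degree $n$ (rather than $n+1$) is what must vanish, which is exactly what $\cd_{\FF \cap \Nzer{G}{H}}\Nzer{G}{H} \leq n-1$ provides.
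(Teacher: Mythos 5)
Your proposal is correct and follows essentially the same route the paper takes: the theorem is presented there precisely as a consequence of the Mayer--Vietoris sequence of Proposition~\ref{MV_LW}, with the sandwich argument at $\Cohom{n+1}{\GG}{G}{M}$ and the characterization of $\cd_{\GG}G$ via vanishing of $\Cohom{n+1}{\GG}{G}{M}$ for all $M$ (Proposition~\ref{pd_zeros}, Corollary~\ref{cd_max_min}). You also correctly read the hypothesis $\cd_{\FF[H]}\Nzer{G}{H}\leq n$ as referring to the family $\GG[H]$, and your remark about the degree shift for the intersection term matches the indexing of the sequence.
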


If we express Corollary \ref{lw-gdim} in terms of Bredon geometrical dimensions and rephrase Theorem \ref{lw-cdim}, we obtain upper bounds for $\gd_\GG G$ and $\cd_\GG G$.

\begin{Corollary}\label{lw-dimensions} The following inequalities hold:
    \begin{enumerate}[label={\emph{(\roman*)}}]
        \item $\gd_\GG G \leq
        \operatorname{max}\{\underset{[H] \in \mathcal{H}}{\operatorname{max}}\{\gd_{\GG[H]}(\Nzer{G}{H}),
        \gd_{\FF \cap \Nzer{G}{H}}(\Nzer{G}{H}) + 1\},
        \gd_\FF G \}$

        \item $\cd_\GG G \leq
        \operatorname{max}\{\underset{[H] \in \mathcal{H}}{\operatorname{max}}\{\cd_{\GG[H]}(\Nzer{G}{H}),
        \cd_{\FF \cap \Nzer{G}{H}}(\Nzer{G}{H}) + 1\},
        \cd_\FF G \}$
    \end{enumerate}
\end{Corollary}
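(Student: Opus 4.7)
The plan is to derive both inequalities as direct repackagings of Corollary~\ref{lw-gdim} and Theorem~\ref{lw-cdim}, respectively. The idea in each case is to take the right-hand side as the candidate bound $n$, observe that the hypotheses of the corresponding dimensional result are exactly built into the definition of $n$, and then invoke that result.

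For part (i), I would set
\[
    n = \max\bigl\{\,\max_{[H]\in\mathcal{H}}\{\gd_{\GG[H]}(\Nzer{G}{H}),\, \gd_{\FF \cap \Nzer{G}{H}}(\Nzer{G}{H}) + 1\},\, \gd_\FF G \bigr\}.
\]
By construction, $\gd_\FF G \leq n$, so there exists an $n$-dimensional model for $\EFG{\FF}{G}$. Similarly, for each $[H]\in\mathcal{H}$, the inequality $\gd_{\GG[H]}(\Nzer{G}{H}) \leq n$ yields an $n$-dimensional model for $\EFG{\GG[H]}{\Nzer{G}{H}}$, and $\gd_{\FF \cap \Nzer{G}{H}}(\Nzer{G}{H}) \leq n - 1$ yields an $(n-1)$-dimensional model for $\EFG{\FF \cap \Nzer{G}{H}}{\Nzer{G}{H}}$. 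These are precisely the three hypotheses of Corollary~\ref{lw-gdim}, so applying that result produces an $n$-dimensional model for $\EFG{\GG}{G}$, and hence $\gd_\GG G \leq n$.

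For part (ii), I would repeat the same bookkeeping with cohomological dimensions in place of geometric ones: set
\[
    n = \max\bigl\{\,\max_{[H]\in\mathcal{H}}\{\cd_{\GG[H]}(\Nzer{G}{H}),\, \cd_{\FF \cap \Nzer{G}{H}}(\Nzer{G}{H}) + 1\},\, \cd_\FF G \bigr\},
\]
read off the three bounds $\cd_\FF G \leq n$, $\cd_{\GG[H]}(\Nzer{G}{H}) \leq n$ and $\cd_{\FF \cap \Nzer{G}{H}}(\Nzer{G}{H}) \leq n-1$ for each $[H]\in\mathcal{H}$, and apply Theorem~\ref{lw-cdim} to conclude $\cd_\GG G \leq n$.

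There is no real obstacle here: both parts are purely formal consequences of the two cited results. The only small point to verify is that the set $\mathcal{H}$ of representatives used when invoking Corollary~\ref{lw-gdim} and Theorem~\ref{lw-cdim} is the same one indexing the maxima in the statement, which is immediate from the setup in Definition~\ref{def_strong_equiv_LW}. Since no new argument is required beyond this repackaging, the proof in the thesis is expected to be a one-line reduction to the two preceding results.
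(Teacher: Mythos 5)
Your proposal is correct and matches the paper's own treatment: the thesis proves Corollary~\ref{lw-dimensions} exactly by rephrasing Corollary~\ref{lw-gdim} and Theorem~\ref{lw-cdim} with $n$ taken to be the stated maximum, just as you do. The only implicit convention you rely on (an ``$n$-dimensional model'' meaning one of dimension at most $n$, so that models of smaller dimension qualify) is the same one the paper uses, so nothing is missing.
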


\section{Families of subgroups of a direct union of groups}

In this section we will present a series of results that will later be useful to
extend applications of the theorems in Chapter~\ref{ch:chains}. These results can be found
in \cite{nucinkis2004dimensions}, \cite{lueckweiermann} and \cite{fluchthesis}.

\begin{Defn}\label{def_direct_union}
    Let $\{G_\lambda \st \lambda \in \Lambda\}$ be a set of subgroups of $G$, where $\Lambda$
    is an indexing set. We say that $G$ is the \emph{direct union of the groups } $G_\lambda$
    if the the following conditions hold:
    \begin{enumerate}[label = {(\roman*)}, noitemsep]
        \item for every $\lambda, \mu \in \Lambda$ there is $\nu\in\Lambda$ such that
        $G_\lambda \leq G_\nu$ and $G_\mu \leq G_\nu$ and
        \item $G \subseteq \bigcup_{\lambda \in \Lambda} G_\lambda$ as sets.
    \end{enumerate}
\end{Defn}

Direct unions are a particular case of direct limits if we take the partial order in $\Lambda$
$\lambda \leq \mu$ if and only if $G_\lambda \leq G_\mu$ and taking inclusions as homomorphisms.

\begin{Defn}\label{Flambda_compatible}
    Let $G$ be the direct union of $\{G_\lambda \st \lambda\in\Lambda\}$. Let $\FF$
    be a family of subgroups of $G$ and for each $\lambda\in\Lambda$ let $\FF_\lambda$
    be a family of subgroups of $G_\lambda$. We say $\FF$ and $\FF_\lambda$ for $\lambda\in\Lambda$
    are \emph{compatible with the direct union} if the following holds for every
    $\lambda,\mu\in\Lambda$:
    \begin{enumerate}[label={(\arabic*)}, noitemsep]
        \item $\FF_\lambda \subseteq \FF_\mu$ if $\lambda\leq\mu$;
        \item $\FF_\lambda \subseteq \FF$;
        \item $\FF \subseteq \bigcup_{\lambda\in\Lambda} \FF_\lambda$ and
        \item $\FF_\lambda = \FF \cap G_\lambda$.
    \end{enumerate}
\end{Defn}

\begin{Proposition}\label{fg_implies_compatible}\cite[Proposition 3.43]{fluchthesis}
    Let $G$ be the direct union of $\{G_\lambda \st \lambda\in\Lambda\}$ and let $\FF$
    be a full family of finitely generated subgroups of $G$. Then, $\FF$ and
    $\{\FF\cap G_\lambda\st\lambda\in\Lambda\}$ are compatible with the direct union.
\end{Proposition}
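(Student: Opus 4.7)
The plan is to verify the four conditions in Definition~\ref{Flambda_compatible} with $\FF_\lambda := \FF \cap G_\lambda$ (in the sense of Remark~\ref{thingswithfam}). The first observation is that, since $\FF$ is full, the family $\FF \cap G_\lambda$ coincides with the collection of all subgroups of $G_\lambda$ that lie in $\FF$: indeed, for any $H \in \FF$ the intersection $H \cap G_\lambda$ is a subgroup of $H$ and hence belongs to $\FF$ by fullness; conversely, any $K \leq G_\lambda$ with $K \in \FF$ satisfies $K = K \cap G_\lambda$. With this identification, condition $(4)$ holds by definition, condition $(2)$ is immediate (every element of $\FF \cap G_\lambda$ is a subgroup of $G$ lying in $\FF$), and condition $(1)$ follows at once from $G_\lambda \leq G_\mu$ whenever $\lambda \leq \mu$, giving $\FF \cap G_\lambda \subseteq \FF \cap G_\mu$.

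The content of the proposition lies in condition $(3)$, namely that every $H \in \FF$ is contained in some $G_\lambda$. This is where the hypothesis that elements of $\FF$ are finitely generated is crucial. Write $H = \langle h_1, \ldots, h_n \rangle$. Since $G = \bigcup_{\lambda \in \Lambda} G_\lambda$ as sets, each $h_i$ lies in some $G_{\lambda_i}$. A finite induction on $n$ using condition $(i)$ of Definition~\ref{def_direct_union} produces a single $\mu \in \Lambda$ with $G_{\lambda_i} \leq G_\mu$ for all $i = 1, \ldots, n$. Then $h_1, \ldots, h_n \in G_\mu$ and hence $H \leq G_\mu$, so $H \in \FF \cap G_\mu = \FF_\mu \subseteq \bigcup_\lambda \FF_\lambda$.

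The only real obstacle is the step that gathers finitely many elements into a common $G_\mu$, and this is precisely where finite generation is used; without it, a subgroup of $\FF$ could spread across infinitely many $G_\lambda$ without being contained in any single one. Once this step is in place, the remaining verifications are immediate from the fullness of $\FF$ and the definitions.
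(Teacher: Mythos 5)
Your proof is correct and takes the expected route: conditions (1), (2) and (4) of Definition~\ref{Flambda_compatible} follow at once from fullness of $\FF$ and the identification $\FF\cap G_\lambda = \{K\leq G_\lambda \st K\in\FF\}$, while condition (3) is obtained by collecting the finitely many generators of $H\in\FF$ into a single $G_\mu$ via the directedness condition (i) of Definition~\ref{def_direct_union}. The paper itself cites this statement to Fluch's thesis without proof, but your argument is exactly the intended one and in effect reduces the statement to Proposition~\ref{every_sg_in_one_summand_implies_compatible}, whose proof in the paper has the same structure.
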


\begin{Proposition}\label{every_sg_in_one_summand_implies_compatible}
    Let $G$ be the direct union of $\{G_\lambda \st \lambda\in\Lambda\}$ and let
    $\FF$ be a full family of subgroups of $G$ such that for every $K\in\FF$ there is
    $\lambda\in\Lambda$ such that $K\leq G_\lambda$. Then, $\FF_\lambda = \FF \cap G_\lambda$
    for $\lambda\in\Lambda$ and $\FF$ are compatible with the direct union.
\end{Proposition}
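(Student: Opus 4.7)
The plan is to verify the four conditions of Definition~\ref{Flambda_compatible} in turn. Condition~(4) is immediate since it is exactly the defining formula for $\FF_\lambda$ in the statement of the proposition. For condition~(2), I would unpack the convention for $\FF\cap G_\lambda$ set out in Remark~\ref{thingswithfam}: any element of $\FF_\lambda$ has the form $H\cap G_\lambda$ for some $H\in\FF$, and since $\FF$ is full and $H\cap G_\lambda \leq H$, we get $H\cap G_\lambda\in\FF$. So $\FF_\lambda\subseteq\FF$, and in fact $\FF_\lambda = \{L\in\FF \st L\leq G_\lambda\}$, which is the more useful description going forward.

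For condition~(1), suppose $\lambda\leq\mu$, meaning $G_\lambda\leq G_\mu$ by our convention on the partial order induced by the direct union. If $L\in\FF_\lambda$, then by the reformulation above $L\in\FF$ and $L\leq G_\lambda\leq G_\mu$, hence $L\in\FF_\mu$.

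Condition~(3) is where the hypothesis of the proposition is actually used. Given $K\in\FF$, the assumption provides some $\lambda\in\Lambda$ with $K\leq G_\lambda$; thus $K\in\FF_\lambda\subseteq\bigcup_{\lambda\in\Lambda}\FF_\lambda$. There is no real obstacle here, since none of the four conditions requires more than the fullness of $\FF$, the transitivity of the subgroup relation along the directed system, and the single hypothesis of the proposition itself; the only place to be mildly careful is reconciling the two equivalent descriptions of $\FF\cap G_\lambda$ (as $\{H\cap G_\lambda\st H\in\FF\}$ versus $\{L\in\FF \st L\leq G_\lambda\}$) at the start, which is exactly what Remark~\ref{thingswithfam} together with fullness of $\FF$ delivers.
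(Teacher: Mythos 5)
Your proposal is correct and follows essentially the same route as the paper: conditions (1), (2) and (4) are handled via the fullness of $\FF$ and the definition $\FF_\lambda = \FF\cap G_\lambda$, and condition (3) is exactly where the hypothesis that each $K\in\FF$ lies in some $G_\lambda$ is invoked. The only difference is that you spell out the details (in particular the identification $\FF\cap G_\lambda = \{L\in\FF \st L\leq G_\lambda\}$) that the paper's proof leaves implicit.
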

\begin{proof}
    Conditions (1), (2) and (4) in Definition~\ref{Flambda_compatible} are true given
    that $\FF$ is a full family of subgroups and the fact that we are taking
    $\FF_\lambda = \FF \cap G_\lambda$. To prove condition (iii), let $K\in\FF$. By hypothesis,
    we know there is $\lambda\in\Lambda$ such that $K \leq G_\lambda$. Hence, $K\cap G_\lambda = K$,
    so $K \in \FF_\lambda$, as we needed to see.
\end{proof}

The following theorem can be deduced from \cite[Theorem 4.3]{lueckweiermann} and
\cite[Theorem 4.1]{nucinkis2004dimensions}, but since we didn't introduce flat Bredon
modules and hence Bredon homological dimensions, we will only give the cohomological version:

\begin{Theorem}\label{cd_direct_unions}\cite[Theorem 3.42]{fluchthesis}
    Let $G$ be a group that is the direct union of $\{G_\lambda\st\lambda\in\Lambda\}$, where
    $\Lambda$ is a countable set of indexes.
    Let $\FF$ and $\FF_\lambda$ be full families of subgroups of $G$ and $G_\lambda$
    for all $\lambda\in\Lambda$, respectively, that are compatible with the direct union.
    Then,
    $$\underset{\lambda\in\Lambda}{\operatorname{sup}}\{\cd_{\FF_\lambda} G_\lambda\}
    \leq \cd_{\FF} G \leq
    \underset{\lambda\in\Lambda}{\operatorname{sup}}\{\cd_{\FF_\lambda} G_\lambda\} + 1.$$
\end{Theorem}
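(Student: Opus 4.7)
The lower bound is immediate from Theorem~\ref{Bredon:cd_subgroup}: applied with $K=G_\lambda$ and using that $\FF_\lambda=\FF\cap G_\lambda$ by compatibility condition (4) of Definition~\ref{Flambda_compatible}, it yields $\cd_{\FF_\lambda}G_\lambda\leq\cd_\FF G$ for every $\lambda\in\Lambda$, and taking the supremum over $\lambda$ gives the inequality.

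For the upper bound, set $n:=\sup_\lambda\cd_{\FF_\lambda}G_\lambda$ and assume $n<\infty$, as otherwise there is nothing to show. Since $\Lambda$ is countable and directed, I would first extract a cofinal ascending chain so that, without loss of generality, $\Lambda=\N$ with $G_0\leq G_1\leq\cdots$, $\bigcup_k G_k=G$, and $\FF=\bigcup_k\FF_k$ (using compatibility conditions (1)--(3)). For each $k$, let $I_k\colon\OFG{\FF_k}{G_k}\to\OFG{\FF}{G}$ denote the inclusion functor. By Proposition~\ref{Bredon:ind_IK_exact} the functor $\modind{I_k}$ is exact, and by Proposition~\ref{resindcoind_prop}(8) it preserves projectives. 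Applying $\modind{I_k}$ to a projective resolution of $\Ztriv{\FF_k}$ of length at most $\cd_{\FF_k}G_k\leq n$ therefore produces a projective resolution of $\modind{I_k}\Ztriv{\FF_k}$ of length at most $n$ in $\RMod{\FF}{G}$, so $\pd_\FF\modind{I_k}\Ztriv{\FF_k}\leq n$. Writing $P:=\bigoplus_{k\in\N}\modind{I_k}\Ztriv{\FF_k}$ and using $\Ext{*}{\FF}{\bigoplus_k N_k}{M}\cong\prod_k\Ext{*}{\FF}{N_k}{M}$, the same bound $\pd_\FF P\leq n$ persists.

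The crux of the argument is to assemble the canonical counit maps $\modind{I_k}\Ztriv{\FF_k}\to\Ztriv{\FF}$ (adjoint to the identification $\modres{I_k}\Ztriv{\FF}=\Ztriv{\FF_k}$) into a \emph{telescope} short exact sequence
$$0\longrightarrow P\xrightarrow{\,\mathrm{id}-s\,}P\longrightarrow\Ztriv{\FF}\longrightarrow 0$$
in $\RMod{\FF}{G}$, where $s$ is the shift induced by the natural transition maps $\modind{I_k}\Ztriv{\FF_k}\to\modind{I_{k+1}}\Ztriv{\FF_{k+1}}$. The identification of the cokernel amounts to showing $\Ztriv{\FF}\cong\varinjlim_k\modind{I_k}\Ztriv{\FF_k}$, which after evaluation at each $G/H$ reduces to the compatibility condition (3), namely that every $H\in\FF$ lies in some $\FF_{k_0}$. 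Granting this sequence, the long exact Ext sequence gives
$$\Ext{n+1}{\FF}{P}{M}\longrightarrow\Ext{n+2}{\FF}{\Ztriv{\FF}}{M}\longrightarrow\Ext{n+2}{\FF}{P}{M}$$
with both outer groups vanishing by the previous paragraph, forcing $\Ext{n+2}{\FF}{\Ztriv{\FF}}{M}=0$ for every $M\in\RMod{\FF}{G}$ and hence $\cd_\FF G\leq n+1$ by Proposition~\ref{pd_zeros}.

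The main obstacle I anticipate is the construction and verification of the telescope sequence. Monicity of $\mathrm{id}-s$ on the direct sum reduces to injectivity of each transition map, which follows from the nestedness $\FF_k\subseteq\FF_{k+1}$ and the exactness of induction; the cokernel identification, however, requires a careful unpacking of the induction bifunctor $\Z[??,I_k(?)]_G$ against the trivial module in order to confirm that the assembled counit maps realise $\Ztriv{\FF}$ as the colimit of the $\modind{I_k}\Ztriv{\FF_k}$. Both steps are pointwise computations on $\OFG{\FF}{G}$, but they lean essentially on the full strength of the compatibility conditions of Definition~\ref{Flambda_compatible} and on the countability of $\Lambda$ (to reduce to $\N$-indexed directed systems, where the telescope presentation is standard).
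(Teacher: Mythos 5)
Your argument is correct, and it is essentially the proof of the result the paper is quoting: the paper itself gives no proof of Theorem~\ref{cd_direct_unions} beyond citing \cite[Theorem 3.42]{fluchthesis} and remarking that the lower bound follows from Theorem~\ref{Bredon:cd_subgroup} (exactly your first paragraph), and the cited source argues just as you do — realise $\Ztriv{\FF}$ as a countable directed colimit of the induced modules $\modind{I_k}\Ztriv{\FF_k}$, use exactness of induction and preservation of projectives to get $\pd_{\FF}\modind{I_k}\Ztriv{\FF_k}\leq n$, and then pay the extra $+1$ through the telescope short exact sequence and the long exact $\operatorname{Ext}$ sequence. Two small remarks: monicity of $\mathrm{id}-s$ on $\bigoplus_k\modind{I_k}\Ztriv{\FF_k}$ is automatic for an $\N$-indexed telescope (an element of lowest nonzero degree cannot be killed), so you do not need injectivity of the transition maps, whose injectivity is in any case not obvious from exactness of induction alone; and the cokernel identification you flag as the crux does go through — evaluating at $G/H$ one gets $\Z$ of the colimit over the union of the categories $\OFG{\FF_k}{G_k}$ of the sets $[G/H,G/L]_G$, and since every $H\in\FF$ and every relevant $G$-map eventually appears at some finite level (conditions (1)--(4) plus cofinality of your chain), this colimit is a single point because the comma category under $G/H$ in $\OFG{\FF}{G}$ is connected. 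So the only schematic step in your write-up is true and is precisely the lemma established in the cited source.
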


Note that the first inequality is given by Theorem~\ref{Bredon:cd_subgroup}, since
$\FF_\lambda = \FF\cap G_\lambda$ for all $\lambda\in\Lambda$.

In the case of the Bredon geometric dimensions, the following result is proved whithin the
proof of \cite[Theorem 4.3]{lueckweiermann}:

\begin{Theorem}\label{gd_direct_unions}
    Let $G$ be a group that is the direct union of $\{G_\lambda\st\lambda\in\Lambda\}$, where
    $\Lambda$ is a countable set of indexes.
    Let $\FF$ and $\FF_\lambda$ be full families of subgroups of $G$ and $G_\lambda$
    for all $\lambda\in\Lambda$, respectively, that are compatible with the direct union.
    Then,
    $$\underset{\lambda\in\Lambda}{\operatorname{sup}}\{\gd_{\FF_\lambda} G_\lambda\}
    \leq \gd_{\FF} G \leq
    \underset{\lambda\in\Lambda}{\operatorname{sup}}\{\gd_{\FF_\lambda} G_\lambda\} + 1.$$
\end{Theorem}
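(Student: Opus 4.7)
The lower bound is essentially immediate: by compatibility condition (4), $\FF_\lambda = \FF \cap G_\lambda$, so Proposition~\ref{Bredon:gd_subgroup} applied to $G_\lambda \leq G$ yields $\gd_{\FF_\lambda} G_\lambda \leq \gd_\FF G$ for every $\lambda\in\Lambda$, hence $\sup_\lambda \gd_{\FF_\lambda} G_\lambda \leq \gd_\FF G$.

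For the upper bound, set $n = \sup_\lambda \gd_{\FF_\lambda} G_\lambda$ and assume $n < \infty$ (otherwise there is nothing to prove). Using the countability of $\Lambda$ and condition (i) in Definition~\ref{def_direct_union}, I would first extract a cofinal ascending chain $G_0 \leq G_1 \leq G_2 \leq \cdots$ with $G = \bigcup_{i\geq 0} G_i$, by enumerating $\Lambda$ and iteratively replacing pairs of groups by a common upper bound. For each $i$, pick an $n$-dimensional model $X_i$ for $\EFG{\FF_i}{G_i}$. Compatibility gives $\FF_{i+1}\cap G_i = \FF_i$, so $\mathrm{res}^{G_{i+1}}_{G_i} X_{i+1}$ is a $G_i$-CW-complex with isotropy in $\FF_i$; the universal property of $X_i$ combined with Theorem~\ref{CellApprox} then yields a cellular $G_i$-map $\phi_i : X_i \to X_{i+1}$. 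Setting $Y_i = G\times_{G_i} X_i$, each $Y_i$ is an $n$-dimensional $G$-CW-complex (apply Corollary~\ref{G-CW-comp-discrete}), and the $\phi_i$ induce cellular $G$-maps $f_i : Y_i \to Y_{i+1}$.

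With those maps in hand, I would take $T$ to be the equivariant mapping telescope obtained by gluing the cylinders $\mcyl{f_i}$ end-to-end; iterating Corollary~\ref{mcylGCW}, $T$ is a $G$-CW-complex of dimension $n+1$. To conclude, I would verify that $T$ models $\EFG{\FF}{G}$ via Corollary~\ref{other_def_of_class_space}. Any isotropy group of a point of $Y_i$ is $G$-conjugate to some element of $\FF_i \subseteq \FF$, so fullness of $\FF$ gives $T$ isotropy in $\FF$; consequently $T^H = \emptyset$ for $H \notin \FF$. If $H \in \FF$, compatibility condition (3) produces $i_0$ with $H \in \FF_{i_0}$, and for every $i \geq i_0$ a direct calculation yields $Y_i^H = \bigsqcup X_i^{g^{-1}Hg}$, indexed by cosets $gG_i$ with $g^{-1}Hg \leq G_i$, each summand being contractible since $g^{-1}Hg \in \FF \cap G_i = \FF_i$.

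The main obstacle is showing that the telescope $T^H$ of these disjoint unions of contractibles is itself contractible, despite each $Y_i^H$ being disconnected. The plan is to use that $G = \bigcup_i G_i$ forces the directed system of $G$-sets $G/G_i$ to have direct limit a single point: every ``extra'' component of $Y_i^H$ indexed by a coset $gG_i \neq eG_i$ is eventually merged by some $f_j^H$ into the component of the identity coset, once $g$ lands in a $G_j$. Passing to the homotopy colimit, $T^H$ collapses to the telescope of the contractible spaces $X_i^{H}$ (for $i \geq i_0$), which is contractible. This produces the required $(n+1)$-dimensional model and finishes the upper bound.
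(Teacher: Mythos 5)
Your proposal is correct and in substance coincides with the paper's proof: the lower bound is obtained exactly as in the paper from Proposition~\ref{Bredon:gd_subgroup}, while for the upper bound the paper simply points to the proof of \cite[Theorem 4.3]{lueckweiermann}, whose argument is the same equivariant mapping-telescope construction you describe (countability giving a cofinal chain, the induced spaces $G\times_{G_i}X_i$, and contractibility of the $H$-fixed telescopes because the component sets $(G/G_i)^H$ have a one-point colimit). The only slip is attributional: the cellular map $\phi_i\colon X_i\to X_{i+1}$ comes from the universal property of the restriction to $G_i$ of $X_{i+1}$, which is itself a model for $\EFG{\FF_i}{G_i}$, not from the universal property of $X_i$.
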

In this case, the first inequality is given by Theorem~\ref{Bredon:gd_subgroup}.

\begin{Defn}
    Let $\mathfrak{X}$ be a class of groups. We say a group $G$ is \emph{locally} $\mathfrak{X}$ if
    for all finitely generated subgroups $H\leq G$, $H\in\mathfrak{X}$.
\end{Defn}

Using the fact that every group is the direct union of its finitely generated
subgroups and Propositions \ref{G_in_FF_cd_0} and \ref{G_in_FF_gd_0}, we get a first application
of Theorems \ref{cd_direct_unions} and \ref{gd_direct_unions}:

\begin{Proposition}\label{cd_locallyF}\cite[Proposition 3.47]{fluchthesis}
    Let $G$ be a group and $\FF$ a full family of finitely generated subgroups
    of $G$. If $G$ is locally $\FF$ and $G$ is countable, then $$\cd_\FF G \leq 1.$$
\end{Proposition}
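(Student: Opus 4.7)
The plan is to apply Theorem~\ref{cd_direct_unions} to the canonical expression of $G$ as a direct union of its finitely generated subgroups, and exploit the local hypothesis to make every term in the supremum vanish.

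First, I would write $G$ as the direct union of the family $\{G_\lambda \st \lambda \in \Lambda\}$ of all its finitely generated subgroups, ordered by inclusion. Since $G$ is countable, it has only countably many finite subsets and therefore only countably many finitely generated subgroups, so $\Lambda$ is countable, as required by Theorem~\ref{cd_direct_unions}. For each $\lambda$ I set $\FF_\lambda = \FF \cap G_\lambda$; this is a full family of subgroups of $G_\lambda$ by the remark following Remark~\ref{thingswithfam}. Because $\FF$ is a full family of finitely generated subgroups by hypothesis, Proposition~\ref{fg_implies_compatible} applies and guarantees that $\FF$ together with $\{\FF_\lambda\}_{\lambda\in\Lambda}$ is compatible with the direct union in the sense of Definition~\ref{Flambda_compatible}.

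Next, I would exploit the local $\FF$ hypothesis at each index. Since $G_\lambda$ is finitely generated and $G$ is locally $\FF$, we have $G_\lambda \in \FF$, and hence $G_\lambda = G_\lambda \cap G_\lambda \in \FF \cap G_\lambda = \FF_\lambda$. Because $\FF_\lambda$ is full it is in particular semi-full, so Proposition~\ref{G_in_FF_cd_0} yields $\cd_{\FF_\lambda} G_\lambda = 0$ for every $\lambda \in \Lambda$.

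Finally, I would plug this into Theorem~\ref{cd_direct_unions}: the upper bound there gives
\[
\cd_\FF G \;\leq\; \sup_{\lambda \in \Lambda} \{\cd_{\FF_\lambda} G_\lambda\} + 1 \;=\; 0 + 1 \;=\; 1,
\]
which is exactly the claim. There is no real obstacle in this argument; the only point that requires a moment of care is checking that the ambient $\FF$ and the restrictions $\FF_\lambda$ satisfy all four compatibility conditions of Definition~\ref{Flambda_compatible}, but this is handed to us by Proposition~\ref{fg_implies_compatible} precisely because $\FF$ consists of finitely generated subgroups. The countability hypothesis on $G$ is used only to guarantee that the indexing set $\Lambda$ is countable so that Theorem~\ref{cd_direct_unions} is applicable.
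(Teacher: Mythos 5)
Your proof is correct and follows exactly the route the paper indicates for this proposition: write $G$ as the direct union of its (countably many) finitely generated subgroups, use Proposition~\ref{fg_implies_compatible} for compatibility, apply Proposition~\ref{G_in_FF_cd_0} via the locally $\FF$ hypothesis to make each $\cd_{\FF_\lambda} G_\lambda$ vanish, and conclude with the upper bound of Theorem~\ref{cd_direct_unions}. Nothing is missing; your write-up just spells out the details that the paper leaves as a one-line remark before the statement.
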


\begin{Proposition}\label{gd_locallyF}
    Let $G$ be a group and $\FF$ a full family of finitely generated subgroups
    of $G$. If $G$ is locally $\FF$ and $G$ is countable, then $$\gd_\FF G \leq 1.$$
\end{Proposition}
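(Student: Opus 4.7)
The plan is to mirror the proof of Proposition~\ref{cd_locallyF} verbatim, swapping in the geometric version (Theorem~\ref{gd_direct_unions}) of the direct union estimate and using Proposition~\ref{G_in_FF_gd_0} in place of Proposition~\ref{G_in_FF_cd_0}.

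First, I would write $G$ as the direct union of its system $\{G_\lambda : \lambda \in \Lambda\}$ of finitely generated subgroups, ordered by inclusion. Since $G$ is countable, the indexing set $\Lambda$ may be taken to be countable, so the hypothesis on $\Lambda$ in Theorem~\ref{gd_direct_unions} is satisfied. Because $\FF$ is a full family of finitely generated subgroups, Proposition~\ref{fg_implies_compatible} ensures that $\FF$ together with $\FF_\lambda := \FF \cap G_\lambda$ for $\lambda \in \Lambda$ is compatible with the direct union in the sense of Definition~\ref{Flambda_compatible}. Note also that each $\FF_\lambda$ is a full family of subgroups of $G_\lambda$ by the remark following Remark~\ref{thingswithfam}, and in particular semi-full, which is what is needed to invoke Proposition~\ref{G_in_FF_gd_0}.

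Next, I would observe that the hypothesis ``$G$ is locally $\FF$'' means exactly that every finitely generated subgroup of $G$ lies in $\FF$, so $G_\lambda \in \FF$ for every $\lambda \in \Lambda$. Since $G_\lambda \leq G_\lambda$, we have $G_\lambda \in \FF \cap G_\lambda = \FF_\lambda$, and therefore Proposition~\ref{G_in_FF_gd_0} gives $\gd_{\FF_\lambda} G_\lambda = 0$ for every $\lambda$.

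Finally, I would apply Theorem~\ref{gd_direct_unions} to conclude
\[
\gd_\FF G \;\leq\; \sup_{\lambda \in \Lambda}\{\gd_{\FF_\lambda} G_\lambda\} + 1 \;=\; 0 + 1 \;=\; 1.
\]
There is no real obstacle here: the argument is a direct translation of the cohomological proof, and the only thing to be careful about is verifying that the ingredients needed by Theorem~\ref{gd_direct_unions} and Proposition~\ref{G_in_FF_gd_0} (countability of $\Lambda$, compatibility of the families, semi-fullness of each $\FF_\lambda$, and membership $G_\lambda \in \FF_\lambda$) are all in place, which they are by the hypotheses on $G$ and $\FF$.
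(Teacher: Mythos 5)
Your proposal is correct and follows exactly the route the paper intends: decompose $G$ as the direct union of its (countably many) finitely generated subgroups, use Proposition~\ref{fg_implies_compatible} for compatibility, note that local-$\FF$ membership together with Proposition~\ref{G_in_FF_gd_0} gives $\gd_{\FF_\lambda} G_\lambda = 0$, and conclude via Theorem~\ref{gd_direct_unions}. Nothing further is needed.
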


\chapter{Classifying spaces for chains of families of subgroups}\label{ch:chains}
\minitoc

The results presented in Chapter~\ref{ch:related-families}, most importantly those from \cite{lueckweiermann},
have been used fruitfully to build the classifying spaces for the family
of virtually cyclic subgroups of a wide variety of groups from those for the family of finite subgroups.

In \cite{nucinkisetal}, we used these methods to recursively build classifying spaces for the families
$\AA_0 \subseteq \AA_1 \subseteq \ldots \subseteq \AA_n$ of subgroups of bounded torsion-free rank of any
finitely generated abelian group.

The objective of this chapter is to widen the results presented in the aforementioned sources, to
be able to study the Bredon dimensions with respect to families forming an ascending chain with
certain properties.

\section{Strongly structured ascending chains of families of subgroups}

\begin{Defn}
    A chain of families of subgroups $\FF_0 \subseteq \FF_1 \subseteq \ldots \subseteq \FF_r \subseteq \ldots$
    of a group $G$ is said to be a \emph{strongly structured ascending chain of subgroups
    of} $G$ if for all $r,i\in \mathbb{N}$ with $i \leq r$, if $H, K \in \dF{F}{r}$
    are such that $H\cap K \in \dF{F}{r}$ and $L \in \dF{F}{i}$, then
    $L\cap H \in \dF{F}{i}$ if and only if $L\cap K \in \dF{F}{i}.$

    We will use the abbreviation ``SSACFS" for ``strongly structured ascending chains of families of subgroups".
\end{Defn}

\begin{Example}
    Let $G$ be a group and $\FF_n$ be the family of finitely generated abelian
    subgroups of $G$ of torsion-free rank smaller than or equal to $n$. Then,
    $\left(\FF_r\right)_{r\in\mathbb{N}}$ is a strongly structured ascending
    chain of families of subgroups of $G$.

    If $G$ is a finitely generated abelian group, then we can find the construction
    we generalise in this chapter for this particular choice of group and chain of
    families of subgroups in \cite{nucinkisetal}.
\end{Example}

Now let us present a more general example related to the previous one and also to
Example~\ref{bounded_rank_subgroups_of_a_class_is_family}, for which we will need
the concept of commensurability of groups:

\begin{Defn}\label{def_commensurable}
    We say that two groups $G_1$ and $G_2$ are \emph{commensurable} if and only if
    there are finite index subgroups $H_1 \leq G_1$ and $H_2 \leq G_2$ such that
    $H_1 \cong H_2$.

    When we restrict ourselves to subgroups $H, K$ of a given group $G$,
    we say $H$ and $K$ are commensurable if and only if $|H:H\cap K| < \infty$
    and $|K:K\cap H| < \infty$. In that case, we define the
    \emph{commensurator of} $H$ \emph{in} $G$ as the set of all elements $g\in G$
    such that $H$ and $H^g$ are commensurable and we denote it by $\comm_G (H)$.
\end{Defn}

\begin{Lemma}\label{bounded_rank_families_are_SSCFS}
    Let $\mathfrak{X}$ be a non-empty class of groups closed under taking subgroups
    and let $r : \mathfrak{X} \to \N \cup \{\infty\}$ be a rank such that if
    $H,K \in \mathfrak{X}$ are such that there is an injective homomorphism
    $f : H \to K$, then:
    \begin{enumerate}[label={\emph{(\roman*)}}, noitemsep]
        \item $r(\{1\}) = 0$,
        \item $r(H) \leq r(K)$ and
        \item $r(H) = r(K)$ if and only if $H$ and $K$ are commensurable.
    \end{enumerate}
    Let $G$ be a group and for all $n\in \N\cup\{\infty\}$ take
    $$\mathfrak{X}_n (G) = \{ H \leq G \st H \in \mathfrak{X} \text{ and } r(H) \leq n\}.$$
    Then,
    $\left(\mathfrak{X}_n (G) \right)_{n\in\N}$ is a strongly structured ascending chain
    of families of subgroups of $G$.
\end{Lemma}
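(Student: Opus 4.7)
The plan is to reduce the SSACFS condition to a single algebraic fact — that commensurability of subgroups is preserved under intersection with a fixed third subgroup — with property (iii) acting as the bridge that converts equality of rank into commensurability and back.

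First I would handle the structural bookkeeping: $\mathfrak{X}_n(G)$ is a full family of subgroups of $G$ by Example~\ref{bounded_rank_subgroups_of_a_class_is_family}, the containments $\mathfrak{X}_n(G) \subseteq \mathfrak{X}_{n+1}(G)$ are immediate from the definition, and a subgroup $H$ lies in $\mathfrak{X}_n(G) \setminus \mathfrak{X}_{n-1}(G)$ precisely when $H \in \mathfrak{X}$ and $r(H) = n$ (using (i) and non-negativity of $r$ at the bottom of the chain, with the convention $\mathfrak{X}_{-1}(G) := \emptyset$).

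Next I would isolate the central lemma: \emph{if $A, B \leq G$ are commensurable and $C \leq G$ is arbitrary, then $A \cap C$ and $B \cap C$ are commensurable}. This is classical and drops out of the coset injection $(A \cap C)/(A \cap B \cap C) \hookrightarrow A/(A \cap B)$, which bounds $[A \cap C : A \cap B \cap C]$ by $[A : A \cap B] < \infty$, together with the symmetric statement exchanging the roles of $A$ and $B$.

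With these two ingredients in hand, verifying the SSACFS condition is almost automatic. Given $H, K \in \mathfrak{X}_r(G) \setminus \mathfrak{X}_{r-1}(G)$ with $H \cap K \in \mathfrak{X}_r(G) \setminus \mathfrak{X}_{r-1}(G)$, property (iii) applied to the pairs $(H, H \cap K)$ and $(K, H \cap K)$ makes both pairs commensurable, so $H$ and $K$ are commensurable. For any $L \in \mathfrak{X}_i(G) \setminus \mathfrak{X}_{i-1}(G)$, the lemma then forces $L \cap H$ and $L \cap K$ to be commensurable, hence to have equal rank by (iii); combined with the monotonicity bound from (ii) giving $r(L \cap H), r(L \cap K) \leq r(L) = i$, this yields $r(L \cap H) = i$ if and only if $r(L \cap K) = i$, which is exactly the SSACFS property. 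I do not expect a real obstacle here: the heavy lifting is done entirely by hypothesis (iii), the only mildly nontrivial step is the coset-injection lemma, and everything else is unfolding definitions and transporting equality of rank back and forth across commensurability.
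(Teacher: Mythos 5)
Your proposal is correct and follows essentially the same route as the paper's proof: both use hypothesis (iii) to translate the rank conditions into finite-index/commensurability statements, and both rest on the same coset-injection bound $[A\cap C : A\cap B\cap C] \leq [A : A\cap B]$ combined with multiplicativity of the index (the paper runs this as a direct implication $|L:L\cap H|<\infty \Rightarrow |L:L\cap K|<\infty$, while you package it as the standalone lemma that commensurability is preserved under intersection with a fixed subgroup). The only point to tighten is that (iii) is stated only for pairs admitting an injective homomorphism, so to deduce $r(L\cap H) = r(L\cap K)$ from commensurability of $L\cap H$ and $L\cap K$ you should pass through the common subgroup $L\cap H\cap K$, which has finite index in both; this is a one-line repair that does not change the argument.
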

\begin{proof}
    First, since $\All(G) \cap \mathfrak{X} \neq \emptyset$, and since $\mathfrak{X}$
    is closed under taking subgroups, $\mathfrak{X}_n(G)\neq\emptyset$ for all $n\in\N$ and
    closed under taking subgroups.
    Also, if $H\in\mathfrak{X}_n(G)$ and $g\in G$, since conjugation is an isomorphism, applying
    $(i)$ to conjugation by $g$ and its inverse, we have $r(H^g) = r(H)$.

    That means $\mathfrak{X}_n(G)$ is a full family of subgroups of $G$ for all $n\in\N$.

    And by construction we have $\mathfrak{X}_k(G) \subseteq \mathfrak{X}_n(G)$ for all
    $k\leq n$. So $\left(\mathfrak{X}_n\right)_{n\in\N}$ is an ascending chain of full families
    of subgroups of $G$.

    It only remains to prove that for all $r,i\in \mathbb{N}$ with $i \leq r$, if
    $H, K \in \mathfrak{X}_r(G) \bs \mathfrak{X}_{r-1}(G)$
    are such that $H\cap K \in \mathfrak{X}_r(G) \bs \mathfrak{X}_{r-1}(G)$ and
    $L \in \mathfrak{X}_i(G) \bs \mathfrak{X}_{i-1}(G)$, then
    $L\cap H \in \mathfrak{X}_i(G) \bs \mathfrak{X}_{i-1}(G)$ if and only if
    $L\cap K \in \mathfrak{X}_i(G) \bs \mathfrak{X}_{i-1}(G).$

    Observe that for $H \leq G$, $H\in \mathfrak{X}_r(G) \bs \mathfrak{X}_{r-1}(G)$
    if and only if $r(H) = r$. Therefore, given
    $0 \leq i \leq r$ and given $H,K, L \leq G$ such that $h(H) = h(K) = h(H\cap K) = r$
    and $h(L) = i$, we need to prove that $h(H\cap L) = i$ if and only if $h(K \cap L) = i$.
    By hypothesis $(ii)$, since $L\cap K \leq L$ and $L\cap H \leq L$, it is equivalent to
    prove that $| L : L\cap H | < \infty$ if and only if $| L : L \cap K | < \infty$
    given that $| H : H\cap K| < \infty$ and $| K : H\cap K | < \infty$.

    Assume then that $| H : H\cap K| < \infty$, $| K : H\cap K | < \infty$ and $| L : L \cap H | < \infty$.
    Since $| H :  H\cap K | < \infty$, intersecting with $L$, we obtain
    $| L \cap H : L \cap H \cap K| < \infty$. Hence, since $| L : L\cap H | < \infty$
    and index is multiplicative, $| L : L\cap H \cap K| < \infty$. Finally, since
    $L\cap H \cap K \leq L \cap K\leq L$, we can conclude $| L : L \cap K | < \infty$,
    as we wanted to show.

    The converse implication is symmetrical.
\end{proof}

We will use the results in \cite{lueckweiermann} summarised in section~\ref{sec:LW} of
Chapter~\ref*{ch:related-families}
for each of the inclusions in the chain of families, so we need to start by showing
that we can use such results:

\begin{Defn}\label{def_eqrel_r}
    Given a chain $\left(\FF_r\right)_{r\in\N}$ of full families of subgroups of a group $G$,
    for each $r \in \N$, let $\sim_r$  denote the following relation on $\dF{F}{r}:$
    $$H \sim_r K \iff H\cap K \in \dF{F}{r}$$
\end{Defn}

\begin{Lemma}\label{SSACFS_LW_eqrel}
    If $\left(\FF_r\right)_{r\in\N}$ is a strongly structured ascending chain
    of families of subgroups of a group $G$, then $\FF_{r-1} \subseteq \FF_r$ is a strongly
    structured inclusion of families of subgroups of $G$ with respect to $\sim_r$ for
    every $r>0$, i.e., $\sim_r$ is a strong equivalence relation
    (in the sense of \cite{lueckweiermann}) in $\dF{F}{r}$ for every $r>0$.
\end{Lemma}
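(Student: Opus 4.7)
The plan is to verify the definition of strong equivalence relation (Definition~\ref{def_strong_equiv_LW}) directly, by separately checking that $\sim_r$ is an equivalence relation on $\dF{F}{r}$ and that it satisfies conditions (i) and (ii) of that definition. The SSACFS hypothesis is tailor-made so that the one nontrivial step, namely transitivity, reduces to a direct invocation of the defining property with the index choice $i = r$.

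First I would dispatch the easy items. Reflexivity holds since $H \cap H = H \in \dF{F}{r}$, and symmetry is immediate from the symmetry of intersection. For condition (i) of Definition~\ref{def_strong_equiv_LW}, if $H, K \in \dF{F}{r}$ with $H \leq K$, then $H \cap K = H \in \dF{F}{r}$, so $H \sim_r K$. For condition (ii), fix $g \in G$ and $H, K \in \dF{F}{r}$. Because $\FF_r$ and $\FF_{r-1}$ are families, they are closed under conjugation, so $gHg^{-1}$ and $gKg^{-1}$ both lie in $\dF{F}{r}$; moreover $(gHg^{-1}) \cap (gKg^{-1}) = g(H \cap K)g^{-1}$ belongs to $\dF{F}{r}$ if and only if $H \cap K$ does, by the same closure under conjugation. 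Thus $H \sim_r K$ if and only if $gHg^{-1} \sim_r gKg^{-1}$.

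The remaining and main task is transitivity. Suppose $H, K, L \in \dF{F}{r}$ satisfy $H \sim_r K$ and $K \sim_r L$, i.e., $H \cap K \in \dF{F}{r}$ and $K \cap L \in \dF{F}{r}$. I would apply the SSACFS axiom with the pair $H, K$ (whose intersection lies in $\dF{F}{r}$) and with $L \in \dF{F}{r}$ playing the role of the element of $\dF{F}{i}$ for the choice $i = r$. The axiom then yields that $L \cap H \in \dF{F}{r}$ if and only if $L \cap K \in \dF{F}{r}$; since the latter holds by assumption, so does the former, giving $H \sim_r L$.

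The only step that genuinely invokes the strong structure is transitivity, and I expect it to pose no real obstacle: the SSACFS axiom was framed precisely to force $\sim_r$ to be transitive, so the argument reduces to matching indices ($i = r$) in the hypothesis. No further choices or constructions are required, and the lemma follows by assembling the items above.
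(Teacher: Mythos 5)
Your proposal is correct and follows essentially the same route as the paper: the easy verifications of reflexivity, symmetry, conditions (i) and (ii) via closure under conjugation and the identity $(H\cap K)^g = H^g\cap K^g$, and transitivity obtained by applying the SSACFS axiom with $i=r$ to the pair $H,K$ and the third subgroup $L$. No gaps.
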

\begin{proof}
    $\sim_r$ is clearly reflexive and symmetric. As for transitivity,
    given $H,K,L \in \dF{F}{r}$ such that $H\sim_r K$ and $K \sim_r L$,
    we need to see that $H\sim_r L$. By definition of SSACFS, in the particular
    case that $i = r$, since $H\cap K \in \dF{F}{r}$ and $K\cap L \in \dF{F}{r}$,
    then $H \cap L \in \dF{F}{r}$. Hence, $\sim_r$ is an equivalence relation in $\dF{F}{r}$.

    To prove that it is strong in the sense of L\"uck-Weiermann, first let $H,K \in \dF{F}{r}$
    with $H \leq K$. Then, $H\cap K = H \in \dF{F}{r}$, so $H \sim_r K$.
    Finally, let $g \in G$ and $H,K \in \dF{F}{r}$. We need to see $H \sim_r K$
    if and only if $H^g \sim_r K^g$.
    And for that it suffices to prove that $(H\cap K)^g = H^g \cap K^g$,
    since $\FF_r$ and $\FF_{r-1}$ are families of subgroups of $G$ and, therefore,
    closed under conjugation. An element $l \in G$ belongs to $H^g \cap K^g$ if and only
    if there are $h \in H$ and $k \in K$ such that $l = ghg^{-1} = gkg^{-1}$.
    But in that case $h = k$ and so $l \in (H \cap K)^g$. The other implication follows trivially.
\end{proof}

And not only the equivalence relations $\sim_r$ are strong in the sense of \cite{lueckweiermann},
but they are the finest of all possible strong equivalence relations at each inclusion.

\begin{Lemma}\label{sim_r_is_finest}
    Let $\left(\FF_r\right)_{r\in\N}$ be a strongly structured ascending chain of families
    of subgroups of a group $G$, let $r>0$ and let $\sim$ be any equivalence relation in $\dF{F}{r}$
    that is strong in the sense of \cite{lueckweiermann}. Then $\sim_r$ is finer than $\sim$.
\end{Lemma}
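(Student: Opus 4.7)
The plan is to apply the two defining axioms of a strong equivalence relation (Definition~\ref{def_strong_equiv_LW}) directly, reducing the problem to the trivial observation that each of $H$ and $K$ contains $H\cap K$ as a subgroup. So I would start by fixing an arbitrary strong equivalence relation $\sim$ on $\dF{F}{r}$ and a pair $H,K\in\dF{F}{r}$ with $H\sim_r K$; unpacking Definition~\ref{def_eqrel_r}, this means precisely that $H\cap K \in \dF{F}{r}$.

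Next I would exploit that $H\cap K$ then lies in the \emph{same set} $\dF{F}{r}$ on which $\sim$ is defined. Since $H\cap K \leq H$ and both subgroups belong to $\dF{F}{r}$, axiom (i) of Definition~\ref{def_strong_equiv_LW} (applied to the pair $\FF_{r-1}\subseteq\FF_r$) gives $H\cap K \sim H$. The symmetric argument, using $H\cap K \leq K$ together with $H\cap K, K \in \dF{F}{r}$, yields $H\cap K \sim K$. Transitivity and symmetry of $\sim$ then combine to give $H\sim K$, which shows that every $\sim_r$-equivalence class is contained in a $\sim$-equivalence class, i.e.\ $\sim_r$ is finer than $\sim$.

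There is no real obstacle here; the only point to verify carefully is that axiom (i) of the strong equivalence relation is legitimately applicable, i.e.\ that $H\cap K$ genuinely belongs to $\dF{F}{r}$ and not merely to $\FF_r$. This is, however, exactly what the hypothesis $H\sim_r K$ guarantees, and it is the sole reason the definition of $\sim_r$ was chosen in this form: it forces the intersection to sit in the same stratum of the chain, so that the subgroup axiom of a strong equivalence relation can be invoked without leaving $\dF{F}{r}$. Thus the proof is essentially a one-line consequence of Definitions~\ref{def_strong_equiv_LW} and~\ref{def_eqrel_r}.
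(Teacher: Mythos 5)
Your argument is correct and is essentially identical to the paper's own proof: both use that $H\sim_r K$ forces $H\cap K\in\dF{F}{r}$, then apply axiom (i) of Definition~\ref{def_strong_equiv_LW} to the inclusions $H\cap K\leq H$ and $H\cap K\leq K$, and conclude $H\sim K$ by transitivity (and symmetry) of $\sim$. No gaps; nothing further is needed.
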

\begin{proof}
    Let $H, K \in \dF{F}{r}$ such that $H \sim_r K$. Then, $H\cap K \in \dF{F}{r}$.
    By $(i)$ in Definition~\ref{def_strong_equiv_LW}, $H\cap K \sim H$ and $H\cap K \sim K$
    given $H\cap K \leq H$ and $H\cap K \leq K$ respectively. Finally, by transitivity
    of $\sim$, that implies $H\sim K$, as we wanted to see.
\end{proof}

Following the example in Lemma~\ref{bounded_rank_families_are_SSCFS}, we have:

\begin{Lemma}\label{eq_rel_for_BRF_is_comm}
    Given a group $G$ and $\left(\mathfrak{X}_n(G)\right)_{n\in\N}$ as in
    Lemma~\ref{bounded_rank_families_are_SSCFS}, for every $r>0$ and
    $H,K \in \mathfrak{X}_r(G) \bs \mathfrak{X}_{r-1}(G)$,
    $H\sim_r K$ if and only if $H$ and $K$ are commensurable.
\end{Lemma}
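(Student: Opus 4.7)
The plan is to apply hypothesis~(iii) of Lemma~\ref{bounded_rank_families_are_SSCFS} to the two natural subgroup inclusions $H\cap K \hookrightarrow H$ and $H\cap K \hookrightarrow K$. First I would observe that $H\cap K$ belongs to $\mathfrak{X}$, because $\mathfrak{X}$ is closed under taking subgroups, and that hypothesis~(ii) of the same lemma gives $r(H\cap K) \leq r(H) = r$. Therefore $H\cap K$ lies in $\mathfrak{X}_r(G)\setminus \mathfrak{X}_{r-1}(G)$ exactly when $r(H\cap K) = r$, and the statement reduces to proving
\[
r(H\cap K) = r \iff H \text{ and } K \text{ are commensurable as subgroups of } G.
\]

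For the forward implication, assuming $r(H\cap K) = r$, hypothesis~(iii) applied to the rank-preserving inclusions $H\cap K \hookrightarrow H$ and $H\cap K \hookrightarrow K$ yields that $H\cap K$ is commensurable with each of $H$ and $K$. Interpreted for subgroups of $G$, this delivers $|H : H\cap K| < \infty$ and $|K : H\cap K| < \infty$, which is exactly commensurability of $H$ and $K$ as subgroups of $G$. For the converse, the same machinery runs in reverse: if $|H : H\cap K|$ and $|K : H\cap K|$ are both finite, then $H\cap K$ is a common finite-index subgroup of $H$ and $K$, hence commensurable with each of them, and (iii) forces $r(H\cap K) = r(H) = r$.

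The main subtlety I expect to navigate is the reconciliation of the two formulations of commensurability given in Definition~\ref{def_commensurable}: the abstract version (existence of isomorphic finite-index subgroups) invoked in (iii), and the subgroup version (finite index in both directions inside a common overgroup) used by $\sim_r$ through the intermediate step above. For a subgroup inclusion $A \leq B$, the subgroup notion immediately yields the abstract one by taking $A$ itself as the common finite-index subgroup; the reverse implication—that abstract commensurability of $A$ with an overgroup $B\supseteq A$ forces $|B:A|<\infty$—is the substantive content that must be available for the rank functions under consideration, and is built into the way hypothesis~(iii) is applied here. Once this identification is in place, the proof is a two-line unpacking of (iii) in each direction.
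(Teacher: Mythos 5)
Your proof is correct and takes essentially the same route as the paper: reduce $H\sim_r K$ to the condition $r(H\cap K)=r$, then apply hypothesis (iii) to the inclusions $H\cap K\leq H$ and $H\cap K\leq K$ (the paper additionally invokes transitivity of commensurability, which your direct unpacking of the subgroup definition of commensurability renders unnecessary). The subtlety you flag about abstract versus subgroup commensurability is genuine but is left implicit in the paper as well, which throughout reads hypothesis (iii) for a subgroup inclusion $A\leq B$ as $r(A)=r(B)$ if and only if $|B:A|<\infty$, exactly as you do.
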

\begin{proof}
    Given $H,K \in \mathfrak{X}_r(G) \bs \mathfrak{X}_{r-1}(G)$, by definition of
    $\sim_r$, $H\sim_r K$ if and only if $r(H\cap K) = r(H) = r(K) = r$. Since
    commensurability is transitive, $H\cap K \leq H$ and $H\cap K \leq K$, by hypothesis
    $(ii)$, $r(H\cap K) = r(H) = r(K) = r$ if and only if $H$ and $K$ are commensurable,
    as we needed to see.
\end{proof}

\section{Construction}\label{sec:construction_dFFr}

Our objective is to be able to give a bound on the Bredon cohomological and geometric
dimensions of a group over the families in a strongly structured ascending chain
of families of subgroups $\left(\FF_r\right)_{r\in\N}$. For that, in this
section we are going to build classifying spaces for the mentioned families using recursion
over $r$.

By Theorem~\ref{EFG_existence}, models for $\EFG{\FF_r}{G}$ exist for all $r\in\N$, so
for all results related to constructions and dimensions we don't need to construct any
initial spaces. However, as we will be interested in finite-dimensionality,
finite-dimensional models for those basic spaces in the process we will
describe in this section will be a must-have to draw any conclusions that can not be
already drawn from Theorem~\ref{EFG_existence}.

As the first example of the aforementioned basic spaces (i.e. spaces used but not being built
in the recursive process), a finite-dimensional model for $\EFG{\FF_0}{G}$
will be necessary. But let us start the construction process and discover
all elements needed as they appear.

From now on, let $G$ be a discrete group and $(\FF_r)_{r\in\N}$ a strongly structured
ascending chain of families of subgroups of $G$.

\subsection{Set-up and involved spaces}\label{sec:setup_chains}

Let $r > 0$. The first step will consist of applying L\"uck-Weiermann to the families
$\FF_{r-1} \subseteq \FF_r$ of subgroups of $G$.

We start, then, by choosing the strong equivalence relations at each level: by Lemma~\ref{SSACFS_LW_eqrel}, we can take
$\sim_r$ as strong equivalence relation in the sense of LW in $\dF{F}{r}$.

\begin{Defn}
    Given $H\in \dF{F}{r}$, we denote by $[H]_r$ the equivalence class of $H$
    and we denote by $[\dF{F}{r}]$ the set of all equivalence classes. Define
    for all $[H]_r \in [\dF{F}{r}]$ the subgroup of $G$
    $$\Nzerr{r}{G}{H} =\{g \in G\,|\, H^{g}\sim_r H \}$$
    and the family of subgroups of $\Nzerr{r}{G}{H}$
    $$\FF_r[H] =\{K \leq \Nzerr{r}{G}{H} \st K \in \dF{F}{r}
    \, ,\, K\sim_r H\}\cup (\FF_{r-1} \cap \Nzerr{r}{G}{H}).$$
\end{Defn}

We can now apply Theorem~\ref{lw-main}. The spaces involved in the push-out would be a model for
$\EFG{\FF_{r-1}}{G}$ and, for each representative of the equivalence classes under $\sim_r$, models for
$\EFG{\FF_{r-1} \cap \Nzerr{r}{G}{H}}{\Nzerr{r}{G}{H}}$ and $\EFG{\FF_r[H]}{\Nzerr{r}{G}{H}}$. Of the last
two sets of models, we can give an upper bound of the Bredon dimensions of $\Nzerr{r}{G}{H}$ with respect
to $\FF_{r-1} \cap \Nzerr{r}{G}{H}$ in terms of the Bredon dimensions of $G$ with respect to
$\FF_{r-1}$ by Theorem~\ref{Bredon:cd_subgroup} and Proposition~\ref{Bredon:gd_subgroup}. However,
in the case of the family $\FF_r[H]$ of subgroups of $\Nzerr{r}{G}{H}$, we need to find a suitable
classifying space that is related to those for the families $\FF_i$ with $i\leq r-1$.

The family $\FF_r[H]$ is given as a union of the sets of subgroups of $\Nzerr{r}{G}{H}$ $[H]_r$ and
$\FF_{r-1} \cap \Nzerr{r}{G}{H}$. Note that $\FF_{r-1}\cap \Nzerr{r}{G}{H}$ is a family of subgroups
of $\Nzerr{r}{G}{H}$ but $[H]_r = \{K \leq \Nzerr{r}{G}{H} \st H \sim_r K \}$ is not closed under
taking subgroups. To complete $[H]_r$, we can add those subgroups in $\FF_{r-1}\cap \Nzerr{r}{G}{H}$
that are related to a subgroup of $H$. This way, and using the structure on the chain of families
$\left(\FF_r\right)_{r\in \N}$, we will be able to build the spaces needed to use Lemma~\ref{E_FUG}.

\begin{Defn}\label{defRS} Let $r>0$,  $H \in \dF{F}{r}$ and $0 < i \leq r$. Then, we define $\RS{i}{G}{H}$
to be union between $\FF_0 \cap \Nzerr{r}{G}{H}$ and the set of subgroups $K$ of $\Nzerr{r}{G}{H}$ in
$\FF_i$ such that there is $L\leq H$ with $L\in\dF{F}{j}$ and $L\sim_j K$ for some $0 < j \leq i$.

In the case $i=0$, we define $$\RS{0}{G}{H} = \FF_0 \cap \Nzerr{r}{G}{H}.$$
\end{Defn}

\begin{Lemma}
    Let $r>0$,  $H \in \dF{F}{r}$ and $0 < i \leq r$. Then,
    $$\RS{i}{G}{H} = \{ K \in \FF_i \cap \Nzerr{r}{G}{H} \st
    K \sim_j K\cap H \text{ with } 0 < j \leq i \}\cup (\FF_0 \cap \Nzerr{r}{G}{H}).$$
\end{Lemma}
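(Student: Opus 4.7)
The plan is to establish the two set-equalities by mutual inclusion, observing first that the summand $\FF_0\cap\Nzerr{r}{G}{H}$ appears on both sides of the claimed equality, so the real content is showing that, for $K\in\FF_i\cap\Nzerr{r}{G}{H}$, the condition ``there exist $L\leq H$ and $0<j\leq i$ with $L\in\dF{F}{j}$ and $L\sim_j K$'' is equivalent to ``$K\sim_j K\cap H$ for some $0<j\leq i$''. Throughout I will use that each $\FF_j$ is a full family, so it is closed under taking subgroups, and the description of $\sim_j$ given in Definition~\ref{def_eqrel_r}.

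For the inclusion $(\supseteq)$, I would take $K\in\FF_i\cap\Nzerr{r}{G}{H}$ with $K\sim_j K\cap H$ for some $0<j\leq i$. By the very definition of $\sim_j$, both $K$ and $K\cap H$ lie in $\dF{F}{j}$. Setting $L:=K\cap H$ then produces a subgroup of $H$ lying in $\dF{F}{j}$, and the computation $L\cap K = K\cap H = L\in\dF{F}{j}$ immediately gives $L\sim_j K$. Hence $K\in\RS{i}{G}{H}$ by Definition~\ref{defRS}.

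For the inclusion $(\subseteq)$, suppose $K\in\FF_i\cap\Nzerr{r}{G}{H}$ admits $L\leq H$ with $L\in\dF{F}{j}$ and $L\sim_j K$ for some $0<j\leq i$. The relation $L\sim_j K$ forces $K\in\dF{F}{j}$ and $L\cap K\in\dF{F}{j}$. Since $L\leq H$ we have the sandwich $L\cap K \leq K\cap H \leq K$. Because $\FF_j$ is a full family and $K\in\FF_j$, the middle term $K\cap H$ lies in $\FF_j$. To show $K\sim_j K\cap H$ it remains to check $K\cap H\notin\FF_{j-1}$, and this is the one step requiring a small argument: if $K\cap H$ were in $\FF_{j-1}$, then the subgroup $L\cap K\leq K\cap H$ would also lie in $\FF_{j-1}$ by closure under subgroups, contradicting $L\cap K\in\dF{F}{j}$. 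Therefore $K\cap H\in\dF{F}{j}$, and since $K\cap(K\cap H)=K\cap H\in\dF{F}{j}$, we conclude $K\sim_j K\cap H$, whence $K$ belongs to the right-hand side.

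The main (and essentially only) obstacle is the sandwich step above, which ensures that $K\cap H$ lands in precisely the layer $\dF{F}{j}$ rather than dropping into $\FF_{j-1}$; everything else is direct unfolding of the definitions of $\RS{i}{G}{H}$ and $\sim_j$.
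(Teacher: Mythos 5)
Your proof is correct and follows essentially the same route as the paper's: for the forward inclusion you rule out $K\cap H\in\FF_{j-1}$ by applying closure under subgroups to $L\cap K\leq K\cap H$, exactly as in the paper, and for the reverse inclusion you take $L=K\cap H$, which is the paper's one-line argument. The only (harmless) difference is that you spell out explicitly that the $\FF_0\cap\Nzerr{r}{G}{H}$ summand is common to both sides and that $L\sim_j K$ already forces $K\in\dF{F}{j}$, points the paper leaves implicit.
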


\begin{proof}
    Let $K \leq \Nzerr{r}{G}{H}$ such that $K \in \dF{F}{j}$ for $0 < j \leq i$.
    For the first inclusion, assume there is $L\leq H$ such that $L \sim_j K$. We need
    to see that then $K\sim_j K\cap H$. So we need to see $K\cap H \in \dF{F}{j}$. Since
    $K \in \FF_j$, so is $K\cap H$. Then $K\cap H \notin \dF{F}{j}$ would be equivalent
    to $K\cap H \in \FF_{j-1}$. Since $\FF_{j-1}$ is closed under taking subgroups and
    $L\cap K \leq K \cap H$, we would have $L\cap K \in \FF_{j-1}$, which is a
    contradiction with the fact that $L\sim_j K$. Therefore, $K\cap H \in \dF{F}{j}$
    as we wanted to see.

    The other inclusion is direct taking $L = K\cap H$ as the subgroup of $H$ involved in
    the definition of $\RS{i}{G}{H}$.
\end{proof}

And now we need to see that $\RS{i}{G}{H}$ is well-defined in terms of the classes in $[\dF{F}{r}]$ and
that $\RS{i}{G}{H}$ is a family of subgroups of $\Nzerr{r}{G}{H}$ for all $0 \leq i \leq r$:

\begin{Lemma}
    For $H,H' \in \dF{F}{r}$ with $H\sim_r H'$, $\RS{i}{G}{H} =\RS{i}{G}{H'}$.
\end{Lemma}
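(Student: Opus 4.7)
The plan is to reduce the equality of families to two basic observations: that the stabilising subgroups $\Nzerr{r}{G}{H}$ and $\Nzerr{r}{G}{H'}$ coincide, and that the SSACFS axiom lets us swap $H$ for $H'$ inside any intersection with a subgroup of bounded rank.

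First, I would verify that $\Nzerr{r}{G}{H} = \Nzerr{r}{G}{H'}$. By Lemma~\ref{SSACFS_LW_eqrel}, $\sim_r$ is a strong equivalence relation on $\dF{F}{r}$; in particular it is preserved by conjugation. Hence for any $g \in G$, $H \sim_r H'$ gives $H^g \sim_r H'^g$, and then by symmetry and transitivity $H^g \sim_r H$ if and only if $H'^g \sim_r H'$. This equates the two stabilisers and, in particular, makes the term $\FF_0 \cap \Nzerr{r}{G}{H}$ that appears in both $\RS{0}{G}{H} = \RS{0}{G}{H'}$ and as a summand of $\RS{i}{G}{H}$ for $i>0$ independent of the chosen representative. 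This already settles the case $i=0$.

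Next, for $i > 0$, I would invoke the alternative description of $\RS{i}{G}{H}$ established in the preceding lemma:
\[
\RS{i}{G}{H} = \{ K \in \FF_i \cap \Nzerr{r}{G}{H} \st K \sim_j K\cap H \text{ with } 0 < j \leq i\} \cup (\FF_0 \cap \Nzerr{r}{G}{H}).
\]
Given $K \in \FF_i \cap \Nzerr{r}{G}{H}$ with $K \in \dF{F}{j}$ for some $0 < j \leq i$, the relation $K \sim_j K\cap H$ is equivalent to $K \cap H \in \dF{F}{j}$, since $K$ itself already lies in $\dF{F}{j}$. The key step is now to apply the SSACFS axiom to the pair $H, H' \in \dF{F}{r}$ (with $H \cap H' \in \dF{F}{r}$ provided by $H \sim_r H'$) and the subgroup $L = K \in \dF{F}{j}$, for which $j \leq i \leq r$. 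This yields $K \cap H \in \dF{F}{j}$ if and only if $K \cap H' \in \dF{F}{j}$, so $K \sim_j K \cap H$ if and only if $K \sim_j K \cap H'$. Combined with the equality of normalisers from the first step, this shows the two sets coincide.

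I do not expect a genuine obstacle; the argument is essentially the definition of SSACFS unpacked. The only point requiring care is bookkeeping the level $j$ associated to each $K$: one must notice that this $j$ is the \emph{unique} integer with $K \in \dF{F}{j}$, so the SSACFS hypothesis at level $(j,r)$ (not $(i,r)$) is what gets invoked. Beyond that, the proof is a short chain of equivalences and the case $i = 0$ is absorbed into the union term.
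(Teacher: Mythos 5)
Your proof is correct and follows essentially the same route as the paper: both reduce to the characterisation $\RS{i}{G}{H} = \{ K \in \FF_i \cap \Nzerr{r}{G}{H} \st K \sim_j K\cap H \}\cup (\FF_0 \cap \Nzerr{r}{G}{H})$ and then apply the SSACFS axiom to the pair $H, H'$ (with $L = K$ at its level $j \leq r$) to swap $K\cap H$ for $K\cap H'$, handling the $\FF_0$ part via $\Nzerr{r}{G}{H} = \Nzerr{r}{G}{H'}$. The only difference is cosmetic: you spell out why $\Nzerr{r}{G}{H} = \Nzerr{r}{G}{H'}$ (conjugation-invariance plus symmetry and transitivity of $\sim_r$), a fact the paper simply asserts.
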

\begin{proof}
    Since $H\sim_r H'$, we know $\Nzerr{r}{G}{H} = \Nzerr{r}{G}{H'}$. Hence,
    $\RS{0}{G}{H} =\RS{0}{G}{H'}$. Let now $0 < i \leq r$.

    Given the symmetry of $\sim_r$, we only need to prove one inclusion. Let
    $K\in \RS{i}{G}{H}$ and let $0 < j \leq i$ such that $K \sim_j K \cap H$.
    We know $H\sim_r H'$, and hence, since $\left(\FF_r \right)_{r\in\N}$ is a
    SSACFS of $G$, we have $K \sim_j K\cap H'$, as we needed to prove.

    Note that if there is no such $j$ it means $K \in \FF_0 \cap \Nzerr{r}{G}{H}$,
    and therefore $K$ will belong to $\RS{i}{G}{H'}$ since $\Nzerr{r}{G}{H} = \Nzerr{r}{G}{H'}$
    and $\RS{0}{G}{H} \subseteq \RS{i}{G}{H}$.
\end{proof}

\begin{Lemma}\label{RrGH-fam}
    Let $r>0$, $H \in \dF{F}{r}$ and $i\leq r$. Then, $\RS{i}{G}{H}$ is a family
    of subgroups of $\Nzerr{r}{G}{H}$.
\end{Lemma}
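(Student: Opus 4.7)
The plan is to verify the three defining properties of a full family: non-emptiness, closure under conjugation by elements of $\Nzerr{r}{G}{H}$, and closure under taking subgroups. Throughout I will make essential use of the equivalent description $\RS{i}{G}{H} = \{ K \in \FF_i \cap \Nzerr{r}{G}{H} \st K \sim_j K\cap H \text{ with } 0 < j \leq i \}\cup (\FF_0 \cap \Nzerr{r}{G}{H})$ already proved in the preceding lemma, together with the defining property of an SSACFS. Non-emptiness is immediate: $\FF_0$ is a non-empty full family of subgroups of $G$, so $\{1\} \in \FF_0 \cap \Nzerr{r}{G}{H} \subseteq \RS{i}{G}{H}$.

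For closure under conjugation, I would fix $K \in \RS{i}{G}{H}$ and $g \in \Nzerr{r}{G}{H}$. If $K \in \FF_0 \cap \Nzerr{r}{G}{H}$ the result is trivial since $\FF_0$ is a full family of $G$ and $\Nzerr{r}{G}{H}$ is closed under conjugation by $g$. Otherwise $K \in \dF{F}{j}$ with $K \cap H \in \dF{F}{j}$ for some $0 < j \leq i$, so $K^g \in \dF{F}{j}$ and $(K\cap H)^g = K^g \cap H^g \in \dF{F}{j}$. Since $g \in \Nzerr{r}{G}{H}$ we have $H \sim_r H^g$, i.e.\ $H \cap H^g \in \dF{F}{r}$. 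Applying the SSACFS property at level $r$ with subgroups $H, H^g$ and the element $K^g \in \dF{F}{j}$ (where $j \leq r$) transfers membership in $\dF{F}{j}$ from $K^g \cap H^g$ to $K^g \cap H$, giving $K^g \sim_j K^g \cap H$ as required.

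For closure under subgroups, fix $K \in \RS{i}{G}{H}$ and $M \leq K$. If $K \in \FF_0$ then $M \in \FF_0 \cap \Nzerr{r}{G}{H}$ and we are done; similarly if $M \in \FF_0$. The remaining case has $K \in \dF{F}{j}$ with $K \cap H \in \dF{F}{j}$ and $M \in \dF{F}{j'}$ with $0 < j' \leq j \leq i$. The trick is to apply the SSACFS property at level $j$ to the pair $K$ and $K \cap H$ (which lie in $\dF{F}{j}$ with trivial intersection $K \cap H \in \dF{F}{j}$) together with $M \in \dF{F}{j'}$: since $M \cap K = M \in \dF{F}{j'}$, the equivalence gives $M \cap (K \cap H) = M \cap H \in \dF{F}{j'}$. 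Taking $L = M \cap H$ exhibits $M$ as an element of $\RS{i}{G}{H}$.

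The only real step is making sure the SSACFS property is applied with the right parameters in each case: in the conjugation argument one uses the $\sim_r$-relation between $H$ and $H^g$ to transport the intersection condition, while in the subgroup argument one uses the $\sim_j$-relation between $K$ and $K \cap H$ to transport a smaller-level intersection. Once one sees the correct choice of $H, K, L$ in the SSACFS axiom in each case, the verification is essentially mechanical; the rest of the work is bookkeeping with the $\FF_0$-summand of $\RS{i}{G}{H}$.
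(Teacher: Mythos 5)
Your proof is correct and matches the paper's argument essentially step for step: non-emptiness via the trivial subgroup, closure under subgroups by applying the SSACFS axiom to the pair $K$, $K\cap H$ at level $j$, and closure under conjugation by applying it to the pair $H$, $H^g$ at level $r$, with the $\FF_0$-cases handled separately. No gaps; the only difference is that you spell out the $\FF_0$ bookkeeping slightly more explicitly than the paper does.
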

\begin{proof}
    Firstly, $\{1\}\in \RS{i}{G}{H} \neq \emptyset$. To prove closure under
    taking subgroups and conjugation, we will assume $K\notin \FF_0 \cap \Nzerr{r}{G}{H}$,
    since otherwise the statements reduce to $\FF_0\cap \Nzerr{r}{G}{H}$ being a family,
    which we already know is true. Let $K \leq \Nzerr{r}{G}{H}$ with $K \in \dF{F}{j}$
    for some $0 < j \leq i$ such that $K\sim_j K\cap H$.

    Let $L\leq K$ and $0 < l \leq j$ such that $L\in\dF{F}{l}$. Since $K\sim_j K \cap H$,
    since $\left(\FF_r \right)_{r\in\N}$ is a SSACFS of $G$, $L\cap K \in \dF{F}{l}$
    if and only if $L\cap K \cap H \in \dF{F}{l}$. But $L\cap K = L \in \dF{F}{l}$
    by hypothesis, so we can conclude that $\RS{i}{G}{H}$ is closed under taking
    subgroups, as $L\sim_l L\cap H$.

    Now if $g\in \Nzerr{r}{G}{H}$, we need to prove $K^g \in \RS{i}{G}{H}$. Since
    $K\in \dF{F}{j}$ and both $\FF_j$ and $\FF_{j-1}$ are families, $K^g\in \dF{F}{j}$.
    For the same reason, since $K\sim_j K\cap H$, $(K\cap H)^g = K^g \cap H^g \in \dF{F}{j}$.
    And because $g\in \Nzerr{r}{G}{H}$ we have $H\sim_r H^g$. Therefore, since
    $\left(\FF_r \right)_{r\in\N}$ is a SSACFS of $G$, as we have $K^g\sim_j K^g \cap H^g$
    and $H\sim_r H^g$, we know $K^g \sim_j K^g \cap H$. This proves $K^g \in \RS{i}{G}{H}$,
    as we needed to see.
\end{proof}

Finally, we express the family $\FF[H]$ of subgroups of $\Nzerr{r}{G}{H}$ as a union of families
and find their intersection to prove that we can use the results for unions of families described in
Chapter~\ref{ch:related-families}:

\begin{Lemma}\label{ingrforE_FUG} Let $r>0$ and $H\in\dF{F}{r}$. Then, the following hold:
    \begin{enumerate}[label={\emph{(\roman*)}}]
        \item $\FF_r[H] = \RS{r}{G}{H} \cup \left( \FF_{r-1} \cap \Nzerr{r}{G}{H}\right)$
        \item $\RS{r-1}{G}{H} = \RS{r}{G}{H} \cap \left( \FF_{r-1} \cap \Nzerr{r}{G}{H}\right)$
    \end{enumerate}
\end{Lemma}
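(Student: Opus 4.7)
The plan is to prove both equalities by double inclusion, using throughout the alternative characterisation of $\RS{i}{G}{H}$ stated just above the lemma, together with the definition of $\FF_r[H]$, the fact that $\left(\FF_r\right)_{r\in\N}$ is a SSACFS, and transitivity of $\sim_r$ from Lemma~\ref{SSACFS_LW_eqrel}.

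For the forward inclusion of (i), given $K\in\FF_r[H]$ I split on the two summands of $\FF_r[H]$: if $K\in\FF_{r-1}\cap\Nzerr{r}{G}{H}$ there is nothing to show, while if $K\in\dF{F}{r}$ with $K\sim_r H$ then $K\cap H\in\dF{F}{r}$ by definition of $\sim_r$, hence $K\sim_r K\cap H$ and $K\in\RS{r}{G}{H}$. The converse is where the SSACFS hypothesis bites. A generic element of the right-hand side either lies already in $\FF_{r-1}\cap\Nzerr{r}{G}{H}$, or it lies in $\RS{r}{G}{H}$ with a witness $0<j\leq r$ such that $K\in\dF{F}{j}$ and $K\sim_j K\cap H$ (the $\FF_0$-summand of $\RS{r}{G}{H}$ being absorbed into $\FF_{r-1}\cap\Nzerr{r}{G}{H}$ since $r\geq 1$). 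If $j<r$ then $K\in\FF_{r-1}$ and we are done; the delicate case is $j=r$, where I must upgrade $K\sim_r K\cap H$ to $K\sim_r H$. I do this by noting that $(K\cap H)\cap H=K\cap H\in\dF{F}{r}$, so $K\cap H\sim_r H$, and then transitivity of $\sim_r$ places $K$ in the first summand of $\FF_r[H]$.

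Part (ii) is essentially bookkeeping once the same characterisation is in hand. For $\subseteq$, a witness $0<j\leq r-1$ for membership in $\RS{r-1}{G}{H}$ puts $K$ in $\FF_{r-1}$ and serves equally well as a witness for membership in $\RS{r}{G}{H}$, while the $\FF_0$-summand sits inside both sides. For $\supseteq$, the key observation is that if $K\in\FF_{r-1}\cap\Nzerr{r}{G}{H}$ then any witness $0<j\leq r$ certifying $K\in\RS{r}{G}{H}$ cannot have $j=r$, since $K\in\dF{F}{j}=\dF{F}{r}$ would rule out $K\in\FF_{r-1}$; hence $j\leq r-1$ and $K\in\RS{r-1}{G}{H}$.

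The main obstacle, then, is the $j=r$ case in the converse of (i): one has to promote the relation $K\sim_r K\cap H$ supplied by membership in $\RS{r}{G}{H}$ to the relation $K\sim_r H$ required by the definition of $\FF_r[H]$, which is done by interposing $K\cap H$ and invoking transitivity of $\sim_r$. Everything else reduces to unwinding the definitions carefully, paying attention to which index $j$ witnesses membership in which family.
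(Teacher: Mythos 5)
Your proposal is correct and follows essentially the same double-inclusion argument as the paper, using the characterisation of $\RS{r}{G}{H}$ via $K\sim_j K\cap H$ and the definition of $\FF_r[H]$. The only cosmetic difference is in the $j=r$ case of (i): you upgrade $K\sim_r K\cap H$ to $K\sim_r H$ by interposing $K\cap H$ and invoking transitivity, whereas the paper just observes the two conditions are literally the same statement ($K\cap H\in\dF{F}{r}$); both are fine.
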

\begin{proof}
    \begin{enumerate}[label={(\roman*)}]
        \item Since $\{K \leq \Nzerr{r}{G}{H} \st K \in \dF{F}{r} \, ,\, H\sim_r K\} \subseteq \RS{r}{G}{H}$,
        $\FF_r[H] \subseteq \RS{r}{G}{H} \cup \left( \FF_{r-1} \cap \Nzerr{r}{G}{H}\right)$.
        For the other inclusion, let $K\in \RS{r}{G}{H}$. If $K\in \FF_0$ or $K\in \dF{F}{i}$
        for some $0 < i < r$, then $K\in \FF_{r-1} \cap \Nzerr{r}{G}{H}$. If $K\in \dF{F}{r}$,
        since $K\sim_r K\cap H$ is equivalent to $K\sim_r H$, we have
        $K \leq \{K \leq \Nzerr{r}{G}{H} \st K \in \dF{F}{r} \, ,\, H\sim_r K\} \subseteq \FF_r[H]$.

        \item Let $K \in \RS{r}{G}{H} \cap \left( \FF_{r-1} \cap \Nzerr{r}{G}{H}\right)$.
        In particular, $K\in \FF_{r-1}$. If $K \in \FF_0$, we are done, since
        $\FF_0 \cap \Nzerr{r}{G}{H} \subseteq \RS{i}{G}{H}$ for all $0 \leq i \leq r$.
        Let $i$ be such that $0< i \leq r-1$ and $K \in \dF{F}{i}$. And since
        $K\in \RS{r}{G}{H}$, $K\sim_i K\cap H$, so $K\in \RS{r-1}{G}{H}$, since $i\leq r-1$.
        The other inclusion follows directly from $\RS{r-1}{G}{H}$ being a subset
        of both $\RS{r}{G}{H}$ and $\FF_{r-1} \cap \Nzerr{r}{G}{H}$.
    \end{enumerate}
\end{proof}

Therefore, let us first build models for $\EFG{\RS{i}{G}{H}}{\Nzerr{r}{G}{H}}$.

\subsection{Classifying spaces for the families $\left(\RS{i}{G}{H}\right)_{i=0}^r$}\label{sec:class_for_RS}

In this section, we will take advantage of the structure that the families
$\left(\RS{i}{G}{H}\right)_{i=0}^r$ inherit from $(\FF_r)_{r\in\N}$ to build recursively
models for $\EFG{\RS{i}{G}{H}}{\Nzerr{r}{G}{H}}$, where $H\in\dF{F}{r}$.

For the base case, note that $\RS{0}{G}{H} = \FF_0 \cap \Nzerr{r}{G}{H}$. Hence, by
Theorem~\ref{Bredon:cd_subgroup} and Proposition~\ref{Bredon:gd_subgroup},
the Bredon dimensions of $\Nzerr{r}{G}{H}$ with respect to the family $\RS{0}{G}{H}$
will be finite in the case $\cd_{\FF_0} G$ and $\gd_{\FF_0} G$ are.

Let now $0 < i \leq r$. We want to use L\"uck-Weiermann method on the families
$\RS{i-1}{G}{H} \subseteq \RS{i}{G}{H}$.

\begin{Lemma}\label{RSisSSACFS}
    $\left(\RS{i}{G}{H}\right)_{i=0}^r$ is a SSACFS of $\Nzerr{r}{G}{H}$.
\end{Lemma}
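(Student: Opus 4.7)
The plan is to verify directly that the chain $\left(\RS{i}{G}{H}\right)_{i=0}^r$ satisfies the definition of a SSACFS. Lemma~\ref{RrGH-fam} already gives that each $\RS{i}{G}{H}$ is a family of subgroups of $\Nzerr{r}{G}{H}$. The ascending part, $\RS{i-1}{G}{H} \subseteq \RS{i}{G}{H}$, is essentially built into Definition~\ref{defRS}: a subgroup $K \in \RS{i-1}{G}{H}$ that is not already in $\FF_0 \cap \Nzerr{r}{G}{H}$ belongs to $\dF{F}{j}$ with $0 < j \leq i-1 \leq i$ and satisfies $K \sim_j K \cap H$, which immediately places it in $\RS{i}{G}{H}$.

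The substantive part is the strong-structure condition. First I would establish the clean characterisation
\[
\dRS{s}{G}{H} \;=\; \{ K \in \dF{F}{s}\cap\Nzerr{r}{G}{H} \st K \sim_s K \cap H \} \qquad (s>0),
\]
together with $\RS{0}{G}{H} = \FF_0 \cap \Nzerr{r}{G}{H}$. The inclusion ``$\supseteq$'' is immediate; for ``$\subseteq$'' one observes that a $K \in \RS{s}{G}{H}$ lying in $\dF{F}{j}$ for some $0 < j < s$ already belongs to $\RS{s-1}{G}{H}$, while $K \in \FF_0$ belongs to $\RS{0}{G}{H} \subseteq \RS{s-1}{G}{H}$, so membership in $\dRS{s}{G}{H}$ forces $K \in \dF{F}{s}$ with $K \sim_s K \cap H$.

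With that characterisation in hand, the verification reduces to the following: fix $0 < i \leq s \leq r$ and suppose $K_1, K_2 \in \dF{F}{s}$ with $K_j \sim_s K_j \cap H$ and $K_1 \cap K_2 \in \dF{F}{s}$, and let $L \in \dF{F}{i}$ with $L \sim_i L \cap H$. Because $K_1, K_1 \cap H$ both lie in $\dF{F}{s}$ with $K_1 \cap (K_1 \cap H) = K_1 \cap H \in \dF{F}{s}$, the SSACFS property of $(\FF_r)_{r\in\N}$ applied at level $i \leq s$ yields
\[
L \cap K_1 \in \dF{F}{i} \iff L \cap K_1 \cap H \in \dF{F}{i},
\]
so whenever $L \cap K_1 \in \dF{F}{i}$ the relation $(L \cap K_1) \sim_i (L \cap K_1) \cap H$ holds automatically; consequently $L \cap K_1 \in \dRS{i}{G}{H} \iff L \cap K_1 \in \dF{F}{i}$, and symmetrically for $K_2$. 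Applying the SSACFS property of $(\FF_r)_{r\in\N}$ a second time to $K_1, K_2$ gives $L \cap K_1 \in \dF{F}{i} \iff L \cap K_2 \in \dF{F}{i}$, and chaining these equivalences completes the verification. The $s=0$ case is vacuous since then $i=0$ and $\RS{0}{G}{H}$ is already closed under intersections as a full family.

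The main obstacle is purely bookkeeping: one must be careful that the various ``$\dRS{s}{G}{H}$'' only collects subgroups genuinely of top step $s$, and that the $\sim_i$ condition on an intersection such as $L \cap K_1$ is precisely the statement $(L \cap K_1) \cap H \in \dF{F}{i}$, which is exactly what the SSACFS property of $(\FF_r)_{r\in\N}$ delivers. Once this dictionary between ``$\sim_i$'' conditions and ``$\dF{F}{i}$'' membership of appropriate intersections is set up, the argument is a straightforward two-step application of the hypothesis on $(\FF_r)_{r\in\N}$.
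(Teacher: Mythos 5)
Your proof is correct and follows essentially the same route as the paper's: both reduce membership of an intersection in $\dRS{i}{G}{H}$ to the two conditions of lying in $\dF{F}{i}$ and having intersection with $H$ in $\dF{F}{i}$, and then invoke the SSACFS property of $(\FF_r)_{r\in\N}$ twice (the paper transfers both conditions from one top-level subgroup to the other via the pairs $(K_1,K_2)$ and $(K_1\cap H,K_2\cap H)$, whereas you first apply it to $(K_j,K_j\cap H)$ to see the $\cap H$ condition is automatic and then to $(K_1,K_2)$ --- a cosmetic variation which incidentally shows the hypothesis $(K_1\cap K_2)\cap H\in\dF{F}{s}$ is never used). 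The only loose end is the boundary case $i=0<s$, which the same closure-under-subgroups observation you make for $\RS{0}{G}{H}$ in the $s=0$ case disposes of verbatim.
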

\begin{proof}
    Let $0 \leq i \leq j \leq r$. Let $K, L \in \dRS{j}{G}{H}$ such that
    $K \cap L \in \dRS{j}{G}{H}$. Assume $M \in \dRS{i}{G}{H}$. We need to
    see $M\cap K\in \dRS{i}{G}{H}$ if and only if $M \cap L \in \dRS{i}{G}{H}$.
    Given the symmetry of intersection, we only need to prove one
    of the implications. Let $M$ then be such that $M\cap K \in \dRS{i}{G}{H}$.
    By definition of $\RS{i}{G}{H}$ and $\RS{i-1}{G}{H}$, that happens if and only if
    $M\cap K \in \dF{F}{i}$ and $M\cap K \cap H \in \dF{F}{i}$.
    We need to see $M \cap L, M \cap L \cap H \in \dF{F}{i}$.

    Since $K, L, K \cap L \in \dRS{j}{G}{H}$, we know
    $K\cap H, L \cap H, K \cap L\cap H \in \dF{F}{j}$.

    The chain $(\FF_r)_{r\in\N}$ is a SSACFS, $K,L,K\cap L \in \dF{F}{j}$ and
    $M,M\cap L \in \dF{F}{i}$, therefore $M\cap L \in \dF{F}{i}$.

    Analogously, since $K\cap H, L\cap H, K\cap L \cap H \in \dF{F}{j}$ and
    $M\cap K \cap H \in \dF{F}{i}$, we have $M\cap L \cap H \in \dF{F}{i}$,
    as we needed to see.
\end{proof}

\begin{Corollary}\label{sim_i_rest_to_dRS}
    The restriction of $\sim_i$ to $\dRS{i}{G}{H} \subseteq \dF{F}{i}$ is
    a strong equivalence relation for $0 < i \leq r$, and in particular
    it is the same equivalence relation than that defined from the chain
    being a SSACFS.
\end{Corollary}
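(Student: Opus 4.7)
The plan is to identify the restriction of $\sim_i$ to $\dRS{i}{G}{H}$ with the strong equivalence relation on $\dRS{i}{G}{H}$ that comes from Lemma~\ref{RSisSSACFS}: since $\left(\RS{i}{G}{H}\right)_{i=0}^r$ is a SSACFS of $\Nzerr{r}{G}{H}$, Lemma~\ref{SSACFS_LW_eqrel} provides a strong equivalence relation $\approx_i$ on $\dRS{i}{G}{H}$ defined by $K \approx_i L$ if and only if $K \cap L \in \dRS{i}{G}{H}$. Once I show that $\approx_i$ and the restriction of $\sim_i$ are literally the same relation, the ``in particular'' clause is immediate and strongness is free.

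Before comparing them, I will record the useful characterisation extracted in the course of the proof of Lemma~\ref{RSisSSACFS}: for any subgroup $N \leq \Nzerr{r}{G}{H}$, one has $N \in \dRS{i}{G}{H}$ if and only if $N \in \dF{F}{i}$ and $N \cap H \in \dF{F}{i}$. Fix now $K, L \in \dRS{i}{G}{H}$. The easy direction is $K \approx_i L \Rightarrow K \sim_i L$: if $K \cap L \in \dRS{i}{G}{H}$, then in particular $K \cap L \in \dF{F}{i}$, which is exactly the definition of $K \sim_i L$ (since both $K,L$ lie in $\dF{F}{i}$ anyway).

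The main step is the converse. Assuming $K \sim_i L$, that is $K \cap L \in \dF{F}{i}$, I need to show $K \cap L \in \dRS{i}{G}{H}$, and by the characterisation above it is enough to verify $K \cap L \cap H \in \dF{F}{i}$. I would apply the SSACFS property of $\left(\FF_r\right)_{r\in\N}$ to the triple $K, L, K \cap L \in \dF{F}{i}$ with the auxiliary subgroup $M := L \cap H$, which lies in $\dF{F}{i}$ precisely because $L \in \dRS{i}{G}{H}$. The SSACFS axiom then gives $M \cap K \in \dF{F}{i}$ if and only if $M \cap L \in \dF{F}{i}$. Since $M \cap L = L \cap H \in \dF{F}{i}$, this forces $M \cap K = K \cap L \cap H \in \dF{F}{i}$, as required.

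I do not anticipate a serious obstacle: the only real content is spotting the correct choice $M = L \cap H$, which is essentially dictated by the extra condition distinguishing membership in $\dRS{i}{G}{H}$ from membership in $\dF{F}{i}$. Once the two relations are identified, strongness in the sense of \cite{lueckweiermann} is inherited from $\approx_i$ via Lemma~\ref{SSACFS_LW_eqrel}, finishing the corollary.
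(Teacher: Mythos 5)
Your proof is correct and follows essentially the same route as the paper, which simply cites Lemma~\ref{RSisSSACFS} and the inclusion $\dRS{i}{G}{H} \subseteq \dF{F}{i}$: you deduce strongness from the SSACFS structure of $\left(\RS{i}{G}{H}\right)_{i=0}^r$ via Lemma~\ref{SSACFS_LW_eqrel}, exactly as intended. The only difference is that you make explicit the identification of the two relations — in particular the nontrivial direction that $K\cap L \in \dF{F}{i}$ forces $K\cap L\cap H \in \dF{F}{i}$, obtained by applying the SSACFS axiom for $\left(\FF_r\right)_{r\in\N}$ with the auxiliary subgroup $L\cap H$ — a step the paper's one-line proof leaves implicit, and your argument for it is valid.
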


\begin{proof}
    It is a result of Lemma~\ref{RSisSSACFS} and the fact that
    $\dRS{i}{G}{H} \subseteq \dF{F}{i}$.
\end{proof}

Note that if $i=r$, then $\dRS{r}{G}{H} = [H]_r$ and hence $[\dRS{r}{G}{H}]$
consists solely on the class of $H$ under the restriction of $\sim_r$ to
$\dRS{r}{G}{H}$ (which coincides with $[H]_r)$. This means the family $\RS{r}{G}{H}[H]$
would coincide with $\RS{r}{G}{H}$ and hence the classifying space that we
want to obtain as a result of a push-out would appear in the diagram as one
of the necessary classifying spaces. Also, by Lemma~\ref{sim_r_is_finest} and
Corollary~\ref{sim_i_rest_to_dRS}, the restriction of $\sim_r$ to $\dRS{r}{G}{H}$
is the finest equivalence relation that is strong in the sense of LW. That means that
utilizing any other strong equivalence in $\dRS{r}{G}{H}$ would always result in $[H]_r$
as the only equivalence class. For that reason,
it is not possible to bound the Bredon dimensions of $\Nzerr{r}{G}{H}$ with
respect to $\RS{r}{G}{H}$ using LW method on $\RS{r-1}{G}{H} \subseteq \RS{r}{G}{H}$.

We encountered now a set of the basic spaces that we mentioned at the beginning
of Section~\ref{sec:construction_dFFr}. And these will be the last of such spaces,
so we can now summarize this information:

\begin{Observation}\label{necessary_spaces}
    \begin{enumerate}[label={(\arabic*)}, noitemsep]
        \item No upper bound for the Bredon dimensions of $G$ with respect to
        $\FF_0$ can be deduced from the construction we are describing, since we
        are using this family as the base case for our recursive process.
        \item For each $k > 0$ and each class $[K]_k$ with respect to $\sim_k$,
        we won't be able, in general, to provide a bound for the Bredon dimensions of
        $\Nzerr{k}{G}{K}$ with respect to $\RS{k}{G}{K}$ as part of the
        recursive process we are providing. We will be able, however, under
        certain additional conditions.
        \item To prove that the Bredon dimensions of $G$ with respect to
        $\FF_r$ are finite (and to give a finite upper bound for them) using
        the recursive construction we are providing, finite-dimensional
        models for $\EFG{\FF_0}{G}$ and for $\EFG{\RS{k}{G}{K}}{\Nzerr{k}{G}{K}}$
        for all $K\in\dRS{k}{G}{H}$ and $0 < k \leq r$ are required.
    \end{enumerate}
\end{Observation}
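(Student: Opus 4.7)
The plan is to justify each of the three items in the Observation by tracing through the recursive construction of Section~\ref{sec:construction_dFFr} and identifying precisely which spaces the procedure consumes as input versus the spaces it produces as output. Each item corresponds to a structural limitation of the L\"uck--Weiermann machinery (Theorem~\ref{lw-main}, Corollary~\ref{lw-dimensions}) when fed a SSACFS, and the arguments are largely bookkeeping once the right invariants are identified.

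For item (1), I would argue simply by inspecting the induction structure: the step that produces information about $\FF_r$ from information about $\FF_{r-1}$ is the L\"uck--Weiermann push-out applied to the pair $\FF_{r-1}\subseteq\FF_r$ with the strong equivalence relation $\sim_r$ of Definition~\ref{def_eqrel_r}. Iterating this step strictly decreases the index and terminates at $\FF_0$, which therefore enters the construction only as base data. No instance of the recursion ever has $\FF_{-1}$ available, so $\cd_{\FF_0}G$ and $\gd_{\FF_0}G$ cannot be bounded from within the process; a bound must be supplied from outside.

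For item (2), the key structural fact is that for the top level $i=r$ of the auxiliary chain $\left(\RS{i}{G}{H}\right)_{i=0}^{r}$ one has $\dRS{r}{G}{H}=[H]_r$, so the restriction of $\sim_r$ to $\dRS{r}{G}{H}$ has a single equivalence class and the family $\RS{r}{G}{H}[H]$ of Definition~\ref{def_strong_equiv_LW} coincides with $\RS{r}{G}{H}$ itself; the L\"uck--Weiermann push-out for $\RS{r-1}{G}{H}\subseteq\RS{r}{G}{H}$ would thus demand as input precisely the classifying space we are trying to construct, and Corollary~\ref{lw-dimensions} yields no non-trivial bound. By Lemma~\ref{sim_r_is_finest} together with Corollary~\ref{sim_i_rest_to_dRS}, any other strong equivalence relation on $\dRS{r}{G}{H}$ must be coarser than $\sim_r$ and so also produces a single class, ruling out an alternative choice. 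The subtle point here, and probably the main obstacle, is to make precise which ``additional conditions'' can rescue the situation in specific examples, which is deferred to later applications.

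For item (3), I would trace the two layers of the construction and extract the external data at each step. The outer push-out for $\FF_{r-1}\subseteq\FF_r$ requires, by Corollary~\ref{lw-dimensions}, finite-dimensional models for $\EFG{\FF_{r-1}}{G}$, $\EFG{\FF_{r-1}\cap\Nzerr{r}{G}{H}}{\Nzerr{r}{G}{H}}$, and $\EFG{\FF_r[H]}{\Nzerr{r}{G}{H}}$. Lemma~\ref{ingrforE_FUG} writes $\FF_r[H]=\RS{r}{G}{H}\cup(\FF_{r-1}\cap\Nzerr{r}{G}{H})$ with intersection $\RS{r-1}{G}{H}$, so Lemma~\ref{E_FUG} reduces the third space to models for $\RS{r}{G}{H}$, $\RS{r-1}{G}{H}$, and $\FF_{r-1}\cap\Nzerr{r}{G}{H}$; the $\RS{r-1}{G}{H}$-piece is itself built by the inner recursion of Section~\ref{sec:class_for_RS}, and the $\FF_{r-1}$-pieces are handled by the outer recursion together with Proposition~\ref{Bredon:gd_subgroup} for passage to subgroups. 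Unwinding both recursions down to the base case then exhibits $\EFG{\FF_0}{G}$ (which via Proposition~\ref{Bredon:gd_subgroup} covers $\EFG{\FF_0\cap\Nzerr{k}{G}{K}}{\Nzerr{k}{G}{K}}=\EFG{\RS{0}{G}{K}}{\Nzerr{k}{G}{K}}$ as well) and the spaces $\EFG{\RS{k}{G}{K}}{\Nzerr{k}{G}{K}}$ for $0<k\leq r$ as exactly the external inputs needed, establishing~(3).
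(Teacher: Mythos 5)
Your justification is correct and follows essentially the same route as the paper, which presents this statement as an Observation supported by the surrounding discussion: item (1) is the base-case remark, item (2) rests on the fact that $\dRS{r}{G}{H}=[H]_r$ gives a single equivalence class together with Lemma~\ref{sim_r_is_finest} and Corollary~\ref{sim_i_rest_to_dRS} ruling out any finer alternative, and item (3) comes from unwinding the two recursive layers to identify the external inputs. No gaps to report.
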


Let hence $i\in\N$ such that $0 < i < r$. Let $\mathcal{K}_i$ be a set of representatives
$[K]_i$ for the equivalence classes in $[\dRS{i}{G}{H}]$.

\begin{Lemma}\label{norm_of_the_norm}
    $$\Nzerr{i}{\Nzerr{r}{G}{H}}{K} = \Nzerr{r}{\Nzerr{i}{G}{K}}{H} =
    \Nzerr{i}{G}{K}\cap \Nzerr{r}{G}{H}.$$
\end{Lemma}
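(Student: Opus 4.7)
The proof is essentially a direct unravelling of the relevant definitions; there is no substantive calculation to perform, and the whole lemma reduces to a bookkeeping statement about what ``$\Nzerr{*}{*}{*}$'' means.

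First I would make sure that all three expressions are well-defined. By the discussion immediately preceding the lemma we have $K\in\dRS{i}{G}{H}$; unpacking Definition~\ref{defRS} together with the fact that $K\notin \RS{i-1}{G}{H}$, one sees that $K\in\dF{F}{i}$ and in particular $K$ is a legitimate subgroup on which the equivalence relation $\sim_i$ from Definition~\ref{def_eqrel_r} is defined. By hypothesis $H\in\dF{F}{r}$, so the same applies to $H$ and $\sim_r$. Moreover, Corollary~\ref{sim_i_rest_to_dRS} tells us that the equivalence relation used to define $\Nzerr{i}{\Nzerr{r}{G}{H}}{K}$ in the ambient group $\Nzerr{r}{G}{H}$ is precisely the restriction of the original $\sim_i$, so no ambiguity arises.

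Next, I would simply expand each of the first two terms against the definition $\Nzerr{s}{X}{L}=\{x\in X\mid L^x\sim_s L\}$. For the leftmost term,
\[
\Nzerr{i}{\Nzerr{r}{G}{H}}{K}
=\{a\in\Nzerr{r}{G}{H}\mid K^a\sim_i K\}
=\Nzerr{r}{G}{H}\cap\Nzerr{i}{G}{K},
\]
since the condition $K^a\sim_i K$ is, by definition, exactly membership of $a$ in $\Nzerr{i}{G}{K}$. Symmetrically,
\[
\Nzerr{r}{\Nzerr{i}{G}{K}}{H}
=\{b\in\Nzerr{i}{G}{K}\mid H^b\sim_r H\}
=\Nzerr{i}{G}{K}\cap\Nzerr{r}{G}{H}.
\]
Both equal the right-hand side of the statement, so the three subgroups coincide.

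The only point requiring care, and thus the main (small) obstacle, is the initial check that the subscripts and ambient groups are compatible: one has to make sure that the notation $\Nzerr{i}{\Nzerr{r}{G}{H}}{K}$ refers to the same equivalence relation that controls $\Nzerr{i}{G}{K}$, which is exactly the content of Corollary~\ref{sim_i_rest_to_dRS}. Once that is in place, the identification of both sides with $\Nzerr{r}{G}{H}\cap\Nzerr{i}{G}{K}$ is immediate from the definition of each normaliser as a subset of $G$ carved out by a condition on conjugates.
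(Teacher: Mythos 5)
Your proof is correct and follows essentially the same route as the paper: both arguments simply unravel the definition $\Nzerr{s}{X}{L}=\{x\in X\st L^x\sim_s L\}$, the only difference being that the paper verifies $\Nzerr{r}{\Nzerr{i}{G}{K}}{H}=\Nzerr{i}{G}{K}\cap\Nzerr{r}{G}{H}$ and dispatches the other term by symmetry, whereas you expand both terms explicitly (and add the harmless preliminary check, via Corollary~\ref{sim_i_rest_to_dRS}, that the relation governing $\Nzerr{i}{\Nzerr{r}{G}{H}}{K}$ is the restriction of $\sim_i$).
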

\begin{proof}
    We only need to prove that $\Nzerr{r}{\Nzerr{i}{G}{K}}{H} =
    \Nzerr{i}{G}{K}\cap \Nzerr{r}{G}{H}$, since intersection is symmetric.

    Let $g \in \Nzerr{r}{\Nzerr{i}{G}{K}}{H}$. That happens if and only if $g$ is an element
    of $\Nzerr{i}{G}{K}$ such that $H^g \sim_r H$. Since $\Nzerr{i}{G}{K} \leq G$,
    $H^g \sim_r H$ if and only if $g \in \Nzerr{r}{G}{H}$, which completes the proof.
\end{proof}

Therefore, we can express the family
$\RS{i-1}{G}{H} \cap \Nzerr{i}{\Nzerr{r}{G}{H}}{K}$ in a simpler way that will
help on the computation of dimensions just by considering $\Nzerr{i}{G}{K}$ the
ambient group. Thus, we have

\begin{Remark}\label{changeofambient}
    $\RS{j}{G}{H} \cap \Nzerr{i}{\Nzerr{r}{G}{H}}{K} = \RS{j}{\Nzerr{i}{G}{K}}{H}$
    for all $j\leq i$.
\end{Remark}

Then, as a consequence of Theorem~\ref{lw-main}, Lemma~\ref{RSisSSACFS}, Corollary~\ref{sim_i_rest_to_dRS},
Lemma~\ref{SSACFS_LW_eqrel} and Remark~\ref{changeofambient}, we have

\begin{Corollary}\label{modelforRSi} In the configuration described above,
the $\Nzerr{r}{G}{H}$-CW-complex $Y$ given by the $\Nzerr{r}{G}{H}$-pushout
$$\begin{tikzcd}
\underset{[K]_i\in \mathcal{K}_i}{\bigsqcup}
    \Nzerr{r}{G}{H}
    \times_{\Nzerr{i}{\Nzerr{r}{G}{H}}{K}}
    \EFG{\RS{i-1}{\Nzerr{i}{G}{K}}{H}}{\Nzerr{i}{\Nzerr{r}{G}{H}}{K}}
\arrow[r, "\iota"]
\arrow[dd, "\underset{[K]_i\in \mathcal{K}_i}{\sqcup} id_{\Nzerr{r}{G}{H}} \times_{\Nzerr{i}{\Nzerr{r}{G}{H}}{K}}g_{[K]_i}"]
& \EFG{\RS{i-1}{G}{H}}{\Nzerr{r}{G}{H}} \arrow[dd] \\
\\
\underset{[K]_i\in \mathcal{K}_i}{\bigsqcup}
    \Nzerr{r}{G}{H}
    \times_{\Nzerr{i}{\Nzerr{r}{G}{H}}{K}}
    \EFG{\RS{i}{G}{H}[K]}{\Nzerr{i}{\Nzerr{r}{G}{H}}{K}}
\arrow[r]
& Y
\end{tikzcd}$$
is a model for $\EFG{\RS{i}{G}{H}}{\Nzerr{r}{G}{H}}$ if either $\iota$ is an inclusion or $g_{[K]_i}$ are inclusions for all $[K]_i \in \mathcal{K}_i$.
\end{Corollary}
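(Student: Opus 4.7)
The plan is to apply the L\"uck--Weiermann push-out construction (Theorem~\ref{lw-main}) to the inclusion of families $\RS{i-1}{G}{H}\subseteq\RS{i}{G}{H}$ of subgroups of the ambient group $\Nzerr{r}{G}{H}$, and then rewrite every piece appearing in that abstract push-out in the explicit form stated in the corollary. Since the two families sit inside a SSACFS on $\Nzerr{r}{G}{H}$ (Lemma~\ref{RSisSSACFS}), essentially all of the combinatorial work has already been done in the preceding lemmas; the bulk of the argument is therefore one of translation.

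First I would verify the hypotheses of Theorem~\ref{lw-main}. By Lemma~\ref{RSisSSACFS}, the chain $(\RS{j}{G}{H})_{j=0}^{r}$ is a SSACFS of $\Nzerr{r}{G}{H}$, so Lemma~\ref{SSACFS_LW_eqrel} applied to this chain at level $i$ produces a strong equivalence relation on $\dRS{i}{G}{H}$; by Corollary~\ref{sim_i_rest_to_dRS} this relation coincides with the restriction of $\sim_i$ to $\dRS{i}{G}{H}$, so the set $\mathcal{K}_i$ of the statement is a valid choice of conjugacy representatives in $[\dRS{i}{G}{H}]$. Theorem~\ref{lw-main} then outputs a model for $\EFG{\RS{i}{G}{H}}{\Nzerr{r}{G}{H}}$ as the $\Nzerr{r}{G}{H}$-push-out of
\begin{equation*}
\underset{[K]_i\in\mathcal{K}_i}{\bigsqcup}\Nzerr{r}{G}{H}\times_{N}\EFG{\RS{i-1}{G}{H}\cap N}{N}\longleftarrow\cdots\longrightarrow\underset{[K]_i\in\mathcal{K}_i}{\bigsqcup}\Nzerr{r}{G}{H}\times_{N}\EFG{\RS{i}{G}{H}[K]}{N},
\end{equation*}
where for brevity $N$ denotes the normalizer of $[K]_i$ inside $\Nzerr{r}{G}{H}$ dictated by Definition~\ref{def_strong_equiv_LW} applied in the ambient group $\Nzerr{r}{G}{H}$, and the middle term is $\EFG{\RS{i-1}{G}{H}}{\Nzerr{r}{G}{H}}$.

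Next I would identify $N$ and the restricted family. The normalizer $N$ is by definition $\{g\in\Nzerr{r}{G}{H}\mid K^{g}\sim_i K\}$, which equals $\Nzerr{i}{G}{K}\cap\Nzerr{r}{G}{H}$; by Lemma~\ref{norm_of_the_norm} this is precisely $\Nzerr{i}{\Nzerr{r}{G}{H}}{K}=\Nzerr{r}{\Nzerr{i}{G}{K}}{H}$. Then Remark~\ref{changeofambient} with $j=i-1$ rewrites
\begin{equation*}
\RS{i-1}{G}{H}\cap\Nzerr{i}{\Nzerr{r}{G}{H}}{K}=\RS{i-1}{\Nzerr{i}{G}{K}}{H},
\end{equation*}
which is exactly the family appearing in the top-left corner of the stated diagram. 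Substituting these identifications into the L\"uck--Weiermann push-out yields the diagram in the statement, and the hypothesis on $\iota$ or the $g_{[K]_i}$ being inclusions is inherited verbatim from Theorem~\ref{lw-main}.

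The main obstacle here is not a deep argument but rather bookkeeping: one must be careful that the ``strong equivalence relation'' fed into L\"uck--Weiermann at level $i$ really is the one already named $\sim_i$, so that the classes indexed by $\mathcal{K}_i$ agree with those produced by the theorem, and that the top-right corner identified as $\EFG{\RS{i}{G}{H}[K]}{N}$ uses the family $\RS{i}{G}{H}[K]$ defined inside $N$ in the L\"uck--Weiermann sense (not a naive intersection). Corollary~\ref{sim_i_rest_to_dRS} handles the first point and Remark~\ref{changeofambient} together with Lemma~\ref{norm_of_the_norm} handles the second, so once those identifications are recorded the conclusion is immediate.
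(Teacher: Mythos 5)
Your proposal is correct and follows essentially the same route as the paper: the paper also obtains this as a direct application of Theorem~\ref{lw-main} to the inclusion $\RS{i-1}{G}{H}\subseteq\RS{i}{G}{H}$ inside $\Nzerr{r}{G}{H}$, citing Lemma~\ref{RSisSSACFS}, Lemma~\ref{SSACFS_LW_eqrel}, Corollary~\ref{sim_i_rest_to_dRS} and Remark~\ref{changeofambient} (together with Lemma~\ref{norm_of_the_norm}) to make exactly the identifications of the equivalence relation, the normalizers and the restricted families that you spell out. Your write-up simply makes explicit the bookkeeping the paper leaves implicit.
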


By Theorem~\ref{Bredon:cd_subgroup} and Proposition~\ref{Bredon:gd_subgroup}, the
Bredon dimensions of $\Nzerr{i}{\Nzerr{r}{G}{H}}{K}$ with respect to the families
$\RS{i-1}{\Nzerr{i}{G}{K}}{H}$ can be bounded by those of $\Nzerr{r}{G}{H}$ with
respect to $\RS{i-1}{G}{H}$.

We want now to be able to relate the models for $\EFG{\RS{i}{G}{H}[K]}{\Nzerr{i}{\Nzerr{r}{G}{H}}{K}}$
with those appearing naturally in the recursive process we are describing. Note that we can
write the family $\RS{i}{G}{H}[K]$ as a union of full families:

\begin{Lemma}\label{RSKiUnion}
    $\RS{i}{G}{H}[K] = \RS{i}{\Nzerr{r}{G}{H}}{K} \cup \RS{i-1}{\Nzerr{i}{G}{K}}{H}.$
\end{Lemma}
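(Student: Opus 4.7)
The plan is to verify the set equality by unfolding the left-hand side and checking double containment against the right-hand side. First, I would apply the L\"uck-Weiermann definition (Definition~\ref{def_strong_equiv_LW}) to the pair $\RS{i-1}{G}{H}\subseteq \RS{i}{G}{H}$ with the restricted strong equivalence relation $\sim_i$ (valid by Lemma~\ref{RSisSSACFS} and Corollary~\ref{sim_i_rest_to_dRS}) at the representative $[K]_i$, and then rewrite the ``complementary'' piece $\RS{i-1}{G}{H}\cap \Nzerr{i}{\Nzerr{r}{G}{H}}{K}$ as $\RS{i-1}{\Nzerr{i}{G}{K}}{H}$ via Remark~\ref{changeofambient}. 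This produces
\begin{equation*}
\RS{i}{G}{H}[K] = \bigl\{L \leq \Nzerr{i}{\Nzerr{r}{G}{H}}{K} \st L\in \dRS{i}{G}{H},\ L\sim_i K\bigr\} \cup \RS{i-1}{\Nzerr{i}{G}{K}}{H}.
\end{equation*}
Since the second summand already appears on the right-hand side, the task reduces to comparing the first summand above with $\RS{i}{\Nzerr{r}{G}{H}}{K}$ (possibly modulo the second summand).

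For the forward inclusion of that remaining piece, I would take $L$ in the first summand. From $L\in \dRS{i}{G}{H}$ together with the alternate characterization of $\RS{\cdot}{\cdot}{\cdot}$ given right after Definition~\ref{defRS}, $L$ cannot lie in $\FF_0\cap \Nzerr{r}{G}{H}$ nor in $\FF_{i-1}$ (either case would force $L\in \RS{i-1}{G}{H}$), so $L\in \dF{F}{i}$; and from $L\sim_i K$ one has $L\cap K\in \dF{F}{i}$. The subgroup $L\cap K\leq K$ then witnesses the second defining clause of $\RS{i}{\Nzerr{r}{G}{H}}{K}$ at level $j=i$, placing $L$ in $\RS{i}{\Nzerr{r}{G}{H}}{K}$.

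For the backward inclusion, take $L \in \RS{i}{\Nzerr{r}{G}{H}}{K}$. If $L\in \FF_0$ then, since $\FF_0\cap \Nzerr{r}{G}{H}\subseteq \RS{i-1}{G}{H}$, the point $L$ lies in $\RS{i-1}{G}{H}\cap \Nzerr{i}{\Nzerr{r}{G}{H}}{K}=\RS{i-1}{\Nzerr{i}{G}{K}}{H}$. Otherwise, the alternate characterization gives $L\in \dF{F}{j}$ with $L\cap K\in \dF{F}{j}$ for some $0<j\leq i$. To transfer this information from $K$ to $H$, I would apply the SSACFS property at level $i$ to the pair $(K,K\cap H)$ --- both belong to $\dF{F}{i}$ because $K\in \dRS{i}{G}{H}$ supplies $K\sim_i K\cap H$, and $K\cap(K\cap H)=K\cap H\in \dF{F}{i}$ --- with test subgroup $L\in \dF{F}{j}$: this turns $L\cap K\in \dF{F}{j}$ into $L\cap K\cap H\in \dF{F}{j}$. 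Combined with the chain of inclusions $L\cap K\cap H\leq L\cap H\leq L\in \FF_j$, this forces $L\cap H\in \dF{F}{j}$, i.e.\ $L\sim_j L\cap H$. If $j<i$, this places $L$ in $\RS{i-1}{G}{H}\cap \Nzerr{i}{\Nzerr{r}{G}{H}}{K}=\RS{i-1}{\Nzerr{i}{G}{K}}{H}$; if $j=i$, the relation $L\sim_i K$ (again from $L\cap K\in \dF{F}{i}$) places $L$ in the first summand of the expanded left-hand side, since by the same argument above $L\in \dRS{i}{G}{H}$. The main obstacle is this bookkeeping across levels, which I would streamline by always working through the alternate characterization following Definition~\ref{defRS} and invoking SSACFS exactly once, with the pair $(K,K\cap H)$ at the top level $i$, to pass from $L\cap K$ to $L\cap K\cap H$.
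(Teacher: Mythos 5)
Your proof is correct and takes essentially the same route as the paper: unfold $\RS{i}{G}{H}[K]$ via Definition~\ref{def_strong_equiv_LW} together with Remark~\ref{changeofambient}, and then check the two inclusions between the top-level piece $\{L \st L\in\dRS{i}{G}{H},\ L\sim_i K\}$ and $\RS{i}{\Nzerr{r}{G}{H}}{K}$ modulo the common summand $\RS{i-1}{\Nzerr{i}{G}{K}}{H}$. The only (harmless) difference is that in the backward inclusion you invoke the SSACFS condition directly with the pair $(K, K\cap H)$ to pass from $L\cap K$ to $L\cap K\cap H$, whereas the paper routes the same transfer through the subgroup-closure of the family $\RS{i}{G}{H}$ (Lemma~\ref{RrGH-fam}), which was itself proved by that SSACFS argument; your write-up in fact makes explicit a step the paper leaves terse.
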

\begin{proof}
    From Definition \ref{def_strong_equiv_LW} and using Remark~\ref{changeofambient},
    we have $\RS{i}{G}{H}[K] = \{L\leq \Nzerr{i}{\Nzerr{r}{G}{H}}{K}
    \st L \in \dRS{i}{G}{H} \, , \, L \sim_i K \}\cup \RS{i-1}{\Nzerr{i}{G}{K}}{H}$.
    It suffices to prove $\{L\leq \Nzerr{i}{\Nzerr{r}{G}{H}}{K} \st L \in \dRS{i}{G}{H}
    \, , \, L \sim_i K \} \subseteq \RS{i}{\Nzerr{r}{G}{H}}{K}$ and $\RS{i}{\Nzerr{r}{G}{H}}{K}
    \subseteq \RS{i}{G}{H}[K]$.

    For the first inclusion, let $M \leq \Nzerr{i}{\Nzerr{r}{G}{H}}{K}$ be such that
    $M \in \dRS{i}{G}{H}$ and $M\sim_i K$. In particular, $M \in \dF{F}{i}$ and
    $M \sim_i K$, so $M\in \RS{i}{G}{H} \cap \Nzerr{i}{\Nzerr{r}{G}{H}}{K}$. And by
    Remark~\ref{changeofambient}, $\RS{i}{G}{H} \cap \Nzerr{i}{\Nzerr{r}{G}{H}}{K}
    = \RS{i}{\Nzerr{r}{G}{H}}{K}$.

    For the second inclusion, let $M\in\RS{i}{\Nzerr{r}{G}{H}}{K}$ (which means
    $M \leq \Nzerr{i}{\Nzerr{r}{G}{H}}{K}$) and for some $j\leq i$, $M\in \dF{F}{j}$
    and $M \sim_j M\cap K$. If $j = i$ then $M\sim_i K$, so
    $M \in \{L\leq \Nzerr{i}{\Nzerr{r}{G}{H}}{K} \st L \in \dRS{i}{G}{H}
    \, , \, L \sim_i K \}$. Assume now $j < i$. Since $\RS{i}{G}{H}$ is a family
    of subgroups and $K\in\RS{i}{G}{H}$, so will be $M\cap K$. Moreover, since
    $M\sim_j M\cap K$, we also have $M \in \RS{i}{G}{H}$. But $j < i$, so we can
    conclude that $M \in \RS{i-1}{G}{H}\cap \Nzerr{i}{\Nzerr{r}{G}{H}}{K}$, concluding our proof.
\end{proof}

\begin{Lemma}\label{RSKiInter}
    $\RS{i}{\Nzerr{r}{G}{H}}{K} \cap \RS{i-1}{\Nzerr{i}{G}{K}}{H} = \RS{i-1}{\Nzerr{r}{G}{H}}{K}.$
\end{Lemma}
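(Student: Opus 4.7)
The plan is to prove the two set-theoretic inclusions separately, relying on the characterization of $\RS{\cdot}{\cdot}{\cdot}$ given by the Lemma immediately after Definition~\ref{defRS} and the identification $\Nzerr{i}{\Nzerr{r}{G}{H}}{K} = \Nzerr{r}{\Nzerr{i}{G}{K}}{H}$ from Lemma~\ref{norm_of_the_norm}, which ensures all three families consist of subgroups of a common ambient group.

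For the inclusion $\RS{i-1}{\Nzerr{r}{G}{H}}{K} \subseteq \RS{i}{\Nzerr{r}{G}{H}}{K} \cap \RS{i-1}{\Nzerr{i}{G}{K}}{H}$, containment in $\RS{i}{\Nzerr{r}{G}{H}}{K}$ will be immediate from $\FF_{i-1} \subseteq \FF_i$. For the harder containment, I will take $L$ in the left-hand side with $L \notin \FF_0$, so $L \in \dF{F}{j}$ and $L \cap K \in \dF{F}{j}$ for some $0 < j \leq i-1$, and I must show $L \cap H \in \dF{F}{j}$. The key move is to introduce the auxiliary subgroup $K \cap H$: since $K \in \dRS{i}{G}{H}$, both $K$ and $K\cap H$ lie in $\dF{F}{i}$ and their intersection equals $K \cap H \in \dF{F}{i}$. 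Applying the SSACFS axiom to the pair $(K, K\cap H)$ at level $i$ and to $L$ at level $j$ will convert $L \cap K \in \dF{F}{j}$ into $L \cap K \cap H \in \dF{F}{j}$. The chain $L \cap K \cap H \leq L \cap H \leq L$ then pins $L \cap H$ to level exactly $j$: it lies in $\FF_j$ because $L$ does, and it cannot lie in $\FF_{j-1}$ without dragging its subgroup $L \cap K \cap H$ down with it, contradicting $L \cap K \cap H \in \dF{F}{j}$.

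For the reverse inclusion, I will take $L$ in the intersection and exploit that each subgroup lies in at most one $\dF{F}{j}$. If $L \in \FF_0$ we are done; otherwise $L \in \dF{F}{j}$ for a unique $j$, membership in $\RS{i-1}{\Nzerr{i}{G}{K}}{H}$ forces $j \leq i-1$, and membership in $\RS{i}{\Nzerr{r}{G}{H}}{K}$ supplies $L \cap K \in \dF{F}{j}$. Together with $L \leq \Nzerr{i}{\Nzerr{r}{G}{H}}{K}$, this is exactly the condition for $L \in \RS{i-1}{\Nzerr{r}{G}{H}}{K}$.

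The main obstacle is the $(\supseteq)$ direction: the defining condition of $\RS{i-1}{\Nzerr{r}{G}{H}}{K}$ controls how $L$ meets $K$, whereas membership in $\RS{i-1}{\Nzerr{i}{G}{K}}{H}$ requires controlling how $L$ meets $H$. The trick will be to use $K \cap H$ as a bridge, since it is simultaneously $\sim_i$-equivalent to $K$ (because $K \in \dRS{i}{G}{H}$) and contained in $H$, so the SSACFS axiom can transport the level information from $K$ to $H$ through this intermediate subgroup.
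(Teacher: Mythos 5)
Your proposal is correct and takes essentially the same route as the paper: both prove the two inclusions separately, the easy one by level bookkeeping ($L\in\FF_{i-1}$ forces $j\leq i-1$, and $\FF_{i-1}\subseteq\FF_i$ gives the other containment), and the hard one by upgrading $L\sim_j L\cap K$ to $L\sim_j L\cap H$ using $K\sim_i K\cap H$ together with the strong-structure property. You simply make explicit the bridge through $K\cap H$ and the argument pinning $L\cap H$ to $\dF{F}{j}$ via $L\cap K\cap H\leq L\cap H\leq L$, which the paper compresses into a citation of Lemma~\ref{RSisSSACFS}.
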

\begin{proof}

    We need to prove both inclusions.

    For the first one, take $M \in \RS{i}{\Nzerr{r}{G}{H}}{K} \cap \RS{i-1}{\Nzerr{i}{G}{K}}{H}$.
    In particular, $M\in \FF_{i-1}\cap \Nzerr{i}{\Nzerr{r}{G}{H}}{K}$ and
    $M\sim_j M\cap K$ for some $j \leq i - 1$ (it is so for some $j\leq i$,
    but since $M\in \FF_{i-1}$, $j\leq i-1$). That means $M\in \RS{i-1}{\Nzerr{r}{G}{H}}{K}$.

    For the converse inclusion, let $M\in \RS{i-1}{\Nzerr{r}{G}{H}}{K}$. That means
    $M\in \FF_i\cap \RS{i-1}{\Nzerr{i}{G}{K}}{H}$ and $M\sim_j M\cap K$ for some $j\leq i-1$.
    It only remains to prove $M \sim_j M \cap H$, but that is consequence of
    Lemma~\ref{RSisSSACFS} and $K\sim_i K\cap H$.
\end{proof}

\begin{Corollary}\label{gdRSKi}
    Let $H\in \dF{F}{r}$ for $r>0$ and the chain of families $\left(\RS{i}{G}{H}\right)_{i=0}^r$
    defined in \ref{defRS}. Let $K\in \dRS{i}{G}{H}$ for $0 < i < r$. Then,
    \begin{equation*}
        \begin{aligned}
            \gd_{\RS{i}{G}{H}[K]} (\Nzerr{i}{\Nzerr{r}{G}{H}}{K}) \leq
            \operatorname{max}\{ & \gd_{\RS{i}{\Nzerr{r}{G}{H}}{K}} (\Nzerr{i}{\Nzerr{r}{G}{H}}{K})\, , \\
            & \gd_{\RS{i-1}{\Nzerr{i}{G}{K}}{H}} (\Nzerr{i}{\Nzerr{r}{G}{H}}{K})\, , \\
            & \gd_{\RS{i-1}{\Nzerr{r}{G}{H}}{K}} (\Nzerr{i}{\Nzerr{r}{G}{H}}{K}) + 1 \}.
        \end{aligned}
    \end{equation*}
\end{Corollary}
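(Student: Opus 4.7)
The plan is to recognise this corollary as a direct application of Lemma~\ref{E_FUG} to the set-theoretic decomposition of $\RS{i}{G}{H}[K]$ already established in Lemmas~\ref{RSKiUnion} and~\ref{RSKiInter}. Specifically, Lemma~\ref{RSKiUnion} writes
$$\RS{i}{G}{H}[K] = \RS{i}{\Nzerr{r}{G}{H}}{K} \cup \RS{i-1}{\Nzerr{i}{G}{K}}{H},$$
and Lemma~\ref{RSKiInter} identifies their intersection as $\RS{i-1}{\Nzerr{r}{G}{H}}{K}$. Modulo verifying that both halves of the union are full families of subgroups of the \emph{same} ambient group, the desired inequality is then simply what Lemma~\ref{E_FUG} spits out.

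First, I would align the ambient groups. By Lemma~\ref{norm_of_the_norm}, $\Nzerr{i}{\Nzerr{r}{G}{H}}{K} = \Nzerr{r}{\Nzerr{i}{G}{K}}{H}$, so $\RS{i-1}{\Nzerr{i}{G}{K}}{H}$ may legitimately be viewed as a family of subgroups of $\Nzerr{i}{\Nzerr{r}{G}{H}}{K}$; an application of Lemma~\ref{RrGH-fam} with ambient group $\Nzerr{i}{G}{K}$ and distinguished subgroup $H$ gives fullness. Analogously, applying Lemma~\ref{RrGH-fam} with ambient group $\Nzerr{r}{G}{H}$ and distinguished subgroup $K$ shows that $\RS{i}{\Nzerr{r}{G}{H}}{K}$ is a full family of subgroups of $\Nzerr{i}{\Nzerr{r}{G}{H}}{K}$. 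So both pieces live in the correct ambient group and satisfy the hypotheses of Lemma~\ref{E_FUG}.

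Second, I would feed $\FF = \RS{i}{\Nzerr{r}{G}{H}}{K}$ and $\GG = \RS{i-1}{\Nzerr{i}{G}{K}}{H}$ into Lemma~\ref{E_FUG} with ambient group $\Nzerr{i}{\Nzerr{r}{G}{H}}{K}$. Using Lemmas~\ref{RSKiUnion} and~\ref{RSKiInter}, the conclusion of Lemma~\ref{E_FUG} reads
\begin{equation*}
\begin{aligned}
\gd_{\RS{i}{G}{H}[K]} \bigl(\Nzerr{i}{\Nzerr{r}{G}{H}}{K}\bigr) \leq \max \bigl\{ & \gd_{\RS{i}{\Nzerr{r}{G}{H}}{K}}\bigl(\Nzerr{i}{\Nzerr{r}{G}{H}}{K}\bigr),\\
& \gd_{\RS{i-1}{\Nzerr{i}{G}{K}}{H}}\bigl(\Nzerr{i}{\Nzerr{r}{G}{H}}{K}\bigr),\\
& \gd_{\RS{i-1}{\Nzerr{r}{G}{H}}{K}}\bigl(\Nzerr{i}{\Nzerr{r}{G}{H}}{K}\bigr)+1 \bigr\},
\end{aligned}
\end{equation*}
which is exactly the stated inequality.

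The main obstacle is not any new mathematical content, since the genuine work is already encapsulated in Lemmas~\ref{RSKiUnion} and~\ref{RSKiInter}; rather, it is bookkeeping. One must be careful that the two families are being compared as families of subgroups of the same group (hence the appeal to Lemma~\ref{norm_of_the_norm} to swap the order of nested normalisers), and that the fullness hypothesis needed by Lemma~\ref{E_FUG} really holds for both halves in that common ambient group. Once these checks are in place, the proof is a one-line invocation of Lemma~\ref{E_FUG}.
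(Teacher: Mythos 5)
Your proposal is correct and follows essentially the same route as the paper, whose proof simply cites Lemmas~\ref{RSKiUnion}, \ref{RSKiInter} and \ref{E_FUG}; your additional checks (via Lemma~\ref{norm_of_the_norm} and Lemma~\ref{RrGH-fam}) that both families are full families of subgroups of the common ambient group $\Nzerr{i}{\Nzerr{r}{G}{H}}{K}$ just make explicit the bookkeeping the paper leaves implicit.
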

\begin{proof}
    It is a consequence of Lemmas \ref{RSKiUnion}, \ref{RSKiInter} and \ref{E_FUG}.
\end{proof}

\begin{Corollary}\label{cdRSKi}
    Let $H\in \dF{F}{r}$ for $r>0$ and the chain of families $\left(\RS{i}{G}{H}\right)_{i=0}^r$
    defined in \ref{defRS}. Let $K\in \dRS{i}{G}{H}$ for $0 < i < r$. Then,
    \begin{equation*}
        \begin{aligned}
            \cd_{\RS{i}{G}{H}[K]} (\Nzerr{i}{\Nzerr{r}{G}{H}}{K}) \leq
            \operatorname{max}\{ & \cd_{\RS{i}{\Nzerr{r}{G}{H}}{K}} (\Nzerr{i}{\Nzerr{r}{G}{H}}{K})\, , \\
            & \cd_{\RS{i-1}{\Nzerr{i}{G}{K}}{H}} (\Nzerr{i}{\Nzerr{r}{G}{H}}{K})\, , \\
            & \cd_{\RS{i-1}{\Nzerr{r}{G}{H}}{K}} (\Nzerr{i}{\Nzerr{r}{G}{H}}{K}) + 1 \}.
        \end{aligned}
    \end{equation*}
\end{Corollary}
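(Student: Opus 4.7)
The plan is to mirror the proof of Corollary~\ref{gdRSKi} almost verbatim, substituting the cohomological union-of-families bound (Lemma~\ref{cdE_FUG}) for its geometric counterpart (Lemma~\ref{E_FUG}). The algebraic/set-theoretic identities that power both arguments are the same: the decomposition of the family $\RS{i}{G}{H}[K]$ as a union of two full families of subgroups of $\Nzerr{i}{\Nzerr{r}{G}{H}}{K}$ and the computation of their intersection are encoded in Lemmas \ref{RSKiUnion} and \ref{RSKiInter}, and these carry over with no change.

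Concretely, first I would recall from Lemma~\ref{RSKiUnion} that
$$\RS{i}{G}{H}[K] = \RS{i}{\Nzerr{r}{G}{H}}{K} \cup \RS{i-1}{\Nzerr{i}{G}{K}}{H},$$
viewed as families of subgroups of $\Nzerr{i}{\Nzerr{r}{G}{H}}{K}$ (using Lemma~\ref{norm_of_the_norm} and Remark~\ref{changeofambient} to make sense of the two summands inside the ambient group). Next, from Lemma~\ref{RSKiInter} the intersection of these two families is exactly $\RS{i-1}{\Nzerr{r}{G}{H}}{K}$. With both identities in hand, I would invoke Lemma~\ref{cdE_FUG} applied to the group $\Nzerr{i}{\Nzerr{r}{G}{H}}{K}$ and to the two displayed full families. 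That lemma yields precisely
$$\cd_{\RS{i}{G}{H}[K]}\bigl(\Nzerr{i}{\Nzerr{r}{G}{H}}{K}\bigr) \leq \max\{A,B,C+1\},$$
where $A$, $B$ and $C$ are the three cohomological dimensions appearing on the right-hand side of the statement.

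The only verification required is that the two families in the union are genuinely \emph{full} families of subgroups of $\Nzerr{i}{\Nzerr{r}{G}{H}}{K}$, since Lemma~\ref{cdE_FUG} is stated in that setting. This follows immediately from Lemma~\ref{RrGH-fam} together with the fact that the families $\FF_{j}$ are full in $G$, combined with Remark~\ref{changeofambient} which identifies the relevant restrictions with families of the form $\RS{j}{?}{?}$ inside the appropriate normaliser. I do not anticipate any real obstacle: the proof is a direct cohomological transcription of Corollary~\ref{gdRSKi}, made possible precisely because Lemma~\ref{cdE_FUG} packages the Mayer--Vietoris sequence for unions of full families into the same max-plus-one bound that was used geometrically.
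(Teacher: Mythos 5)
Your proposal is correct and is exactly the paper's argument: the paper proves Corollary~\ref{cdRSKi} by combining Lemmas~\ref{RSKiUnion} and \ref{RSKiInter} with the cohomological union bound of Lemma~\ref{cdE_FUG}, just as you describe. The extra verification that the two families are full families of subgroups of $\Nzerr{i}{\Nzerr{r}{G}{H}}{K}$ is a harmless (and sound) addition.
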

\begin{proof}
    It is a consequence of Lemmas \ref{RSKiUnion}, \ref{RSKiInter} and \ref{cdE_FUG}.
\end{proof}

Finally, we can draw conclusions on the Bredon dimensions corresponding to all
groups and families of subgroups appearing in the push-out in Corollary~\ref{modelforRSi}.

\begin{Corollary}\label{gdRSi_first}
    Let $G$ be a group and $\left( \FF_n \right)_{n\in\N}$ a SSACFS of $G$. Let
    $H\in \dF{F}{r}$ and the chain of families $\left(\RS{i}{G}{H}\right)_{i=0}^r$
    defined in \ref{defRS}. Then, if $0 < i < r$,
    \begin{equation*}
        \begin{aligned}
            \gd_{\RS{i}{G}{H}} \Nzerr{r}{G}{H} \leq
                \underset{[K]_i \in \mathcal{K}}{\operatorname{max}}\{
                & \gd_{\RS{i-1}{\Nzerr{i}{G}{K}}{H}}(\Nzerr{i}{\Nzerr{r}{G}{H}}{K}) + 1 \, , \\
                & \gd_{\RS{i}{G}{H}[K]_i}(\Nzerr{i}{\Nzerr{r}{G}{H}}{K}) \, , \\
                & \gd_{\RS{i-1}{G}{H}}(\Nzerr{r}{G}{H}) \}.
        \end{aligned}
    \end{equation*}
\end{Corollary}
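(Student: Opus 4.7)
The plan is to derive the bound as a direct application of the L\"uck-Weiermann geometric-dimension inequality, namely Corollary~\ref{lw-dimensions}(i), to the inclusion of full families $\RS{i-1}{G}{H} \subseteq \RS{i}{G}{H}$ viewed as families of subgroups of the ambient group $\Nzerr{r}{G}{H}$. All of the setup has been arranged in the preceding subsection, so the argument is essentially a matter of identifying each ingredient of L\"uck-Weiermann with the objects named in the statement; no genuinely new construction is required.

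First I would invoke Lemma~\ref{RSisSSACFS} to recall that $(\RS{j}{G}{H})_{j=0}^r$ is itself an SSACFS of the group $\Nzerr{r}{G}{H}$, so that by Corollary~\ref{sim_i_rest_to_dRS} the restriction of $\sim_i$ to $\dRS{i}{G}{H}$ is a strong equivalence relation in the sense of Definition~\ref{def_strong_equiv_LW}. The equivalence classes of this restricted relation are then indexed exactly by the representatives $[K]_i \in \mathcal{K}$ appearing in the statement, so the L\"uck-Weiermann machinery of Section~\ref{sec:LW} applies to the pair $\RS{i-1}{G}{H} \subseteq \RS{i}{G}{H}$ with this equivalence relation.

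Next, for each class $[K]_i \in \mathcal{K}$ I would identify the L\"uck-Weiermann normalizer with the groups that already appear in our notation. By construction this normalizer is $\{g \in \Nzerr{r}{G}{H} \st gKg^{-1} \sim_i K\}$, which is precisely $\Nzerr{i}{\Nzerr{r}{G}{H}}{K}$ and which, by Lemma~\ref{norm_of_the_norm}, coincides with $\Nzerr{i}{G}{K} \cap \Nzerr{r}{G}{H}$. Using Remark~\ref{changeofambient}, the family $\RS{i-1}{G}{H} \cap \Nzerr{i}{\Nzerr{r}{G}{H}}{K}$ that would appear in the ``$+1$'' summand of Corollary~\ref{lw-dimensions}(i) coincides with $\RS{i-1}{\Nzerr{i}{G}{K}}{H}$, and the family of the LW push-out at $[K]_i$ is by definition $\RS{i}{G}{H}[K]_i$.

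With these identifications in place, plugging directly into Corollary~\ref{lw-dimensions}(i) yields a maximum over the three quantities in the statement: $\gd_{\RS{i-1}{\Nzerr{i}{G}{K}}{H}}(\Nzerr{i}{\Nzerr{r}{G}{H}}{K}) + 1$ from the ``$+1$'' summand, $\gd_{\RS{i}{G}{H}[K]_i}(\Nzerr{i}{\Nzerr{r}{G}{H}}{K})$ from the LW push-out data, and $\gd_{\RS{i-1}{G}{H}}(\Nzerr{r}{G}{H})$ from the base of the push-out. The only thing deserving care is verifying that the strong-equivalence-relation and normalizer conventions match between the SSACFS setup and the hypotheses of \cite{lueckweiermann}; that verification is exactly what was performed in Lemma~\ref{SSACFS_LW_eqrel}, Lemma~\ref{RSisSSACFS}, Corollary~\ref{sim_i_rest_to_dRS}, Lemma~\ref{norm_of_the_norm} and Remark~\ref{changeofambient}, so the present corollary is obtained by pure assembly with no additional obstacle.
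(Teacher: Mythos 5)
Your proposal is correct and follows essentially the same route as the paper: the paper's proof simply cites Corollary~\ref{modelforRSi} (which is exactly the L\"uck--Weiermann push-out for $\RS{i-1}{G}{H}\subseteq\RS{i}{G}{H}$, justified by Lemma~\ref{RSisSSACFS}, Corollary~\ref{sim_i_rest_to_dRS}, Lemma~\ref{norm_of_the_norm} and Remark~\ref{changeofambient}) together with Corollary~\ref{lw-dimensions}, which is precisely the assembly you describe.
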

\begin{proof}
    It follows from Corollaries \ref{modelforRSi} and \ref{lw-dimensions}.
\end{proof}

\begin{Corollary}\label{cdRSi_first}
    Let $G$ be a group and $\left( \FF_n \right)_{n\in\N}$ a SSACFS of $G$. Let
    $H\in \dF{F}{r}$ and the chain of families $\left(\RS{i}{G}{H}\right)_{i=0}^r$
    defined in \ref{defRS}. Then, if $0 < i < r$,
    \begin{equation*}
        \begin{aligned}
            \cd_{\RS{i}{G}{H}} \Nzerr{r}{G}{H} \leq
                \underset{[K]_i \in \mathcal{K}}{\operatorname{max}}\{
                & \cd_{\RS{i-1}{\Nzerr{i}{G}{K}}{H}}(\Nzerr{i}{\Nzerr{r}{G}{H}}{K}) + 1 \, , \\
                & \cd_{\RS{i}{G}{H}[K]_i}(\Nzerr{i}{\Nzerr{r}{G}{H}}{K}) \, , \\
                & \cd_{\RS{i-1}{G}{H}}(\Nzerr{r}{G}{H}) \}.
        \end{aligned}
    \end{equation*}
\end{Corollary}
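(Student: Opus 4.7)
The plan is to recognize this as a direct application of the L\"uck--Weiermann cohomological dimension bound (Corollary~\ref{lw-dimensions}(ii)) to the inclusion $\RS{i-1}{G}{H}\subseteq\RS{i}{G}{H}$ of families of subgroups of the ambient group $\Nzerr{r}{G}{H}$. All of the preparatory work has already been done in this section: Lemma~\ref{RSisSSACFS} shows that $\left(\RS{j}{G}{H}\right)_{j=0}^{r}$ is a SSACFS of $\Nzerr{r}{G}{H}$, and Corollary~\ref{sim_i_rest_to_dRS} upgrades the restriction of $\sim_i$ to $\dRS{i}{G}{H}$ to a strong equivalence relation in the sense of \cite{lueckweiermann}, whose equivalence classes are indexed by the set $\mathcal{K}_i$ chosen in the statement. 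Thus the cohomological L\"uck--Weiermann bound is legitimately applicable to this inclusion with these classes.

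Next I would identify the three quantities that control the bound. For a representative $K$ of a class $[K]_i\in\mathcal{K}_i$, the relevant L\"uck--Weiermann normalizer, taken inside the ambient group $\Nzerr{r}{G}{H}$, is $\Nzer{\Nzerr{r}{G}{H}}{K}$, which by Lemma~\ref{norm_of_the_norm} coincides with $\Nzerr{i}{\Nzerr{r}{G}{H}}{K}$; and the intersected family $\RS{i-1}{G}{H}\cap\Nzerr{i}{\Nzerr{r}{G}{H}}{K}$ is, by Remark~\ref{changeofambient}, exactly $\RS{i-1}{\Nzerr{i}{G}{K}}{H}$. Substituting these identifications into Corollary~\ref{lw-dimensions}(ii) replaces its abstract normalizers and intersected families by precisely the three expressions in the statement. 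Moreover, the $\Nzerr{r}{G}{H}$-pushout constructed in Corollary~\ref{modelforRSi} is exactly the L\"uck--Weiermann pushout for this configuration, which confirms that the bound from Corollary~\ref{lw-dimensions}(ii) is the correct tool to invoke here.

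The main obstacle I anticipate is not computational but notational: one must track simultaneously the outer index $r$ coming from $H\in\dF{F}{r}$ and the inner index $i$ currently being treated, and verify that the normalizer produced by the L\"uck--Weiermann recipe inside the subgroup $\Nzerr{r}{G}{H}$ agrees with the normalizer $\Nzerr{i}{G}{K}$ computed in $G$ and then intersected with $\Nzerr{r}{G}{H}$. Once Lemma~\ref{norm_of_the_norm} and Remark~\ref{changeofambient} are used to pin down this identification, the inequality follows immediately from Corollary~\ref{lw-dimensions}(ii) after a cosmetic rearrangement (absorbing the third, $[K]_i$-independent term into the outer maximum), with no further estimate required.
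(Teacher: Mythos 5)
Your proposal is correct and is essentially the paper's own argument: the paper proves this corollary by combining Corollary~\ref{modelforRSi} with Corollary~\ref{lw-dimensions}, i.e.\ the L\"uck--Weiermann bound applied to $\RS{i-1}{G}{H}\subseteq\RS{i}{G}{H}$ inside $\Nzerr{r}{G}{H}$, with the normalizer and intersected family identified exactly as you do via Lemma~\ref{norm_of_the_norm} and Remark~\ref{changeofambient}. No gaps; your remark that the pushout of Corollary~\ref{modelforRSi} is not strictly needed for the cohomological bound (Theorem~\ref{lw-cdim} being algebraic) is accurate but does not change the route.
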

\begin{proof}
    It follows from Corollaries \ref{modelforRSi} and \ref{lw-dimensions}.
\end{proof}

Note that we now have upper bounds for the Bredon dimensions related to all
families of subgroups appearing in Corollary~\ref{modelforRSi} in terms of
those of classifying spaces for which the recursive process gives upper bounds
on the Bredon dimensions or that are listed in Observation~\ref{necessary_spaces}.
This means that if we have finite-dimensional classifying spaces for the families
in Observation~\ref{necessary_spaces}, we will have finite-dimensional classifiying
spaces for the families $\RS{i}{G}{H}$ of subgroups of $\Nzerr{r}{G}{H}$ for $0 \leq i \leq r$,
but a more specific result will be given in Section~\ref{sec:bredon_dim_chains}.

\subsection{Classifying spaces for the families $\left(\FF_r\right)_{r\in\N}$}

We have now all necessary ingredients to tackle the recursive construction process for
the chain $\left(\FF_r\right)_{r\in\N}$.

\begin{Proposition}\label{modelforFr}
    Let $r>0$ and take $\sim_r$ and $[H]_r$, $\Nzerr{r}{G}{H}$ and $\FF_r[H]$ for $H\in\dF{F}{r}$
    as defined in Section~\ref{sec:setup_chains}. Let $\mathcal{H}_r$ be a set of representatives
    for the equivalence classes in $[\dF{F}{r}]$. Then, the $G$-CW-complex $X$ given by the $G$-pushout
    $$\begin{tikzcd}
        \underset{[H]_r \in \mathcal{H}_r}{\bigsqcup} G
            \times_{\Nzerr{r}{G}{H}}
            \EFG{\FF_{r-1} \cap \Nzerr{r}{G}{H}}{\Nzerr{r}{G}{H}}
        \arrow[r, "\iota"]
        \arrow[dd, "\underset{[H]_r\in \mathcal{H}_r}{\sqcup} id_G \times_{\Nzerr{r}{G}{H}}f_{[H]_r}"]
        & \EFG{\FF_{r-1}}{G} \arrow[dd] \\
        \\
        \underset{[H]_r \in \mathcal{H}_r}{\bigsqcup} G
            \times_{\Nzerr{r}{G}{H}}
            \EFG{\FF_r[H]}{\Nzerr{r}{G}{H}}
        \arrow[r]
        & X
    \end{tikzcd}$$
    is a model for $\EFG{\FF_r}{G}$ if either $\iota$ is an inclusion or $f_{[H]}$ are inclusions for all $[H]_r \in \mathcal{H}_r$.
\end{Proposition}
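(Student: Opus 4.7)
The plan is to recognise this proposition as a direct application of the L\"uck--Weiermann pushout theorem (Theorem~\ref{lw-main}) to the pair of full families $\FF_{r-1} \subseteq \FF_r$, with the strong equivalence relation $\sim_r$ on $\dF{F}{r}$ introduced in Definition~\ref{def_eqrel_r}. All the preparatory work has been done in Section~\ref{sec:setup_chains}, so essentially I need to check that the setup of the pushout in the statement matches the hypotheses and conclusion of Theorem~\ref{lw-main}.

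First, I would invoke Lemma~\ref{SSACFS_LW_eqrel}, which guarantees that $\sim_r$ is a strong equivalence relation on $\dF{F}{r}$ in the sense of Definition~\ref{def_strong_equiv_LW}. This handles the only non-formal ingredient required by Theorem~\ref{lw-main}. Next, I would verify that the objects appearing in the pushout in the statement of the proposition coincide with the ones produced by the L\"uck--Weiermann construction for this equivalence relation: by construction the normaliser-type subgroup $\Nzerr{r}{G}{H}$ equals the group $\Nzer{G}{H}$ associated with $\sim_r$, and by definition of $\FF_r[H]$ (and the fact that $\FF_{r-1}\cap\Nzerr{r}{G}{H}$ is a full family of subgroups) this family agrees with the family $\GG[H]$ in Definition~\ref{def_strong_equiv_LW} specialised to $\FF=\FF_{r-1}$ and $\GG=\FF_r$.

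Once these identifications are in place, the set $\mathcal{H}_r$ of representatives of the classes in $[\dF{F}{r}]$ plays the role of the indexing set $\mathcal{H}$ in Theorem~\ref{lw-main}. Plugging these into the cellular $G$-pushout from Theorem~\ref{lw-main} yields precisely the diagram in the statement of the proposition, and the theorem then asserts that the resulting $G$-CW-complex $X$ is a model for $\EFG{\FF_r}{G}$, provided either $\iota$ or each $f_{[H]_r}$ is an inclusion.

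There is no real obstacle to overcome, since the entire technical content has been absorbed into Section~\ref{sec:setup_chains}: the verification that $\sim_r$ is a strong equivalence relation (Lemma~\ref{SSACFS_LW_eqrel}) and the matching of the data $(\Nzerr{r}{G}{H},\FF_r[H])$ with the L\"uck--Weiermann data. If one wanted a more self-contained argument one could unwind the proof of Theorem~\ref{lw-main} and compute the fixed-point sets $X^L$ for $L\in\FF_r$ directly, using Corollary~\ref{other_def_of_class_space}, splitting into the cases $L\in\FF_{r-1}$ (where contractibility comes from the bottom row of the pushout) and $L\in\dF{F}{r}$ (where $L$ lies in a unique class $[H]_r$ up to conjugacy and contractibility comes from the left column); but this would only reproduce the proof already given in \cite{lueckweiermann}, so I would simply cite Theorem~\ref{lw-main}.
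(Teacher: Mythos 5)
Your proposal matches the paper's proof exactly: the paper also deduces the proposition directly from Lemma~\ref{SSACFS_LW_eqrel} (that $\sim_r$ is a strong equivalence relation) together with Theorem~\ref{lw-main}, with the identifications $\Nzerr{r}{G}{H} = \Nzer{G}{H}$ and $\FF_r[H] = \GG[H]$ implicit in the setup of Section~\ref{sec:setup_chains}. No gaps; your optional fixed-point verification is unnecessary, as you correctly note.
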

\begin{proof}
    Consequence of Lemma~\ref{SSACFS_LW_eqrel} and Theorem~\ref{lw-main}.
\end{proof}

And in the case of the spaces in the left-bottom corner of the previous diagram, we have:

\begin{Corollary}\label{gdFrH}
    If $r>0$ and $H\in \dF{F}{r}$,
    \begin{equation*}
        \begin{aligned}
            \gd_{\FF_r[H]} \Nzerr{r}{G}{H} \leq \operatorname{max}\{
            & \gd_{\FF_{r-1}\cap \Nzerr{r}{G}{H}} \Nzerr{r}{G}{H}\, , \,
            \gd_{\RS{r}{G}{H}} \Nzerr{r}{G}{H}\, , \\
            & \gd_{\RS{r-1}{G}{H}} \Nzerr{r}{G}{H} + 1 \}.
        \end{aligned}
    \end{equation*}
\end{Corollary}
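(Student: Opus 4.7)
The plan is to apply Lemma~\ref{E_FUG} to the pair of families $\RS{r}{G}{H}$ and $\FF_{r-1}\cap\Nzerr{r}{G}{H}$, both regarded as full families of subgroups of the ambient group $\Nzerr{r}{G}{H}$. Once the hypotheses of that lemma are checked and the union and intersection are identified, the statement will fall out immediately.

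First I would confirm that the two candidate families are indeed full families of subgroups of $\Nzerr{r}{G}{H}$. For $\FF_{r-1}\cap\Nzerr{r}{G}{H}$ this is the restriction of the full family $\FF_{r-1}$ to the subgroup $\Nzerr{r}{G}{H}\leq G$, and by Remark~\ref{thingswithfam} the restriction of a full family is again a full family. For $\RS{r}{G}{H}$, Lemma~\ref{RrGH-fam} shows that it is a family of subgroups of $\Nzerr{r}{G}{H}$, and the proof given there also exhibits closure under taking subgroups, so it is full.

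Next I would use Lemma~\ref{ingrforE_FUG} to rewrite the two set-theoretic operations in the form demanded by Lemma~\ref{E_FUG}. Part (i) of that lemma identifies the union of our two families as
\begin{equation*}
\RS{r}{G}{H}\cup\bigl(\FF_{r-1}\cap\Nzerr{r}{G}{H}\bigr) = \FF_r[H],
\end{equation*}
and part (ii) identifies their intersection as $\RS{r-1}{G}{H}$. Substituting these identifications into Lemma~\ref{E_FUG} applied in the ambient group $\Nzerr{r}{G}{H}$ yields
\begin{equation*}
\gd_{\FF_r[H]}\Nzerr{r}{G}{H}\leq \max\bigl\{\gd_{\RS{r}{G}{H}}\Nzerr{r}{G}{H},\;\gd_{\FF_{r-1}\cap\Nzerr{r}{G}{H}}\Nzerr{r}{G}{H},\;\gd_{\RS{r-1}{G}{H}}\Nzerr{r}{G}{H}+1\bigr\},
\end{equation*}
which is exactly the claimed inequality.

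There is no real obstacle here, since the preparatory work has been absorbed into Lemma~\ref{ingrforE_FUG}. The only point requiring attention is the ambient group: all three dimensions on the right-hand side are computed over $\Nzerr{r}{G}{H}$, not over $G$, so one must be sure to invoke Lemma~\ref{E_FUG} with $\Nzerr{r}{G}{H}$ in the role of the ambient group, and to use the full-family status established in the previous step to justify doing so.
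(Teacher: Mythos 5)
Your proposal is correct and follows essentially the same route as the paper's own proof, which likewise cites Lemma~\ref{RrGH-fam}, Lemma~\ref{ingrforE_FUG} and Lemma~\ref{E_FUG}. Your extra care in noting that both families are full (so that Lemma~\ref{E_FUG} applies with $\Nzerr{r}{G}{H}$ as the ambient group) is a welcome but minor elaboration of the same argument.
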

\begin{proof}
    The proof follows from Lemmas \ref{RrGH-fam}, \ref{ingrforE_FUG} and \ref{E_FUG}.
\end{proof}

\begin{Corollary}\label{cdFrH}
    If $r>0$ and $H\in \dF{F}{r}$,
    \begin{equation*}
        \begin{aligned}
            \cd_{\FF_r[H]} \Nzerr{r}{G}{H} \leq \operatorname{max}\{
            & \cd_{\FF_{r-1}\cap \Nzerr{r}{G}{H}} \Nzerr{r}{G}{H}\, , \,
            \cd_{\RS{r}{G}{H}} \Nzerr{r}{G}{H}\, , \\
            & \cd_{\RS{r-1}{G}{H}} \Nzerr{r}{G}{H} + 1 \}.
        \end{aligned}
    \end{equation*}
\end{Corollary}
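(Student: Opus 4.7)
The plan is to mirror the strategy of Corollary~\ref{gdFrH} but swap the geometric union lemma for its cohomological counterpart. First I would verify the set-theoretic identities needed to recognise $\FF_r[H]$ as a union of two full families whose intersection is known: by Lemma~\ref{ingrforE_FUG}(i), we have $\FF_r[H] = \RS{r}{G}{H} \cup \left( \FF_{r-1} \cap \Nzerr{r}{G}{H}\right)$, and by Lemma~\ref{ingrforE_FUG}(ii) the intersection of these two pieces is exactly $\RS{r-1}{G}{H}$.

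Next I would check that all three families involved are genuine full families of subgroups of $\Nzerr{r}{G}{H}$, which is what Lemma~\ref{cdE_FUG} requires. That $\FF_{r-1} \cap \Nzerr{r}{G}{H}$ is a full family is clear from Remark~\ref{thingswithfam} (restriction of a full family to a subgroup is full), while $\RS{r}{G}{H}$ and $\RS{r-1}{G}{H}$ are full families by Lemma~\ref{RrGH-fam}. With this in place, the hypotheses of Lemma~\ref{cdE_FUG} are met.

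Applying Lemma~\ref{cdE_FUG} to the pair $\RS{r}{G}{H}$ and $\FF_{r-1} \cap \Nzerr{r}{G}{H}$, taken as families of subgroups of $\Nzerr{r}{G}{H}$, yields
\[
\cd_{\FF_r[H]} \Nzerr{r}{G}{H} \leq \max\bigl\{ \cd_{\RS{r}{G}{H}} \Nzerr{r}{G}{H},\; \cd_{\FF_{r-1}\cap \Nzerr{r}{G}{H}} \Nzerr{r}{G}{H},\; \cd_{\RS{r-1}{G}{H}} \Nzerr{r}{G}{H} + 1\bigr\},
\]
which is exactly the claimed inequality.

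There is essentially no obstacle here, since all of the work has already been front-loaded into the preparatory lemmas of Section~\ref{sec:construction_dFFr}: the only subtle point is being confident that the union/intersection identities \emph{and} the fullness of the three families hold simultaneously, so that the cohomological Mayer--Vietoris bound of Lemma~\ref{cdE_FUG} applies verbatim rather than just giving the weaker bound from Lemma~\ref{E_FUGjoin}. Once that bookkeeping is done, the proof is a one-line citation, completely parallel to the proof of Corollary~\ref{gdFrH} but using Lemma~\ref{cdE_FUG} in place of Lemma~\ref{E_FUG}.
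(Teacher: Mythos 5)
Your proposal is correct and follows the paper's own proof exactly: the paper also deduces the bound from Lemma~\ref{RrGH-fam}, Lemma~\ref{ingrforE_FUG} (the union/intersection decomposition of $\FF_r[H]$) and the cohomological union bound of Lemma~\ref{cdE_FUG}. Your extra remark on fullness via Remark~\ref{thingswithfam} is just making explicit a check the paper leaves implicit.
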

\begin{proof}
    The proof follows from Lemmas \ref{RrGH-fam}, \ref{ingrforE_FUG} and \ref{cdE_FUG}.
\end{proof}

\section{Bredon dimensions and Mayer-Vietoris sequences}\label{sec:bredon_dim_chains}

In this section, we will present the results regarding upper bounds for
Bredon cohomological and geometric dimensions of the elements involved
in the construction described in the previous section.

For the following results, let $G$ be a group and $(\FF_r)_{r\in\N})$
a strongly structured ascending chain of families of subgroups of $G$. Let also
the equivalence relations $\sim_r$ as defined in Definition~\ref{def_eqrel_r}
and for each $r\in\N$ let $\mathcal{H}_r$ be a set of representatives of
the classes under $\sim_r$. For each $H\in\mathcal{H}_r$, let the strongly
structured ascending chain $(\RS{i}{G}{H})_{i = 0}^r$ of subgroups of
$\Nzerr{r}{G}{H}$ be as defined in Definition~\ref{defRS}. Take for every
$H\in\mathcal{H}_r$ and every $0 < i < r$ the set of representatives $\mathcal{K}_i(H)$
of the classes under $\sim_i$ restricted to $\dRS{i}{G}{H}$, i.e.,
$\mathcal{K}_i(H) = \mathcal{H}_i \cap \RS{i}{G}{H}$.

\subsection{Bredon geometric dimensions}

In the case of the auxiliary chain of families $\left(\RS{i}{G}{H}\right)_{i=0}^r$
of subgroups of $\Nzerr{r}{G}{H}$, we have:

\begin{Proposition}\label{gdRSi}
    Let $H \in \dF{F}{r}$ for $r>0$ and $0 < i < r$. Then,
    \begin{equation*}
        \begin{aligned}
            \gd_{\RS{i}{G}{H}} (\Nzerr{r}{G}{H} ) \leq
            \underset{[K]_i \in \mathcal{K}_i(H)}{\operatorname{max}} \{
            & \gd_{\RS{i-1}{\Nzerr{i}{G}{K}}{H}} (\Nzerr{i}{\Nzerr{r}{G}{H}}{K} ) + 1 \, , \\
            & \gd_{\RS{i-1}{\Nzerr{r}{G}{H}}{K}} (\Nzerr{i}{\Nzerr{r}{G}{H}}{K} ) + 1 \, , \\
            & \gd_{\RS{i}{\Nzerr{r}{G}{H}}{K}} (\Nzerr{i}{\Nzerr{r}{G}{H}}{K} )  \, , \\
            & \gd_{\RS{i-1}{G}{H}} (\Nzerr{r}{G}{H} )  \}.
        \end{aligned}
    \end{equation*}
\end{Proposition}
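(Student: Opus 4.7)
The plan is to assemble this bound by direct substitution from two results already established, namely Corollary~\ref{gdRSi_first} and Corollary~\ref{gdRSKi}. The strategy is essentially a matter of bookkeeping: Corollary~\ref{gdRSi_first} already provides the bound
\[
\gd_{\RS{i}{G}{H}} \Nzerr{r}{G}{H} \leq \underset{[K]_i \in \mathcal{K}_i(H)}{\max}\bigl\{\,A(K)\,,\ B(K)\,,\ C\,\bigr\},
\]
where $A(K) = \gd_{\RS{i-1}{\Nzerr{i}{G}{K}}{H}}(\Nzerr{i}{\Nzerr{r}{G}{H}}{K}) + 1$, $B(K) = \gd_{\RS{i}{G}{H}[K]}(\Nzerr{i}{\Nzerr{r}{G}{H}}{K})$, and $C = \gd_{\RS{i-1}{G}{H}}(\Nzerr{r}{G}{H})$. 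Two of these three quantities already appear verbatim in the statement to be proved. The only term that still needs to be expanded is $B(K)$.

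To expand $B(K)$, the plan is to invoke Corollary~\ref{gdRSKi}, which bounds $B(K)$ by the maximum of three further quantities: $\gd_{\RS{i}{\Nzerr{r}{G}{H}}{K}}(\Nzerr{i}{\Nzerr{r}{G}{H}}{K})$, $\gd_{\RS{i-1}{\Nzerr{i}{G}{K}}{H}}(\Nzerr{i}{\Nzerr{r}{G}{H}}{K})$, and $\gd_{\RS{i-1}{\Nzerr{r}{G}{H}}{K}}(\Nzerr{i}{\Nzerr{r}{G}{H}}{K}) + 1$. Substituting this into the outer maximum gives four terms per index $[K]_i$. Before declaring victory I would observe that the term $\gd_{\RS{i-1}{\Nzerr{i}{G}{K}}{H}}(\Nzerr{i}{\Nzerr{r}{G}{H}}{K})$ coming from Corollary~\ref{gdRSKi} is dominated by $A(K)$, which is the same quantity plus one, and so it can be discarded from the outer maximum without loss. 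What remains is exactly the four-term maximum appearing in the statement.

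The main substantive work has already been done in the previous subsection, where the cellular push-out of Corollary~\ref{modelforRSi} was set up, combined with the decomposition $\RS{i}{G}{H}[K] = \RS{i}{\Nzerr{r}{G}{H}}{K} \cup \RS{i-1}{\Nzerr{i}{G}{K}}{H}$ of Lemma~\ref{RSKiUnion} and the intersection identity of Lemma~\ref{RSKiInter}; these are the ingredients feeding Corollaries~\ref{gdRSi_first} and \ref{gdRSKi}. Consequently, I do not anticipate any real obstacle in the proof itself: the only thing to be careful about is the indexing set $\mathcal{K}_i(H)$, which by construction consists of representatives of the classes of $\sim_i$ restricted to $\dRS{i}{G}{H}$, and which by Corollary~\ref{sim_i_rest_to_dRS} coincides with the set of representatives used implicitly when L\"uck--Weiermann is applied to the pair $\RS{i-1}{G}{H} \subseteq \RS{i}{G}{H}$ inside $\Nzerr{r}{G}{H}$. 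Once this compatibility of indexing is noted, the two corollaries fit together cleanly and the stated bound follows.
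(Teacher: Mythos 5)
Your proposal is correct and follows exactly the paper's own argument: the paper proves Proposition~\ref{gdRSi} by combining Corollary~\ref{gdRSi_first} with Corollary~\ref{gdRSKi}, just as you do. Your additional remarks about the dominated term $\gd_{\RS{i-1}{\Nzerr{i}{G}{K}}{H}}(\Nzerr{i}{\Nzerr{r}{G}{H}}{K})$ and the compatibility of the indexing set $\mathcal{K}_i(H)$ are sound bookkeeping that the paper leaves implicit.
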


\begin{proof}
    From Corollaries \ref{gdRSi_first} and \ref{gdRSKi}.
\end{proof}

And this, as we mentioned in the last paragraph of Section~\ref{sec:class_for_RS}, implies
the following result about finite-dimensionality:

\begin{Corollary}\label{finite_gd_RSi}
    Let $H \in\dF{F}{r}$ for $r>0$ and $0 < i < r$. If there are finite-dimensional
    models for $\EFG{\RS{i}{\Nzerr{r}{G}{H}}{K}}{\Nzerr{i}{\Nzerr{r}{G}{H}}{K}}$ for
    every $0 < i < r$ and $K\in\mathcal{K}_i(H)$ and there is a finite-dimensional model
    for $\EFG{\FF_0}{G}$, then there is a finite-dimensional model for
    $\EFG{\RS{i}{G}{H}}{\Nzerr{r}{G}{H}}$ for all $i \in\N$ with $0 \leq i < r$.
\end{Corollary}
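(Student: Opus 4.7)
The plan is to proceed by induction on $i$ running from $0$ up to $r-1$, with Proposition~\ref{gdRSi} providing the engine for the inductive step. For the base case $i=0$, Definition~\ref{defRS} gives $\RS{0}{G}{H} = \FF_0 \cap \Nzerr{r}{G}{H}$, so the finite-dimensional model for $\EFG{\FF_0}{G}$ assumed in the hypothesis, combined with Proposition~\ref{Bredon:gd_subgroup} applied to the subgroup $\Nzerr{r}{G}{H} \leq G$, immediately yields a finite-dimensional model for $\EFG{\RS{0}{G}{H}}{\Nzerr{r}{G}{H}}$. No further input is required here.

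For the inductive step, I would assume finite-dimensionality is known at level $i-1$ for every choice of ambient normalizer and distinguished subgroup arising in the construction, and then apply Proposition~\ref{gdRSi} to bound $\gd_{\RS{i}{G}{H}}(\Nzerr{r}{G}{H})$ by a maximum over $[K]_i \in \mathcal{K}_i(H)$ of four terms. The term $\gd_{\RS{i}{\Nzerr{r}{G}{H}}{K}}(\Nzerr{i}{\Nzerr{r}{G}{H}}{K})$ is finite directly by the second hypothesis of the corollary, and $\gd_{\RS{i-1}{G}{H}}(\Nzerr{r}{G}{H})$ is finite by the inductive hypothesis. For the term $\gd_{\RS{i-1}{\Nzerr{i}{G}{K}}{H}}(\Nzerr{i}{\Nzerr{r}{G}{H}}{K})$, I would invoke Remark~\ref{changeofambient} to rewrite the family as $\RS{i-1}{G}{H} \cap \Nzerr{i}{\Nzerr{r}{G}{H}}{K}$ and then use Proposition~\ref{Bredon:gd_subgroup} with the subgroup inclusion $\Nzerr{i}{\Nzerr{r}{G}{H}}{K} \leq \Nzerr{r}{G}{H}$ to reduce the bound to the inductive hypothesis.

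The remaining term $\gd_{\RS{i-1}{\Nzerr{r}{G}{H}}{K}}(\Nzerr{i}{\Nzerr{r}{G}{H}}{K})$ is the most delicate: it refers to the $\RS$-construction applied to the auxiliary SSACFS $(\RS{j}{G}{H})_{j=0}^{r}$ of $\Nzerr{r}{G}{H}$ guaranteed by Lemma~\ref{RSisSSACFS}, now with $K$ playing the role of distinguished subgroup. I would derive a version of Remark~\ref{changeofambient} for this nested chain and combine it again with Proposition~\ref{Bredon:gd_subgroup} to reduce this term to an expression already controlled at an earlier stage of the induction (or by the explicit hypothesis on $\RS{i}{\Nzerr{r}{G}{H}}{K}$). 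Once all four terms are shown finite, Proposition~\ref{gdRSi} delivers a finite-dimensional model for $\EFG{\RS{i}{G}{H}}{\Nzerr{r}{G}{H}}$ and the induction closes.

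The main obstacle will be justifying the nested $\RS$-notation that arises in this last term: one must confirm that applying Definition~\ref{defRS} a second time, to the auxiliary chain on $\Nzerr{r}{G}{H}$ rather than on $G$, interacts correctly with the subgroup-restriction bound of Proposition~\ref{Bredon:gd_subgroup}. Concretely, I need the chain $(\RS{j}{G}{H})_{j=0}^{r}$ to inherit enough of the structural properties of $(\FF_r)_{r\in\N}$ to enable the same restriction-and-reduction step used for the previous term, and the bookkeeping of ambient groups $\Nzerr{i}{\Nzerr{r}{G}{H}}{K} = \Nzerr{i}{G}{K} \cap \Nzerr{r}{G}{H}$ from Lemma~\ref{norm_of_the_norm} must be handled carefully to ensure that nothing outside the inductive scope is invoked.
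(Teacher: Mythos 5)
Your proposal is correct and follows essentially the same route as the paper, whose proof is literally ``Proposition~\ref{gdRSi} and recursion over $i$''; your base case via $\RS{0}{G}{H} = \FF_0 \cap \Nzerr{r}{G}{H}$ and Proposition~\ref{Bredon:gd_subgroup}, and your treatment of the cross terms via Remark~\ref{changeofambient} and restriction to subgroups, match the discussion the paper gives around Corollary~\ref{modelforRSi} and Observation~\ref{necessary_spaces}. The extra care you take with the nested term $\gd_{\RS{i-1}{\Nzerr{r}{G}{H}}{K}}(\Nzerr{i}{\Nzerr{r}{G}{H}}{K})$ is a legitimate filling-in of detail the paper leaves implicit, not a departure from its argument.
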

\begin{proof}
    It is the result of Proposition~\ref{gdRSi} and recursion over $i$.
\end{proof}

Analogously, for the chain $\left(\FF_r\right)_{r\in\N}$:

\begin{Proposition}\label{gdFr}
    Let $H \in \dF{F}{r}$ for $r>0$. Then,
    \begin{equation*}
        \begin{aligned}
            \gd_{\FF_r} G \leq \underset{[H]_r \in \mathcal{H}_r}{\operatorname{max}} \{
            & \gd_{\FF_{r-1} \cap \Nzerr{r}{G}{H}} (\Nzerr{r}{G}{H}) + 1 \, , \\
            & \gd_{\RS{r}{G}{H}} (\Nzerr{r}{G}{H})\, , \\
            & \gd_{\RS{r-1}{G}{H}} (\Nzerr{r}{G}{H}) + 1  \, , \\
            & \gd_{\FF_{r-1}} G  \}.
        \end{aligned}
    \end{equation*}
\end{Proposition}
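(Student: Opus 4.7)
The plan is to assemble the bound directly from two results already available: the geometric dimension bound coming from the L\"uck--Weiermann construction for the inclusion $\FF_{r-1} \subseteq \FF_r$, and the bound from Corollary \ref{gdFrH} on $\gd_{\FF_r[H]}\Nzerr{r}{G}{H}$. Since Lemma \ref{SSACFS_LW_eqrel} tells us that $\sim_r$ is a strong equivalence relation on $\dF{F}{r}$, Proposition \ref{modelforFr} applies and yields an explicit cellular $G$-pushout model for $\EFG{\FF_r}{G}$; this is precisely the setting in which Corollary \ref{lw-dimensions} (i) applies.

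First I would invoke Corollary \ref{lw-dimensions} (i) in this setting, obtaining
\begin{equation*}
\gd_{\FF_r} G \leq \max\Bigl\{\,\underset{[H]_r\in\mathcal{H}_r}{\max}\{\gd_{\FF_r[H]}(\Nzerr{r}{G}{H}),\; \gd_{\FF_{r-1}\cap\Nzerr{r}{G}{H}}(\Nzerr{r}{G}{H})+1\},\; \gd_{\FF_{r-1}} G\,\Bigr\}.
\end{equation*}
Next I would substitute the bound from Corollary \ref{gdFrH} into the term $\gd_{\FF_r[H]}(\Nzerr{r}{G}{H})$, which expresses it as the maximum of $\gd_{\FF_{r-1}\cap\Nzerr{r}{G}{H}}(\Nzerr{r}{G}{H})$, $\gd_{\RS{r}{G}{H}}(\Nzerr{r}{G}{H})$, and $\gd_{\RS{r-1}{G}{H}}(\Nzerr{r}{G}{H})+1$. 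Merging the two maxima and observing that $\gd_{\FF_{r-1}\cap\Nzerr{r}{G}{H}}(\Nzerr{r}{G}{H})$ is absorbed by $\gd_{\FF_{r-1}\cap\Nzerr{r}{G}{H}}(\Nzerr{r}{G}{H})+1$ gives the claimed inequality.

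There is no real obstacle here; the statement is essentially an algebraic combination of previously established facts. The only point requiring care is the verification that the push-out diagram in Proposition \ref{modelforFr} can indeed be taken to satisfy the inclusion hypothesis of Corollary \ref{lw-dimensions}, which is addressed via the cellular mapping cylinder construction (as discussed in the paragraph preceding Corollary \ref{lw-gdim}). Once this is noted, the proof reduces to a single application of each of the two cited bounds in succession.
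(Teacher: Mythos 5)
Your proposal is correct and follows essentially the same route as the paper: the paper's proof is exactly the combination of Proposition~\ref{modelforFr} (the L\"uck--Weiermann pushout for $\FF_{r-1}\subseteq\FF_r$, turned into a dimension bound via Corollary~\ref{lw-dimensions}) with Corollary~\ref{gdFrH}, and your absorption of the term $\gd_{\FF_{r-1}\cap \Nzerr{r}{G}{H}}(\Nzerr{r}{G}{H})$ into the same term plus one is the correct bookkeeping.
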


\begin{proof}
    It is a consequence of Proposition~\ref{modelforFr} and Corollary~\ref{gdFrH}.
\end{proof}

\begin{Corollary}\label{finite_gd_Fr}
    Let $n>0$. If there are finite-dimensional models for
    $\EFG{\RS{r}{G}{H}}{\Nzerr{r}{G}{H}}$ for every $0 < r \leq n$ and
    $H\in\mathcal{H}_r$ and there is a finite-dimensional model
    for $\EFG{\FF_0}{G}$, then there is a finite-dimensional model
    for $\EFG{\FF_r}{G}$ for all $r \in\N$ with $0 \leq r \leq n$.
\end{Corollary}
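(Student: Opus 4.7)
The plan is to argue by induction on $r$, with $n$ fixed. The base case $r=0$ is immediate from the hypothesis that $\EFG{\FF_0}{G}$ admits a finite-dimensional model. For the inductive step, assume that $\gd_{\FF_{r-1}} G < \infty$ and apply Proposition~\ref{gdFr} to bound $\gd_{\FF_r} G$ above by a maximum over $[H]_r \in \mathcal{H}_r$ of four quantities. I will verify each of these is finite, uniformly in the choice of representative $H$.

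The first and fourth terms in the maximum, namely $\gd_{\FF_{r-1} \cap \Nzerr{r}{G}{H}} (\Nzerr{r}{G}{H}) + 1$ and $\gd_{\FF_{r-1}} G$, are both bounded above by $\gd_{\FF_{r-1}} G + 1$, the first by Proposition~\ref{Bredon:gd_subgroup}, and both are finite by the inductive hypothesis. The second term, $\gd_{\RS{r}{G}{H}} (\Nzerr{r}{G}{H})$, is finite by direct hypothesis of the corollary applied at level $r$ and the representative $H \in \mathcal{H}_r$.

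The principal obstacle is the third term $\gd_{\RS{r-1}{G}{H}} (\Nzerr{r}{G}{H}) + 1$. To handle it, I invoke Corollary~\ref{finite_gd_RSi} at level $r$ for the representative $H$, whose conclusion gives finite-dimensional models for $\EFG{\RS{i}{G}{H}}{\Nzerr{r}{G}{H}}$ for every $0 \leq i < r$; in particular for $i = r-1$. The hypotheses of that corollary demand a finite-dimensional model for $\EFG{\FF_0}{G}$, which is given, together with finite-dimensional models for $\EFG{\RS{i}{\Nzerr{r}{G}{H}}{K}}{\Nzerr{i}{\Nzerr{r}{G}{H}}{K}}$ for every $0 < i < r$ and every $K \in \mathcal{K}_i(H)$. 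Since $\mathcal{K}_i(H) = \mathcal{H}_i \cap \RS{i}{G}{H} \subseteq \mathcal{H}_i$, the outer hypothesis of the corollary furnishes finite-dimensional models for $\EFG{\RS{i}{G}{K}}{\Nzerr{i}{G}{K}}$. Restricting the action along the inclusion $\Nzerr{i}{\Nzerr{r}{G}{H}}{K} = \Nzerr{i}{G}{K} \cap \Nzerr{r}{G}{H} \leq \Nzerr{i}{G}{K}$ via Proposition~\ref{Bredon:gd_subgroup}, and using the analogue of Remark~\ref{changeofambient} to identify $\RS{i}{G}{K} \cap \Nzerr{i}{\Nzerr{r}{G}{H}}{K}$ with $\RS{i}{\Nzerr{r}{G}{H}}{K}$, supplies the required finite-dimensional classifying spaces and closes the induction.

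The subtle point worth flagging is the uniformity of the bound over $[H]_r \in \mathcal{H}_r$ (and, at deeper recursion levels, over the sets $\mathcal{K}_i(H)$): each individual term in the maximum of Proposition~\ref{gdFr} must be dominated by a single finite quantity independent of $H$. The first, second and fourth terms pose no problem, being controlled by $\gd_{\FF_{r-1}} G$ (inductive hypothesis) and by the ambient hypothesis of the corollary; the third is tamed through the recursive bound furnished by Proposition~\ref{gdRSi}, on which Corollary~\ref{finite_gd_RSi} is built, and this is where the bulk of the work sits.
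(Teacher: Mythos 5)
Your argument is correct and is essentially the paper's own proof: the paper proves this corollary exactly by recursion over $r$ together with Proposition~\ref{gdFr}, with the third term handled by Corollary~\ref{finite_gd_RSi} and the restriction results (Lemma~\ref{norm_of_the_norm}, Proposition~\ref{Bredon:gd_subgroup}). Your write-up merely makes explicit the step the paper leaves implicit, namely that the hypothesized models for $\EFG{\RS{i}{G}{K}}{\Nzerr{i}{G}{K}}$, $K\in\mathcal{H}_i$, restrict along $\Nzerr{i}{\Nzerr{r}{G}{H}}{K}\leq\Nzerr{i}{G}{K}$ to the classifying spaces for $\RS{i}{\Nzerr{r}{G}{H}}{K}$ required in Corollary~\ref{finite_gd_RSi}.
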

\begin{proof}
    It is the result of Proposition~\ref{gdFr} and recursion over $r$.
\end{proof}

\subsection{Bredon cohomological dimensions}

Analogous results to those in the previous section can be proven for the Bredon
cohomological dimensions. We also present Mayer-Vietoris long exact sequences that
can help improve the upper bounds or provide lower bounds for the Bredon dimensions
when applied to particular groups and chains of families of subgroups.

\begin{Proposition}\label{cd_mv_rsHK}
    Let $K\in\mathcal{K}_i$ and let $M\in\RMod{\RS{i}{G}{H}[K]}{\Nzerr{i}{\Nzerr{r}{G}{H}}{K}}$.
    Let $F_1 : \RS{i}{\Nzerr{r}{G}{H}}{K} \to \RS{i}{G}{H}[K]$,
    $F_2 : \RS{i-1}{\Nzerr{i}{G}{K}}{H} \to \RS{i}{G}{H}[K]$ and
    $F_\cap : \RS{i-1}{\Nzerr{r}{G}{H}}{K} \to \RS{i}{G}{H}[K]$ be the
    inclusion functors.
    Then, the following sequence in Bredon cohomology is exact
    \begin{multline*}
        \cdots \rightarrow \Cohom{n}{\RS{i}{G}{H}[K]}{\Nzerr{i}{\Nzerr{r}{G}{H}}{K}}{M}
        \rightarrow \\
        \Cohom{n}{\RS{i}{\Nzerr{r}{G}{H}}{K}}{\Nzerr{i}{\Nzerr{r}{G}{H}}{K}}{\modres{F_1} M} \oplus
        \Cohom{n}{\RS{i-1}{\Nzerr{i}{G}{K}}{H}}{\Nzerr{i}{\Nzerr{r}{G}{H}}{K}}{\modres{F_2}M}
        \rightarrow \\
        \Cohom{n}{\RS{i-1}{\Nzerr{r}{G}{H}}{K}}{\Nzerr{i}{\Nzerr{r}{G}{H}}{K}}{\modres{F_\cap} M}
        \rightarrow \Cohom{n+1}{\RS{i}{G}{H}[K]}{\Nzerr{i}{\Nzerr{r}{G}{H}}{K}}{M} \to \cdots
    \end{multline*}

    and hence
    \begin{equation*}
        \begin{aligned}
            \cd_{\RS{i}{G}{H}[K]} (\Nzerr{i}{\Nzerr{r}{G}{H}}{K}) \leq \operatorname{max}\{
            & \cd_{\RS{i}{\Nzerr{r}{G}{H}}{K}} (\Nzerr{i}{\Nzerr{r}{G}{H}}{K})\, , \\
            & \cd_{\RS{i-1}{\Nzerr{i}{G}{K}}{H}} (\Nzerr{i}{\Nzerr{r}{G}{H}}{K})\, , \\
            & \cd_{\RS{i-1}{\Nzerr{r}{G}{H}}{K}} (\Nzerr{i}{\Nzerr{r}{G}{H}}{K}) + 1 \}.
        \end{aligned}
    \end{equation*}
\end{Proposition}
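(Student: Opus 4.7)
The plan is to apply Lemma~\ref{cdE_FUG} directly to the pair of full families
$\FF = \RS{i}{\Nzerr{r}{G}{H}}{K}$ and $\GG = \RS{i-1}{\Nzerr{i}{G}{K}}{H}$
of subgroups of the ambient group $\Nzerr{i}{\Nzerr{r}{G}{H}}{K}$. Once we
recognise $\FF \cup \GG$ and $\FF \cap \GG$ in the statement, the conclusion
of that lemma is precisely the Mayer--Vietoris sequence and the cohomological
dimension bound claimed here.

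First I would verify that both families live inside the same ambient group.
Applying Lemma~\ref{RrGH-fam} with ambient group $\Nzerr{r}{G}{H}$ and
representative $K \in \dRS{i}{G}{H} \subseteq \dF{F}{i}$ shows that
$\RS{i}{\Nzerr{r}{G}{H}}{K}$ is a full family of subgroups of
$\Nzerr{i}{\Nzerr{r}{G}{H}}{K}$. Applying the same lemma with ambient group
$\Nzerr{i}{G}{K}$ and $H \in \dF{F}{r}$ shows that $\RS{i-1}{\Nzerr{i}{G}{K}}{H}$
is a full family of subgroups of $\Nzerr{r}{\Nzerr{i}{G}{K}}{H}$. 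But by
Lemma~\ref{norm_of_the_norm} the latter group equals $\Nzerr{i}{\Nzerr{r}{G}{H}}{K}$,
so both families indeed sit inside the same ambient group, as Lemma~\ref{cdE_FUG} requires.

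Next, Lemma~\ref{RSKiUnion} gives $\FF \cup \GG = \RS{i}{G}{H}[K]$, while
Lemma~\ref{RSKiInter} gives $\FF \cap \GG = \RS{i-1}{\Nzerr{r}{G}{H}}{K}$.
Under these identifications the three inclusion functors $F_1, F_2, F_\cap$ in the
statement are precisely the inclusion functors $F_\FF, F_\GG, I$ of Lemma~\ref{cdE_FUG}.
Feeding any right $\OFG{\RS{i}{G}{H}[K]}{\Nzerr{i}{\Nzerr{r}{G}{H}}{K}}$-module $M$
into that lemma therefore yields both the claimed Mayer--Vietoris long exact sequence
in Bredon cohomology and, by the same result, the upper bound on
$\cd_{\RS{i}{G}{H}[K]} \Nzerr{i}{\Nzerr{r}{G}{H}}{K}$.

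Because everything reduces to invoking an already proven Mayer--Vietoris principle
(Lemma~\ref{cdE_FUG}) together with two set-theoretic identifications already
established in Lemmas~\ref{RSKiUnion} and \ref{RSKiInter}, there is no real obstacle
in the proof: the only work is the bookkeeping to confirm that we are working with
full families of subgroups of the correct ambient group, which is handled by
Lemmas~\ref{RrGH-fam} and~\ref{norm_of_the_norm}.
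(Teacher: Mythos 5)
Your proposal is correct and follows essentially the same route as the paper, whose proof is exactly the combination of Lemmas~\ref{cdE_FUG}, \ref{RSKiUnion} and \ref{RSKiInter}; your extra bookkeeping via Lemmas~\ref{RrGH-fam} and \ref{norm_of_the_norm} to identify the common ambient group $\Nzerr{i}{\Nzerr{r}{G}{H}}{K}$ just makes explicit what the paper leaves implicit.
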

\begin{proof}
    Consequence of Lemmas \ref{cdE_FUG}, \ref{RSKiUnion} and \ref{RSKiInter}.
\end{proof}

\begin{Theorem}\label{cd_mv_rs_i_H}
    Let $H\in\mathcal{H}_r$ and let $i\in\N$ with $0 < i < r$. Let
    $M$ be a right $\OFG{\RS{i}{G}{H}}{\Nzerr{r}{G}{H}}$-module. Let
    $F_1 : \RS{i-1}{G}{H} \to \RS{i}{G}{H}$ and $F_{[K]} : \RS{i}{G}{H}[K] \to \RS{i}{G}{H}$
    and $I_{[K]} : \RS{i-1}{\Nzerr{i}{G}{K}}{H} \to \RS{i}{G}{H}$ the inclusion functors for each $K\in\mathcal{K}_i$.
    Then, the following sequence in Bredon Cohomology is exact
    \begin{multline*}
        \cdots \rightarrow \Cohom{n-1}{\RS{i}{G}{H}}{\Nzerr{r}{G}{H}}{M} \rightarrow \\
        \left( \underset{[K]\in \mathcal{K}_i}{\prod}
        \Cohom{n-1}{\RS{i}{G}{H}[K]}{\Nzerr{i}{\Nzerr{r}{G}{H}}{K}}{\modres{F_{[K]}} M}\right)
        \oplus  \Cohom{n-1}{\RS{i-1}{G}{H}}{G}{\modres{F_1} M}
        \rightarrow \\
        \underset{[K]\in\mathcal{K}_i}{\prod}
        \Cohom{n-1}{\RS{i-1}{\Nzerr{i}{G}{K}}{H}}{\Nzerr{i}{\Nzerr{r}{G}{H}}{K}}{\modres{I_{[K]}} M}
        \rightarrow \Cohom{n}{\RS{i}{G}{H}}{\Nzerr{r}{G}{H}}{M}\rightarrow \cdots
    \end{multline*}

    and hence
    \begin{equation*}
        \begin{aligned}
            \cd_{\RS{i}{G}{H}} (\Nzerr{r}{G}{H}) \leq
            \underset{[K]_i \in \mathcal{K}}{\operatorname{max}} \{
            & \cd_{\RS{i-1}{\Nzerr{i}{G}{K}}{H}} (\Nzerr{i}{\Nzerr{r}{G}{H}}{K} ) + 1 \, , \\
            & \cd_{\RS{i-1}{\Nzerr{r}{G}{H}}{K}} (\Nzerr{i}{\Nzerr{r}{G}{H}}{K} ) + 1 \, , \\
            & \cd_{\RS{i}{\Nzerr{r}{G}{H}}{K}} (\Nzerr{i}{\Nzerr{r}{G}{H}}{K} )  \, , \\
            & \cd_{\RS{i-1}{G}{H}} (\Nzerr{r}{G}{H} ) \}.
        \end{aligned}
    \end{equation*}
\end{Theorem}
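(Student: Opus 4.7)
The plan is to apply the Mayer--Vietoris sequence of Proposition \ref{MV_LW} to the pushout construction of Corollary \ref{modelforRSi}, and then read off the dimension bound from exactness together with the estimate for $\cd_{\RS{i}{G}{H}[K]}$ provided by Proposition \ref{cd_mv_rsHK}.

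First I would check that the L\"uck--Weiermann machinery of Definition \ref{def_strong_equiv_LW} and Theorem \ref{lw-main} applies to the inclusion $\RS{i-1}{G}{H} \subseteq \RS{i}{G}{H}$ of families of subgroups of the ambient group $\Nzerr{r}{G}{H}$. By Lemma \ref{RSisSSACFS} the chain $\left(\RS{j}{G}{H}\right)_{j=0}^{r}$ is itself a SSACFS of $\Nzerr{r}{G}{H}$, and by Corollary \ref{sim_i_rest_to_dRS} together with Lemma \ref{SSACFS_LW_eqrel} the restriction of $\sim_i$ to $\dRS{i}{G}{H}$ is a strong equivalence relation. Lemma \ref{norm_of_the_norm} identifies the relevant L\"uck--Weiermann normalizer with $\Nzerr{i}{\Nzerr{r}{G}{H}}{K}$, and the L\"uck--Weiermann family at $K$ is by construction $\RS{i}{G}{H}[K]$. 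Consequently, Corollary \ref{modelforRSi} supplies a cellular $\Nzerr{r}{G}{H}$-pushout whose corner is a model for $\EFG{\RS{i}{G}{H}}{\Nzerr{r}{G}{H}}$.

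Second, I would apply Proposition \ref{MV_LW} directly to this pushout. The resulting long exact sequence is precisely the one stated in the theorem once Remark \ref{changeofambient} is used to identify $\RS{i-1}{G}{H}\cap \Nzerr{i}{\Nzerr{r}{G}{H}}{K}$ with $\RS{i-1}{\Nzerr{i}{G}{K}}{H}$, and the inclusion functors $F_1$, $F_{[K]}$, $I_{[K]}$ are used to keep track of the restricted coefficient modules.

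For the dimension bound, let $d$ denote the maximum appearing on the right-hand side of the claimed inequality. Proposition \ref{cd_mv_rsHK} yields $\cd_{\RS{i}{G}{H}[K]}(\Nzerr{i}{\Nzerr{r}{G}{H}}{K}) \leq d$ for every $[K]_i \in \mathcal{K}_i(H)$. For a fixed $M \in \RMod{\RS{i}{G}{H}}{\Nzerr{r}{G}{H}}$ and $n > d$, Proposition \ref{pd_zeros} makes each of the Bredon cohomology groups associated with $F_{[K]}$, $F_1$ and (in degree $n-1$) $I_{[K]}$ vanish; exactness of the Mayer--Vietoris sequence then forces $\Cohom{n}{\RS{i}{G}{H}}{\Nzerr{r}{G}{H}}{M} = 0$, and Corollary \ref{cd_max_min} delivers the bound. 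The main obstacle is essentially organisational: aligning the change-of-ambient-group identifications of Remark \ref{changeofambient} and Lemma \ref{norm_of_the_norm} with the restrictions of $M$ appearing in Proposition \ref{MV_LW}, so that the ready-made L\"uck--Weiermann Mayer--Vietoris sequence can be transcribed without recomputing any pushout cohomology by hand.
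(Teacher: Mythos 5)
Your proposal is correct and follows essentially the same route as the paper: the long exact sequence comes from Proposition~\ref{MV_LW} applied to the push-out of Corollary~\ref{modelforRSi}, and the dimension bound from combining that with Proposition~\ref{cd_mv_rsHK}. The only cosmetic difference is that you re-derive the bound by a vanishing argument in the exact sequence (via Proposition~\ref{pd_zeros} and Corollary~\ref{cd_max_min}) where the paper simply cites the already-packaged Corollary~\ref{cdRSi_first}, which rests on the same L\"uck--Weiermann estimates.
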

\begin{proof}
    The Mayer-Vietoris sequence exists and is exact due to Corollary~\ref{modelforRSi} and
    Proposition~\ref{MV_LW}. The upper bound for Bredon cohomological dimension is
    consequence of Corollary~\ref{cdRSi_first} and Proposition~\ref{cd_mv_rsHK}.
\end{proof}

\begin{Proposition}\label{cd_mv_FrH}
    Let $H\in\mathcal{H}_r$ and let $M\in\RMod{\FF_r[H]}{\Nzerr{r}{G}{H}}$.
    Let $F_1 : \RS{r}{G}{H} \to \FF_r[H]$,
    $F_2 : \FF_{r-1}\cap\Nzerr{r}{G}{H} \to \FF_r[H]$ and
    $F_\cap : \RS{r-1}{G}{H} \to \FF_r[H]$ be the
    inclusion functors.
    Then, the following sequence in Bredon cohomology is exact
    \begin{multline*}
        \cdots \longrightarrow \Cohom{n}{\FF_r[H]}{\Nzerr{r}{G}{H}}{M}
        \longrightarrow \\
        \Cohom{n}{\RS{r}{G}{H}}{\Nzerr{r}{G}{H}}{\modres{F_1} M} \oplus
        \Cohom{n}{\FF_{r-1}\cap\Nzerr{r}{G}{H}}{\Nzerr{r}{G}{H}}{\modres{F_2} M}
        \longrightarrow \\
        \Cohom{n}{\RS{r-1}{G}{H}}{\Nzerr{r}{G}{H}}{\modres{F_\cap} M}
        \longrightarrow \Cohom{n+1}{\FF_r[H]}{\Nzerr{r}{G}{H}}{M} \longrightarrow \cdots
    \end{multline*}

    and hence
    \begin{equation*}
        \begin{aligned}
            \cd_{\FF_r[H]} (\Nzerr{r}{G}{H}) \leq \operatorname{max}\{
            & \cd_{\RS{r}{G}{H}} (\Nzerr{r}{G}{H})\, , \\
            & \cd_{\FF_{r-1}\cap\Nzerr{r}{G}{H}} (\Nzerr{r}{G}{H})\, , \\
            & \cd_{\RS{r-1}{G}{H}} (\Nzerr{r}{G}{H}) + 1 \}.
        \end{aligned}
    \end{equation*}
\end{Proposition}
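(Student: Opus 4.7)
The plan is a direct application of Lemma~\ref{cdE_FUG} to the pair of full families $\RS{r}{G}{H}$ and $\FF_{r-1} \cap \Nzerr{r}{G}{H}$ of subgroups of $\Nzerr{r}{G}{H}$, mirroring the argument given for Proposition~\ref{cd_mv_rsHK} one level up in the chain. First I would verify that both families are full: $\RS{r}{G}{H}$ is full by Lemma~\ref{RrGH-fam}, and $\FF_{r-1} \cap \Nzerr{r}{G}{H}$ is full as the restriction to a subgroup of the full family $\FF_{r-1}$ (Remark~\ref{thingswithfam}).

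Next I would invoke Lemma~\ref{ingrforE_FUG} to identify the union as $\FF_r[H] = \RS{r}{G}{H} \cup (\FF_{r-1}\cap\Nzerr{r}{G}{H})$ and the intersection as $\RS{r-1}{G}{H} = \RS{r}{G}{H} \cap (\FF_{r-1}\cap\Nzerr{r}{G}{H})$. With these identifications the hypotheses of Lemma~\ref{cdE_FUG} are met, and feeding the data into that lemma produces precisely the long exact Mayer-Vietoris sequence in the statement, with the inclusion functors $F_1$, $F_2$, $F_\cap$ playing the roles required (they are the tautological inclusions from each piece into the union $\FF_r[H]$).

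For the dimension bound, let $d$ denote the maximum on the right-hand side of the claimed inequality. By Proposition~\ref{pd_zeros}, the groups $\Cohom{d+1}{\RS{r}{G}{H}}{\Nzerr{r}{G}{H}}{\modres{F_1} M}$ and $\Cohom{d+1}{\FF_{r-1}\cap\Nzerr{r}{G}{H}}{\Nzerr{r}{G}{H}}{\modres{F_2} M}$ both vanish, and $\Cohom{d}{\RS{r-1}{G}{H}}{\Nzerr{r}{G}{H}}{\modres{F_\cap} M} = 0$ because $d \geq \cd_{\RS{r-1}{G}{H}} \Nzerr{r}{G}{H} + 1$. Exactness of the Mayer-Vietoris sequence at the term $\Cohom{d+1}{\FF_r[H]}{\Nzerr{r}{G}{H}}{M}$ then forces that group to vanish for every right $\OFG{\FF_r[H]}{\Nzerr{r}{G}{H}}$-module $M$, yielding $\cd_{\FF_r[H]} \Nzerr{r}{G}{H} \leq d$ by another application of Proposition~\ref{pd_zeros}.

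I do not expect any real obstacle: the structural Lemmas~\ref{ingrforE_FUG} and~\ref{cdE_FUG} have already done the topological and cohomological work, and this proposition is essentially the analogue of Proposition~\ref{cd_mv_rsHK} for the outermost level of the chain. The only care needed is to match the three inclusion functors $F_1$, $F_2$, $F_\cap$ with the inclusions implicit in Lemma~\ref{cdE_FUG}, which is immediate from the definitions and the set-theoretic equalities supplied by Lemma~\ref{ingrforE_FUG}.
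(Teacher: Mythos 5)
Your proposal is correct and follows essentially the same route as the paper, whose proof is precisely the combination of the decomposition in Lemma~\ref{ingrforE_FUG} with the Mayer--Vietoris sequence and dimension bound of Lemma~\ref{cdE_FUG} applied to the full families $\RS{r}{G}{H}$ and $\FF_{r-1}\cap\Nzerr{r}{G}{H}$ of subgroups of $\Nzerr{r}{G}{H}$. Your extra step re-deriving the bound via Proposition~\ref{pd_zeros} is harmless but unnecessary, since that inequality is already part of the conclusion of Lemma~\ref{cdE_FUG}.
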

\begin{proof}
    Consequence of Lemmas \ref{cdE_FUG} and \ref{ingrforE_FUG}.
\end{proof}

\begin{Theorem}\label{cd_mv_F_r}
    Let $G$ be a group and $\left(\FF_r\right)_{r\in\N}$ a strongly structured
    ascending chain of families of subgroups of $G$. Let $r > 0$ and
    $M\in\RMod{\FF_r}{G}$. Let $\mathcal{H}_r$ be a set of representatives of the
    equivalence classes in $\dF{F}{r}$ with respect to $\sim_r$ (as defined in
    Definition~\ref{def_eqrel_r}).
    Let $F_1 : \FF_{r-1} \to \FF_r$ and $F_{[H]} : \FF_r[H] \to \FF_r$
    and $I_{[H]} : \FF_{r-1}\cap\Nzerr{r}{G}{H} \to \FF_r$ the inclusion functors for
    each $H\in\mathcal{H}_r$.
    Then, the following sequence in Bredon Cohomology is exact
    \begin{multline*}
        \cdots \longrightarrow \Cohom{n-1}{\FF_r}{G}{M} \longrightarrow \\
        \left( \underset{[H]\in \mathcal{H}_r}{\prod}
        \Cohom{n-1}{\FF_r[H]}{\Nzerr{r}{G}{H}}{\modres{F_{[H]}} M}\right)
        \oplus  \Cohom{n-1}{\FF_{r-1}}{G}{\modres{F_1} M}
        \longrightarrow \\
        \underset{[H]\in \mathcal{H}_r}{\prod}
        \Cohom{n-1}{\FF_{r-1}\cap \Nzerr{r}{G}{H}}{\Nzerr{r}{G}{H}}{\modres{I_{[H]}} M}
        \longrightarrow \Cohom{n}{\FF_r}{G}{M}\longrightarrow \cdots
    \end{multline*}

    and hence
    \begin{equation*}
        \begin{aligned}
            \cd_{\FF_r} G \leq \underset{[H]_r \in \mathcal{H}_r}{\operatorname{max}} \{
            & \cd_{\FF_{r-1} \cap \Nzerr{r}{G}{H}} (\Nzerr{r}{G}{H}) + 1 \, , \\
            & \cd_{\RS{r}{G}{H}} (\Nzerr{r}{G}{H})\, , \\
            & \cd_{\RS{r-1}{G}{H}} (\Nzerr{r}{G}{H}) + 1  \, , \\
            & \cd_{\FF_{r-1}} G  \}.
        \end{aligned}
    \end{equation*}
\end{Theorem}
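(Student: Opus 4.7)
The plan is to mirror the strategy already used to prove Theorem~\ref{cd_mv_rs_i_H}, just one level up in the recursive construction. The two ingredients have already been set in place: Lemma~\ref{SSACFS_LW_eqrel} tells us that $\sim_r$ is a strong equivalence relation on $\dF{F}{r}$, so the inclusion $\FF_{r-1}\subseteq\FF_r$ fits into the L\"uck--Weiermann framework, and Proposition~\ref{modelforFr} realises $\EFG{\FF_r}{G}$ as the $G$-pushout appearing in Theorem~\ref{lw-main} with this data.

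First, I would invoke Proposition~\ref{MV_LW} directly with $\FF = \FF_{r-1}$, $\GG = \FF_r$ and the strong equivalence relation $\sim_r$, the set of representatives $\mathcal{H}_r$ of $[\dF{F}{r}]$, and the given $\FF_r$-module $M$. The orbit-category inclusions $F_1:\FF_{r-1}\to\FF_r$, $F_{[H]}:\FF_r[H]\to\FF_r$ and $I_{[H]}:\FF_{r-1}\cap\Nzerr{r}{G}{H}\to\FF_r$ for each $H\in\mathcal{H}_r$ are exactly what appears in the statement of Proposition~\ref{MV_LW}, so the displayed Mayer--Vietoris long exact sequence follows verbatim.

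Next, for the dimension bound, I would let $d$ denote the maximum on the right-hand side of the claimed inequality, i.e.
\[
d = \max_{[H]_r\in\mathcal{H}_r}\bigl\{\cd_{\FF_{r-1}\cap\Nzerr{r}{G}{H}}\Nzerr{r}{G}{H}+1,\;\cd_{\RS{r}{G}{H}}\Nzerr{r}{G}{H},\;\cd_{\RS{r-1}{G}{H}}\Nzerr{r}{G}{H}+1,\;\cd_{\FF_{r-1}}G\bigr\}.
\]
By Proposition~\ref{pd_zeros}, to conclude $\cd_{\FF_r}G\leq d$ it suffices to show that $\Cohom{d+1}{\FF_r}{G}{M}=0$ for every $M\in\RMod{\FF_r}{G}$. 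Picking an arbitrary such $M$ and plugging $n=d+1$ into the long exact sequence, the two flanking terms vanish: $\Cohom{d+1}{\FF_{r-1}}{G}{\modres{F_1}M}=0$ and $\Cohom{d+1}{\FF_{r-1}\cap\Nzerr{r}{G}{H}}{\Nzerr{r}{G}{H}}{\modres{I_{[H]}}M}=0$ by the definition of $d$ together with Proposition~\ref{pd_zeros}, while $\Cohom{d+1}{\FF_r[H]}{\Nzerr{r}{G}{H}}{\modres{F_{[H]}}M}=0$ by Proposition~\ref{cd_mv_FrH} applied to $\modres{F_{[H]}}M$ (whose hypotheses are exactly the three summands involving $\FF_{r-1}\cap\Nzerr{r}{G}{H}$, $\RS{r}{G}{H}$ and $\RS{r-1}{G}{H}$, all captured by $d$). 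Exactness of the sequence then forces $\Cohom{d+1}{\FF_r}{G}{M}=0$, which gives the desired bound.

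No step looks genuinely difficult; the only real bookkeeping hazard is making sure the indices in the three auxiliary terms inside the $\max$ really do control the three cohomology groups flanking $\Cohom{d+1}{\FF_r[H]}{\Nzerr{r}{G}{H}}{\modres{F_{[H]}}M}$ in Proposition~\ref{cd_mv_FrH} after restriction along $F_{[H]}$. Since restriction with an inclusion functor is exact and the families in question are literally the ones appearing in the statement of $d$, this match-up is essentially automatic, but it is the one place where a slip would break the argument, so I would verify it carefully before finishing.
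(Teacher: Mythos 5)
Your proposal is correct and takes essentially the same route as the paper: the paper likewise obtains the long exact sequence from Proposition~\ref{modelforFr} together with Proposition~\ref{MV_LW}, and gets the dimension bound from Proposition~\ref{cd_mv_FrH} combined with the L\"uck--Weiermann dimension estimate (Corollary~\ref{lw-dimensions}), which is exactly the vanishing argument you carry out inline via Proposition~\ref{pd_zeros}. One small indexing correction: the term mapping into $\Cohom{d+1}{\FF_r}{G}{M}$ in the sequence is the degree-$d$ group $\Cohom{d}{\FF_{r-1}\cap\Nzerr{r}{G}{H}}{\Nzerr{r}{G}{H}}{\modres{I_{[H]}}M}$, not the degree-$(d+1)$ one you name, and its vanishing is precisely what the summand $\cd_{\FF_{r-1}\cap\Nzerr{r}{G}{H}}(\Nzerr{r}{G}{H})+1$ in your definition of $d$ provides.
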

\begin{proof}
    Propositions \ref{modelforFr} and \ref{MV_LW} gives us the Mayer-Vietoris long exact
    sequence and Corollary~\ref{lw-dimensions} and Proposition~\ref{cd_mv_FrH}, the upper
    bound on Bredon cohomological dimension.
\end{proof}

\section{Classifying spaces for the families $\RS{r}{G}{H}$}

In this section, we give some additional conditions to the initial set-up that will
guarantee a finite-dimensional model for $\EFG{\RS{r}{G}{H}}{\Nzerr{r}{G}{H}}$. From this point
on, we adopt the commonly used notation $\ucd G, \ugd G, \UEG$ to represent $\cd_{\Fin (G)} G$,
$\gd_{\Fin (G)} G$ and $\EFG{\Fin (G)}{G}$, respectively.

\begin{Theorem}\label{cd_nzerr_nzr_general_case}
    Let $H\in\dF{F}{r}$ be such that $\Nzr{G}{H} = \Nzerr{r}{G}{H}$. Then,
    $\HHb = \{ LH/H \st L\in\RS{r}{G}{H}\}$ is a full family of subgroups of
    $\Nzr{G}{H}/H$. Moreover, if there is $n\in\N$ such that $\cd_{\RS{r}{G}{H}\cap LH} LH \leq n$
    for all $L\in\RS{r}{G}{H}$, then
    $$\cd_{\RS{r}{G}{H}} \Nzr{G}{H} \leq \cd_{\HHb} \left(\Nzr{G}{H}/H\right) + n.$$
\end{Theorem}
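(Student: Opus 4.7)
The plan is to apply Corollary~\ref{cd_for_quotients} with ambient group $\Nzr{G}{H}$, normal subgroup $N = H$ (normal in $\Nzr{G}{H}$ by definition of the normalizer), family $\FF = \RS{r}{G}{H}$ of subgroups of $\Nzr{G}{H}$, and the candidate family $\HHb$ of subgroups of $\Nzr{G}{H}/H$ as in the statement.

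First I would verify that $\HHb$ is a full family of subgroups of $\Nzr{G}{H}/H$. Since each $L \in \RS{r}{G}{H}$ lies in $\Nzerr{r}{G}{H} = \Nzr{G}{H}$, each $LH/H$ is a genuine subgroup of $\Nzr{G}{H}/H$. For closure under conjugation, take $gH \in \Nzr{G}{H}/H$ and $LH/H \in \HHb$; the hypothesis $\Nzr{G}{H} = \Nzerr{r}{G}{H}$ gives $g \in \Nzerr{r}{G}{H}$, so by Lemma~\ref{RrGH-fam} we have $L^g \in \RS{r}{G}{H}$, and therefore $(LH/H)^{gH} = L^g H/H \in \HHb$. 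For closure under subgroups, suppose $K/H \leq LH/H$, so $H \leq K \leq LH$; by Dedekind's modular law $K = (K \cap L)H$, and since $K \cap L \leq L$ and $\RS{r}{G}{H}$ is closed under taking subgroups (Lemma~\ref{RrGH-fam}), we get $K \cap L \in \RS{r}{G}{H}$, whence $K/H = (K\cap L)H/H \in \HHb$.

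Next I would confirm the hypotheses preceding Corollary~\ref{cd_for_quotients}: condition (i), that $LH/H \in \HHb$ for every $L \in \FF$, holds by construction of $\HHb$; condition (ii), that $\FF \cap S \subseteq \FF$ for every $S/H \in \HHb$, is trivial. The derived family
$$\HH = \{\,S \leq \Nzr{G}{H} \st H \leq S,\ S/H \in \HHb\,\}$$
then equals $\{\,LH \st L \in \RS{r}{G}{H}\,\}$, so the assumption $\cd_{\RS{r}{G}{H}\cap LH} LH \leq n$ for all $L \in \RS{r}{G}{H}$ is precisely the bound $\cd_{\FF\cap S} S \leq n$ required by Corollary~\ref{cd_for_quotients}. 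Applying the corollary delivers $\cd_{\RS{r}{G}{H}} \Nzr{G}{H} \leq n + \cd_{\HHb}(\Nzr{G}{H}/H)$.

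The only real subtlety lies in the closure of $\HHb$ under subgroups: a priori, an arbitrary subgroup of $LH/H$ need not lift to a subgroup of the form $L'H/H$ with $L' \in \RS{r}{G}{H}$. The resolution comes from Dedekind's modular law, which applies precisely because subgroups of the quotient correspond to subgroups of $LH$ that contain $H$, forcing the identity $K = (K \cap L)H$ with $K \cap L \leq L \in \RS{r}{G}{H}$. Everything else is direct bookkeeping within the framework of Corollary~\ref{cd_for_quotients}.
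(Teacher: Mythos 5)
Your proof is correct and follows essentially the same route as the paper: show that $\HHb$ is a full family of subgroups of $\Nzr{G}{H}/H$ and then apply Corollary~\ref{cd_for_quotients} with $N=H$ and $\FF=\RS{r}{G}{H}$, identifying the derived family $\HH$ with $\{LH \st L\in\RS{r}{G}{H}\}$ so that the hypothesis $\cd_{\RS{r}{G}{H}\cap LH} LH \leq n$ is exactly what the corollary needs. Your explicit use of Dedekind's modular law to get $K=(K\cap L)H$ in the subgroup-closure step is a cleaner rendering of the paper's own argument, which takes $S'=S\cap L$ but leaves the identity $(S\cap L)H=S$ implicit.
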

\begin{proof}
    First of all, if $L\in\RS{r}{G}{H}$, since $\Nzerr{r}{G}{H} = \Nzr{G}{H}$, $L\leq \Nzr{G}{H}$, so
    $H \lhd LH$ and $LH/H$ is a subgroup of $\Nzr{G}{H}/H$.

    To see that $\HHb$ is closed under
    conjugation, we need to prove that $\left(LH/H\right)^{gH}$ belongs to $\HHb$ for $gH\in\Nzr{G}{H}/H$
    and $L\in\RS{r}{G}{H}$. Take $kH \in LH/H$. Then $(gH)(kH)(gH)^{-1} = gkg^{-1}H \in L^gH/H$,
    given the fact that $g\Nzr{G}{H}$ and hence $(LH)^g = L^gH$. And since $L\in\RS{r}{G}{H}$,
    $g\in\Nzr{G}{H}$ and $\RS{r}{G}{H}$ is closed under conjugation, $L^gH/H \in \HHb$.

    Let $L\in\RS{r}{G}{H}$ and $S/H\leq LH/H$. We need to find $S^\prime\in\RS{r}{G}{H}$ such that
    $S^\prime H/H = S/H$. Take $S^\prime = S\cap L$. Since $S\leq LH$, then $S\cap L \leq L$, so
    $S^\prime \in \FF_r$. We only need to see that $S^\prime \cap H \sim_j S^\prime$ for some $j\leq r$.
    But $S^\prime \cap H = S \cap L \cap H = S \cap H \cap L = S \cap L = S^\prime$, since $H\leq S$.
    In particular, $S^\prime \cap H \sim_j S$ for some $j\leq r$, so $\HHb$ is closed under taking subgroups.

    Now that we proved that $\HHb$ is a full family of subgroups of $\Nzr{G}{H}/H$, as
    direct consequence of Corollary~\ref{cd_for_quotients}, we obtain the rest
    of the theorem.
\end{proof}

\begin{Theorem}\label{gd_nzerr_nzr_general_case}
    Let $H\in\dF{F}{r}$ be such that $\Nzr{G}{H} = \Nzerr{r}{G}{H}$. Then, if there is $n\in\N$
    such that $\gd_{\RS{r}{G}{H}\cap LH} LH \leq n$
    for all $L\in\RS{r}{G}{H}$, then
    $$\gd_{\RS{r}{G}{H}} \Nzr{G}{H} \leq \gd_{\HHb} \left(\Nzr{G}{H}/H\right) + n.$$
\end{Theorem}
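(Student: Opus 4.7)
The plan is to mirror the proof of Theorem~\ref{cd_nzerr_nzr_general_case}, but invoking the geometric analogue Theorem~\ref{gd_for_quotients} in place of the cohomological Corollary~\ref{cd_for_quotients}. The assumption $\Nzr{G}{H}=\Nzerr{r}{G}{H}$ ensures that $H$ is normal in $\Nzr{G}{H}$, so that taking the quotient by $H$ makes sense, and the proof of Theorem~\ref{cd_nzerr_nzr_general_case} already established that $\HHb$ is a full family of subgroups of $\Nzr{G}{H}/H$; I will reuse that verification without repeating it.

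Next I would set up the application of Theorem~\ref{gd_for_quotients} with ambient group $\Nzr{G}{H}$, normal subgroup $H$, full family $\FF=\RS{r}{G}{H}$ of $\Nzr{G}{H}$, and the full family $\HHb$ of $\Nzr{G}{H}/H$. The compatibility condition $LH/H\in\HHb$ for every $L\in\RS{r}{G}{H}$ is immediate from the definition of $\HHb$. Writing
$$\HH=\{S\leq \Nzr{G}{H}\st SH/H\in\HHb\},$$
Theorem~\ref{gd_for_quotients} then yields
$$\gd_{\RS{r}{G}{H}}\Nzr{G}{H}\leq \gd_{\HHb}\bigl(\Nzr{G}{H}/H\bigr)+n,$$
provided we can verify that $\gd_{\RS{r}{G}{H}\cap S} S\leq n$ for every $S\in\HH$.

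The main (mild) obstacle is verifying this last inequality for an arbitrary $S\in\HH$, whereas the hypothesis of the theorem only gives it for subgroups of the special form $LH$ with $L\in\RS{r}{G}{H}$. The plan to handle this is: given $S\in\HH$, by definition of $\HHb$ there exists $L\in\RS{r}{G}{H}$ with $SH/H=LH/H$, hence $SH=LH$ and in particular $S\leq LH$. By the hypothesis of the theorem, $\gd_{\RS{r}{G}{H}\cap LH} LH\leq n$, so Proposition~\ref{Bredon:gd_subgroup} applied to $S\leq LH$ gives
$$\gd_{(\RS{r}{G}{H}\cap LH)\cap S} S \leq \gd_{\RS{r}{G}{H}\cap LH} LH\leq n.$$
Since $S\leq LH$, we have $(\RS{r}{G}{H}\cap LH)\cap S=\RS{r}{G}{H}\cap S$, which gives the required bound and closes the argument.
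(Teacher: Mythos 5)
Your proposal is correct and follows essentially the same route as the paper: reuse the verification from Theorem~\ref{cd_nzerr_nzr_general_case} that $\HHb$ is a full family of subgroups of $\Nzr{G}{H}/H$, note $LH/H\in\HHb$ for every $L\in\RS{r}{G}{H}$, and apply Theorem~\ref{gd_for_quotients}. You are in fact slightly more careful than the paper's two-line proof, since you explicitly verify the hypothesis $\gd_{\RS{r}{G}{H}\cap S}S\leq n$ for an arbitrary $S\in\HH$ (not only for subgroups of the form $LH$) via $S\leq SH=LH$ together with Proposition~\ref{Bredon:gd_subgroup}, a detail the paper leaves implicit.
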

\begin{proof}
    As we saw in Theorem~\ref{cd_nzerr_nzr_general_case}, $\HHb$ is a full family of subgroups of $\Nzr{G}{H}/H$.
    Since $\RS{r}{G}{H}$ is a full family of subgroups of $\Nzr{G}{H}$ and $LH/H \in\HHb$ for every $L\in\RS{r}{G}{H}$,
    Theorem~\ref{gd_for_quotients} yields the inequality we wanted to show.
\end{proof}

\begin{Corollary}\label{cd_nzerr_nzr_LH_in_same_family}
    Let $H\in\dF{F}{r}$ be such that $\Nzr{G}{H} = \Nzerr{r}{G}{H}$ and $LH\in\RS{r}{G}{H}$
    for all $L\in\RS{r}{G}{H}$. Then,
    $$\cd_{\RS{r}{G}{H}} \Nzr{G}{H} \leq \cd_{\HHb} \left(\Nzr{G}{H}/H\right).$$
\end{Corollary}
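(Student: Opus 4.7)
The plan is to recognise this corollary as the $n=0$ instance of Theorem~\ref{cd_nzerr_nzr_general_case}. The extra hypothesis in the corollary, namely $LH\in\RS{r}{G}{H}$ for every $L\in\RS{r}{G}{H}$, is precisely what is needed to guarantee that the dimensions $\cd_{\RS{r}{G}{H}\cap LH} LH$ all vanish, after which the theorem applies directly.

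In more detail, first I would invoke Theorem~\ref{cd_nzerr_nzr_general_case} so that $\HHb = \{LH/H \st L\in\RS{r}{G}{H}\}$ is already known to be a full family of subgroups of $\Nzr{G}{H}/H$; this uses the hypothesis $\Nzr{G}{H} = \Nzerr{r}{G}{H}$ and nothing else. Next I would verify that the constant $n$ appearing in that theorem can be taken to be $0$. For each $L\in\RS{r}{G}{H}$, by hypothesis $LH\in\RS{r}{G}{H}$, and since $LH\cap LH = LH$, this puts $LH$ itself into the restricted family $\RS{r}{G}{H}\cap LH$. Because $\RS{r}{G}{H}$ is a full (hence semi-full) family of subgroups of $\Nzr{G}{H}$ by Lemma~\ref{RrGH-fam}, its restriction $\RS{r}{G}{H}\cap LH$ is a semi-full family of subgroups of $LH$, so Proposition~\ref{G_in_FF_cd_0} yields $\cd_{\RS{r}{G}{H}\cap LH} LH = 0$.

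Having shown that the uniform bound in Theorem~\ref{cd_nzerr_nzr_general_case} holds with $n = 0$, the conclusion
\[
\cd_{\RS{r}{G}{H}} \Nzr{G}{H} \leq \cd_{\HHb}\!\left(\Nzr{G}{H}/H\right) + 0 = \cd_{\HHb}\!\left(\Nzr{G}{H}/H\right)
\]
follows immediately. There is no genuine obstacle here: the corollary is essentially a re-packaging observation, and the only thing to be mildly careful about is confirming that $LH$ really lands in $\RS{r}{G}{H}\cap LH$ (rather than merely in $\RS{r}{G}{H}$) so that Proposition~\ref{G_in_FF_cd_0} can be applied at the level of the subgroup $LH$ with the restricted family.
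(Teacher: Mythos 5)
Your proposal is correct and follows essentially the same route as the paper: the paper's own proof also reduces to checking that every relevant subgroup $LH$ lies in $\RS{r}{G}{H}$, so that the restricted dimension $\cd_{\RS{r}{G}{H}\cap LH} LH$ vanishes by Proposition~\ref{G_in_FF_cd_0}, and then applies the quotient bound (the paper cites Corollary~\ref{cd_for_quotients} directly, which is exactly Theorem~\ref{cd_nzerr_nzr_general_case} with $n=0$ as you use it). Your extra care in noting that $LH$ lands in the restricted family $\RS{r}{G}{H}\cap LH$ is the same observation the paper makes for the subgroups $S\in\HH$.
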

\begin{proof}
    By Theorem~\ref{cd_nzerr_nzr_general_case}, we know $\HHb$ is a full family
    of subgroups of $\Nzr{G}{H}$ and hence we can apply Corollary~\ref{cd_for_quotients}.
    Let $\HH = \{ S\leq G \st N \leq S \text{ and } S/N \in \HHb\}$ and let $S\in\HH$.
    By definition of $\HHb$ and $\HH$, we know that $S = LH$ for some $L\in\RS{r}{G}{H}$.
    Then, $S\in\RS{r}{G}{H}$, by hypothesis, so $\cd_{\RS{r}{G}{H}\cap S} S = 0$, as we needed
    to see.
\end{proof}

\begin{Corollary}\label{gd_nzerr_nzr_LH_in_same_family}
    Let $H\in\dF{F}{r}$ be such that $\Nzr{G}{H} = \Nzerr{r}{G}{H}$ and $LH\in\RS{r}{G}{H}$
    for all $L\in\RS{r}{G}{H}$. Then,
    $$\gd_{\RS{r}{G}{H}} \Nzr{G}{H} \leq \gd_{\HHb} \left(\Nzr{G}{H}/H\right).$$
\end{Corollary}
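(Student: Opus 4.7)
The plan is to mirror the structure of the preceding cohomological statement, Corollary~\ref{cd_nzerr_nzr_LH_in_same_family}, but now invoking the geometric analogue Theorem~\ref{gd_nzerr_nzr_general_case} in place of Theorem~\ref{cd_nzerr_nzr_general_case}. The key observation that makes the proof short is that the additional hypothesis $LH\in\RS{r}{G}{H}$ for every $L\in\RS{r}{G}{H}$ forces the constant $n$ appearing in Theorem~\ref{gd_nzerr_nzr_general_case} to be $0$.

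First, I would note that $\HHb = \{LH/H \st L\in\RS{r}{G}{H}\}$ is a full family of subgroups of $\Nzr{G}{H}/H$; this was already established within the proof of Theorem~\ref{cd_nzerr_nzr_general_case} and does not require the cohomological character of the statement, so it transfers to the present setting without change. Consequently the right-hand side $\gd_{\HHb}(\Nzr{G}{H}/H)$ is well defined.

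Next I would verify the hypothesis of Theorem~\ref{gd_nzerr_nzr_general_case}, namely the existence of $n\in\N$ with $\gd_{\RS{r}{G}{H}\cap LH} LH \leq n$ for every $L\in\RS{r}{G}{H}$. By assumption, $LH\in\RS{r}{G}{H}$; since $\RS{r}{G}{H}$ is a full family of subgroups of $\Nzr{G}{H}$ (by Lemma~\ref{RrGH-fam}) and $LH\leq\Nzr{G}{H}$, restricting to $LH$ gives $LH\in\RS{r}{G}{H}\cap LH$. Proposition~\ref{G_in_FF_gd_0} then yields $\gd_{\RS{r}{G}{H}\cap LH} LH = 0$, so we may take $n=0$.

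Applying Theorem~\ref{gd_nzerr_nzr_general_case} with $n=0$ gives
\[
\gd_{\RS{r}{G}{H}} \Nzr{G}{H} \leq \gd_{\HHb}\left(\Nzr{G}{H}/H\right) + 0 = \gd_{\HHb}\left(\Nzr{G}{H}/H\right),
\]
which is the desired bound. No substantive obstacle is expected; the only point requiring care is checking that $\RS{r}{G}{H}\cap LH$ genuinely is a (semi-)full family of $LH$ so that Proposition~\ref{G_in_FF_gd_0} applies, and this is immediate from $\RS{r}{G}{H}$ being full on $\Nzr{G}{H}$.
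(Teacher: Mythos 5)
Your proof is correct and follows essentially the same route as the paper: the paper applies Theorem~\ref{gd_for_quotients} (of which Theorem~\ref{gd_nzerr_nzr_general_case} is just the packaged form for this family) and observes that every relevant subgroup $S = LH$ lies in $\RS{r}{G}{H}$ by hypothesis, so the bound $n$ can be taken to be $0$, exactly as you argue via Proposition~\ref{G_in_FF_gd_0}.
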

\begin{proof}
    Let $\HH = \{ S\leq G \st N \leq S \text{ and } S/N \in \HHb\}$.
    By Theorem~\ref{gd_for_quotients}, we only need to see that $\gd_{\RS{r}{G}{H}\cap S}S = 0$
    for all $S\in\HH$. And that is true given that $S\in\RS{r}{G}{H}$ for all $S\in\HH$, as we
    saw in the proof of Corollary~\ref{cd_nzerr_nzr_LH_in_same_family}.
\end{proof}

\begin{Corollary}\label{cd_top_rank_space_for_rs_comm}
    Let $\left( \FF_n \right)_{n\in\N}$ be a strongly structured ascending chain of families
    of subgroups of $G$ such that the equivalence relation $\sim_r$ in $\dF{F}{r}$ is
    commensurability, i.e., if $H, K \in \dF{F}{r}$ then $H\cap K \in \dF{F}{r}$
    if and only if $| H : H\cap K| < \infty$ and $| K : H\cap K | < \infty$.
    Then, if $[H]_r \in [\dF{F}{r}]_r$ is such that $\Nzerr{r}{G}{H} = \Nzr{G}{H}$, we have
    $$\cd_{\RS{r}{G}{H}} \Nzerr{r}{G}{H} \leq \ucd \left(\Nzr{G}{H}/H \right).$$
\end{Corollary}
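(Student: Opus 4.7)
The plan is to apply Corollary~\ref{cd_nzerr_nzr_LH_in_same_family} with $N = H \lhd \Nzr{G}{H}$, thereby reducing the bound to $\cd_{\HHb}(\Nzr{G}{H}/H)$, and then to identify $\HHb$ with $\Fin(\Nzr{G}{H}/H)$ so that this quantity becomes $\ucd(\Nzr{G}{H}/H)$.

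The first hypothesis of Corollary~\ref{cd_nzerr_nzr_LH_in_same_family}, namely $\Nzerr{r}{G}{H} = \Nzr{G}{H}$, holds by assumption. For the remaining hypothesis, I will verify that for every $L \in \RS{r}{G}{H}$ one has $LH \in \RS{r}{G}{H}$. By definition of $\RS{r}{G}{H}$, there is some $j\leq r$ with $L\in\FF_j$ and $L\sim_j L\cap H$ (or $L\in\FF_0$ in the base case). Using that $\sim_r$ is commensurability and unwinding the SSACFS structure at intermediate levels yields that $L$ and $L\cap H$ are commensurable subgroups of $G$, so $[L:L\cap H]<\infty$; consequently $[LH:H] = [L:L\cap H]<\infty$, making $LH$ commensurable with $H$. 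Since $H \in \dF{F}{r}$ and $\sim_r$ is commensurability, this forces $LH \in \dF{F}{r}$ with $LH \cap H = H \in \dF{F}{r}$, hence $LH \sim_r LH \cap H$, i.e.\ $LH \in \RS{r}{G}{H}$.

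Having discharged the hypotheses of Corollary~\ref{cd_nzerr_nzr_LH_in_same_family}, what remains is to show $\HHb = \Fin(\Nzr{G}{H}/H)$. The inclusion $\HHb \subseteq \Fin(\Nzr{G}{H}/H)$ is immediate from $LH/H \cong L/(L\cap H)$ together with the finite-index computation above. For the reverse inclusion, I will pick an arbitrary finite subgroup $F/H \leq \Nzr{G}{H}/H$: then $F$ contains $H \in \dF{F}{r}$ with finite index, so $F$ is commensurable with $H$; the commensurability interpretation of $\sim_r$ places $F$ in $\dF{F}{r}$ with $F \sim_r F \cap H = H$, whence $F \in \RS{r}{G}{H}$ and $F/H = FH/H \in \HHb$. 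Combining this identification with the earlier reduction yields the desired inequality.

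The delicate step will be the implicit passage ``commensurable to an element of $\dF{F}{r} \Rightarrow$ lies in $\dF{F}{r}$''. The SSACFS axiom only characterises $\sim_r$ on pairs already known to be in $\dF{F}{r}$, so strictly one needs the ambient family $\FF_r$ to be closed under taking commensurable supergroups of elements of $\dF{F}{r}$. This closure is automatic in the principal applications via Lemma~\ref{bounded_rank_families_are_SSCFS} (where the defining rank is invariant under commensurability), which is precisely the setting that motivates the corollary; the remainder of the argument is a formal consequence of the machinery already assembled in Section~\ref{sec:construction_dFFr}.
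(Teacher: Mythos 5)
Your proposal follows the paper's own route: it reduces the statement to Corollary~\ref{cd_nzerr_nzr_LH_in_same_family} applied to $H\lhd\Nzr{G}{H}$ and then identifies $\HHb$ with $\Fin\left(\Nzr{G}{H}/H\right)$ via the second isomorphism theorem, which is exactly what the paper does. The delicate point you flag --- that finite-index supergroups of members of $\dF{F}{r}$ (such as $LH$, or the preimage $F$ of a finite subgroup of $\Nzr{G}{H}/H$) must again lie in $\FF_r$, and that members of $\RS{r}{G}{H}$ coming from lower strata or from $\FF_0$ must meet $H$ in finite index --- is equally implicit in the paper's proof, which simply asserts that $L\leq\Nzr{G}{H}$ belongs to $\RS{r}{G}{H}$ if and only if the relevant index is finite; in the intended applications (e.g.\ the families $\HH_r$ of virtually polycyclic subgroups of bounded Hirsch length, where all the relations $\sim_j$ are commensurability and $\FF_0=\Fin$) these closure properties hold, so your argument is at the same level of rigour as, and essentially identical to, the paper's.
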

\begin{proof}
    First, we need to see that the family $\HHb$ defined in Theorem~\ref{cd_nzerr_nzr_general_case}
    is the family of finite subgroups of $\Nzr{G}{H}/H$.

    For a subgroup of $L\leq \Nzr{G}{H}$, $LH/H\in\Fin(\Nzr{G}{H}/H)$ if and only if $|LH : H| < \infty$.
    Also, as $\sim_r$ is commensurability, $L\leq \Nzr{G}{H}$ belongs to $\RS{r}{G}{H}$ if and only if $|H : H\cap L| < \infty$.
    By the Second Isomorphism Theorem, we know $|LH : H| = |H : H\cap L|$, so $LH/H \in \Fin(\Nzr{G}{H}/H)$ if and only if
    $LH/H\in \HHb$, as we wanted to see.

    By Corollary~\ref{cd_nzerr_nzr_LH_in_same_family}, it only remains to prove that given
    $H\in\dF{F}{r}$ such that $\Nzerr{r}{G}{H} = \Nzr{G}{H}$ and $L\in\FF_r \cap \Nzr{G}{H}$ such that
    $|L : L\cap H| < \infty$, then $|LH : H| < \infty$. But that is consequence of $L\leq\Nzr{G}{H}$ and
    the Second Isomorphism Theorem.
\end{proof}

\begin{Corollary}\label{gd_top_rank_space_for_rs_comm}
    Let $\left( \FF_n \right)_{n\in\N}$ be a strongly structured ascending chain of families
    of subgroups of $G$ such that the equivalence relation $\sim_r$ in $\dF{F}{r}$ is
    commensurability. Assume that $[H]_r \in [\dF{F}{r}]_r$ is such that $\Nzerr{r}{G}{H} = \Nzr{G}{H}$
    and let $X$ be a model for $\UEGG{\Nzr{G}{H}/H}$. Then, $X$ is a model for $\EFG{\RS{r}{G}{H}}{\Nzerr{r}{G}{H}}$
    and
    $$\gd_{\RS{r}{G}{H}} \Nzerr{r}{G}{H} \leq \ugd \left(\Nzr{G}{H}/H \right).$$
\end{Corollary}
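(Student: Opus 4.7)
The plan is to mirror the proof of Corollary~\ref{cd_top_rank_space_for_rs_comm}, replacing the cohomological tools with their geometric analogues and invoking Lemma~\ref{class_space_for_quotients} to lift the specific space $X$ to a model for $\EFG{\RS{r}{G}{H}}{\Nzerr{r}{G}{H}}$, rather than merely bounding its dimension.

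First I would identify the family $\HHb = \{ LH/H \st L \in \RS{r}{G}{H}\}$ of subgroups of $\Nzr{G}{H}/H$ from Theorem~\ref{gd_nzerr_nzr_general_case} with $\Fin\left(\Nzr{G}{H}/H\right)$. The Second Isomorphism Theorem gives $|LH : H| = |L : L \cap H|$, and the hypothesis that $\sim_r$ is commensurability yields, exactly as in the proof of Corollary~\ref{cd_top_rank_space_for_rs_comm}, the characterisation that a subgroup $L \leq \Nzr{G}{H} = \Nzerr{r}{G}{H}$ lies in $\RS{r}{G}{H}$ if and only if $|L : L \cap H| < \infty$. Combining these two observations, $LH/H$ is finite precisely when $L \in \RS{r}{G}{H}$, so $\HHb = \Fin\left(\Nzr{G}{H}/H\right)$ and the hypothesised model $X$ for $\UEGG{\Nzr{G}{H}/H}$ is the same thing as a model for $\EFG{\HHb}{\Nzr{G}{H}/H}$.

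Next I would invoke Lemma~\ref{class_space_for_quotients} with ambient group $\Nzr{G}{H}$ and normal subgroup $H$: the space $X$, equipped with the $\Nzr{G}{H}$-action induced by the quotient map, becomes a model for $\EFG{\HH}{\Nzr{G}{H}}$, where $\HH = \{L \leq \Nzr{G}{H} \st LH/H \in \HHb\}$. The crucial step is then to verify the equality $\HH = \RS{r}{G}{H}$. The inclusion $\RS{r}{G}{H} \subseteq \HH$ is immediate from the first step, while the reverse inclusion is exactly the content of the closing paragraph of the proof of Corollary~\ref{cd_top_rank_space_for_rs_comm}, since the description of both $\HH$ and $\RS{r}{G}{H}$ collapses to the same index condition $|L : L \cap H| < \infty$ under the commensurability hypothesis on $\sim_r$. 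Once this equality is secured, $X$ is a model for $\EFG{\RS{r}{G}{H}}{\Nzerr{r}{G}{H}}$, and choosing $X$ to realise $\ugd(\Nzr{G}{H}/H)$ yields the stated inequality $\gd_{\RS{r}{G}{H}} \Nzerr{r}{G}{H} \leq \ugd(\Nzr{G}{H}/H)$.

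The main obstacle I expect is the equality $\HH = \RS{r}{G}{H}$: one must take care that every subgroup $L \leq \Nzr{G}{H}$ with $|L : L \cap H| < \infty$ actually belongs to $\RS{r}{G}{H}$, which pivots on the characterisation established in the first step and in particular on $\sim_r$ being literally commensurability on $\dF{F}{r}$ rather than just any strong equivalence. The remainder of the argument is essentially formal: it is the upgrade from ``bounding a dimension via a quotient'' (Corollary~\ref{cd_for_quotients}) to ``lifting a classifying space via a quotient'' (Lemma~\ref{class_space_for_quotients}) combined with the identification of the relevant families.
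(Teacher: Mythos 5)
Your proposal is correct and follows essentially the same route as the paper: identify $\HHb$ with $\Fin\left(\Nzr{G}{H}/H\right)$ via the Second Isomorphism Theorem and the commensurability hypothesis, show $\HH = \RS{r}{G}{H}$, and apply Lemma~\ref{class_space_for_quotients} to lift $X$ to a model for $\EFG{\RS{r}{G}{H}}{\Nzerr{r}{G}{H}}$. Your only (harmless) deviation is reading the dimension inequality directly off the lifted model instead of re-citing Corollary~\ref{gd_nzerr_nzr_LH_in_same_family} as the paper does.
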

\begin{proof}
    Let $\HH = \{ L \leq G \st LH/H \in \Fin (\Nzr{G}{H}/H) \}$. If we show that $\HH = \RS{r}{G}{H}$, by
    Lemma~\ref{class_space_for_quotients}, we will reach the desired conclusions.

    But by the same argumentation we used to prove $\HHb = \Fin(\Nzr{G}{H}/H)$ in Corollary~\ref{cd_top_rank_space_for_rs_comm},
    $L\in\HH$ if and only if $L\in\RS{r}{G}{H}$.

    The inequality of Bredon geometric dimensions can be also proven as in the proof of Corollary~\ref{cd_top_rank_space_for_rs_comm},
    using Corollary~\ref{gd_nzerr_nzr_LH_in_same_family}.
\end{proof}

We would like to relax the condition of $\Nzr{G}{H} = \Nzerr{r}{G}{H}$ in order to provide
upper bounds for the Bredon dimension with respect to the families $\RS{r}{G}{H}$ of subgroups
of $\Nzerr{r}{G}{H}$ in a more general set-up. For that, the following
equivalence will be helpful:

\begin{Lemma}\label{cond_are_equivalent_RSrGH}
    Let $H\in\dF{F}{r}$. Then, the following conditions are equivalent:
    \begin{enumerate}[label = {\emph{(\roman*)}}, noitemsep]
        \item there is $H^\prime \in [H]_r$ such that $\Nzr{G}{H^\prime} = \Nzerr{r}{G}{H}$;
        \item for all $K\leq \Nzerr{r}{G}{H}$ there is $H_K \in [H]_r$ such that
        $K \leq \Nzr{G}{H_K}.$
    \end{enumerate}
\end{Lemma}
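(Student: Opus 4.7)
The plan is to first isolate a preliminary fact that does all the work: the subgroup $\Nzerr{r}{G}{H}$ depends only on the equivalence class $[H]_r$, meaning that whenever $H^\prime \sim_r H$ we have $\Nzerr{r}{G}{H^\prime} = \Nzerr{r}{G}{H}$. I would prove this first, using Lemma~\ref{SSACFS_LW_eqrel}, which guarantees that $\sim_r$ is a strong equivalence relation and in particular $A \sim_r B$ if and only if $A^g \sim_r B^g$ for every $g \in G$. Combined with transitivity, this yields $H^g \sim_r H \iff H^{\prime g} \sim_r H^\prime$, so the two sets of conjugating elements agree.

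For the implication $(i) \Rightarrow (ii)$: given $H^\prime \in [H]_r$ with $\Nzr{G}{H^\prime} = \Nzerr{r}{G}{H}$, take $H_K = H^\prime$ for every $K \leq \Nzerr{r}{G}{H}$. The inclusion $K \leq \Nzerr{r}{G}{H} = \Nzr{G}{H^\prime} = \Nzr{G}{H_K}$ is immediate, and $H_K \in [H]_r$ by hypothesis, so $(ii)$ holds.

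For the implication $(ii) \Rightarrow (i)$: apply the hypothesis to the particular subgroup $K = \Nzerr{r}{G}{H}$ to obtain some $H^\prime \in [H]_r$ with $\Nzerr{r}{G}{H} \leq \Nzr{G}{H^\prime}$. The reverse inclusion is automatic: any $g \in \Nzr{G}{H^\prime}$ satisfies $H^{\prime g} = H^\prime$, whence $H^{\prime g} \cap H^\prime = H^\prime \in \dF{F}{r}$, i.e.\ $H^{\prime g} \sim_r H^\prime$, so $g \in \Nzerr{r}{G}{H^\prime}$. The preliminary observation then gives $\Nzr{G}{H^\prime} \leq \Nzerr{r}{G}{H^\prime} = \Nzerr{r}{G}{H}$, and combining both containments yields $\Nzr{G}{H^\prime} = \Nzerr{r}{G}{H}$, establishing $(i)$.

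There is no serious obstacle in this argument; the content of the lemma is essentially the bookkeeping identity $\Nzerr{r}{G}{H^\prime} = \Nzerr{r}{G}{H}$ for $H^\prime \in [H]_r$, together with the trivial containment $\Nzr{G}{H^\prime} \leq \Nzerr{r}{G}{H^\prime}$. The clever step—if any—is realising that condition $(ii)$ needs only to be applied to the single distinguished subgroup $K = \Nzerr{r}{G}{H}$ to recover the strong statement $(i)$.
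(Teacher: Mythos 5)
Your proof is correct and follows essentially the same route as the paper: the forward direction is the same one-line observation, and for the converse both arguments apply $(ii)$ to the single subgroup $K=\Nzerr{r}{G}{H}$ and then use the containment $\Nzr{G}{H^\prime}\leq\Nzerr{r}{G}{H^\prime}$ together with the fact that $\Nzerr{r}{G}{H^\prime}=\Nzerr{r}{G}{H}$ for $H^\prime\in[H]_r$. The only difference is that you spell out and prove this last well-definedness fact (via the strong equivalence relation property from Lemma~\ref{SSACFS_LW_eqrel}), which the paper uses implicitly.
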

\begin{proof}
    For the first implication ((i) $\Rightarrow$ (ii)), let $K\leq\Nzerr{r}{G}{H}$. If we
    take $H_K = H^\prime$, we are done, since $\Nzr{G}{H^\prime} = \Nzerr{r}{G}{H}$.

    For the other implication, take $K = \Nzerr{r}{G}{H}$. Then, by hypothesis,
    there is $H_{\Nzerr{r}{G}{H}}$ such that
    $\Nzerr{r}{G}{H} \leq \Nzr{G}{H_{\Nzerr{r}{G}{H}}}$. But since $H_{\Nzerr{r}{G}{H}} \in [H]_r$,
    we also have $\Nzr{G}{H_{\Nzerr{r}{G}{H}}} \leq \Nzerr{r}{G}{H_{\Nzerr{r}{G}{H}}} = \Nzerr{r}{G}{H}$. Hence, taking
    $H^\prime = H_{\Nzerr{r}{G}{H}}$ completes the proof.
\end{proof}

Condition $(ii)$ in Lemma~\ref{cond_are_equivalent_RSrGH} (and hence condition $(i)$) can be relaxed by restricting
$K$ to belong to some set of subgroups of $\Nzerr{r}{G}{H}$, instead of it being any subgroup.
The following results apply this idea to a decomposition of $\Nzerr{r}{G}{H}$
as a direct union of subgroups.

We can see a similar methodology applied to the particular case of CAT(0) groups and the chains
of families $\Fin \subseteq \Vcy$ in \cite{degrijsepetrosyan_cat0} and
$\left(\AA_r\right)_{r\in\N}$ in \cite{prytula_cat0}, where $\AA_r$ is the family of virtually
abelian subgroups of torsion-free rank less than or equal to $r$. In the following results we relax the hypothesis
used in the aforementioned publications, while also making the proofs independent of the particular class of groups
and families of subgroups considered.

\begin{Theorem}\label{NGHr_direct_union_dim_RrGH}
    Let $G$ be a group and $\left(\FF_r\right)_{r\in\N}$ be a SSACFS of $G$. Let $H\in\dF{F}{r}$ such that
    $\Nzerr{r}{G}{H}$ is the direct union of $\{N_\lambda \st \lambda\in\Lambda\}$ with:
    \begin{enumerate}[label= {\emph{(\roman*)}}, noitemsep]
        \item $\Lambda$ is countable;
        \item for all $K\in\RS{r}{G}{H}$ there is $\lambda\in\Lambda$ such that $K\leq N_\lambda$; and
        \item for all $\lambda\in\Lambda$ there is $H_\lambda\in\dF{F}{r}$ such that $H\sim_r H_\lambda$
        and $N_\lambda \leq \Nzr{G}{H_\lambda}.$
    \end{enumerate}
    Then, if $s = \underset{\lambda\in\Lambda}{\operatorname{sup}}\{
    \cd_{\RS{r}{G}{H}\cap\Nzr{G}{H_\lambda}}\Nzr{G}{H_\lambda}\}$,
    $$ s \leq \cd_{\RS{r}{G}{H}} \Nzerr{r}{G}{H} \leq s + 1$$
    and
    $$ s \leq \gd_{\RS{r}{G}{H}} \Nzerr{r}{G}{H} \leq s + 1.$$
\end{Theorem}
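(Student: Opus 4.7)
The plan is to reduce the theorem to the direct-union results (Theorems~\ref{cd_direct_unions} and \ref{gd_direct_unions}) combined with the subgroup restriction results (Theorem~\ref{Bredon:cd_subgroup} and Proposition~\ref{Bredon:gd_subgroup}). The intuition is that the hypotheses give us a direct-union decomposition of $\Nzerr{r}{G}{H}$ adapted to the family $\RS{r}{G}{H}$, and each member $N_\lambda$ sits inside $\Nzr{G}{H_\lambda}$, so we can transport dimension bounds between $N_\lambda$ and $\Nzr{G}{H_\lambda}$.

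First, I would verify that $\RS{r}{G}{H}$ and the families $\FF_\lambda = \RS{r}{G}{H} \cap N_\lambda$ are compatible with the direct union in the sense of Definition~\ref{Flambda_compatible}. Conditions (1), (2) and (4) are immediate from $\RS{r}{G}{H}$ being a full family (Lemma~\ref{RrGH-fam}); condition (3), $\RS{r}{G}{H} \subseteq \bigcup_\lambda \FF_\lambda$, is precisely what hypothesis (ii) provides, since any $K\in \RS{r}{G}{H}$ is contained in some $N_\lambda$ and then $K = K\cap N_\lambda \in \FF_\lambda$.

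Next, I would apply Theorems~\ref{cd_direct_unions} and \ref{gd_direct_unions} to obtain
\[
\underset{\lambda\in\Lambda}{\operatorname{sup}}\{\cd_{\FF_\lambda} N_\lambda\}
\leq \cd_{\RS{r}{G}{H}} \Nzerr{r}{G}{H} \leq
\underset{\lambda\in\Lambda}{\operatorname{sup}}\{\cd_{\FF_\lambda} N_\lambda\} + 1,
\]
and the analogous statement for $\gd$. Then I would relate $\cd_{\FF_\lambda} N_\lambda$ to $\cd_{\RS{r}{G}{H}\cap \Nzr{G}{H_\lambda}} \Nzr{G}{H_\lambda}$ using hypothesis (iii): because $H_\lambda \sim_r H$, we have $\Nzerr{r}{G}{H_\lambda} = \Nzerr{r}{G}{H}$ and hence $\Nzr{G}{H_\lambda} \leq \Nzerr{r}{G}{H}$, so Theorem~\ref{Bredon:cd_subgroup} yields
\[
\cd_{\RS{r}{G}{H}\cap \Nzr{G}{H_\lambda}} \Nzr{G}{H_\lambda} \leq \cd_{\RS{r}{G}{H}} \Nzerr{r}{G}{H},
\]
which upon taking $\sup_\lambda$ gives $s \leq \cd_{\RS{r}{G}{H}} \Nzerr{r}{G}{H}$. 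For the upper bound, since $N_\lambda \leq \Nzr{G}{H_\lambda}$ and $(\RS{r}{G}{H}\cap \Nzr{G}{H_\lambda})\cap N_\lambda = \RS{r}{G}{H}\cap N_\lambda = \FF_\lambda$, a second application of Theorem~\ref{Bredon:cd_subgroup} gives $\cd_{\FF_\lambda} N_\lambda \leq \cd_{\RS{r}{G}{H}\cap \Nzr{G}{H_\lambda}} \Nzr{G}{H_\lambda} \leq s$, so the sup in the direct-union bound is $\leq s$, and the chain of inequalities closes to $\cd_{\RS{r}{G}{H}} \Nzerr{r}{G}{H} \leq s + 1$. The geometric case is entirely parallel, replacing Theorem~\ref{Bredon:cd_subgroup} by Proposition~\ref{Bredon:gd_subgroup} and Theorem~\ref{cd_direct_unions} by Theorem~\ref{gd_direct_unions}.

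The only point requiring some attention (rather than being a routine bookkeeping step) is the compatibility verification, specifically (3) in Definition~\ref{Flambda_compatible}, since this is exactly where hypothesis (ii) is used in an essential way and is what distinguishes this set-up from the naive direct-union decomposition; the rest is a clean application of the restriction inequalities together with the identification $\Nzerr{r}{G}{H_\lambda} = \Nzerr{r}{G}{H}$ inherited from $H_\lambda \sim_r H$.
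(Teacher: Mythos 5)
Your proposal is correct and follows essentially the same route as the paper's proof: compatibility of $\RS{r}{G}{H}$ with the direct union (which the paper obtains by citing Proposition~\ref{every_sg_in_one_summand_implies_compatible}, whose content you verify directly), then Theorems~\ref{cd_direct_unions} and \ref{gd_direct_unions}, and finally the restriction inequalities of Theorem~\ref{Bredon:cd_subgroup} and Proposition~\ref{Bredon:gd_subgroup} applied to $N_\lambda \leq \Nzr{G}{H_\lambda} \leq \Nzerr{r}{G}{H}$ for both the upper and lower bounds. No gaps.
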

\begin{proof}
    By Proposition~\ref{every_sg_in_one_summand_implies_compatible}, by assumption $(ii)$ and
    since $\RS{r}{G}{H}$ is a full family, we know that
    $\RS{r}{G}{H} \cap N_\lambda$ for $\lambda\in\Lambda$ and $\RS{r}{G}{H}$ are compatible with the
    direct union. Then, by Theorems \ref{cd_direct_unions} and \ref{gd_direct_unions},
    $$\cd_{\RS{r}{G}{H}} \Nzerr{r}{G}{H} \leq
    \underset{\lambda\in\Lambda}{\operatorname{sup}}\{
    \cd_{\RS{r}{G}{H}\cap N_\lambda} N_\lambda\} + 1$$
    and
    $$\gd_{\RS{r}{G}{H}} \Nzerr{r}{G}{H} \leq
    \underset{\lambda\in\Lambda}{\operatorname{sup}}\{
    \gd_{\RS{r}{G}{H}\cap N_\lambda} N_\lambda\} + 1,$$
    respectively.

    Consider now $H_\lambda$ as in assumption $(iii)$. $N_\lambda \leq \Nzr{G}{H_\lambda}$ and
    $\RS{r}{G}{H} \cap \Nzr{G}{H_\lambda}$ is a full family of subgroups, so by
    Theorem~\ref{Bredon:cd_subgroup}, $\cd_{\RS{r}{G}{H}\cap N_\lambda} N_\lambda \leq
    \cd_{\RS{r}{G}{H} \cap \Nzr{G}{H_\lambda}} \Nzr{G}{H_\lambda}$. That proves $\cd_{\RS{r}{G}{H}} \Nzerr{r}{G}{H} \leq s + 1$.
    Using Theorem~\ref{Bredon:gd_subgroup} instead, we obtain $\gd_{\RS{r}{G}{H}} \Nzerr{r}{G}{H} \leq s + 1$.

    Since $H_\lambda \sim_r H$, we know $\Nzr{G}{H_\lambda} \leq \Nzerr{r}{G}{H}$. For that reason,
    by Theorems \ref{Bredon:cd_subgroup} and \ref{Bredon:gd_subgroup},
    we have for all $\lambda\in\Lambda$
    $$\cd_{\RS{r}{G}{H} \cap \Nzr{G}{H_\lambda}} \Nzr{G}{H_\lambda} \leq
    \cd_{\RS{r}{G}{H}} \Nzerr{r}{G}{H}$$
    and
    $$\gd_{\RS{r}{G}{H} \cap \Nzr{G}{H_\lambda}} \Nzr{G}{H_\lambda} \leq
    \gd_{\RS{r}{G}{H}} \Nzerr{r}{G}{H}.$$
    And since the supremum of a set is the smallest of its upper bounds, that finishes the proof.
\end{proof}

We can now deduce similar results to \ref{cd_nzerr_nzr_general_case}, \ref{gd_nzerr_nzr_general_case},
\ref{cd_nzerr_nzr_LH_in_same_family}, \ref{gd_nzerr_nzr_LH_in_same_family}, \ref{cd_top_rank_space_for_rs_comm}
and \ref{gd_top_rank_space_for_rs_comm} substituting the condition that $\Nzr{G}{H} = \Nzerr{r}{G}{H}$
by the hypotheses in Theorem~\ref{NGHr_direct_union_dim_RrGH}.

\begin{Theorem}\label{cd_NGHr_direct_union_general_case}
    Let $H\in\dF{F}{r}$ such that $\Nzerr{r}{G}{H}$ is the direct union of
    $\{N_\lambda \st \lambda\in\Lambda\}$ and conditions $(i)-(iii)$ in
    Theorem~\ref{NGHr_direct_union_dim_RrGH} hold. Then, for every $\lambda\in\Lambda$,
    $\HHb_\lambda = \{ LH_\lambda/H_\lambda \st L\in\RS{r}{\Nzr{G}{H_\lambda}}{H_\lambda}\}$
    is a full family of subgroups of $\Nzr{G}{H_\lambda}/H_\lambda$.
    Moreover, if there is $n\in\N$ such that
    $\cd_{\RS{r}{\Nzr{G}{H_\lambda}}{H_\lambda}\cap LH_\lambda} LH_\lambda \leq n$
    for all $L\in\RS{r}{\Nzr{G}{H_\lambda}}{H_\lambda}$ and for all $\lambda\in\Lambda$, then
    $$\cd_{\RS{r}{G}{H} \Nzerr{r}{G}{H}} \Nzerr{r}{G}{H} \leq
    \underset{\lambda\in\Lambda}{\operatorname{sup}}\{\cd_{\HHb_\lambda} \left(\Nzr{G}{H_\lambda}/H_\lambda\right)\} + n + 1.$$
\end{Theorem}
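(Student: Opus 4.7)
The plan is to reduce the statement to two results already available in the chapter: Theorem \ref{cd_nzerr_nzr_general_case}, applied internally in each $\Nzr{G}{H_\lambda}$, and Theorem \ref{NGHr_direct_union_dim_RrGH}, applied to $\Nzerr{r}{G}{H}$ as a direct union.

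First, for each $\lambda \in \Lambda$ I would pass from $G$ to the subgroup $\Nzr{G}{H_\lambda}$. The restricted chain $\bigl(\FF_i \cap \Nzr{G}{H_\lambda}\bigr)_{i\in\N}$ is readily checked to be a SSACFS of $\Nzr{G}{H_\lambda}$, since the SSACFS property of $(\FF_i)_{i\in\N}$ is preserved by intersection with any subgroup. The crucial identity needed is
$$\Nzerr{r}{\Nzr{G}{H_\lambda}}{H_\lambda} \;=\; \Nzr{G}{H_\lambda},$$
which holds because every element of $\Nzr{G}{H_\lambda}$ fixes $H_\lambda$ under conjugation, so the $\sim_r$-condition defining the top-normalizer is automatic. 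This places us exactly in the setting of Theorem \ref{cd_nzerr_nzr_general_case} inside $\Nzr{G}{H_\lambda}$ with top class $H_\lambda$.

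Second, I would identify the corresponding $\RS{r}{\cdot}{H_\lambda}$-family. Using $H_\lambda \sim_r H$ (hypothesis \emph{(iii)} of Theorem \ref{NGHr_direct_union_dim_RrGH}) together with the fact that $\RS{r}{G}{?}$ is constant on $\sim_r$-classes (Lemma in Section \ref{sec:construction_dFFr}), and the fact that $\sim_j$ for the restricted chain is simply the restriction of the original $\sim_j$, one obtains
$$\RS{r}{\Nzr{G}{H_\lambda}}{H_\lambda} \;=\; \RS{r}{G}{H} \cap \Nzr{G}{H_\lambda}.$$
Theorem \ref{cd_nzerr_nzr_general_case} then delivers, simultaneously, both that $\HHb_\lambda$ is a full family of subgroups of $\Nzr{G}{H_\lambda}/H_\lambda$ and, under the hypothesis $\cd_{\RS{r}{\Nzr{G}{H_\lambda}}{H_\lambda}\cap LH_\lambda} LH_\lambda \leq n$ for every relevant $L$, the estimate
$$\cd_{\RS{r}{G}{H}\cap\Nzr{G}{H_\lambda}}\Nzr{G}{H_\lambda} \;\leq\; \cd_{\HHb_\lambda}\bigl(\Nzr{G}{H_\lambda}/H_\lambda\bigr) + n.$$

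Finally, I would take the supremum of the right-hand side over $\lambda \in \Lambda$ and substitute into the direct-union bound of Theorem \ref{NGHr_direct_union_dim_RrGH}, which reads $\cd_{\RS{r}{G}{H}} \Nzerr{r}{G}{H} \leq s + 1$ with $s = \sup_\lambda \cd_{\RS{r}{G}{H}\cap\Nzr{G}{H_\lambda}} \Nzr{G}{H_\lambda}$. The additional $+1$ in the conclusion is exactly this direct-union contribution. The main difficulty is not conceptual but bookkeeping: one must confirm that $\Nzerr{r}{?}{H_\lambda}$, $\RS{r}{?}{H_\lambda}$ and the relations $\sim_j$ behave compatibly when the ambient group changes from $G$ to $\Nzr{G}{H_\lambda}$, and that the cohomological-dimension hypothesis on $LH_\lambda$ transports correctly between these two ambient settings. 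Once these verifications are in place the inequality follows by simple composition.
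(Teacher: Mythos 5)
Your proposal is correct and follows essentially the same route as the paper: identify $\RS{r}{\Nzr{G}{H_\lambda}}{H_\lambda} = \RS{r}{G}{H}\cap\Nzr{G}{H_\lambda}$ via $H_\lambda\sim_r H$, apply Theorem~\ref{cd_nzerr_nzr_general_case} inside each $\Nzr{G}{H_\lambda}$ to get the bound $\cd_{\HHb_\lambda}\left(\Nzr{G}{H_\lambda}/H_\lambda\right)+n$, and then feed the supremum into the direct-union estimate of Theorem~\ref{NGHr_direct_union_dim_RrGH}, which contributes the final $+1$. Your explicit verification that $\Nzerr{r}{\Nzr{G}{H_\lambda}}{H_\lambda}=\Nzr{G}{H_\lambda}$ and that the restricted chain remains a SSACFS simply spells out details the paper leaves implicit.
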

\begin{proof}
    First, note that $\RS{r}{\Nzr{G}{H_\lambda}}{H_\lambda} = \RS{r}{G}{H} \cap \Nzr{G}{H_\lambda},$
    since $H_\lambda \sim_r H$ for all $\lambda\in\Lambda$. In particular, for all $L\in\RS{r}{\Nzr{G}{H_\lambda}}{H_\lambda}$
    we have $H_\lambda \lhd LH$, so $\HHb_\lambda$ is a well defined and, following the reasoning
    in Theorem~\ref{cd_nzerr_nzr_general_case}, full family of subgroups of $\Nzr{G}{H_\lambda}$.

    By Theorem~\ref{NGHr_direct_union_dim_RrGH}, $$\cd_{\RS{r}{G}{H}} \Nzerr{r}{G}{H} \leq \underset{\lambda\in\Lambda}{\operatorname{sup}}\{
    \cd_{\RS{r}{G}{H}\cap\Nzr{G}{H_\lambda}}\Nzr{G}{H_\lambda}\} + 1.$$

    That means that we only need to prove that $\cd_{\RS{r}{G}{H}\cap\Nzr{G}{H_\lambda}}\Nzr{G}{H_\lambda}
    \leq \cd_{\HHb_\lambda} \left(\Nzr{G}{H_\lambda}/H_\lambda\right) + n$ for every $\lambda\in\Lambda$.
    And that is consequence of Theorem~\ref{cd_nzerr_nzr_general_case}.
\end{proof}

\begin{Theorem}\label{gd_NGHr_direct_union_general_case}
    Let $H\in\dF{F}{r}$ such that $\Nzerr{r}{G}{H}$ is the direct union of
    $\{N_\lambda \st \lambda\in\Lambda\}$ and conditions $(i)-(iii)$ in
    Theorem~\ref{NGHr_direct_union_dim_RrGH} hold. Then, if there is $n\in\N$ such that
    $\gd_{\RS{r}{\Nzr{G}{H_\lambda}}{H_\lambda}\cap LH_\lambda} LH_\lambda \leq n$
    for all $L\in\RS{r}{\Nzr{G}{H_\lambda}}{H_\lambda}$ and for all $\lambda\in\Lambda$,
    $$\gd_{\RS{r}{G}{H} \Nzerr{r}{G}{H}} \Nzerr{r}{G}{H} \leq
    \underset{\lambda\in\Lambda}{\operatorname{sup}}\{\gd_{\HHb_\lambda} \left(\Nzr{G}{H_\lambda}/H_\lambda\right)\} + n + 1.$$
\end{Theorem}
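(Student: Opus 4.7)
The plan is to mirror the proof of Theorem~\ref{cd_NGHr_direct_union_general_case}, substituting the geometric bounds for the cohomological ones throughout. First, I would invoke the geometric half of Theorem~\ref{NGHr_direct_union_dim_RrGH} to obtain
\[
\gd_{\RS{r}{G}{H}} \Nzerr{r}{G}{H} \leq \underset{\lambda\in\Lambda}{\operatorname{sup}}\left\{\gd_{\RS{r}{G}{H}\cap\Nzr{G}{H_\lambda}}\Nzr{G}{H_\lambda}\right\} + 1.
\]
This reduces the problem to uniformly bounding, over $\lambda\in\Lambda$, the geometric dimension of each $\Nzr{G}{H_\lambda}$ with respect to the corresponding restricted family.

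Next, for each $\lambda\in\Lambda$, assumption $(iii)$ gives $H_\lambda \sim_r H$, so $\Nzerr{r}{G}{H} = \Nzerr{r}{G}{H_\lambda}$ and therefore $\RS{r}{G}{H}\cap\Nzr{G}{H_\lambda} = \RS{r}{\Nzr{G}{H_\lambda}}{H_\lambda}$, exactly as in the proof of Theorem~\ref{cd_NGHr_direct_union_general_case}. Moreover, inside the ambient group $\Nzr{G}{H_\lambda}$ the subgroup $H_\lambda$ is normal, and every element of $\Nzr{G}{H_\lambda}$ automatically stabilises the $\sim_r$-class of $H_\lambda$; hence the usual normaliser and the $r$-normaliser coincide within $\Nzr{G}{H_\lambda}$, placing us precisely in the setting required to apply Theorem~\ref{gd_nzerr_nzr_general_case}.

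Applying that theorem with $\Nzr{G}{H_\lambda}$ as the ambient group and the standing hypothesis $\gd_{\RS{r}{\Nzr{G}{H_\lambda}}{H_\lambda}\cap LH_\lambda} LH_\lambda \leq n$ for all $L\in\RS{r}{\Nzr{G}{H_\lambda}}{H_\lambda}$, I would obtain
\[
\gd_{\RS{r}{\Nzr{G}{H_\lambda}}{H_\lambda}} \Nzr{G}{H_\lambda} \leq \gd_{\HHb_\lambda}\left(\Nzr{G}{H_\lambda}/H_\lambda\right) + n.
\]
Combining this estimate with the previous one, taking the supremum over $\lambda\in\Lambda$, and adding the final $+1$ yields the claimed bound. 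No serious obstacle is anticipated: the argument is entirely parallel to the cohomological case already proved, and the only point demanding care is the bookkeeping that identifies $\RS{r}{G}{H}\cap\Nzr{G}{H_\lambda}$ with $\RS{r}{\Nzr{G}{H_\lambda}}{H_\lambda}$ and verifies the hypothesis of Theorem~\ref{gd_nzerr_nzr_general_case} inside the smaller group $\Nzr{G}{H_\lambda}$, both of which follow directly from condition $(iii)$ of Theorem~\ref{NGHr_direct_union_dim_RrGH}.
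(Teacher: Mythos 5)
Your proposal is correct and follows essentially the same route as the paper: the paper's proof simply repeats the argument of Theorem~\ref{cd_NGHr_direct_union_general_case} with Theorem~\ref{gd_nzerr_nzr_general_case} in place of the cohomological version, i.e.\ it combines the geometric bound from Theorem~\ref{NGHr_direct_union_dim_RrGH} with the identification $\RS{r}{G}{H}\cap\Nzr{G}{H_\lambda} = \RS{r}{\Nzr{G}{H_\lambda}}{H_\lambda}$ and then applies Theorem~\ref{gd_nzerr_nzr_general_case} inside each $\Nzr{G}{H_\lambda}$, exactly as you do. Your explicit check that the normaliser and the $r$-normaliser of $H_\lambda$ coincide inside $\Nzr{G}{H_\lambda}$ is a point left implicit in the paper, and is correctly handled.
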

\begin{proof}
    As in proof of Theorem~\ref{cd_NGHr_direct_union_general_case} but using Theorem~\ref{gd_nzerr_nzr_general_case} instead of Theorem~\ref{cd_nzerr_nzr_general_case}.
\end{proof}

\begin{Corollary}\label{cd_NGHr_direct_union_LH_in_same_family}
    Let $H\in\dF{F}{r}$ be such that $\Nzerr{r}{G}{H}$ is the direct union of
    $\{N_\lambda \st \lambda\in\Lambda\}$ and conditions $(i)-(iii)$ in
    Theorem~\ref{NGHr_direct_union_dim_RrGH} hold. Assume $LH_\lambda\in\RS{r}{\Nzr{G}{H_\lambda}}{H_\lambda}$
    for all $L\in\RS{r}{\Nzr{G}{H_\lambda}}{H_\lambda}$ and for all $\lambda\in\Lambda$.
    Then,
    $$\cd_{\RS{r}{G}{H}} \Nzerr{r}{G}{H} \leq
    \underset{\lambda\in\Lambda}{\operatorname{sup}}\{\cd_{\HHb_\lambda} \left(\Nzr{G}{H_\lambda}/H_\lambda\right)\} + 1.$$
\end{Corollary}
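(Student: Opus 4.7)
The plan is to combine the direct-union bound from Theorem~\ref{NGHr_direct_union_dim_RrGH} with the ``$LH$ in the same family'' specialisation from Corollary~\ref{cd_nzerr_nzr_LH_in_same_family}, applied inside each $\Nzr{G}{H_\lambda}$. First I would invoke Theorem~\ref{NGHr_direct_union_dim_RrGH} directly to obtain
\[
\cd_{\RS{r}{G}{H}} \Nzerr{r}{G}{H} \;\leq\; \underset{\lambda\in\Lambda}{\operatorname{sup}}\{\cd_{\RS{r}{G}{H}\cap\Nzr{G}{H_\lambda}} \Nzr{G}{H_\lambda}\} + 1,
\]
so the task reduces to bounding each supremand by $\cd_{\HHb_\lambda}(\Nzr{G}{H_\lambda}/H_\lambda)$.

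Next I would rewrite the family in the ambient group $\Nzr{G}{H_\lambda}$: since $H\sim_r H_\lambda$, we have $\Nzerr{r}{G}{H_\lambda}=\Nzerr{r}{G}{H}$, and a routine check (analogous to Remark~\ref{changeofambient}) gives $\RS{r}{G}{H}\cap\Nzr{G}{H_\lambda}=\RS{r}{\Nzr{G}{H_\lambda}}{H_\lambda}$. So the problem becomes an application of Corollary~\ref{cd_nzerr_nzr_LH_in_same_family} to the group $\Nzr{G}{H_\lambda}$ with distinguished subgroup $H_\lambda\in\dF{F}{r}$ in that ambient group.

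To verify the hypotheses of Corollary~\ref{cd_nzerr_nzr_LH_in_same_family} in this new setting, I would observe that both the normaliser of $H_\lambda$ in $\Nzr{G}{H_\lambda}$ and the $r$-commensurator $\Nzerr{r}{\Nzr{G}{H_\lambda}}{H_\lambda}$ coincide with $\Nzr{G}{H_\lambda}$ itself: the first is tautological, and the second follows because every $g\in\Nzr{G}{H_\lambda}$ satisfies $H_\lambda^g=H_\lambda$, hence $H_\lambda^g\sim_r H_\lambda$ trivially. The remaining hypothesis, that $LH_\lambda\in\RS{r}{\Nzr{G}{H_\lambda}}{H_\lambda}$ for every $L\in\RS{r}{\Nzr{G}{H_\lambda}}{H_\lambda}$, is precisely the standing assumption of the corollary.

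Applying Corollary~\ref{cd_nzerr_nzr_LH_in_same_family} therefore gives $\cd_{\RS{r}{\Nzr{G}{H_\lambda}}{H_\lambda}}\Nzr{G}{H_\lambda}\leq \cd_{\HHb_\lambda}(\Nzr{G}{H_\lambda}/H_\lambda)$ for each $\lambda$, and taking the supremum over $\Lambda$ and combining with the bound from Theorem~\ref{NGHr_direct_union_dim_RrGH} yields the desired inequality. No serious obstacle is expected; the only delicate bookkeeping is the identification $\RS{r}{G}{H}\cap\Nzr{G}{H_\lambda}=\RS{r}{\Nzr{G}{H_\lambda}}{H_\lambda}$ and the automatic coincidence of normaliser and commensurator once we pass to the ambient subgroup $\Nzr{G}{H_\lambda}$, both of which are formal consequences of the equivalence $H\sim_r H_\lambda$.
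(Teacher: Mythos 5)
Your proof is correct and follows essentially the same route as the paper: the paper cites Theorem~\ref{cd_NGHr_direct_union_general_case} (itself Theorem~\ref{NGHr_direct_union_dim_RrGH} combined with the quotient bound) and sets $n=0$ because $LH_\lambda\in\RS{r}{\Nzr{G}{H_\lambda}}{H_\lambda}$ forces $\cd_{\RS{r}{\Nzr{G}{H_\lambda}}{H_\lambda}\cap LH_\lambda}LH_\lambda=0$, which is exactly what you do after regrouping, applying Corollary~\ref{cd_nzerr_nzr_LH_in_same_family} inside each $\Nzr{G}{H_\lambda}$ and then the direct-union bound. Your identifications $\RS{r}{G}{H}\cap\Nzr{G}{H_\lambda}=\RS{r}{\Nzr{G}{H_\lambda}}{H_\lambda}$ and $\Nzr{\Nzr{G}{H_\lambda}}{H_\lambda}=\Nzerr{r}{\Nzr{G}{H_\lambda}}{H_\lambda}=\Nzr{G}{H_\lambda}$ are the same observations the paper relies on, so there is no gap.
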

\begin{proof}
    As in proof of Corollary~\ref{cd_nzerr_nzr_LH_in_same_family}, but using Theorem~\ref{cd_NGHr_direct_union_general_case}
    instead of Theorem~\ref{cd_nzerr_nzr_general_case}.
\end{proof}

\begin{Corollary}\label{gd_NGHr_direct_union_LH_in_same_family}
    Let $H$ as in Corollary~\ref{cd_NGHr_direct_union_LH_in_same_family}. Then,
    $$\gd_{\RS{r}{G}{H}} \Nzerr{r}{G}{H} \leq
    \underset{\lambda\in\Lambda}{\operatorname{sup}}\{\gd_{\HHb_\lambda} \left(\Nzr{G}{H_\lambda}/H_\lambda\right)\} + 1.$$
\end{Corollary}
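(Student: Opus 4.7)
The plan is to mirror, almost verbatim, the strategy that established the cohomological analogue Corollary~\ref{cd_NGHr_direct_union_LH_in_same_family}, but with every ingredient swapped for its geometric dimension counterpart. Concretely, I will invoke Theorem~\ref{gd_NGHr_direct_union_general_case}, which delivers the bound
$$\gd_{\RS{r}{G}{H}} \Nzerr{r}{G}{H} \leq \underset{\lambda\in\Lambda}{\operatorname{sup}}\{\gd_{\HHb_\lambda}\left(\Nzr{G}{H_\lambda}/H_\lambda\right)\} + n + 1$$
as soon as one can exhibit a uniform integer $n\in\N$ with $\gd_{\RS{r}{\Nzr{G}{H_\lambda}}{H_\lambda}\cap LH_\lambda} LH_\lambda \leq n$ for every $\lambda\in\Lambda$ and every $L\in\RS{r}{\Nzr{G}{H_\lambda}}{H_\lambda}$. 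So the entire task reduces to producing a good $n$.

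Next, I will show that in fact $n = 0$ is admissible under the standing hypothesis carried over from the statement. That hypothesis, inherited from Corollary~\ref{cd_NGHr_direct_union_LH_in_same_family}, says precisely that $LH_\lambda\in\RS{r}{\Nzr{G}{H_\lambda}}{H_\lambda}$ for all such $L$ and $\lambda$. Therefore $LH_\lambda$ lies in the full subfamily $\RS{r}{\Nzr{G}{H_\lambda}}{H_\lambda}\cap LH_\lambda$ of subgroups of $LH_\lambda$; fullness of this intersection is immediate from Lemma~\ref{RrGH-fam} and Remark~\ref{thingswithfam}. Applying Proposition~\ref{G_in_FF_gd_0} then forces $\gd_{\RS{r}{\Nzr{G}{H_\lambda}}{H_\lambda}\cap LH_\lambda} LH_\lambda = 0$ for every $\lambda$ and every admissible $L$. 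Plugging $n=0$ into the inequality from Theorem~\ref{gd_NGHr_direct_union_general_case} yields the desired $\sup + 1$ bound.

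There is essentially no genuine obstacle here: the proof is a mechanical translation of the proof of Corollary~\ref{cd_NGHr_direct_union_LH_in_same_family}, substituting Theorem~\ref{gd_NGHr_direct_union_general_case} for Theorem~\ref{cd_NGHr_direct_union_general_case} and Proposition~\ref{G_in_FF_gd_0} for Proposition~\ref{G_in_FF_cd_0}. The one thing worth checking explicitly is that the hypothesis as phrased is literally the input that Theorem~\ref{gd_NGHr_direct_union_general_case} needs in order to take $n=0$, but since both statements are formulated in parallel around the condition $LH_\lambda\in\RS{r}{\Nzr{G}{H_\lambda}}{H_\lambda}$, this verification is immediate.
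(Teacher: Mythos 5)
Your proposal is correct and matches the paper's argument: the paper likewise deduces this corollary by taking $n=0$ in Theorem~\ref{gd_NGHr_direct_union_general_case}, the point being that the hypothesis $LH_\lambda\in\RS{r}{\Nzr{G}{H_\lambda}}{H_\lambda}$ forces $\gd_{\RS{r}{\Nzr{G}{H_\lambda}}{H_\lambda}\cap LH_\lambda} LH_\lambda = 0$ (as in the proofs of Corollaries~\ref{cd_nzerr_nzr_LH_in_same_family} and~\ref{gd_nzerr_nzr_LH_in_same_family}, via Proposition~\ref{G_in_FF_gd_0}). Your explicit check that the restricted family is full, hence semi-full, is a harmless extra detail; no gap.
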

\begin{proof}
    As in proof of Corollary~\ref{gd_nzerr_nzr_LH_in_same_family}, but using Theorem~\ref{gd_NGHr_direct_union_general_case}
    instead of Theorem~\ref{gd_nzerr_nzr_general_case}.
\end{proof}

\begin{Corollary}\label{cd_direct_union_top_rank_space_for_rs_comm}
    Let $\left( \FF_n \right)_{n\in\N}$ be a strongly structured ascending chain of families
    of subgroups of $G$ such that the equivalence relation $\sim_r$ in $\dF{F}{r}$ is
    commensurability. Let $[H]_r \in [\dF{F}{r}]_r$ be such that $\Nzerr{r}{G}{H}$ is the direct union of
    $\{N_\lambda \st \lambda\in\Lambda\}$ and conditions $(i)-(iii)$ in
    Theorem~\ref{NGHr_direct_union_dim_RrGH} hold. Then,
    $$\cd_{\RS{r}{G}{H}} \Nzerr{r}{G}{H} \leq
    \underset{\lambda\in\Lambda}{\operatorname{sup}}\{
        \ucd \left( \Nzr{G}{H_\lambda}/H_\lambda \right)\} + 1.$$
\end{Corollary}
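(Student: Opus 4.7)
The plan is to combine Theorem~\ref{NGHr_direct_union_dim_RrGH} with a fibrewise application of Corollary~\ref{cd_top_rank_space_for_rs_comm}, using $\Nzr{G}{H_\lambda}$ as a new ambient group in which the $r$-relative normalizer of $H_\lambda$ already coincides with its plain normalizer. First I would apply Theorem~\ref{NGHr_direct_union_dim_RrGH} to conclude
$$\cd_{\RS{r}{G}{H}} \Nzerr{r}{G}{H} \;\leq\; \underset{\lambda\in\Lambda}{\operatorname{sup}}\bigl\{\cd_{\RS{r}{G}{H}\cap\Nzr{G}{H_\lambda}} \Nzr{G}{H_\lambda}\bigr\} + 1,$$
so that the whole statement reduces to the pointwise inequality $\cd_{\RS{r}{G}{H}\cap\Nzr{G}{H_\lambda}} \Nzr{G}{H_\lambda} \leq \ucd\bigl(\Nzr{G}{H_\lambda}/H_\lambda\bigr)$ for each $\lambda\in\Lambda$.

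To obtain that inequality I would fix $\lambda$, set $N^\lambda := \Nzr{G}{H_\lambda}$, and pass to the restricted chain $\FF_n^\lambda := \FF_n\cap N^\lambda$. Three verifications are needed: (a)~$(\FF_n^\lambda)_{n\in\N}$ is a SSACFS of $N^\lambda$, which is routine since both the full-family axioms and the SSACFS intersection condition are preserved by intersecting with a subgroup; (b)~the induced equivalence relation at level $r$ is again commensurability, because for subgroups $L,L^\prime\leq N^\lambda$ lying in $\FF_r\cap N^\lambda$ the commensurability condition is an intrinsic property of $L,L^\prime$ independent of the ambient group; (c)~$\Nzerr{r}{N^\lambda}{H_\lambda}=N^\lambda=\Nzr{N^\lambda}{H_\lambda}$, where the first equality uses $H_\lambda\lhd N^\lambda$, forcing $H_\lambda^g = H_\lambda$ and in particular $H_\lambda^g\sim_r H_\lambda$ for every $g\in N^\lambda$.

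With (a)--(c) in place I would invoke Corollary~\ref{cd_top_rank_space_for_rs_comm} inside $N^\lambda$ with representative $H_\lambda$ to obtain
$$\cd_{\RS{r}{N^\lambda}{H_\lambda}} N^\lambda \;\leq\; \ucd\bigl(N^\lambda/H_\lambda\bigr).$$
The final bookkeeping step is to identify the two families $\RS{r}{N^\lambda}{H_\lambda}$ and $\RS{r}{G}{H}\cap N^\lambda$: since $H\sim_r H_\lambda$ we have $\RS{r}{G}{H}=\RS{r}{G}{H_\lambda}$, and the defining commensurability conditions on a subgroup $K\leq N^\lambda$ relative to $H_\lambda$ are phrased identically whether computed in $G$ or in $N^\lambda$. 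Substituting the resulting pointwise bound into the supremum gives the statement.

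The main obstacle is the bookkeeping around (a)--(c); the cleanest verification is (c), which relies on $H_\lambda$ being honestly normal in $N^\lambda$ so that the $\sim_r$-widening of the normalizer adds nothing new. Once the restricted chain is shown to inherit the hypotheses of Corollary~\ref{cd_top_rank_space_for_rs_comm}, the proof is a two-line stacking of previously established estimates.
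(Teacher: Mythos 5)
Your argument is correct and is essentially the paper's own proof: the paper likewise combines the direct-union estimate of Theorem~\ref{NGHr_direct_union_dim_RrGH} with the quotient/commensurability bound applied inside each $\Nzr{G}{H_\lambda}$ (where $\Nzerr{r}{\Nzr{G}{H_\lambda}}{H_\lambda}=\Nzr{G}{H_\lambda}$), only packaged through the intermediate Corollaries~\ref{cd_NGHr_direct_union_general_case} and~\ref{cd_NGHr_direct_union_LH_in_same_family} rather than by invoking Corollary~\ref{cd_top_rank_space_for_rs_comm} fibrewise with the restricted chain $\FF_n\cap\Nzr{G}{H_\lambda}$ as you do. Your verifications (a)--(c), together with the identification $\RS{r}{\Nzr{G}{H_\lambda}}{H_\lambda}=\RS{r}{G}{H}\cap\Nzr{G}{H_\lambda}$, are exactly the points the paper uses implicitly, so the two proofs coincide in substance.
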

\begin{proof}
    Proceeding as we did in proof of Corollary~\ref{cd_top_rank_space_for_rs_comm}, but using Corollary~\ref{cd_NGHr_direct_union_LH_in_same_family}
    instead of Corollary~\ref{cd_nzerr_nzr_LH_in_same_family}.
\end{proof}

\begin{Corollary}\label{gd_direct_union_top_rank_space_for_rs_comm}
    Let $\left( \FF_n \right)_{n\in\N}$ and $[H]_r \in [\dF{F}{r}]_r$ as in
    Corollary~\ref{cd_direct_union_top_rank_space_for_rs_comm}. Then,
    $$\gd_{\RS{r}{G}{H}} \Nzerr{r}{G}{H} \leq
    \underset{\lambda\in\Lambda}{\operatorname{sup}}\{
        \ugd \left(\Nzr{G}{H_\lambda} \right)\} + 1.$$
\end{Corollary}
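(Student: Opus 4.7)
The plan is to mirror verbatim the proof of Corollary~\ref{cd_direct_union_top_rank_space_for_rs_comm}, substituting its cohomological inputs by the corresponding geometric ones. Concretely, the cohomological proof is built from two ingredients: (a) Corollary~\ref{cd_NGHr_direct_union_LH_in_same_family}, which converts a bound on $\cd$ of quotient families $\HHb_\lambda$ into a bound on $\cd_{\RS{r}{G}{H}}\Nzerr{r}{G}{H}$ with a $+1$, under the hypothesis $LH_\lambda\in\RS{r}{\Nzr{G}{H_\lambda}}{H_\lambda}$; and (b) the identification $\HHb_\lambda=\Fin\!\left(\Nzr{G}{H_\lambda}/H_\lambda\right)$ coming from commensurability together with the Second Isomorphism Theorem. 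Both ingredients are available in the geometric setting: we have the exact geometric analogue Corollary~\ref{gd_NGHr_direct_union_LH_in_same_family}, and the identification in (b) is purely group-theoretic, so it transfers to the geometric setup without any change.

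First I would verify the hypotheses of Corollary~\ref{gd_NGHr_direct_union_LH_in_same_family}. The direct union decomposition of $\Nzerr{r}{G}{H}$ and conditions (i)--(iii) of Theorem~\ref{NGHr_direct_union_dim_RrGH} are inherited directly from the assumptions of Corollary~\ref{cd_direct_union_top_rank_space_for_rs_comm}. The remaining closure condition $LH_\lambda\in\RS{r}{\Nzr{G}{H_\lambda}}{H_\lambda}$ for each $L\in\RS{r}{\Nzr{G}{H_\lambda}}{H_\lambda}$ is checked exactly as in Corollary~\ref{cd_top_rank_space_for_rs_comm}: since $H\sim_r H_\lambda$ and $\sim_r$ is commensurability, $L$ is commensurable with $H_\lambda$, and the Second Isomorphism Theorem gives $|LH_\lambda:H_\lambda|=|L:L\cap H_\lambda|<\infty$, so $LH_\lambda$ is itself commensurable with $H_\lambda$ and therefore lies in $\RS{r}{\Nzr{G}{H_\lambda}}{H_\lambda}$.

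Next I would identify the quotient family $\HHb_\lambda$. By the same Second Isomorphism Theorem argument used in the proof of Corollary~\ref{cd_top_rank_space_for_rs_comm}, $LH_\lambda/H_\lambda\in\HHb_\lambda$ if and only if $|LH_\lambda:H_\lambda|=|L:L\cap H_\lambda|<\infty$, which is equivalent to $L$ and $H_\lambda$ being commensurable, hence to $LH_\lambda/H_\lambda$ being finite. This yields $\HHb_\lambda=\Fin\!\left(\Nzr{G}{H_\lambda}/H_\lambda\right)$, so $\gd_{\HHb_\lambda}\!\left(\Nzr{G}{H_\lambda}/H_\lambda\right)=\ugd\!\left(\Nzr{G}{H_\lambda}/H_\lambda\right)$. (The right-hand side in the statement should therefore be read as $\ugd\!\left(\Nzr{G}{H_\lambda}/H_\lambda\right)$, matching the cohomological counterpart.) Plugging this into Corollary~\ref{gd_NGHr_direct_union_LH_in_same_family} and taking the supremum over $\lambda\in\Lambda$ yields the claimed inequality.

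There is no genuine obstacle: the work is entirely formal, since each step is the geometric mirror of a step already carried out in the proof of Corollary~\ref{cd_direct_union_top_rank_space_for_rs_comm}. The only point requiring mild care is to make sure we invoke the geometric analogue Corollary~\ref{gd_NGHr_direct_union_LH_in_same_family} (which itself rests on Theorem~\ref{gd_NGHr_direct_union_general_case} and Theorem~\ref{gd_for_quotients}) rather than its cohomological counterpart, so that we do not inadvertently need $\cd=\gd$ type comparisons along the way.
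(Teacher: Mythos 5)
Your proof is correct and follows essentially the same route as the paper: identify $\HHb_\lambda$ with the family of finite subgroups of $\Nzr{G}{H_\lambda}/H_\lambda$ via commensurability and the Second Isomorphism Theorem (exactly as in Corollary~\ref{gd_top_rank_space_for_rs_comm}), then apply the geometric direct-union result Corollary~\ref{gd_NGHr_direct_union_LH_in_same_family}. You are also right that the bound should be read as $\ugd\left(\Nzr{G}{H_\lambda}/H_\lambda\right)$, matching the cohomological counterpart; the quotient by $H_\lambda$ is simply missing in the printed statement.
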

\begin{proof}
    As in proof of Corollary~\ref{gd_top_rank_space_for_rs_comm}, but using Corollary~\ref{gd_NGHr_direct_union_LH_in_same_family}
    instead of Corollary~\ref{gd_nzerr_nzr_LH_in_same_family}.
\end{proof}

\chapter{Classifying spaces for families of virtually polycyclic subgroups}\label{ch:class_poly}
\minitoc

In this chapter we will use the constructions and results presented in Chapters \ref{ch:related-families} and \ref{ch:chains}
to study the Bredon dimensions of certain groups $G$ with respect to families of virtually
polycyclic subgroups. We will first focus on groups $G$ that are themselves virtually polycyclic to
then extend those results to groups $G$ belonging to a wider class of groups.

In \cite{scott} and \cite{segal}, many of the basic properties of polycyclic and virtually polycyclic groups can be found.
In the first source mentioned, virtually polycyclic groups are referred to as M-groups.

\begin{Defn}\label{def_poly-something}
    Given a property or class of groups $\mathfrak{X}$, we say that a group $G$ is
    \emph{poly-}~$\mathfrak{X}$ if it admits a subnormal series
    $$\{1\} = G_0 \lhd G_1 \lhd \ldots \lhd G_k = G$$
    where $G_{i+1}/G_i$ has the property $\mathfrak{X}$ for all
    $0 \leq i < k$.
\end{Defn}

Polycyclic groups, hence, are those that admit a finite subnormal series with cyclic
factors. Polycyclicity is preserved under taking subgroups and quotients.

Note that being virtually polycyclic, polycyclic-by-finite, poly-$\Z$-by-finite
or poly-($\Z$~or~finite) are equivalent group properties, and all of them are
preserved by taking subgroups, quotients and extensions.

We will use these facts throughout the current chapter
without mention.

As an invariant for polycyclic-by-finite groups to define an indexed ascending
chain of subgroups (as torsion-free rank does in \cite{nucinkisetal}), we take
the Hirsch length of the group:

\begin{Defn}
    Given a polycyclic-by-finite group $G$, its \emph{Hirsch length} $h(G)$
    is the number of infinite cyclic factors in any of its subnormal
    series with infinite cyclic or finite factors.
\end{Defn}

These and many other properties of this class of groups and the Hirsch length can be found
in \cite{segal}. The following one is crucial in many of the proofs:

\begin{Lemma}\label{hirschl} Let $G$ be virtually polycyclic, $H \leq G$ and $N \lhd G$, then
    \begin{enumerate}[label={\emph{(\roman*)}}]
        \item $h(H) \leq h(G)$
        \item $h(H) = h(G) \Leftrightarrow |H : G| < \infty$
        \item $h(G) = h(N) + h(G/N)$
    \end{enumerate}
\end{Lemma}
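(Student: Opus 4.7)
The plan is to establish (iii) first, since both (i) and (ii) will depend on it. For (iii) I would choose subnormal series with cyclic or finite factors that realise $h(N)$ in $N$ and $h(G/N)$ in $G/N$; pulling back the latter along the projection $G \twoheadrightarrow G/N$ and concatenating it with the former produces a subnormal series of $G$ with cyclic or finite factors whose number of infinite cyclic factors is exactly $h(N) + h(G/N)$. Since $h(G)$ does not depend on the chosen series, this yields the equality.

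For (i) I would fix any subnormal series $1 = G_0 \lhd \dots \lhd G_k = G$ of $G$ with cyclic or finite factors and set $H_i = H \cap G_i$. Each quotient $H_{i+1}/H_i$ embeds into $G_{i+1}/G_i$ via the natural map, and is therefore trivial, finite, or infinite cyclic. Hence the $H_i$ furnish a subnormal series of $H$ with cyclic or finite factors and at most $h(G)$ infinite cyclic factors, giving $h(H) \leq h(G)$.

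The reverse direction of (ii) falls out quickly from (iii) applied to the normal core $K$ of $H$ in $G$: the hypothesis $[G:H] < \infty$ forces $[G:K] < \infty$, hence $h(G/K) = 0$ and $h(G) = h(K)$ by (iii), while (i) sandwiches $h(K) \leq h(H) \leq h(G)$. The forward direction is the main obstacle. My plan is to recycle the series used for (i) and prove by induction on $i$ that $[G_i : H_i] < \infty$. Under the hypothesis $h(H) = h(G)$, counting infinite cyclic factors in the embeddings $H_{i+1}/H_i \hookrightarrow G_{i+1}/G_i$ forces each such inclusion to have finite index (automatic when $G_{i+1}/G_i$ is finite, and forced by the matching counts otherwise, as there is no room to lose an infinite cyclic factor). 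This gives $[G_{i+1} : H_{i+1} G_i] < \infty$, and combining it with the identification $[H_{i+1} G_i : H_{i+1}] = [G_i : H_i]$ and the inductive hypothesis, multiplicativity of the index yields $[G_{i+1} : H_{i+1}] < \infty$; at $i+1 = k$ this is the desired $[G:H] < \infty$.
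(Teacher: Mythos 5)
Your proof is correct. Note that the paper itself offers no argument for this lemma --- it is quoted from Segal's book on polycyclic groups --- so there is no internal proof to compare against; what you have written is essentially the standard textbook argument. The details check out: for (iii), concatenating a series of $N$ with the pullback of a series of $G/N$ produces a series of $G$ with exactly $h(N)+h(G/N)$ infinite cyclic factors; for (i), each factor $H_{i+1}/H_i \cong (H\cap G_{i+1})G_i/G_i$ embeds in $G_{i+1}/G_i$, and an infinite cyclic factor can only land in an infinite cyclic one; and in (ii), the counting argument does force $H_{i+1}G_i/G_i$ to be a nontrivial (hence finite-index) subgroup of each infinite cyclic $G_{i+1}/G_i$, after which the identity $[H_{i+1}G_i : H_{i+1}] = [G_i : H_{i+1}\cap G_i] = [G_i : H_i]$ and multiplicativity of the index make the induction run; the converse via the normal core (Poincar\'e's lemma plus (iii) and (i)) is fine. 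The one point worth flagging is that every step silently invokes the well-definedness of $h$, i.e.\ that the number of infinite cyclic factors is the same for every subnormal series with cyclic-or-finite factors; the paper's definition builds this invariance in, but it is itself a Schreier-refinement-type fact, so a fully self-contained treatment would need to prove or cite it separately.
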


\begin{Defn}\label{def_Hr}
    Let $G$ be any group.
    We define $\HH_r$ as the family of virtually polycyclic subgroups of $G$ of Hirsch
    length less than or equal to $r$, where $r\in \N$.
\end{Defn}

\begin{Corollary}
    Let $G$ be any group, then $(\HH_r)_{r\in\N}$ is a strongly structured
    ascending chain of subgroups of $G$.
\end{Corollary}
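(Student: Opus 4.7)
The plan is to verify the two components of the definition of a strongly structured ascending chain of families of subgroups: that each $\HH_r$ is a full family of subgroups of $G$ forming an ascending chain, and the refinement condition on intersections. The family structure for $\HH_r$ is straightforward: it is nonempty since $\{1\} \in \HH_0 \subseteq \HH_r$; it is closed under conjugation because Hirsch length is an isomorphism invariant and virtual polycyclicity is preserved under conjugation; and it is closed under taking subgroups by Lemma~\ref{hirschl}(i) together with the fact that subgroups of virtually polycyclic groups are virtually polycyclic. The inclusions $\HH_{r-1} \subseteq \HH_r$ are immediate from Definition~\ref{def_Hr}.

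For the refinement condition, I would fix $i \leq r$, elements $H, K \in \dF{H}{r}$ with $H \cap K \in \dF{H}{r}$, and $L \in \dF{H}{i}$. The case $i = 0$ is trivial, since $\HH_0$ is the family of finite subgroups and hence $L \cap H, L \cap K$ automatically lie in $\HH_0 = \dF{H}{0}$. So assume $i > 0$. The main tool is Lemma~\ref{hirschl}(ii), which characterises finite index inside a virtually polycyclic group by Hirsch length equality.

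Apply Lemma~\ref{hirschl}(ii) to the inclusion $H \cap K \leq H$ (both of Hirsch length $r$) to obtain $|H : H \cap K| < \infty$, and symmetrically $|K : H \cap K| < \infty$. Assume $L \cap H \in \dF{H}{i}$; the same lemma applied to $L \cap H \leq L$ gives $|L : L \cap H| < \infty$. Intersecting with the subgroup $L \cap H \leq H$ transfers the finite index $|H : H \cap K| < \infty$ to $|L \cap H : L \cap H \cap K| < \infty$, so multiplicativity of index yields $|L : L \cap H \cap K| < \infty$, and the chain $L \cap H \cap K \leq L \cap K \leq L$ forces $|L : L \cap K| < \infty$. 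A final invocation of Lemma~\ref{hirschl}(ii), now in the reverse direction, gives $h(L \cap K) = h(L) = i$; since $L \cap K \leq L$ is automatically virtually polycyclic, we conclude $L \cap K \in \dF{H}{i}$. The converse implication is symmetric in $H$ and $K$.

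No serious obstacle is expected: the argument reduces entirely to standard properties of Hirsch length and subgroup index, and in essence this is Lemma~\ref{bounded_rank_families_are_SSCFS} applied to the class of virtually polycyclic groups with rank $h$. The one point requiring a little care is that, for subgroups of a common ambient group $G$, Lemma~\ref{hirschl}(ii) produces precisely the subgroup-theoretic commensurability used throughout the proof of Lemma~\ref{bounded_rank_families_are_SSCFS}, even though commensurability of abstract virtually polycyclic groups is not in general detected by Hirsch length.
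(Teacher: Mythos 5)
Your proof is correct and follows essentially the same route as the paper: the paper simply quotes Lemma~\ref{bounded_rank_families_are_SSCFS} together with Lemma~\ref{hirschl}, and your argument is exactly that lemma's index-transfer proof unfolded for the class of virtually polycyclic groups with rank the Hirsch length. Your closing caveat is not actually needed: whenever there is an injection $H\hookrightarrow K$ of virtually polycyclic groups, Lemma~\ref{hirschl}(ii) shows that $h(H)=h(K)$ already forces $H$ and $K$ to be abstractly commensurable, so hypothesis (iii) of Lemma~\ref{bounded_rank_families_are_SSCFS} holds verbatim and the direct citation goes through.
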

\begin{proof}
    It is direct consequence of Lemma~\ref{bounded_rank_families_are_SSCFS} and
    Lemma~\ref{hirschl}.
\end{proof}

In this case, given $H\in\dF{\HH}{r}$ and $i \leq r$, we have
$$\RS{i}{G}{H} = \{ K \leq \Nzerr{r}{G}{H} \st h(K) = h(H\cap K) \leq i\}$$
and $\Nzerr{r}{G}{H} = \comm_G (H)$.

\section{Virtually polycyclic groups}

Let us focus our interest in the chain of families $(\HH_r)_{r\in\N}$ of subgroups of
a virtually polycyclic group $G$.

Note that in general the family $\HH_0$ is the family of finite subgroups of $G$ and the family
$\HH_1$ is the family of virtually cyclic subgroups of $G$. Models for $E_\Fin G$ and
$E_\Vcy G$ and the dimensions $\gd_\Fin G$, $\gd_\Vcy G$, $\cd_\Fin G$ and $\cd_\Vcy G$
for any virtually polycyclic group $G$ can be found in \cite{luecksurvey} and
\cite{lueckweiermann}.

For this reason, we dispose of the base case spaces in both recursions for the construction described in previous chapter, given the fact that $\RS{0}{G}{H}$ is the family of finite subgroups
of $\Nzerr{h(H)}{G}{H}$, which is also virtually polycyclic.

But as we saw in last chapter, we also require (finite-dimensional) models for
$\EFG{\RS{r}{G}{H}}{\Nzerr{r}{G}{H}}$ for $H\in\dF{\FF}{r}$ and for all
$r > 0$, where the families $\RS{r}{G}{H}$ are as in Definition~\ref{defRS}.
We will use Theorems \ref{cd_top_rank_space_for_rs_comm} and \ref{gd_top_rank_space_for_rs_comm}
to build them, and for that we need the result that follows:

\begin{Lemma}\label{normcomm}\cite[Corollary 10]{capracekropholler}
    The following assertions are equivalent for any finitely generated virtually
    soluble group $G$:
    \begin{enumerate}[label={(\roman*)}, noitemsep]
        \item $G$ is polycyclic-by-finite.
        \item Every $H\leq G$ contains a finite index subgroup $K$ such that $\Nzr{G}{K} = \comm_G (H).$
        \item For all subnormal subgroups $H$ of $G$ and all finitely generated $J \leq \comm_G (H)$,
        there exists a finite index subgroup $K\leq H$ which is normal in $\langle J \cup K \rangle.$
    \end{enumerate}
\end{Lemma}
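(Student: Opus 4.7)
The plan is to establish the cyclic implications $(i) \Rightarrow (ii) \Rightarrow (iii) \Rightarrow (i)$, with the first two steps being essentially routine consequences of the noetherian structure of polycyclic-by-finite groups and the third step carrying the real content of the statement. For $(i) \Rightarrow (ii)$, I would exploit the classical fact that a polycyclic-by-finite group has only finitely many subgroups of any prescribed finite index. Fix $H \leq G$ and set $M = \comm_G(H)$. For each $g \in M$, conjugation by $g$ induces an isomorphism between the two finite-index subgroups $H \cap H^{g^{-1}}$ and $H \cap H^g$ of $H$, and therefore permutes the finite-index subgroups of $H$ lying inside $H \cap H^{g^{-1}}$. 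Taking $K$ to be the intersection of a suitable $M$-orbit of the intersection of all subgroups of $H$ of some fixed index yields a finite-index subgroup of $H$ invariant under all of $M$; hence $M \leq \Nzr{G}{K}$, and the reverse inclusion $\Nzr{G}{K} \leq \comm_G(K) = \comm_G(H)$ is automatic since $K$ has finite index in $H$.

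The implication $(ii) \Rightarrow (iii)$ is then immediate: specialising $H$ to a subnormal subgroup, the $K$ produced by $(ii)$ satisfies $\comm_G(H) \leq \Nzr{G}{K}$, so $J \leq \Nzr{G}{K}$ for any $J \leq \comm_G(H)$, whence $K \lhd \langle J \cup K \rangle$.

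The main obstacle is $(iii) \Rightarrow (i)$, which I would prove contrapositively. A finitely generated virtually soluble group $G$ that is not polycyclic-by-finite must, by the classical structure theory of finitely generated soluble groups, admit a subsection isomorphic either to the wreath product $\Z \wr \Z$ or to a group of the form $\Z[1/p] \rtimes \Z$ in which the generator of the acting copy of $\Z$ multiplies the base by $p$. In each witness I would identify $H$ as the abelian base and $J$ as a generator of the acting $\Z$, which commensurates $H$ and is finitely generated. A direct computation then shows that the $J$-action on the lattice of finite-index subgroups of $H$ has infinite order, so no finite-index $K \leq H$ can be $J$-invariant, contradicting $(iii)$. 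The delicate technical point is lifting the obstruction from the subsection back to an honest pair $(H, J)$ inside $G$ while preserving the subnormality of $H$ and the relation $J \leq \comm_G(H)$, which is where the full force of the structure theorem for finitely generated soluble groups is essential rather than formal manipulations that suffice for the other implications.
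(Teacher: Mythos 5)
Two preliminary remarks: the thesis itself contains no proof of this lemma (it is imported verbatim from Caprace--Kropholler--Reid--Wesolek, Corollary 10), so your sketch has to stand on its own; and your implication (ii)$\Rightarrow$(iii) is indeed correct and immediate. The first genuine gap is in (i)$\Rightarrow$(ii). Conjugation by $g\in M=\comm_G(H)$ does not permute the finite-index subgroups of $H$: it carries subgroups of $H\cap H^{g^{-1}}$ to subgroups of $H\cap H^{g}$, and as $g$ ranges over $M$ nothing bounds the resulting family, so your ``intersection of a suitable $M$-orbit'' has no a priori reason to have finite index in $H$ -- an intersection of infinitely many finite-index subgroups is typically of infinite index, and the indices $[H:H\cap H^{g}]$ are not known in advance to be bounded. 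Producing \emph{some} finite-index subgroup of $H$ normalised by all of $\comm_G(H)$ is essentially the entire content of this implication; it amounts to the true but nontrivial fact that a commensurated subgroup of a polycyclic-by-finite group is commensurable with a normal subgroup of its commensurator. The legitimate use of ``finitely many subgroups of a given index'' only enters afterwards: once one has $N\lhd M$ commensurable with $H$, the intersection $K$ of the finitely many subgroups of $N$ of index at most $[N:N\cap H]$ is characteristic in $N$, lies in $H$, has finite index in $H$, and then $\Nzr{G}{K}=\comm_G(H)$ follows. Your sketch silently assumes exactly the missing step, so this direction is not ``routine noetherian bookkeeping''.

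The second gap is in (iii)$\Rightarrow$(i), where two things go wrong. First, the structure theorem you invoke is false as stated: a finitely generated virtually soluble group that is not polycyclic-by-finite need not have a subsection isomorphic to $\Z\wr\Z$ or $\Z[1/p]\rtimes\Z$; the lamplighter $C_p\wr\Z$ is such a group, and every section of it is (elementary abelian $p$)-by-cyclic, hence contains neither of your witnesses. The correct input (Kropholler's theorem on $C_p\wr\Z$ sections together with the structure of finitely generated soluble minimax groups) forces you to handle torsion witnesses of lamplighter type as well. Second, your choice of witness inside the witness group does not work: taking $H$ to be the abelian base makes $H$ normal, so $K=H$ satisfies (iii) trivially, and in $\Z[1/p]\rtimes\Z$ the base even has many $J$-invariant finite-index subgroups such as $q\Z[1/p]$ for a prime $q\neq p$ -- the $J$-action on its lattice of finite-index subgroups is in fact trivial, the opposite of what you assert. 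A genuine violation of (iii) requires a non-invariant commensurated subgroup, e.g.\ $H=\Z\leq\Z[1/p]$ (whose only $\langle t^{\pm1}\rangle$-invariant subgroup is trivial) or a ``staircase'' subgroup of the base of $\Z\wr\Z$ (note the half-base of $\Z\wr\Z$ is not even commensurated by $t$, unlike in the $C_p$ case). Finally, the step you yourself flag as delicate -- transporting such a witness from a section back to an honest subnormal subgroup $H\leq G$ with a finitely generated $J\leq\comm_G(H)$ -- is precisely where the substance of the cited result lies, and it is not supplied.
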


We can take for each $r>0$ a set of representatives $\mathcal{H}_r$ of the classes
with respect to $\sim_r$ such that if $H \in \mathcal{H}_r$ then $\Nzr{G}{H} = \comm_G (H)$.

\begin{Corollary}\label{gd_top_rank_RS_poly}
    Let $G$ be a virtually polycyclic group and for every $r\in\N$ let $\HH_r$ be the family
    of subgroups of $G$ of Hirsch length smaller than or equal to $r$. Let $H\in\mathcal{H}_r$.
    If $X$ is a model for $\EFG{\Fin}{\Nzr{G}{H}/H}$, then $X$ is also a model for
    $\EFG{\RS{r}{G}{H}}{\Nzr{G}{H}}$. In particular, we have
    $$\gd_{\RS{r}{G}{H}}\left(\Nzr{G}{H}\right) \leq h(\Nzr{G}{H}) - h(H).$$
\end{Corollary}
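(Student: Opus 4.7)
The plan is to recognise this statement as a direct application of Corollary~\ref{gd_top_rank_space_for_rs_comm}, followed by a known dimension estimate for $\UEG$ of virtually polycyclic groups. First I would verify the two hypotheses of that corollary in our setting. Since $G$ is virtually polycyclic, by Lemma~\ref{hirschl}(ii) two subgroups $H,K$ of Hirsch length $r$ satisfy $h(H\cap K)=r$ if and only if $|H:H\cap K|<\infty$ and $|K:K\cap H|<\infty$; equivalently (using Lemma~\ref{eq_rel_for_BRF_is_comm} with Hirsch length as the rank function), the equivalence relation $\sim_r$ on $\dF{\HH}{r}$ is exactly commensurability. Next, the choice of the set of representatives $\mathcal{H}_r$ made via Lemma~\ref{normcomm} ensures that for $H\in\mathcal{H}_r$ we have $\Nzr{G}{H}=\comm_G(H)$, and since $\Nzerr{r}{G}{H}=\comm_G(H)$ in this chain, we obtain $\Nzr{G}{H}=\Nzerr{r}{G}{H}$.

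With both hypotheses in place, Corollary~\ref{gd_top_rank_space_for_rs_comm} applies directly: any model $X$ for $\UEGG{\Nzr{G}{H}/H}=\EFG{\Fin}{\Nzr{G}{H}/H}$ is automatically a model for $\EFG{\RS{r}{G}{H}}{\Nzr{G}{H}}$, yielding the first part of the statement. It also gives the inequality
\[
\gd_{\RS{r}{G}{H}}\!\left(\Nzr{G}{H}\right) \;\leq\; \ugd\!\left(\Nzr{G}{H}/H\right).
\]

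To obtain the numerical bound, I would then control $\ugd(\Nzr{G}{H}/H)$ in terms of Hirsch length. Since $G$ is virtually polycyclic and $H\lhd \Nzr{G}{H}\leq G$, the quotient $\Nzr{G}{H}/H$ is again virtually polycyclic, and by Lemma~\ref{hirschl}(iii) its Hirsch length equals $h(\Nzr{G}{H})-h(H)$. Invoking the standard fact (see \cite{luecksurvey}) that every virtually polycyclic group $Q$ admits a cocompact model for $\UEGG{Q}$ of dimension at most $h(Q)$, i.e.\ $\ugd(Q)\leq h(Q)$, we conclude
\[
\gd_{\RS{r}{G}{H}}\!\left(\Nzr{G}{H}\right) \;\leq\; \ugd\!\left(\Nzr{G}{H}/H\right) \;\leq\; h(\Nzr{G}{H})-h(H),
\]
which is exactly the asserted inequality.

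The proof is essentially a matter of assembly: the main conceptual step is identifying $\sim_r$ with commensurability, so that the top-rank family $\RS{r}{G}{H}$ pulls back to $\Fin$ under the quotient $\Nzr{G}{H}\to\Nzr{G}{H}/H$. There is no substantial obstacle beyond citing the polycyclic $\UEG$-dimension bound; the choice of $\mathcal{H}_r$ via Lemma~\ref{normcomm} is what makes the argument go through without needing the more delicate direct-union techniques of Theorems~\ref{NGHr_direct_union_dim_RrGH}--\ref{gd_NGHr_direct_union_general_case}, which will be reserved for the locally virtually polycyclic case treated later in the chapter.
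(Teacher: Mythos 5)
Your proposal is correct and takes essentially the same route as the paper: it verifies that $\sim_r$ is commensurability and that the choice of representatives $\mathcal{H}_r$ via Lemma~\ref{normcomm} gives $\Nzr{G}{H}=\comm_G(H)=\Nzerr{r}{G}{H}$, then applies Corollary~\ref{gd_top_rank_space_for_rs_comm}. The only (harmless) difference is that the paper invokes the equality $\ugd\left(\Nzr{G}{H}/H\right)=h(\Nzr{G}{H})-h(H)$ for the virtually polycyclic quotient, while you only use the inequality $\ugd\leq h$, which is all that is needed.
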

\begin{proof}
    Consequence of Corollary~\ref{gd_top_rank_space_for_rs_comm}, since $\ugd (\Nzr{G}{H}/H) =
    h(\Nzr{G}{H}/H) = h(\Nzr{G}{H}) - h(H)$.
\end{proof}

\begin{Corollary}\label{cd_top_rank_RS_poly}
    Let $G$ be a virtually polycyclic group and for every $r\in\N$ let $\HH_r$ be the family
    of subgroups of $G$ of Hirsch length smaller than or equal to $r$. Let $H\in\mathcal{H}_r$.
    Then,
    $$\cd_{\RS{r}{G}{H}} \Nzr{G}{H} \leq h(\Nzr{G}{H}) - h(H).$$
\end{Corollary}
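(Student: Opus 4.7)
The plan is to apply Corollary~\ref{cd_top_rank_space_for_rs_comm} and reduce the problem to computing $\ucd$ of the quotient $\Nzr{G}{H}/H$. For that, I need to verify the two hypotheses of that corollary: that $\sim_r$ is commensurability on $\dF{\HH}{r}$, and that the chosen representative $H \in \mathcal{H}_r$ satisfies $\Nzerr{r}{G}{H} = \Nzr{G}{H}$.

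First, I would check that $\sim_r$ coincides with commensurability. By Lemma~\ref{eq_rel_for_BRF_is_comm}, combined with the fact that the Hirsch length satisfies the rank hypotheses of Lemma~\ref{bounded_rank_families_are_SSCFS} (hypothesis $(iii)$ being exactly Lemma~\ref{hirschl}(ii)), two elements of $\dF{\HH}{r}$ are $\sim_r$-related if and only if they are commensurable; in particular $\Nzerr{r}{G}{H} = \comm_G(H)$. Since $H \in \mathcal{H}_r$ was chosen, via Lemma~\ref{normcomm}, so that $\Nzr{G}{H} = \comm_G(H)$, both hypotheses of Corollary~\ref{cd_top_rank_space_for_rs_comm} hold, yielding
$$\cd_{\RS{r}{G}{H}} \Nzr{G}{H} \leq \ucd\left(\Nzr{G}{H}/H\right).$$

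It remains to bound $\ucd(\Nzr{G}{H}/H)$ by $h(\Nzr{G}{H}) - h(H)$. Since $G$ is virtually polycyclic and virtual polycyclicity is inherited by subgroups and quotients, the group $\Nzr{G}{H}/H$ is itself virtually polycyclic. I would then invoke the standard fact that for any virtually polycyclic group $Q$ the equality $\ucd Q = h(Q)$ holds (a model for $\UEGG{Q}$ of dimension $h(Q)$ exists, see e.g.\ \cite{luecksurvey}, which gives the upper bound, and the lower bound follows from standard cohomological considerations). Finally, Lemma~\ref{hirschl}(iii) gives $h(\Nzr{G}{H}/H) = h(\Nzr{G}{H}) - h(H)$, which assembles into the desired inequality.

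I do not expect any significant obstacle here; the statement is the direct cohomological analogue of Corollary~\ref{gd_top_rank_RS_poly}, and the proof simply substitutes $\ucd$ for $\ugd$ in that argument. The only point worth mild care is that the set of representatives $\mathcal{H}_r$ can indeed be chosen to satisfy both the normalizer-equals-commensurator property and to be a full system of $\sim_r$-classes, but this was already arranged in the paragraph following Lemma~\ref{normcomm}.
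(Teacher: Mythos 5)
Your proposal is correct and follows essentially the same route as the paper: apply Corollary~\ref{cd_top_rank_space_for_rs_comm} (with $\sim_r$ being commensurability and the representatives chosen so that $\Nzr{G}{H} = \comm_G(H) = \Nzerr{r}{G}{H}$) and then use $\ucd\left(\Nzr{G}{H}/H\right) = h\left(\Nzr{G}{H}/H\right) = h(\Nzr{G}{H}) - h(H)$. The extra verifications you spell out (Lemma~\ref{eq_rel_for_BRF_is_comm}, Lemma~\ref{hirschl}, Lemma~\ref{normcomm}) are exactly the facts the paper arranges in the surrounding discussion.
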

\begin{proof}
    Since $\ucd (\Nzr{G}{H}/H) = h(\Nzr{G}{H}/H) = h(\Nzr{G}{H}) - h(H)$, by Theorem~\ref{cd_top_rank_space_for_rs_comm}
    the inequality we had to prove holds.
\end{proof}

Now we have all necessary ingredients to apply results from Chapter~\ref*{ch:chains}. Let us first study
the chain of families $(\RS{i}{G}{H})_{i = 0}^{r}$ for $H\in\mathcal{H}_r$.

As a direct consequence of Corollary~\ref{gdRSKi} and Proposition~\ref{gdRSi}, we have:

\begin{Corollary}\label{gd_RS_as_max_poly}
    Let $G$ be a virtually polycyclic group and for every $r\in\N$ let $\HH_r$ be the family
    of subgroups of $G$ of Hirsch length smaller than or equal to $r$. Let $H\in\mathcal{H}_r$
    and let $\mathcal{K}_i = \mathcal{H}_i \cap \RS{i}{G}{H}$ for $i = 0, \ldots, r - 1$. Then,

    \begin{equation*}
        \begin{aligned}
            \gd_{\RS{i}{G}{H}[K]} (\Nzr{\Nzr{G}{H}}{K}) \leq
            \underset{[K]_i \in \mathcal{K}_i}{\operatorname{max}}\{ &
            h(\Nzr{\Nzr{G}{H}}{K}) - h(K)\, , \\ &
            \gd_{\RS{i-1}{\Nzr{G}{K}}{H}} (\Nzr{\Nzr{G}{H}}{K})\, , \\ &
            \gd_{\RS{i-1}{\Nzr{G}{H}}{K}} (\Nzr{\Nzr{G}{H}}{K}) + 1 \}
        \end{aligned}
    \end{equation*}

    and hence

    \begin{equation*}
        \begin{aligned}
            \gd_{\RS{i}{G}{H}} (\Nzr{G}{H} ) \leq
            \underset{[K]_i \in \mathcal{K}_i}{\operatorname{max}} \{ &
            \gd_{\RS{i-1}{\Nzr{G}{K}}{H}} (\Nzr{\Nzr{G}{H}}{K} ) + 1 \, , \\ &
            \gd_{\RS{i-1}{\Nzr{G}{H}}{K}} (\Nzr{\Nzr{G}{H}}{K} ) + 1 \, , \\ &
            h(\Nzr{\Nzr{G}{H}}{K}) - h(K)\, , \\ &
            \gd_{\RS{i-1}{G}{H}} (\Nzr{G}{H} )  \}.
        \end{aligned}
    \end{equation*}

\end{Corollary}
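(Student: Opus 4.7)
The plan is to derive both inequalities as direct specializations of Corollary~\ref{gdRSKi} and Proposition~\ref{gdRSi}, substituting the identifications that are specific to the virtually polycyclic setting. The main inputs are that every subgroup of $G$ is itself virtually polycyclic, so Lemma~\ref{normcomm} can be invoked uniformly to replace the iterated centraliser-type subgroups $\Nzerr{*}{*}{*}$ by honest normalisers, and Corollary~\ref{gd_top_rank_RS_poly} can be invoked inside any such subgroup to bound the ``top-rank'' term by a difference of Hirsch lengths.

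The first step is to fix consistent choices of representatives. Since $G$ is virtually polycyclic, Lemma~\ref{normcomm} allows us to select $\mathcal{H}_r$ so that every $H\in\mathcal{H}_r$ satisfies $\Nzerr{r}{G}{H} = \Nzr{G}{H}$, and likewise $\mathcal{H}_i$ so that every $K\in\mathcal{H}_i$ satisfies $\Nzerr{i}{G}{K} = \Nzr{G}{K}$. Since $\mathcal{K}_i = \mathcal{H}_i \cap \RS{i}{G}{H}$, its elements inherit these properties. Applying Lemma~\ref{norm_of_the_norm} then gives
\[
\Nzerr{i}{\Nzerr{r}{G}{H}}{K} \;=\; \Nzerr{i}{G}{K}\cap\Nzerr{r}{G}{H} \;=\; \Nzr{G}{K}\cap\Nzr{G}{H} \;=\; \Nzr{\Nzr{G}{H}}{K},
\]
so every iterated $\Nzerr{}{}{}$-subgroup appearing in Corollary~\ref{gdRSKi} and Proposition~\ref{gdRSi} collapses to exactly the iterated normaliser that appears in the statement to be proved.

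The second step handles the sole ``top-rank'' summand $\gd_{\RS{i}{\Nzerr{r}{G}{H}}{K}}(\Nzerr{i}{\Nzerr{r}{G}{H}}{K})$ present in both of those results. Observing that $\Nzr{G}{H}$ is itself virtually polycyclic, Corollary~\ref{gd_top_rank_RS_poly} applied inside $\Nzr{G}{H}$---with $K$ playing the role of the top-rank representative of its $\sim_i$-class---yields the bound $h(\Nzr{\Nzr{G}{H}}{K}) - h(K)$. The remaining terms on the right-hand sides of Corollary~\ref{gdRSKi} and Proposition~\ref{gdRSi} are already in the recursive form demanded by the statement (families $\RS{i-1}{*}{*}$ in smaller ambient groups) and are left untouched.

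Plugging these identifications into Corollary~\ref{gdRSKi} produces the first inequality, and plugging them into Proposition~\ref{gdRSi} produces the second. I do not expect any genuine obstacle beyond bookkeeping: namely, checking that the chosen representatives remain valid when the ambient group shifts from $G$ to $\Nzr{G}{H}$. This compatibility is guaranteed by the normaliser identification established in the first step, which shows that each $K\in\mathcal{K}_i$ is automatically a Lemma~\ref{normcomm}-style representative inside $\Nzr{G}{H}$, so that Corollary~\ref{gd_top_rank_RS_poly} does apply there.
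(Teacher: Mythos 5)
Your proposal is correct and follows essentially the same route as the paper: the first inequality is Corollary~\ref{gdRSKi} combined with Corollary~\ref{gd_top_rank_RS_poly} (after collapsing the iterated $\Nzerr{}{}{}$-subgroups to normalisers via Lemma~\ref{normcomm} and Lemma~\ref{norm_of_the_norm}), and the second follows by feeding the same identifications into Proposition~\ref{gdRSi}. The extra bookkeeping you spell out about the representatives in $\mathcal{K}_i$ remaining valid inside $\Nzr{G}{H}$ is exactly the point the paper leaves implicit, and your justification of it is sound.
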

\begin{proof}
    Corollaries \ref{gdRSKi} and \ref{gd_top_rank_RS_poly} give us the first inequality. The second
    inequality is consequence of the first one and Proposition~\ref{gdRSi}.
\end{proof}

\begin{Corollary}\label{cd_RS_as_max_poly}
    Under the same assumptions than the previous result, we have

    \begin{equation*}
        \begin{aligned}
            \cd_{\RS{i}{G}{H}[K]} (\Nzr{\Nzr{G}{H}}{K}) \leq
            \underset{[K]_i \in \mathcal{K}_i}{\operatorname{max}}\{ &
            h(\Nzr{\Nzr{G}{H}}{K}) - h(K)\, , \\ &
            \cd_{\RS{i-1}{\Nzr{G}{K}}{H}} (\Nzr{\Nzr{G}{H}}{K})\, , \\ &
            \cd_{\RS{i-1}{\Nzr{G}{H}}{K}} (\Nzr{\Nzr{G}{H}}{K}) + 1 \}
        \end{aligned}
    \end{equation*}

    and hence

    \begin{equation*}
        \begin{aligned}
            \cd_{\RS{i}{G}{H}} (\Nzr{G}{H} ) \leq
            \underset{[K]_i \in \mathcal{K}_i}{\operatorname{max}} \{ &
            \cd_{\RS{i-1}{\Nzr{G}{K}}{H}} (\Nzr{\Nzr{G}{H}}{K} ) + 1 \, , \\ &
            \cd_{\RS{i-1}{\Nzr{G}{H}}{K}} (\Nzr{\Nzr{G}{H}}{K} ) + 1 \, , \\ &
            h(\Nzr{\Nzr{G}{H}}{K}) - h(K)\, , \\ &
            \cd_{\RS{i-1}{G}{H}} (\Nzr{G}{H} )  \}.
        \end{aligned}
    \end{equation*}

\end{Corollary}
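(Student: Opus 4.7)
The plan is to mirror the argument for Corollary~\ref{gd_RS_as_max_poly} step by step, simply substituting each geometric ingredient by its cohomological counterpart, all of which are already available earlier in the thesis. There is no new construction to perform: the structural content (the SSACFS property of $(\HH_r)_{r\in\N}$, the identification $\Nzerr{r}{G}{H}=\comm_G(H)=\Nzr{G}{H}$ for the chosen representatives, and the bounds on the ``top-rank'' pieces) was already established above.

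First I would prove the inequality for $\cd_{\RS{i}{G}{H}[K]}(\Nzr{\Nzr{G}{H}}{K})$. Apply Corollary~\ref{cdRSKi} to $K\in\dRS{i}{G}{H}$, which gives the three-term maximum with $\cd_{\RS{i}{\Nzr{G}{H}}{K}}(\Nzr{\Nzr{G}{H}}{K})$ in the first slot and the $+1$ shifted term $\cd_{\RS{i-1}{\Nzr{G}{H}}{K}}(\Nzr{\Nzr{G}{H}}{K}) + 1$ in the third. Then Corollary~\ref{cd_top_rank_RS_poly}, applied inside the virtually polycyclic group $\Nzr{G}{H}$ to the representative $K$ (which by our choice of $\mathcal{K}_i = \mathcal{H}_i \cap \RS{i}{G}{H}$ satisfies $\Nzerr{i}{\Nzr{G}{H}}{K}=\Nzr{\Nzr{G}{H}}{K}$), bounds the first slot by $h(\Nzr{\Nzr{G}{H}}{K})-h(K)$. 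Substituting, together with the remaining middle term $\cd_{\RS{i-1}{\Nzr{G}{K}}{H}}(\Nzr{\Nzr{G}{H}}{K})$, yields the displayed upper bound for $\cd_{\RS{i}{G}{H}[K]}$.

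For the second inequality, I would invoke Theorem~\ref{cd_mv_rs_i_H}, the cohomological Mayer--Vietoris upper bound for $\cd_{\RS{i}{G}{H}}(\Nzr{G}{H})$ in terms of the Bredon cohomological dimensions of $\Nzr{\Nzr{G}{H}}{K}$ with respect to $\RS{i-1}{\Nzr{G}{K}}{H}$, $\RS{i-1}{\Nzr{G}{H}}{K}$, $\RS{i}{\Nzr{G}{H}}{K}$ and of $\Nzr{G}{H}$ with respect to $\RS{i-1}{G}{H}$. Once again, Corollary~\ref{cd_top_rank_RS_poly} replaces the $\cd_{\RS{i}{\Nzr{G}{H}}{K}}(\Nzr{\Nzr{G}{H}}{K})$ term by $h(\Nzr{\Nzr{G}{H}}{K})-h(K)$, and the remaining three terms match the claimed maximum verbatim.

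I do not expect any real obstacle: every input (the SSACFS structure, the compatibility $\Nzr{G}{H}=\comm_G(H)$ supplied by Lemma~\ref{normcomm}, the top-rank bound of Corollary~\ref{cd_top_rank_RS_poly}, and the Mayer--Vietoris estimate of Theorem~\ref{cd_mv_rs_i_H}) is already in place, and the combinatorics of taking maxima is identical to that in Corollary~\ref{gd_RS_as_max_poly}. Thus the entire proof reduces to the two-line statement: \emph{first inequality by Corollaries~\ref{cdRSKi} and \ref{cd_top_rank_RS_poly}; second inequality by combining the first with Theorem~\ref{cd_mv_rs_i_H}.}
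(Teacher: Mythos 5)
Your proposal is correct and follows essentially the same route as the paper: the first inequality via Corollaries~\ref{cdRSKi} and \ref{cd_top_rank_RS_poly}, and the second via the Mayer--Vietoris bound of Theorem~\ref{cd_mv_rs_i_H} (with the top-rank term again controlled by Corollary~\ref{cd_top_rank_RS_poly}). No gaps to report.
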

\begin{proof}
    The first inequality is given by applying Corollaries \ref{cdRSKi} and \ref{cd_top_rank_RS_poly}.
    The second, by applying Theorem~\ref{cd_mv_rs_i_H}.
\end{proof}

We can extend these results to produce upper bounds for the Bredon dimensions
of $\Nzr{G}{H}$ with respect to the family $\RS{i}{G}{H}$, where $H\in\mathcal{H}_r$ and $i \leq r$:

\begin{Proposition}\label{upperbound_dim_RS_polycyclic}
    Let $G$ be a virtually polycyclic group and for every $r\in\N$ let $\HH_r$ be the family
    of subgroups of $G$ of Hirsch length smaller than or equal to $r$. Let $H\in\mathcal{H}_r$
    and $i \leq r - 1$. Then,
    $$\gd_{\RS{i}{G}{H}} (\Nzr{G}{H}) \leq h(\Nzr{G}{H}) + i$$
    and
    $$\cd_{\RS{i}{G}{H}} (\Nzr{G}{H}) \leq h(\Nzr{G}{H}) + i.$$
\end{Proposition}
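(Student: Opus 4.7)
The plan is to prove both inequalities simultaneously by induction on $i$, with the base case $i=0$ and the inductive step powered by Corollaries~\ref{gd_RS_as_max_poly} and~\ref{cd_RS_as_max_poly}. Throughout I tacitly assume, using Lemma~\ref{normcomm}, that the representatives $H\in\mathcal{H}_r$ and $K\in\mathcal{K}_i$ have been chosen so that $\Nzr{G}{H}=\comm_G(H)$ and $\Nzr{\Nzr{G}{H}}{K}=\comm_{\Nzr{G}{H}}(K)$, so that the ``$\Nzerr{r}{\cdot}{\cdot}$'' notation collapses to ordinary normalizers.

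For the base case $i=0$, the family $\RS{0}{G}{H}$ equals $\Fin\cap\Nzr{G}{H}$, so a model for $\UEGG{\Nzr{G}{H}}$ is a model for $\EFG{\RS{0}{G}{H}}{\Nzr{G}{H}}$. Since $\Nzr{G}{H}$ is virtually polycyclic of Hirsch length $h(\Nzr{G}{H})$, standard results give $\ugd\Nzr{G}{H}=\ucd\Nzr{G}{H}=h(\Nzr{G}{H})$, which is the desired bound.

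For the inductive step, assuming the statement for all indices strictly smaller than $i$ and for every virtually polycyclic ambient group with a suitable representative, I would apply Corollary~\ref{gd_RS_as_max_poly} (and~\ref{cd_RS_as_max_poly}) and bound each of the four entries of the maximum by $h(\Nzr{G}{H})+i$. The last entry $\gd_{\RS{i-1}{G}{H}}\Nzr{G}{H}$ is directly bounded by $h(\Nzr{G}{H})+(i-1)$ by the inductive hypothesis applied with the same $G,H$ and index $i-1$. The third entry $h(\Nzr{\Nzr{G}{H}}{K})-h(K)$ is at most $h(\Nzr{G}{H})-i$ since $\Nzr{\Nzr{G}{H}}{K}\leq\Nzr{G}{H}$ and $h(K)=i$. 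For the second entry, I apply the inductive hypothesis in the virtually polycyclic group $\Nzr{G}{H}$ with distinguished subgroup $K\in\mathcal{H}_i$ and index $i-1\leq i-1$, obtaining
\[
\gd_{\RS{i-1}{\Nzr{G}{H}}{K}}\Nzr{\Nzr{G}{H}}{K}+1\leq h(\Nzr{\Nzr{G}{H}}{K})+i\leq h(\Nzr{G}{H})+i.
\]
The first entry uses the identity $\Nzr{\Nzr{G}{H}}{K}=\Nzr{\Nzr{G}{K}}{H}$; I apply the inductive hypothesis with ambient group $\Nzr{G}{K}$ (virtually polycyclic), distinguished subgroup $H$ of Hirsch length $r$, and index $i-1\leq r-2\leq r-1$, which yields the analogous bound. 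The cohomological version is identical, using Corollary~\ref{cd_RS_as_max_poly} in place of Corollary~\ref{gd_RS_as_max_poly}.

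The main obstacle I expect is purely bureaucratic: ensuring that at each recursive level the chosen representative still satisfies the commensurator condition so that $\Nzerr{j}{\cdot}{\cdot}=\Nzr{\cdot}{\cdot}$ and the ambient groups appearing on the right-hand side of Corollary~\ref{gd_RS_as_max_poly} match those to which the inductive hypothesis is being applied. This is handled uniformly by invoking Lemma~\ref{normcomm} inside each virtually polycyclic subgroup encountered ($\Nzr{G}{H}$ and $\Nzr{G}{K}$), replacing $H$ and $K$ by finite-index subgroups in the same $\sim_r$, resp.\ $\sim_i$, equivalence class if necessary; this replacement preserves Hirsch length and the families $\RS{\bullet}{\cdot}{\cdot}$, so none of the bounds above are affected.
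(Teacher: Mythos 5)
Your proposal is correct and follows essentially the same route as the paper: induction on $i$ with base case $\RS{0}{G}{H}=\Fin\cap\Nzr{G}{H}$ (giving $h(\Nzr{G}{H})$), and an inductive step that bounds each entry of the maximum in Corollaries~\ref{gd_RS_as_max_poly} and~\ref{cd_RS_as_max_poly} by applying the hypothesis inside $\Nzr{G}{K}$, $\Nzr{G}{H}$ and $G$, using Lemma~\ref{norm_of_the_norm} and monotonicity of Hirsch length. Your explicit invocation of Lemma~\ref{normcomm} to keep the commensurator-equals-normalizer condition at every level is exactly the representative choice the paper builds into its strengthened induction hypothesis (quantifying over all virtually polycyclic $L$ and $S\leq L$ with $\comm_L S=\Nzr{L}{S}$), so the two arguments coincide.
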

\begin{proof}
    We will prove it by induction on $i$. For the base case, since $\RS{0}{G}{H}$ coincides with
    the family of finite subgroups of $\Nzr{G}{H}$, we already know
    $\gd_{\RS{0}{G}{H}} \Nzr{G}{H} = h(\Nzr{G}{H})$. Note that this equality holds for all
    virtually polycyclic group $L$ and $S\leq L$ such that $\comm_L S = \Nzr{L}{S}$.

    For the induction step, we assume that for all $L$ virtually polycyclic and all $S\leq L$
    such that $\comm_L S = \Nzr{L}{S}$ we have
    $\gd_{\RS{i-1}{L}{S}} \Nzr{L}{S} \leq h(\Nzr{L}{S}) + i - 1$ and we need to prove that
    $\gd_{\RS{i}{G}{H}} \Nzr{G}{H} \leq h(\Nzr{G}{H}) + i$. Given Corollary~\ref{gd_RS_as_max_poly},
    if is sufficient to prove the following for all $K\in\mathcal{K}_i = \mathcal{H}_r \cap \RS{i}{G}{H}$:
    \begin{enumerate}[label = {(\roman*)}, noitemsep]
        \item $\gd_{\RS{i-1}{\Nzr{G}{K}}{H}} (\Nzr{\Nzr{G}{H}}{K}) \leq h(\Nzr{G}{H}) + i - 1$;
        \item $\gd_{\RS{i-1}{\Nzr{G}{H}}{K}} (\Nzr{\Nzr{G}{H}}{K}) \leq h(\Nzr{G}{H}) + i - 1$ and
        \item $\gd_{\RS{i-1}{G}{H}} (\Nzr{G}{H}) \leq h(\Nzr{G}{H}) + i$.
    \end{enumerate}

    And the three inequalities are true by induction hypothesis applied to
    $H\cap\Nzr{G}{K} \leq \Nzr{G}{K}$, $K \leq \Nzr{G}{H}$ and $H \leq G$, respectively,
    given that if $A \leq B$ then $h(A) \leq h(B)$ for all $A, B$ virtually polycyclic.

    The proof for the Bredon cohomological dimension is the same as the one for the Bredon geometric
    dimension, but using Corollary~\ref{cd_RS_as_max_poly} instead of \ref{gd_RS_as_max_poly}.
\end{proof}

We can now derive similar results for the families $\HH_r [H]$ and $\HH_r$ of subgroups of $\Nzr{G}{H}$
and $G$, respectively.

\begin{Corollary}\label{gd_HH_as_max_poly}
    Let $G$ be a virtually polycyclic group and for every $r\in\N$ let $\HH_r$ be the family
    of subgroups of $G$ of Hirsch length smaller than or equal to $r$. Then,

    \begin{equation*}
        \begin{aligned}
            \gd_{\HH_r[H]} \Nzr{G}{H} \leq \underset{H \in \mathcal{H}_r}{\operatorname{max}}\{ &
            \gd_{\HH_{r-1}\cap \Nzr{G}{H}} \Nzr{G}{H}\, , \,
            h(\Nzr{G}{H}) - h(H)\, , \\ &
            \gd_{\RS{r-1}{G}{H}} \Nzr{G}{H} + 1 \}.
        \end{aligned}
    \end{equation*}

    and hence

    \begin{equation*}
        \begin{aligned}
            \gd_{\HH_r} G \leq \underset{H \in \mathcal{H}_r}{\operatorname{max}} \{ &
            \gd_{\HH_{r-1} \cap \Nzr{G}{H}} (\Nzr{G}{H}) + 1 \, , \\ &
            h(\Nzr{G}{H}) - h(H)\, , \\ &
            \gd_{\RS{r-1}{G}{H}} (\Nzr{G}{H}) + 1  \, , \\ &
            \gd_{\HH_{r-1}} G  \}.
        \end{aligned}
    \end{equation*}
\end{Corollary}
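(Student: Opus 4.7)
The plan is to view Corollary~\ref{gd_HH_as_max_poly} as a direct specialisation of the general machinery in Chapter~\ref{ch:chains} to the particular SSACFS $(\HH_r)_{r\in\N}$ of virtually polycyclic subgroups of a virtually polycyclic group $G$. All the heavy lifting has been done already; the task reduces to matching the earlier abstract bounds against the concrete Hirsch-length data now available, and choosing the representatives carefully.

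First I would fix the set of representatives $\mathcal{H}_r$ of the $\sim_r$-classes in $\dF{\HH}{r}$. Since $G$ is virtually polycyclic, it is finitely generated and virtually soluble, so Lemma~\ref{normcomm} applies: for every $H\in\dF{\HH}{r}$ we may replace $H$ by a finite index subgroup $H'\leq H$ with $\Nzr{G}{H'}=\comm_G(H)=\Nzerr{r}{G}{H}$ (recall that for $(\HH_r)_{r\in\N}$ the equivalence $\sim_r$ is exactly commensurability, so $H\sim_r H'$). Hence $\mathcal{H}_r$ can be chosen so that $\Nzerr{r}{G}{H}=\Nzr{G}{H}$ for every $H\in\mathcal{H}_r$, which legitimises replacing $\Nzerr{r}{G}{H}$ by $\Nzr{G}{H}$ throughout the statement.

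Next I would derive the first displayed inequality by invoking Corollary~\ref{gdFrH}, which already yields
\[
\gd_{\HH_r[H]}\Nzr{G}{H}\leq\max\{\gd_{\HH_{r-1}\cap\Nzr{G}{H}}\Nzr{G}{H},\ \gd_{\RS{r}{G}{H}}\Nzr{G}{H},\ \gd_{\RS{r-1}{G}{H}}\Nzr{G}{H}+1\}
\]
for every $H\in\mathcal{H}_r$, and then feeding in the bound $\gd_{\RS{r}{G}{H}}\Nzr{G}{H}\leq h(\Nzr{G}{H})-h(H)$ supplied by Corollary~\ref{gd_top_rank_RS_poly} (which in turn uses the identification $\Nzerr{r}{G}{H}=\Nzr{G}{H}$ produced in the previous paragraph, together with the fact that $\ugd(\Nzr{G}{H}/H)=h(\Nzr{G}{H}/H)=h(\Nzr{G}{H})-h(H)$ for virtually polycyclic quotients). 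Taking the maximum over $H\in\mathcal{H}_r$ gives exactly the first displayed bound in Corollary~\ref{gd_HH_as_max_poly}.

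Finally, to obtain the second displayed inequality I would plug the bound on $\gd_{\HH_r[H]}\Nzr{G}{H}$ just obtained into Proposition~\ref{gdFr} applied to the chain $(\HH_r)_{r\in\N}$. Proposition~\ref{gdFr} states
\[
\gd_{\HH_r}G\leq\max_{H\in\mathcal{H}_r}\{\gd_{\HH_{r-1}\cap\Nzr{G}{H}}\Nzr{G}{H}+1,\ \gd_{\RS{r}{G}{H}}\Nzr{G}{H},\ \gd_{\RS{r-1}{G}{H}}\Nzr{G}{H}+1,\ \gd_{\HH_{r-1}}G\},
\]
and again substituting $\gd_{\RS{r}{G}{H}}\Nzr{G}{H}\leq h(\Nzr{G}{H})-h(H)$ via Corollary~\ref{gd_top_rank_RS_poly} produces the second displayed inequality. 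Essentially no step is hard; the only point requiring care is the appeal to Lemma~\ref{normcomm} to justify the assumption $\Nzerr{r}{G}{H}=\Nzr{G}{H}$, because without this the application of Corollary~\ref{gd_top_rank_space_for_rs_comm} (and hence Corollary~\ref{gd_top_rank_RS_poly}) would not be available, and the resulting bound on $\gd_{\RS{r}{G}{H}}$ in terms of Hirsch lengths would fail.
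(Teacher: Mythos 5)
Your proposal is correct and follows essentially the same route as the paper: the paper's proof likewise obtains the first bound from Corollary~\ref{gdFrH} combined with Corollary~\ref{gd_top_rank_RS_poly} (after choosing, via Lemma~\ref{normcomm}, representatives $H\in\mathcal{H}_r$ with $\Nzr{G}{H}=\comm_G(H)=\Nzerr{r}{G}{H}$), and the second bound by feeding this into Proposition~\ref{gdFr}. Your extra care in spelling out why the representatives can be so chosen is exactly the justification the paper relies on implicitly.
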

\begin{proof}
    We get the first inequality from Corollaries \ref{gdFrH} and \ref{gd_top_rank_RS_poly}.
    For the second inequality, we use the first and Proposition~\ref{gdFr}.
\end{proof}

\begin{Corollary}\label{cd_HH_as_max_poly}
    Let $G$ and $\HH_r$ as in the previous result. Then,

    \begin{equation*}
        \begin{aligned}
            \cd_{\HH_r[H]} \Nzr{G}{H} \leq \underset{H \in \mathcal{H}_r}{\operatorname{max}}\{ &
            \cd_{\HH_{r-1}\cap \Nzr{G}{H}} \Nzr{G}{H}\, , \,
            h(\Nzr{G}{H}) - h(H)\, , \\ &
            \cd_{\RS{r-1}{G}{H}} \Nzr{G}{H} + 1 \}.
        \end{aligned}
    \end{equation*}

    and hence

    \begin{equation*}
        \begin{aligned}
            \cd_{\HH_r} G \leq \underset{H \in \mathcal{H}_r}{\operatorname{max}} \{ &
            \cd_{\HH_{r-1} \cap \Nzr{G}{H}} (\Nzr{G}{H}) + 1 \, , \\ &
            h(\Nzr{G}{H}) - h(H)\, , \\ &
            \cd_{\RS{r-1}{G}{H}} (\Nzr{G}{H}) + 1  \, , \\ &
            \cd_{\HH_{r-1}} G  \}.
        \end{aligned}
    \end{equation*}
\end{Corollary}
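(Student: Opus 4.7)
The plan is to mirror the proof of Corollary~\ref{gd_HH_as_max_poly}, substituting Bredon cohomological dimensions for geometric ones and replacing the geometric tools with their cohomological counterparts already established in this chapter and the previous one. By Lemma~\ref{normcomm}, one may choose the set $\mathcal{H}_r$ of representatives of the equivalence classes in $[\dF{\HH}{r}]$ so that $\Nzr{G}{H}=\comm_G(H)=\Nzerr{r}{G}{H}$ for every $H\in\mathcal{H}_r$, and this identification will be used tacitly throughout.

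For the first inequality, I would invoke Corollary~\ref{cdFrH}, which bounds $\cd_{\HH_r[H]}\Nzerr{r}{G}{H}$ by the maximum of the three quantities $\cd_{\HH_{r-1}\cap\Nzerr{r}{G}{H}}\Nzerr{r}{G}{H}$, $\cd_{\RS{r}{G}{H}}\Nzerr{r}{G}{H}$, and $\cd_{\RS{r-1}{G}{H}}\Nzerr{r}{G}{H}+1$. The only term that differs from those appearing in the statement is the middle one, and this is handled by Corollary~\ref{cd_top_rank_RS_poly}, which gives $\cd_{\RS{r}{G}{H}}\Nzr{G}{H}\leq h(\Nzr{G}{H})-h(H)$. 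Performing this single substitution yields the first claimed inequality.

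For the second inequality, I would apply Theorem~\ref{cd_mv_F_r} to the inclusion $\HH_{r-1}\subseteq\HH_r$, which provides an upper bound for $\cd_{\HH_r}G$ as the maximum over $[H]\in\mathcal{H}_r$ of $\cd_{\HH_{r-1}\cap\Nzr{G}{H}}\Nzr{G}{H}+1$, $\cd_{\RS{r}{G}{H}}\Nzr{G}{H}$, $\cd_{\RS{r-1}{G}{H}}\Nzr{G}{H}+1$, and $\cd_{\HH_{r-1}}G$. Replacing the term $\cd_{\RS{r}{G}{H}}\Nzr{G}{H}$ once more by $h(\Nzr{G}{H})-h(H)$ via Corollary~\ref{cd_top_rank_RS_poly} delivers the second claimed bound.

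There is no substantive obstacle: both parts reduce to direct substitution of the known upper bound for $\cd_{\RS{r}{G}{H}}\Nzr{G}{H}$ into inequalities already in hand. The only points worth flagging are the appeal to Lemma~\ref{normcomm} for the choice of representatives, and the fact that Theorem~\ref{cd_mv_F_r}, the cohomological counterpart of Proposition~\ref{gdFr}, ultimately rests on the Mayer-Vietoris long exact sequence for the L\"uck--Weiermann pushout established in Proposition~\ref{MV_LW}; neither point requires further work here.
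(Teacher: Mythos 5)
Your proposal is correct and follows essentially the same route as the paper: the paper's proof also combines the bound on $\cd_{\HH_r[H]}\Nzr{G}{H}$ coming from the union decomposition (Proposition~\ref{cd_mv_FrH}, which gives the same maximum as Corollary~\ref{cdFrH}) with Corollary~\ref{cd_top_rank_RS_poly} and Theorem~\ref{cd_mv_F_r}, together with the choice of representatives via Lemma~\ref{normcomm} so that $\Nzr{G}{H}=\comm_G(H)=\Nzerr{r}{G}{H}$. The substitution of $\cd_{\RS{r}{G}{H}}\Nzr{G}{H}\leq h(\Nzr{G}{H})-h(H)$ into both bounds is exactly the intended argument.
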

\begin{proof}
    Using Proposition~\ref{cd_mv_FrH}, Corollary~\ref{cd_top_rank_RS_poly} and Theorem~\ref{cd_mv_F_r}.
\end{proof}

Finally, we get the following upper bounds for the Bredon dimensions of $G$ with respect to
the family $\HH_r$, for $r \leq h(G)$:

\begin{Theorem}\label{upperbound_dim_HH_polycyclic}
    Let $G$ be a virtually polycyclic group and, for every $r\in\N$, let $\HH_r$ be the family
    of subgroups of $G$ of Hirsch length smaller than or equal to $r$. Then,
    $$\gd_{\HH_r} G \leq h(G) + r$$
    and
    $$\cd_{\HH_r} G \leq h(G) + r.$$
\end{Theorem}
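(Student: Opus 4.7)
The plan is to proceed by induction on $r$, leveraging the recursive bounds established in Corollaries~\ref{gd_HH_as_max_poly} and \ref{cd_HH_as_max_poly} together with the auxiliary bound for the families $\RS{i}{G}{H}$ given in Proposition~\ref{upperbound_dim_RS_polycyclic}. For the base case $r = 0$, note that $\HH_0$ coincides with the family $\Fin$ of finite subgroups of $G$, and for virtually polycyclic groups it is classical (see \cite{luecksurvey}) that $\gd_{\Fin} G = \cd_{\Fin} G = h(G)$, matching the claimed bound.

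For the inductive step, I would assume the theorem holds for every virtually polycyclic group and every index strictly less than $r$, and apply Corollary~\ref{gd_HH_as_max_poly} (respectively Corollary~\ref{cd_HH_as_max_poly}) to the maximum over $H \in \mathcal{H}_r$, where representatives are chosen so that $\Nzr{G}{H} = \comm_G(H) = \Nzerr{r}{G}{H}$, as permitted by Lemma~\ref{normcomm}. The task reduces to bounding each of the four quantities appearing in the maximum by $h(G) + r$. First, since $\HH_{r-1} \cap \Nzr{G}{H}$ is precisely the family of virtually polycyclic subgroups of $\Nzr{G}{H}$ of Hirsch length at most $r-1$, the induction hypothesis applied to the virtually polycyclic group $\Nzr{G}{H}$ gives $\gd_{\HH_{r-1}\cap\Nzr{G}{H}} \Nzr{G}{H} \leq h(\Nzr{G}{H}) + r - 1 \leq h(G) + r - 1$, so adding $1$ keeps us within the desired bound. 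Second, since $H \in \dF{\HH}{r}$ we have $h(H) = r$, so $h(\Nzr{G}{H}) - h(H) \leq h(G) - r \leq h(G) + r$ by part (i) of Lemma~\ref{hirschl}. Third, Proposition~\ref{upperbound_dim_RS_polycyclic} yields $\gd_{\RS{r-1}{G}{H}} \Nzr{G}{H} \leq h(\Nzr{G}{H}) + r - 1 \leq h(G) + r - 1$, so adding $1$ is again within range. Finally, the direct induction hypothesis for $G$ itself handles $\gd_{\HH_{r-1}} G \leq h(G) + r - 1$.

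The cohomological statement follows by the identical argument, replacing each geometric dimension by the corresponding cohomological one and invoking the cohomological halves of Corollary~\ref{cd_HH_as_max_poly} and Proposition~\ref{upperbound_dim_RS_polycyclic}. The only subtlety is that the induction hypothesis must be stated uniformly for \emph{all} virtually polycyclic groups (not just $G$), so that it may be applied to the commensurator $\Nzr{G}{H}$, which is itself a subgroup of $G$ and hence virtually polycyclic with Hirsch length at most $h(G)$; this uniformity is automatic since the class of virtually polycyclic groups is closed under taking subgroups.

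The main obstacle is purely bookkeeping rather than conceptual: one must check that all four quantities in the maximum are controlled simultaneously and that each invocation of the induction hypothesis is legitimate (in particular, the use of induction on the commensurator $\Nzr{G}{H}$ with the smaller index $r-1$, and the careful observation that $h(\Nzr{G}{H}) \leq h(G)$ absorbs the loss of a unit from $h(H) = r$ into the slack provided by going from $r-1$ to $r$). All the deep work, that is, the strongly structured ascending chain construction of Section~\ref{sec:construction_dFFr}, the Mayer--Vietoris machinery, and the fact that $\RS{r}{G}{H}$-classifying spaces can be controlled via $\underline{E}(\Nzr{G}{H}/H)$, has already been done upstream, so the proof at this stage is a clean induction.
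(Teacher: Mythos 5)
Your proposal is correct and follows essentially the same route as the paper: induction on $r$ with base case $\gd_{\HH_0}G=\cd_{\HH_0}G=h(G)$, and the inductive step carried out by bounding each term in the maxima of Corollaries~\ref{gd_HH_as_max_poly} and \ref{cd_HH_as_max_poly} using the induction hypothesis (applied uniformly to virtually polycyclic subgroups such as $\Nzr{G}{H}$) together with Proposition~\ref{upperbound_dim_RS_polycyclic}. If anything, your write-up is slightly more careful than the paper's, since you explicitly cite Proposition~\ref{upperbound_dim_RS_polycyclic} for the $\RS{r-1}{G}{H}$ term and explicitly dispose of the $h(\Nzr{G}{H})-h(H)$ term, both of which the paper leaves implicit.
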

\begin{proof}
    We proceed, as in the proof of Proposition~\ref{upperbound_dim_RS_polycyclic}, by induction
    over $r$. For the base case, since $\HH_0$ is the family of finite subgroups of $G$, we know that
    $\gd_{\HH_0} G = h(G)$.

    For the inductive step, let $H\in\dF{H}{r}$ be such that $\Nzerr{r}{G}{H} = \Nzr{G}{H}$ and
    assume that for all $L$ virtually polycyclic we have
    $\gd_{\HH_{r-1}\cap L} L \leq h(L) + i$.
    By hypothesis of induction, we have:
    \begin{enumerate}[label={(\roman*)}, noitemsep]
        \item $\gd_{\HH_{r-1}\cap\Nzr{G}{H}} \Nzr{G}{H} \leq h(\Nzr{G}{H}) + r - 1 \leq
            h(G) + r - 1$;
        \item $\gd_{\RS{r-1}{G}{H}} (\Nzr{G}{H}) \leq h(\Nzr{G}{H}) + r - 1$ and
        \item $\gd_{\HH_{r-1}} G \leq h(G) + r - 1$.
    \end{enumerate}

    By Corollary~\ref{gd_HH_as_max_poly}, we have $\gd_{\HH_r} G \leq h(G) + r$, as we
    needed.

    The proof for the Bredon cohomological dimension is the same as the one for the Bredon geometric
    dimension, using Corollary~\ref{cd_HH_as_max_poly} instead of \ref{gd_HH_as_max_poly}.
\end{proof}

We can also give lower bounds for the Bredon dimensions of $G$ with respect to $\HH_r$ that will
be useful in the next section.

\begin{Corollary}\label{lower_bound_vpc}
    Let $G$ be a virtually polycyclic group and, for every $r\in\N$, let $\HH_r$ be the family
    of subgroups $H\leq G$ such that $h(H) \leq r$. Then,
    $$\cd_{\HH_r} G \geq h(G) - r$$
    and
    $$\gd_{\HH_r} G \geq h(G) - r.$$
\end{Corollary}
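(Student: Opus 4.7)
The plan is to exploit the chain of full families $\Fin \subseteq \HH_r$ and turn the known equality $\cd_{\Fin} G = \gd_{\Fin} G = h(G)$ for virtually polycyclic groups into a lower bound for the Bredon dimensions with respect to $\HH_r$. The key observation is that moving \emph{down} a family inclusion is controlled by Corollary~\ref{inclusion_cd_goes_up_boundedly} and Proposition~\ref{inclusion_gd_goes_up_boundedly}, which bound the dimensions of the smaller family above in terms of those of the larger family plus a uniform bound on the Bredon dimensions of the subgroups in the larger family.

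First I would recall that for any virtually polycyclic group $L$, $\cd_{\Fin} L = \gd_{\Fin} L = h(L)$; this is a standard fact, obtained e.g.\ from the existence of a cocompact $h(L)$-dimensional model for $\UEGG{L}$ built from a Mal'cev basis of a torsion-free polycyclic finite-index subgroup, with the matching lower bound coming from the virtual cohomological dimension of that subgroup. Next I would apply Corollary~\ref{inclusion_cd_goes_up_boundedly} to the inclusion functor $\pi : \Fin \hookrightarrow \HH_r$: conditions (MP1) and (MP2) are immediate since $\pi(L)=L$ and $\Fin$ is closed under taking subgroups. For every $S \in \HH_r$, the subgroup $S$ is itself virtually polycyclic with $h(S) \leq r$, and $\Fin \cap S$ is exactly the family of finite subgroups of $S$, so
\begin{equation*}
    \cd_{\Fin \cap S} S \;=\; h(S) \;\leq\; r.
\end{equation*}
Plugging this into the conclusion of Corollary~\ref{inclusion_cd_goes_up_boundedly} yields $h(G) = \cd_{\Fin} G \leq r + \cd_{\HH_r} G$, which rearranges to the desired $\cd_{\HH_r} G \geq h(G) - r$.

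For the geometric inequality I would repeat the argument verbatim using Proposition~\ref{inclusion_gd_goes_up_boundedly} in place of Corollary~\ref{inclusion_cd_goes_up_boundedly}. Both $\Fin$ and $\HH_r$ are full families of subgroups of $G$, so the hypothesis of that proposition is satisfied, and for every $H \in \HH_r$ we again have $\gd_{\Fin \cap H} H = h(H) \leq r$. This gives $h(G) = \gd_{\Fin} G \leq \gd_{\HH_r} G + r$, hence $\gd_{\HH_r} G \geq h(G) - r$.

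There is essentially no obstacle beyond checking the hypotheses; the main conceptual point is simply recognising that the inequalities of Chapter~\ref{ch:related-families}, which in this paper were used to push dimensions \emph{up} (i.e.\ to bound $\cd_{\HH_r} G$ from above via $\cd_{\HH_{r-1}} G$ and auxiliary families), can be read in the opposite direction to bound $\cd_{\Fin} G$ above in terms of $\cd_{\HH_r} G$. The uniform bound $r$ on the Bredon dimensions of subgroups $S \in \HH_r$ with respect to $\Fin \cap S$ is the content of the Hirsch length restriction built into the definition of $\HH_r$, so no additional work is required there.
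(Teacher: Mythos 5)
Your proposal is correct and is essentially identical to the paper's own proof: the paper also applies Corollary~\ref{inclusion_cd_goes_up_boundedly} and Proposition~\ref{inclusion_gd_goes_up_boundedly} to the inclusion $\HH_0 = \Fin \subseteq \HH_r$, using $\cd_{\HH_0 \cap H} H = \gd_{\HH_0 \cap H} H = h(H) \leq r$ for $H \in \HH_r$ and $\cd_{\HH_0} G = \gd_{\HH_0} G = h(G)$ to obtain $h(G) \leq \cd_{\HH_r} G + r$ and $h(G) \leq \gd_{\HH_r} G + r$. The only cosmetic difference is that you spell out the verification of (MP1)--(MP2) and the classical fact $\cd_{\Fin} G = \gd_{\Fin} G = h(G)$, which the paper takes as known.
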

\begin{proof}
    Consider the families $\HH_0 \subseteq \HH_r$ and $\pi : \HH_0 \to \HH_r$ to be the inclusion.
    Given $H\in\HH_r$, we know that $\cd_{\HH_0 \cap H} H = \gd_{\HH_0 \cap H} H = h(H) \leq r$.
    By Corollary~\ref{inclusion_cd_goes_up_boundedly} and Proposition~\ref{inclusion_gd_goes_up_boundedly},
    since $\HH_0\subseteq\HH_r$ are full families, we can conclude that
    $h(G)  = \cd_{\HH_0} G \leq \cd_{\HH_r} G + r$ and $h(G) = \gd_{\HH_0} G \leq \gd_{\HH_r} G + r$.
\end{proof}

\section{Locally virtually polycyclic groups}

In this section we use Theorems \ref{gd_direct_unions} and \ref{cd_direct_unions} to widen
the class of groups to which the ambient group belongs in the results in the previous section.

\begin{Defn}
    Let $G$ be a locally virtually polycyclic group. Then, we define its Hirsch length
    as $h(G) = \operatorname{sup}\{h(H) \st H \leq G \emph{ finitely generated}\}$.
\end{Defn}

Note that this extension of the definition of the Hirsch length is consistent with that for
virtually polycyclic groups given in the previous section and also with that for elementary amenable groups
given in \cite{hillman1991elementary}.

\begin{Theorem}\label{dim_locally_vpc}
    Let $G$ be a locally virtually polycyclic countable group such that $h(G) < \infty$. Then,
    $$\cd_{\HH_r} G \leq h(G) + r + 1$$
    and
    $$\gd_{\HH_r} G \leq h(G) + r + 1,$$
    for $0 \leq r < h(G)$ and $\cd_{\HH_r} G \leq 1$ and $\gd_{\HH_r} G \leq 1$ for $r \geq h(G)$.
\end{Theorem}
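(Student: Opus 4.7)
The plan is to exploit the fact that every group is the direct union of its finitely generated subgroups, combined with the previous section's results for virtually polycyclic groups and the direct-union inequalities in Theorems~\ref{cd_direct_unions} and \ref{gd_direct_unions}.

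First I would write $G = \bigcup_{\lambda \in \Lambda} G_\lambda$, where $\{G_\lambda \st \lambda \in \Lambda\}$ is the collection of finitely generated subgroups of $G$ directed by inclusion. Since $G$ is locally virtually polycyclic, each $G_\lambda$ is virtually polycyclic, and since $G$ is countable, $\Lambda$ may be taken countable. Because virtually polycyclic groups are finitely generated, $\HH_r$ is a full family of finitely generated subgroups of $G$, so by Proposition~\ref{fg_implies_compatible} the families $\HH_r$ and $\{\HH_r \cap G_\lambda\}_{\lambda\in\Lambda}$ are compatible with the direct union. Note also that $\HH_r \cap G_\lambda$ is exactly the family of subgroups of $G_\lambda$ of Hirsch length at most $r$, using that $\HH_r$ is full.

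For the case $0 \leq r < h(G)$, I would apply Theorem~\ref{upperbound_dim_HH_polycyclic} to each virtually polycyclic group $G_\lambda$ to obtain
\[ \cd_{\HH_r \cap G_\lambda} G_\lambda \leq h(G_\lambda) + r \leq h(G) + r \]
and the analogous bound for $\gd$, using the Hirsch-length monotonicity from Lemma~\ref{hirschl}. Theorems~\ref{cd_direct_unions} and \ref{gd_direct_unions} then yield the desired estimates $\cd_{\HH_r} G \leq h(G) + r + 1$ and $\gd_{\HH_r} G \leq h(G) + r + 1$. For the case $r \geq h(G)$, every finitely generated $G_\lambda$ satisfies $h(G_\lambda) \leq h(G) \leq r$, hence $G_\lambda \in \HH_r$. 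This shows that $G$ is locally $\HH_r$, so Propositions~\ref{cd_locallyF} and \ref{gd_locallyF} immediately give $\cd_{\HH_r} G \leq 1$ and $\gd_{\HH_r} G \leq 1$.

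Since everything reduces to direct applications of previously established machinery, there is no substantial obstacle; the only delicate step is the bookkeeping needed to verify compatibility of the families with the direct union and to identify $\HH_r \cap G_\lambda$ correctly as the $r$-th Hirsch-bounded family inside $G_\lambda$. Both points are handled by the monotonicity statements in Lemma~\ref{hirschl} together with the fact that virtually polycyclic subgroups are finitely generated, which ensures that Proposition~\ref{fg_implies_compatible} applies to $\HH_r$.
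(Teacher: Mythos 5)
Your proposal is correct and follows essentially the same route as the paper: decompose $G$ as the direct union of its finitely generated (hence virtually polycyclic) subgroups, use Proposition~\ref{fg_implies_compatible} and Theorems~\ref{cd_direct_unions}/\ref{gd_direct_unions}, and feed in Theorem~\ref{upperbound_dim_HH_polycyclic}. The only cosmetic difference is in the case $r \geq h(G)$, where you invoke Propositions~\ref{cd_locallyF} and \ref{gd_locallyF} while the paper simply observes that each $G_\lambda$ lies in $\HH_r \cap G_\lambda$ so its Bredon dimensions vanish and the direct-union bound gives $\leq 1$; these amount to the same argument.
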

\begin{proof}
    Let $\{G_\lambda \st \lambda\in\Lambda\}$ be the set of finitely generated subgroups of $G$.
    Let $r\in\N$. Since $\HH_r$ is a full family of finitely generated subgroups of $G$, by
    Proposition~\ref{fg_implies_compatible}, the families $\HH_r$ and
    $\{\HH_r \cap G_\lambda \st \lambda\in\Lambda\}$
    are compatible with the direct union. Given that $G$ is a countable group, $\Lambda$ is
    also countable. Hence, by Theorems \ref{cd_direct_unions} and \ref{gd_direct_unions}, we have
    $$\underset{\lambda\in\Lambda}{\operatorname{sup}}\{\cd_{\HH_{r,\lambda}} G_\lambda\} \leq
    \cd_{\HH_r} G \leq
    \underset{\lambda\in\Lambda}{\operatorname{sup}}\{\cd_{\HH_{r,\lambda}} G_\lambda\} + 1$$
    and
    $$\underset{\lambda\in\Lambda}{\operatorname{sup}}\{\gd_{\HH_{r,\lambda}} G_\lambda\} \leq
    \gd_{\HH_r} G \leq
    \underset{\lambda\in\Lambda}{\operatorname{sup}}\{\gd_{\HH_{r,\lambda}} G_\lambda\} + 1,$$
    where $\HH_{r,\lambda} = \HH_r \cap G_\lambda$.

    Let $\lambda\in\Lambda$ and $r\in\N$. Since $G$ is locally virtually polycyclic and
    $G_\lambda$ is finitely generated, $G_\lambda$ is virtually polycyclic.
    Therefore, by Theorem~\ref{upperbound_dim_HH_polycyclic},
    $\cd_{\HH_{r,\lambda}} G_\lambda \leq h(G_\lambda) + r$ and
    $\gd_{\HH_{r,\lambda}} G_\lambda \leq h(G_\lambda) + r$, respectively.
    Note that if $r \geq h(G_\lambda)$, $G_\lambda \in \HH_{r,\lambda}$, so
    $\cd_{\HH_{r,\lambda}} G_\lambda = \gd_{\HH_{r,\lambda}} G_\lambda = 0$, which concludes the
    proof, since $h(G) = \underset{\lambda\in\Lambda}{\operatorname{sup}}\{h(G_\lambda)\}$.
\end{proof}

\begin{Theorem}\label{cd_finite_iff_h_finite}
    Let $G$ be a locally virtually polycyclic countable group and $r\in\N$. Then,
    $\cd_{\HH_r} G < \infty$ if and only if $h(G) < \infty$.
\end{Theorem}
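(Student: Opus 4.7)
The plan is to prove the two implications separately. The ``if'' direction is essentially free from what has already been established: assuming $h(G) < \infty$, Theorem~\ref{dim_locally_vpc} immediately gives $\cd_{\HH_r} G \leq h(G) + r + 1 < \infty$ (for $r < h(G)$) or $\cd_{\HH_r} G \leq 1$ (for $r \geq h(G)$), so in particular the Bredon cohomological dimension is finite.

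For the ``only if'' direction, I would argue by contraposition: assume $h(G) = \infty$ and show $\cd_{\HH_r} G = \infty$. The key ingredients are Corollary~\ref{lower_bound_vpc} (lower bound for virtually polycyclic groups) and Theorem~\ref{Bredon:cd_subgroup} (the cohomological dimension of a subgroup with respect to the restricted family is bounded above by that of the ambient group). Since $G$ is locally virtually polycyclic, every finitely generated subgroup of $G$ is virtually polycyclic. The definition of $h(G)$ as the supremum of $h(H)$ over finitely generated subgroups $H \leq G$ then forces, under the assumption $h(G) = \infty$, the existence of a sequence of finitely generated subgroups $H_n \leq G$ with $h(H_n) \geq n$.

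For each such $H_n$, note that $\HH_r \cap H_n$ is precisely the family of subgroups of $H_n$ of Hirsch length $\leq r$, since virtual polycyclicity is inherited by subgroups. Applying Corollary~\ref{lower_bound_vpc} to the virtually polycyclic group $H_n$ yields
\[
\cd_{\HH_r \cap H_n} H_n \geq h(H_n) - r \geq n - r.
\]
On the other hand, by Theorem~\ref{Bredon:cd_subgroup},
\[
\cd_{\HH_r \cap H_n} H_n \leq \cd_{\HH_r} G.
\]
Combining these inequalities gives $\cd_{\HH_r} G \geq n - r$ for every $n \in \N$, and hence $\cd_{\HH_r} G = \infty$, completing the contrapositive.

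I do not expect any real obstacle here: both directions are essentially bookkeeping on top of previously proved results (Theorem~\ref{dim_locally_vpc}, Corollary~\ref{lower_bound_vpc}, Theorem~\ref{Bredon:cd_subgroup}), together with the standard fact that virtual polycyclicity is preserved by subgroups and the definition of $h(G)$ for locally virtually polycyclic groups. The only point that deserves a line of justification is the identification $\HH_r \cap H_n = \{K \leq H_n : h(K) \leq r\}$, which is needed so that Corollary~\ref{lower_bound_vpc} applies verbatim inside $H_n$.
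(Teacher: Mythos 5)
Your proof is correct and follows essentially the same route as the paper: the ``if'' direction is read off from Theorem~\ref{dim_locally_vpc}, and the ``only if'' direction is proved contrapositively by applying Corollary~\ref{lower_bound_vpc} to finitely generated (hence virtually polycyclic) subgroups of unbounded Hirsch length. The only, harmless, difference is that you invoke Theorem~\ref{Bredon:cd_subgroup} directly for each such subgroup, whereas the paper routes the same lower bound through the direct-union inequality of Theorem~\ref{cd_direct_unions}, which itself rests on that subgroup restriction result.
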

\begin{proof}
    We only need to prove the left-to-right implication, as the other implication is proven
    in Theorem~\ref{dim_locally_vpc}. We want to see that $h(G) < \infty$ assuming that
    $\cd_{\HH_r} G < \infty$. In order to achieve that, we will proceed by contrapositive, i.e.,
    we assume that $h(G) = \infty$ and see that then $\cd_{\HH_r} G$ can not be finite.

    Note that since $G$ is countable, $G$ is the direct union of $\{G_\lambda \st \lambda\in\Lambda\}$,
    where $G_\lambda$ is finitely generated (and hence virtually polycyclic). Proceeding as we did in
    in proof of Theorem~\ref{dim_locally_vpc}, we get that
    $$\cd_{\HH_r} G \geq
    \underset{\lambda\in\Lambda}{\operatorname{sup}}\{\cd_{\HH_{r,\lambda}} G_\lambda\}.$$

    Let $M\in\N$. We want to find $\lambda\in\Lambda$ such that $\cd_{\HH_{r,\lambda}} G_\lambda > M$.
    Since $G_\lambda$ is virtually polycyclic for all $\lambda\in\Lambda$, by
    Corollary~\ref{lower_bound_vpc}, $\cd_{\HH_r\cap G_\lambda} G_\lambda \geq h(G_\lambda) - r$.
    As $h(G) = \infty$ and $h(G) = \underset{\lambda\in\Lambda}{\operatorname{sup}}\{h(G_\lambda)\}$,
    for each $n\in\N$ there is $\lambda(n)\in\Lambda$ such that $h(G_{\lambda(n)}) > n$. If we
    take $\lambda = \lambda(M + r)$, we get that $\cd_{\HH_{r,\lambda}} G_\lambda > M + r - r = M$.
    Therefore, $\cd_{\HH_r} G$ can not be finite, as we wanted to see.
\end{proof}

Analogously, in the case of the Bredon geometric dimension:

\begin{Theorem}\label{gd_finite_iff_h_finite}
    Let $G$ be a locally virtually polycyclic countable group and $r\in\N$. Then,
    $\gd_{\HH_r} G < \infty$ if and only if $h(G) < \infty$.
\end{Theorem}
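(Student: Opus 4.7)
The plan is to mimic the proof of Theorem~\ref{cd_finite_iff_h_finite}, replacing Bredon cohomological dimensions by Bredon geometric dimensions throughout, since both directions have direct geometric analogues of the ingredients used in the cohomological argument.

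First I would dispose of the easy implication: if $h(G) < \infty$, then Theorem~\ref{dim_locally_vpc} provides the explicit bound $\gd_{\HH_r} G \leq h(G) + r + 1$ (or $\leq 1$ when $r \geq h(G)$), so in particular $\gd_{\HH_r} G < \infty$.

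For the nontrivial direction I would argue by contrapositive, assuming $h(G) = \infty$ and showing $\gd_{\HH_r} G = \infty$. Since $G$ is countable, it is the direct union of a countable family $\{G_\lambda \st \lambda \in \Lambda\}$ of finitely generated subgroups; because $G$ is locally virtually polycyclic, each $G_\lambda$ is virtually polycyclic. The family $\HH_r$ consists of finitely generated subgroups of $G$ (every virtually polycyclic group is finitely generated), so by Proposition~\ref{fg_implies_compatible} the families $\HH_r$ and $\{\HH_r \cap G_\lambda \st \lambda \in \Lambda\}$ are compatible with the direct union. Applying the geometric half of Theorem~\ref{gd_direct_unions} yields
\[
\gd_{\HH_r} G \geq \underset{\lambda \in \Lambda}{\operatorname{sup}}\{\gd_{\HH_r \cap G_\lambda} G_\lambda\}.
\]

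It then suffices to show that the supremum on the right is infinite. Given $M \in \N$, I would use the lower bound $\gd_{\HH_r \cap G_\lambda} G_\lambda \geq h(G_\lambda) - r$, supplied by Corollary~\ref{lower_bound_vpc} applied to the virtually polycyclic group $G_\lambda$. Since $h(G) = \underset{\lambda \in \Lambda}{\operatorname{sup}}\{h(G_\lambda)\} = \infty$, there exists $\lambda \in \Lambda$ with $h(G_\lambda) > M + r$, and for this $\lambda$ we obtain $\gd_{\HH_r \cap G_\lambda} G_\lambda > M$. As $M$ was arbitrary, $\gd_{\HH_r} G$ cannot be finite.

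There is no genuine obstacle here: both main ingredients (the direct-union lower bound and the polycyclic lower bound $h(G_\lambda) - r$) are already in place in the paper. The only mildly delicate point is verifying the compatibility hypothesis of the direct-union theorem, which is why invoking Proposition~\ref{fg_implies_compatible} and noting that virtually polycyclic groups are finitely generated is what makes the argument go through cleanly.
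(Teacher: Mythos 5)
Your proposal is correct and is essentially the paper's own argument: the paper proves this theorem by exchanging $\cd$ with $\gd$ in the proof of Theorem~\ref{cd_finite_iff_h_finite}, which is exactly the contrapositive argument via Proposition~\ref{fg_implies_compatible}, Theorem~\ref{gd_direct_unions} and Corollary~\ref{lower_bound_vpc} that you spell out.
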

\begin{proof}
    Exchanging $\cd$ by $\gd$ in the previous theorem's proof yields the desired result.
\end{proof}

\begin{bibchapter}
\begin{biblist}

    \bib{bredon_equiv}{book}{
        title={Equivariant cohomology theories},
        author={Bredon, Glen E.},
        volume={34},
        series={Lecture Notes in Mathematics},
        year={1967},
        publisher={Springer-Verlag},
        ISBN={978-3-540-03905-1}
    }

    \bib{capracekropholler}{article}{
        title={On the residual and profinite closures of commensurated subgroups},
        author={Caprace, Pierre-Emmanuel},
        author={Kropholler, Peter H.},
        author={Reid, Colin D},
        author={Wesolek, Phillip},
        journal={arXiv preprint arXiv:1706.06853},
        year={2017}
    }
    \bib{connollyetal}{article}{
        title={On the dimension of the virtually cyclic classifying space of a crystallographic group},
        author={Connolly, Frank},
        author={Fehrman, Benjamin},
        author={Hartglass, Michael},
        journal={arXiv preprint math/0610387},
        year={2006}
    }
    \bib{nucinkisetal}{article}{
        title={On the dimension of classifying spaces for families of abelian subgroups},
        author={Corob Cook, Ged},
        author={Moreno, V\'ictor},
        author={Nucinkis, Brita},
        author={Pasini, Federico},
        journal={Homology, Homotopy and Applications},
        volume = {19},
        year = {2017},
        number = {3},
        pages = {83 – 87}
        doi = {10.4310/HHA.2017.v19.n2.a5},
        URL = {http://dx.doi.org/10.4310/HHA.2017.v19.n2.a5}
    }

    \bib{degrijsepetrosyan_cat0}{article}{
        title={Bredon cohomological dimensions for groups acting on CAT (0)-spaces},
        author={Degrijse, Dieter},
        author={Petrosyan, Nansen},
        journal={Groups, Geometry and Dynamics},
        volume={9 (4)},
        pages={1231-1265},
        year={2015}
    }

    \bib{degrijsepetrosyan}{article}{
        AUTHOR = {Degrijse, Dieter},
        author = {Petrosyan, Nansen},
        TITLE = {Geometric dimension of groups for the family of virtually
                  cyclic subgroups},
        JOURNAL = {J. Topol.},
        FJOURNAL = {Journal of Topology},
        VOLUME = {7},
        YEAR = {2014},
        NUMBER = {3},
        PAGES = {697--726},
        ISSN = {1753-8416},
        MRCLASS = {20F65},
        MRNUMBER = {3252961},
        DOI = {10.1112/jtopol/jtt045},
        URL = {http://dx.doi.org/10.1112/jtopol/jtt045},
    }

    \bib{fluchthesis}{thesis}{
        AUTHOR = {Fluch, Martin},
        TITLE = {On Bredon (co-) homological dimensions of groups},
        JOURNAL = {arXiv preprint arXiv:1009.4633},
        type = {PhD. Thesis},
        institution = {University of Southampton},
        YEAR = {2011}
    }

    \bib{fritsch_cellularstr}{book}{
        title = {Cellular structures in topology},
        author = {Fritsch, Rudolf},
        author = {Piccinini, Renzo},
        volume = {19},
        year = {1990},
        publisher = {Cambridge University press}
    }

    \bib{Hatcher_algebraictop}{book}{
        title={Algebraic topology},
        author={Hatcher, Allen},
        year={2001},
        publisher={Cambridge University Press},
        isbn = {0-521-79540-0}
    }

    \bib{hillman1991elementary}{article}{
        title = {Elementary amenable groups and 4-manifolds with Euler characteristic 0},
        author={Hillman, Jonathan A.},
        journal={Journal of the Australian Mathematical Society},
        volume={50},
        number={1},
        pages={160--170},
        year={1991},
        publisher={Cambridge University Press}
    }

    \bib{illman_GCW}{incollection}{
        title={Equivariant algebraic topology},
        author={Illman, S{\"o}ren},
        booktitle={Annales de l’institut Fourier},
        volume={23},
        number={2},
        pages={87--91},
        year={1973}
    }

    \bib{illman_rest_transf}{article}{
        title={Restricting the transformation group in equivariant CW-complexes},
        author={Illman, S{\"o}ren},
        journal={Osaka Journal of Mathematics},
        volume={27},
        pages={191--206},
        year={1990}
    }

    \bib{juanpinedaleary}{article}{
        title={On classifying spaces for the family of virtually cyclic subgroups},
        author={Juan-Pineda, Daniel},
        author={Leary, Ian J.}
        year={2006},
        publisher={American Mathematical Society}
    }

    \bib{kropholler1990}{article}{
        title={An analogue of the torus decomposition theorem for certain Poincar{\'e} duality groups},
        author={Kropholler, Peter H.},
        journal={Proceedings of the London Mathematical Society},
        volume={3},
        number={3},
        pages={503--529},
        year={1990},
        publisher={Oxford University Press}
    }

    \bib{luecksurvey}{incollection}{
        author={L{\"u}ck, Wolfgang},
        title={Survey on classifying spaces for families of subgroups},
        date={2005},
        booktitle={Infinite groups: geometric, combinatorial and
        dynamical aspects},
        series={Progr. Math.},
        volume={248},
        publisher={Birkh\"auser},
        address={Basel},
        pages={269\ndash 322},
        review={\MR{MR2195456 (2006m:55036)}},
    }

    \bib{luecktransformation}{book}{
        title={Transformation groups and algebraic K-theory},
        author={L{\"u}ck, Wolfgang},
        series={Mathematica Gottingensis},
        volume={1408},
        year={1989},
        publisher={Springer-Verlag},
        isbn = {978-3-540-51846-4}
    }

    \bib{lueck_alg_top}{book}{
        title     = {Algebraische Topologie: Homologie und Mannigfaltigkeiten},
        author    = {L{\"u}ck, Wolfgang},
        year      = {2012},
        publisher = {Springer-Verlag}
    }

    \bib{lueckmeintrup}{article}{
        title     = {On the universal space for group actions with compact isotropy},
        author    = {Luck, Wolfgang},
        author    = {Meintrup, David},
        journal   = {Contemporary Mathematics},
        volume    = {258},
        pages     = {293--306},
        year      = {2000},
        publisher = {Providence, RI: American Mathematical Society}
    }

    \bib{lueckweiermann}{article}{
        author={L{\"u}ck, Wolfgang},
        author={Weiermann, Michael},
        title={On the classifying space of the family of virtually cyclic
        subgroups},
        date={2012},
        journal={Pure App. Math. Q.},
        volume={8},
        number={2},
        pages={479\ndash 555},
        url={http://arxiv.org/abs/math/0702646v2},
    }

    \bib{mpconchita}{article}{
        author={Mart{\i}nez-P{\'e}rez, Conchita},
        title={A spectral sequence in Bredon (co) homology},
        journal={Journal of Pure and Applied Algebra},
        volume={176},
        number={2},
        pages={161--173},
        year={2002},
        publisher={Elsevier}
    }

    \bib{matumoto_GCW}{article}{
        author={Matumoto, Takao},
        title={On $G$-CW complexes and a theorem of J. H. C. Whitehead},
        date={1971},
        journal={J. Fac. Sci. Univ. Tokyo Sect. IA Math.},
        volume={18},
        pages={363\ndash 374},
        url={http://arxiv.org/abs/math/0702646v2},
    }

    \bib{mislin_equivariant}{incollection}{
        title={Equivariant K-homology of the classifying space for proper actions},
        author={Mislin, Guido},
        booktitle={Proper group actions and the Baum-Connes conjecture},
        pages={1--78},
        year={2003},
        publisher={Springer}
    }

    \bib{nucinkispetrosyan_hier}{article}{
        title={Complete Bredon cohomology and its applications to hierarchically defined groups},
        author={Nucinkis, Brita E.A.},
        author={Petrosyan, Nansen},
        booktitle={Mathematical Proceedings of the Cambridge Philosophical Society},
        volume={161},
        number={1},
        pages={143--156},
        year={2016},
        organization={Cambridge University Press}
    }

    \bib{nucinkis2004dimensions}{article}{
        title={On dimensions in Bredon homology},
        author={Nucinkis, Brita E. A.},
        journal={Homology, Homotopy and Applications},
        volume={6},
        number={1},
        pages={33--47},
        year={2004},
        publisher={International Press of Boston}
    }

    \bib{prytula_cat0}{article}{
        title={Bredon cohomological dimension for virtually abelian
        stabilisers for CAT(0) groups}
        author={Prytu{\l}a, Tomasz}
        journal={in preparation}

    }

    \bib{rotman_homalg}{book}{
        title={An introduction to homological algebra},
        author={Rotman, Joseph J.},
        year={2008},
        publisher={Springer Science \& Business Media}
    }

    \bib{scott}{book}{
        author = {Scott, William R.},
        title = {Group Theory},
        publisher = {Dover Publications, New York},
        year = {1987},
        isbn = {0-486-65377-8}
    }

    \bib{segal}{book}{
        AUTHOR = {Segal, Daniel},
        TITLE = {Polycyclic groups},
        SERIES = {Cambridge Tracts in Mathematics},
        VOLUME = {82},
        PUBLISHER = {Cambridge University Press, Cambridge},
        YEAR = {1983},
        PAGES = {xiv+289},
        ISBN = {0-521-24146-4},
        MRCLASS = {20-02 (20F16)},
        MRNUMBER = {713786 (85h:20003)},
        MRREVIEWER = {John F. Bowers},
        DOI = {10.1017/CBO9780511565953},
        URL = {http://dx.doi.org/10.1017/CBO9780511565953},
    }

    \bib{tDieck_algebraic_top}{book}{
        title     = {Algebraic topology},
        author    = {tom Dieck, Tammo},
        volume    = {8},
        year      = {2008},
        publisher = {European Mathematical Society}
    }

    \bib{tDieck_transformation}{book}{
        title     = {Transformation groups},
        author    = {tom Dieck, Tammo},
        volume    = {8},
        year      = {1987},
        SERIES = {De Gruyter studies in mathematics},
        publisher = {Walter de Gruyter},
        isbn = {0-89925-029-7}
    }

    \bib{weibel_homalg}{book}{
        title={An introduction to homological algebra},
        author={Weibel, Charles A.},
        number={38},
        year={1995},
        publisher={Cambridge university press}
    }

    \bib{whitehead_elements_hpy}{book}{
        title={Elements of homotopy theory},
        author={Whitehead, George W.},
        volume={61},
        year={2012},
        publisher={Springer Science \& Business Media}
    }

    \bib{whitehead_combhomotopy1}{article}{
        title={Combinatorial homotopy. I},
        author={Whitehead, John H. C.},
        journal={Bulletin of the American Mathematical Society},
        volume={55},
        number={3},
        pages={213--245},
        year={1949}
    }

\end{biblist}
\end{bibchapter}

\end{document}